\documentclass[11pt,a4paper]{article}
  \def\SvZfontcode{8}
  \def\SvZslantedGreekCapitals{1}
  \usepackage{amsmath,amsthm,amscd}


\def\SvZrequireslantedRedef{0}
\ifcase\SvZfontcode
\usepackage{bm}
\usepackage{amssymb}
\def\SvZrequireslantedRedef{1}
\or
\usepackage{fouriernc}
\usepackage{bm}
\def\SvZrequireslantedRedef{1}
\or
\usepackage{fourier}
\usepackage{bm}
\def\SvZrequireslantedRedef{1}
\or
\usepackage{amssymb}
\if\SvZslantedGreekCapitals 1
\usepackage[slantedGreek]{mathptmx}
\else
\usepackage{mathptmx}
\fi
\DeclareMathAlphabet{\bm}{OT1}{ptm}{b}{it} 
\or
\usepackage{kmath,kerkis}
\usepackage{bm}
\def\SvZrequireslantedRedef{1}
\or
\if\SvZslantedGreekCapitals 1
\usepackage[sc,slantedGreek]{mathpazo}
\else
\usepackage[sc]{mathpazo}
\fi
\usepackage{bm}
\linespread{1.05}
\or
\usepackage[adobe-utopia]{mathdesign}
\usepackage{bm}
\or
\usepackage[urw-garamond]{mathdesign}
\usepackage{bm}
\or
\usepackage[bitstream-charter]{mathdesign}
\usepackage{bm}
\or
\usepackage{lmodern}
\usepackage{bm}
\def\SvZrequireslantedRedef{1}
\or
\usepackage{arev}
\fi

\if\SvZrequireslantedRedef 1
\if\SvZslantedGreekCapitals 1

\renewcommand{\Gamma}{\varGamma}
\renewcommand{\Delta}{\varDelta}
\renewcommand{\Theta}{\varTheta}
\renewcommand{\Lambda}{\varLambda}
\renewcommand{\Xi}{\varXi}
\renewcommand{\Pi}{\varPi}
\renewcommand{\Sigma}{\varSigma}
\renewcommand{\Upsilon}{\varUpsilon}
\renewcommand{\Phi}{\varPhi}
\renewcommand{\Psi}{\varPsi}
\renewcommand{\Omega}{\varOmega}
\fi
\fi

\renewcommand{\phi}{\varphi}

\reversemarginpar

\usepackage{url}
\usepackage{listings}
\lstset{language=C++,tabsize=4,breaklines=true}
\usepackage{kbordermatrix}
\usepackage{booktabs}
\usepackage{graphicx}
\usepackage{color}

\ifx\SvZinPresentation\undefined
\usepackage{float}
\usepackage{longtable}
\usepackage[margin=10pt,labelfont=bf,textfont=it]{caption}

\usepackage[colorlinks]{hyperref}

\renewcommand{\theenumi}{\textit{\roman{enumi}}}

\newcommand{\mathds}{\mathbb}


\DeclareMathOperator{\rank}{rank}

\newcommand{\Q}{ \mathds{Q} }
\newcommand{\R}{ \mathds{R} }

\newcommand{\Z}{ \mathds{Z} }
\newcommand{\C}{ \mathds{C} }
\newcommand{\N}{\mathds{N}}
\newcommand{\field}{\mathds{F}}
\newcommand{\group}{G}
\newcommand{\bracketring}{\mathds{B}}
\DeclareMathOperator{\GF}{GF}
\newcommand{\ring}{R}
\newcommand{\parf}{\ensuremath{\mathbb{P}}} 
\newcommand{\reg}{\uniform_0}
\newcommand{\dyadic}{\mathds{D}}
\newcommand{\nreg}{\uniform_1}
\newcommand{\psru}{\mathds{S}}
\newcommand{\sru}{\ensuremath{\sqrt[6]{1}}}
\newcommand{\golmean}{\mathds{G}}
\newcommand{\golrat}{\mathds{G}}
\newcommand{\splittable}{\mathds{Y}}
\newcommand{\dowfour}{\mathds{W}}
\newcommand{\ger}{\mathds{GE}}
\newcommand{\gauss}{\hydra_2}
\newcommand{\hydra}{\mathds{H}}
\newcommand{\cyclo}{\mathds{K}}

\newcommand{\uniform}{\mathds{U}}

\DeclareMathOperator{\crat}{Cr}
\newcommand{\Crat}{\crat}
\DeclareMathOperator{\assoc}{Asc}
\DeclareMathOperator{\fun}{{\cal F}}
\newcommand{\lift}{\mathds{L}}

\renewcommand{\tilde}{\widetilde}
\renewcommand{\hat}{\widehat}
\renewcommand{\bar}{\overline}

\newcommand{\ignore}[1]{}


\DeclareMathOperator{\sign}{sgn}


\DeclareMathOperator{\PG}{PG} 
\DeclareMathOperator{\AG}{AG} 
\newcommand{\deltadot}{Q^+}


\let\Oldsetminus\setminus

\newcommand{\delete}{\ensuremath{\!\Oldsetminus\!}}
\newcommand{\contract}{\ensuremath{\!/}}

\newcommand{\minorof}{\ensuremath{\preceq}}


\newcommand{\bip}{G}

\ifx\SvZinPresentation\undefined
\newtheorem{theorem}{Theorem}[section]

\newtheorem{lemma}[theorem]{Lemma}
\newtheorem{proposition}[theorem]{Proposition}
\newtheorem{definition}[theorem]{Definition}
\newtheorem{corollary}[theorem]{Corollary}

\newtheorem{claim}{Claim}[theorem] 

\newenvironment{claimenv}{\list{}{\rightmargin0pt\leftmargin10pt\topsep0pt}\item[]}{\endlist}

\newenvironment{subproof}{\begin{claimenv}\begin{proof}}{\end{proof}\end{claimenv}}
\fi
\newtheorem{conjecture}[theorem]{Conjecture}
\newtheorem{question}[theorem]{Question}
 
\newcommand{\Dutchvon}[2]{#2}

\usepackage[longnamesfirst,numbers]{natbib}
\shortcites{oriented}

\begin{document}
  \title{Confinement of matroid representations to subsets of partial fields}
  \author{R. A. Pendavingh and S. H. M. van Zwam\thanks{E-mail: \url{rudi@win.tue.nl}, \url{Stefan.van.Zwam@cwi.nl}. This research was supported by NWO, grant 613.000.561. Parts of this paper have appeared in the second author's PhD thesis \cite{vZ09}.}}
  \maketitle
  \abstract{
    Let $M$ be a matroid representable over a (partial) field $\parf$ and $B$ a matrix representable over a sub-partial field $\parf'\subseteq\parf$. We say that $B$ \emph{confines} $M$ to $\parf'$ if, whenever a $\parf$-representation matrix $A$ of $M$ has a submatrix $B$, $A$ is a scaled $\parf'$-matrix. We show that, under some conditions on the partial fields, on $M$, and on $B$, verifying whether $B$ confines $M$ to $\parf'$ amounts to a finite check. A corollary of this result is Whittle's Stabilizer Theorem~\cite{Whi96b}.

    A combination of the Confinement Theorem and the Lift Theorem from \citet{PZ08lift} leads to a short proof of Whittle's characterization of the matroids representable over $\GF(3)$ and other fields~\cite{Whi97}.

    We also use a combination of the Confinement Theorem and the Lift Theorem to prove a characterization, in terms of representability over partial fields, of the 3-connected matroids that have $k$ inequivalent representations over $\GF(5)$, for $k = 1, \ldots, 6$.

    Additionally we give, for a fixed matroid $M$, an algebraic construction of a partial field $\parf_M$ and a representation matrix $A$ over $\parf_M$ such that every representation of $M$ over a partial field $\parf$ is equal to $\phi(A)$ for some homomorphism $\phi:\parf_M\rightarrow\parf$. Using the Confinement Theorem we prove an algebraic analog of the theory of free expansions by \citet{GOVW02}.
  }

\section{Introduction}
Questions regarding the representability of matroids pervade matroid theory. A famous theorem is the characterization of regular matroids due to Tutte. We say that a matrix over the real numbers is \emph{totally unimodular} if the determinant of every square submatrix is in the set $\{-1,0,1\}$.
\begin{theorem}[\citet{Tut65}]\label{thm:reg}
Let $M$ be a matroid. The following are equivalent:
\begin{enumerate}
  \item $M$ is representable over $\GF(2)$ and some field that does not have characteristic 2;
  \item  $M$ is representable over $\R$ by a totally unimodular matrix;
  \item $M$ is representable over every field.
\end{enumerate}
\end{theorem}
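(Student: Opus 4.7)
The implications (iii) $\Rightarrow$ (i) and (ii) $\Rightarrow$ (iii) are straightforward; the heart of the theorem lies in (i) $\Rightarrow$ (ii). My plan is to dispatch the easy directions first and then concentrate on the signing argument.

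For (iii) $\Rightarrow$ (i) there is nothing to prove. For (ii) $\Rightarrow$ (iii), given a totally unimodular $\R$-representation $A$ of $M$ of rank $r$, interpret the entries of $A$ in an arbitrary field $\field$ via the ring homomorphism $\Z \to \field$. Every $r \times r$ minor of $A$ lies in $\{-1,0,+1\} \subset \Z$, and since $\pm 1$ remains nonzero in any field, the collections of nonzero $r \times r$ minors before and after the reduction agree set-theoretically. Hence the two representations define the same matroid $M$.

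The nontrivial direction is (i) $\Rightarrow$ (ii). Since $M$ is binary, fix a basis $B$ and write a $\GF(2)$-representation in standard form as $[I \mid D]$. Because $M$ is also representable over some field $\field$ of characteristic different from $2$, we may, after pivoting on the same basis $B$, also fix an $\field$-representation of $M$ as $[I \mid D']$ with the same support as $D$. The plan is to show that $D'$ can, after scaling rows and columns, be brought into a form with entries in $\{-1,0,+1\}$, and then to show that the resulting integer matrix is totally unimodular. To perform the scaling, form the bipartite graph $H$ whose vertex set is the rows and columns of $D$ and whose edges are the positions of the nonzero entries. Pick a spanning forest $T$ of $H$; by scaling rows and columns of $D'$, normalize so that every entry at an edge of $T$ equals $1$. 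Each edge $e \notin T$ closes a unique cycle with $T$, and computing the corresponding $2 \times 2$ subdeterminant along that cycle over $\field$ and comparing with the $\GF(2)$-prediction forces the normalized value at $e$ to be $\pm 1$.

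To verify total unimodularity of the resulting signed integer matrix, I would use induction on the size of a subdeterminant: pivoting on a nonzero entry of an arbitrary $k \times k$ submatrix yields a $(k-1) \times (k-1)$ matrix to which the same cycle-based scaling argument applies, so that the pivoted matrix has entries in $\{-1,0,+1\}$, and the inductive hypothesis finishes the step. I expect the main obstacle to be the global consistency of the signing in the previous paragraph, i.e., showing that the constraints imposed by distinct fundamental cycles of $T$ never conflict. This is precisely where the simultaneous representability over $\GF(2)$ and over a field of characteristic $\neq 2$ is used essentially, and it is the point at which Tutte's original argument invokes his homotopy theorem for chain groups.
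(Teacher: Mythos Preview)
The paper does not give its own proof of this theorem; it is quoted from \citet{Tut65} as motivation. That said, within the paper's framework the hard direction (i)$\Rightarrow$(ii) is a one-line consequence of the Lift Theorem: taking $\mathcal A$ to be all $\GF(2)\otimes\field$-matrices with $\mathrm{char}(\field)\neq 2$, one checks $\Crat(\mathcal A)=\{(0,0),(1,1)\}$ and hence $\lift_{\mathcal A}(\GF(2)\otimes\field)\cong\reg$, which is exactly the appendix entry for $\reg$.

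Your outline is the standard direct (Camion--Gerards) argument and is essentially correct, but two points need repair.

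First, ``the corresponding $2\times 2$ subdeterminant along that cycle'' is wrong unless the fundamental cycle has length four. For a cycle $C$ of length $2n$ the relevant object is the $n\times n$ subdeterminant $\det D'[V(C)]$, equivalently the signature $\sigma_{D'}(C)$ of Definition~\ref{def:sig}. Over $\GF(2)$ this determinant vanishes for every induced cycle, hence it vanishes over $\field$ as well (same matroid), giving $\sigma_{D'}(C)=1$ and forcing the single non-tree entry on $C$ to be $\pm 1$. There is a genuine subtlety here that you did not flag: the $T$-fundamental cycle of $e$ need not be induced. The clean fix is the one in the proof of Theorem~\ref{thm:crossratpf}: pick $e=xy$ outside the subgraph $H$ of $\pm 1$-entries together with a shortest $x$--$y$ path $P$ in $H$; then $P\cup e$ is induced.

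Second, the ``global consistency'' obstacle you anticipate does not exist in your setup. You already hold $D'$; after $T$-normalization each off-tree entry is a definite element of $\field$, and the cycle argument merely identifies it as $\pm 1$. Distinct fundamental cycles of $T$ constrain distinct off-tree edges, so nothing can conflict. Tutte's homotopy theorem belongs to his chain-group proof, a different route altogether; your argument does not need it. What your inductive pivot step \emph{does} need, and you left implicit, is that the entries of $A^{xy}$ are (up to sign) $2\times 2$ subdeterminants of $A$ divided by $A_{xy}=\pm 1$; once every $2\times 2$ subdeterminant is shown to lie in $\{0,\pm 1\}$ by the same parity comparison with $\GF(2)$, pivoting preserves $\{0,\pm 1\}$-entries and the induction runs.
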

Whittle gave a similar characterization of the matroids representable over $\GF(3)$ and some other field. We say that a matrix over the real numbers is \emph{dyadic} if the determinant of every square submatrix is in the set $\{0\}\cup\{\pm 2^k \mid k \in \Z\}$. We say that a matrix over the complex numbers is \emph{sixth-roots-of-unity} ($\sqrt[6]{1}$) if the determinant of every square submatrix is in the set $\{0\}\cup\{ \zeta^l \mid l \in \Z\}$, where $\zeta$ is a root of $x^2-x+1=0$ (so $\zeta^6 = 1$).
\begin{theorem}[\citet{Whi97}]\label{thm:classsintro}
  Let $M$ be a 3-connected matroid that is representable over $\GF(3)$ and some field that is not of characteristic 3. Then at least one of the following holds:
  \begin{enumerate}
    \item\label{it:whi1} $M$ is representable over $\R$ by a dyadic matrix;
    \item\label{it:whi2} $M$ is representable over $\C$ by a $\sru$-matrix.
  \end{enumerate}
\end{theorem}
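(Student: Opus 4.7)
The plan is to combine the Lift Theorem of \citet{PZ08lift} with the Confinement Theorem of this paper. The point of departure is the observation that the dyadic partial field $\dyadic$ and the sixth-roots-of-unity partial field $\sru$ both admit natural homomorphisms onto $\GF(3)$: one sends $2\mapsto -1$, the other sends $\zeta\mapsto -1$. So the two candidate conclusions of the theorem both arise as ``lifts'' of a $\GF(3)$-representation to a richer partial field, and the task is to show that every such lift compatible with a second representation over a field $\field$ of characteristic $\neq 3$ is forced into exactly one of these two families.

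First I would fix a $\GF(3)$-representation $A_0$ of $M$ and a $\field$-representation, and look for a ``universal'' partial field $\parf^\ast$ into which both factor, for instance a partial field encoding the cross-ratios of the common structure. The 3-connectedness of $M$ together with the existence of the $\field$-representation should verify the hypotheses of the Lift Theorem, which then produces a single matrix $A$ over $\parf^\ast$ representing $M$ whose image under the canonical homomorphism $\parf^\ast\to\GF(3)$ is $A_0$ (up to scaling). Crucially, $\parf^\ast$ is chosen so that both $\dyadic$ and $\sru$ embed into it as sub-partial fields and are the only sub-partial fields relevant to $M$.

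Next I would apply the Confinement Theorem to $A$ with a suitably chosen small submatrix $B$. The submatrix $B$ should correspond to a minor of $M$ whose $\parf^\ast$-representations cluster into exactly two families: a dyadic one and a sixth-roots-of-unity one. A $U_{2,4}$-minor is the natural candidate, since its cross-ratios over a partial field with a surjection to $\GF(3)$ leave essentially two possibilities. The Confinement Theorem then propagates this local dichotomy: it forces $A$ itself, after scaling, to lie entirely in $\dyadic$ or entirely in $\sru$, giving conclusion (\ref{it:whi1}) or (\ref{it:whi2}). In the degenerate case that $M$ has no $U_{2,4}$-minor, $M$ is binary, and Theorem~\ref{thm:reg} applies directly to give that $M$ is regular and hence representable in both $\dyadic$ and $\sru$.

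The main obstacle I expect is identifying $\parf^\ast$ cleanly and verifying the hypotheses of the Lift Theorem and the Confinement Theorem for the chosen $B$. In particular, one must ensure that the ``fingerprint'' submatrix $B$ is genuinely present as a submatrix of some $\parf^\ast$-representation of $M$ (so that the Confinement Theorem has something to act on), and that the only scaled $\parf^\ast$-matrices with submatrix $B$ are scaled $\dyadic$- or scaled $\sru$-matrices (so that confinement produces exactly the required dichotomy). Once these two ingredients are in place, the Confinement Theorem does the work of turning a local two-valued statement about a minor into the desired global conclusion about $M$.
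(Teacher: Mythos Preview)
Your high-level strategy---combine the Lift Theorem with the Confinement Theorem, using a $U_{2,4}$-minor as the confining submatrix---matches the paper's approach. However, your sketch is vague precisely where the paper does real work, and the way you phrase the confinement step suggests some confusion about what the Confinement Theorem actually gives you.

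Concretely: the paper takes $\parf := \GF(3)\otimes\field$ and sets $\parf^\ast := \lift_{\cal A}\parf$ for ${\cal A}$ the class of all $\parf$-matrices. The heart of the argument is then a structural claim you do not mention: for any nontrivial fundamental element $p$ of $\parf^\ast$, the sub-partial field $\parf^\ast[\assoc\{p\}]$ is isomorphic to one of $\reg$, $\nreg$, $\dyadic$, $\psru$. This is proven by a finite case analysis on which of the six associates of $p$ coincide; the key constraint is that the characteristic is not $3$, which rules out certain collapses. This classification is what produces the dichotomy you want, and it is not automatic---it is the step your last paragraph flags as ``the main obstacle'' without resolving.

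Your description of confinement is also slightly backwards. You do not choose $B$ and then discover which $\parf'$ it confines $A$ to; rather, for a fixed induced sub-partial field $\parf'\subseteq\parf^\ast$ and a $\parf'$-matrix $B$, the Confinement Theorem says $A$ is a scaled $\parf'$-matrix provided the finite check on small extensions of $M[I\,B]$ passes. In the paper, $B$ is a $2\times 2$ matrix $\left[\begin{smallmatrix}1&1\\p&1\end{smallmatrix}\right]$ and $\parf' = \parf^\ast[\assoc\{p\}]$; the finite check succeeds because ternary matroids have no $U_{2,5}$- or $U_{3,5}$-minor, so $U_{2,4}$ has no 3-connected single-element ternary extension or coextension. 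One also needs that $\parf'$ is \emph{induced}, which the paper gets from the fact that the additive relations in the lift are confined to a single associate set. You should make both of these points explicit.

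Finally, note that the outcome is four-valued ($\reg$, $\nreg$, $\dyadic$, $\psru$), not two-valued; the theorem as stated follows because $\reg$ and $\nreg$ each admit homomorphisms to both $\dyadic$ and $\psru$.
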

Whittle's characterization was, in fact, more precise. He also characterized the matroids as in \eqref{it:whi1},\eqref{it:whi2} by the set of fields over which $M$ is representable. In \cite{PZ08lift} we proved the Lift Theorem, a general theorem from which Whittle's results of the latter type follow. But the Lift Theorem is not sufficient to prove that Whittle's classification is complete. In this paper we will fill this gap by proving the Confinement Theorem. Using this we will be able to give a comparatively short proof of Whittle's theorem.

The Confinement Theorem has other applications. For instance, Whittle's Stabilizer Theorem~\cite{Whi96b} is a corollary of it. Semple and Whittle's~\cite{SW96b} result that every representable matroid with no $U_{2,5}$- and no $U_{3,5}$-minor is either binary or ternary can be proven with it, again by combining it with the Lift Theorem. We were led to the Confinement Theorem by our study of matroids with inequivalent representations over $\GF(5)$. \citet{OVW95} proved that a 3-connected quinary\footnote{Some authors prefer the word \emph{quinternary}} matroid never has more than 6 inequivalent representations. Using the Lift Theorem and the Confinement Theorem we were able to extend that result as follows:
\begin{theorem}\label{thm:quinary}
  Let $M$ be a 3-connected quinary matroid. Then $M$ has at most 6 inequivalent representations over $\GF(5)$. Moreover, the following hold:
  \begin{enumerate}
    \item\label{quin:two}If $M$ has at least two inequivalent representations over $\GF(5)$, then $M$ is representable over $\C$, over $\GF(p^2)$ for all primes $p \geq 3$, and over $\GF(p)$ when $p \equiv 1 \mod 4$.
    \item\label{quin:three}If $M$ has at least three inequivalent representations over $\GF(5)$, then $M$ is representable over every field with at least five elements.
    \item\label{quin:four}If $M$ has at least four inequivalent representations over $\GF(5)$, then $M$ is not binary and not ternary.
    \item\label{quin:five}If $M$ has at least five inequivalent representations over $\GF(5)$, then $M$ has six inequivalent representations over $\GF(5)$.
  \end{enumerate}
\end{theorem}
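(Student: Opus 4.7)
The plan is to combine the Confinement Theorem (to be proved in this paper) with the Lift Theorem of \cite{PZ08lift}. The strategy is to associate, to each $k \in \{2,\ldots,6\}$, a partial field $\parf_k$ such that any 3-connected quinary matroid $M$ with at least $k$ inequivalent $\GF(5)$-representations is representable over $\parf_k$, where $\parf_k$ carries enough homomorphisms to $\GF(5)$ to realise all those representations. Once $\parf_k$ has been pinned down, the Lift Theorem transfers representability of $M$ to every field admitting a homomorphism from $\parf_k$.

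First I would reprove the upper bound of 6 due to \citet{OVW95} inside this framework. Given two inequivalent $\GF(5)$-representations $A_1$ and $A_2$ of $M$, I would construct a common lifted representation $A$ over a partial field $\parf$ with two homomorphisms $\phi_1,\phi_2\colon\parf\to\GF(5)$ realising the $A_i$; this is exactly the situation in which the Confinement Theorem applies, reducing the verification to a bounded submatrix. Iterating with additional inequivalent representations produces a tower of partial fields whose top levels are sharply constrained by the multiplicative group of $\GF(5)$, yielding both $k \le 6$ and a classification of the partial fields that can arise at each level. For part~(i), I expect $\parf_2$ to be (essentially) the partial field generated by a primitive fourth root of unity, so that it maps into $\C$, into $\GF(p^2)$ for every odd prime $p$, and into $\GF(p)$ precisely when $-1$ is a square in $\GF(p)$, that is, $p\equiv 1 \bmod 4$. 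For part~(ii), $\parf_3$ should turn out to be the near-regular partial field $\uniform_1$, whose homomorphisms reach every field of size at least three; combined with the quinary hypothesis this yields the stated claim. Part~(iii) then follows since no partial field admitting four distinct homomorphisms to $\GF(5)$ can embed in $\GF(2)$ or $\GF(3)$, as these fields are too small to accommodate the required images. Part~(iv) reflects a Galois-type action on the set of inequivalent $\GF(5)$-representations: the structure of $\parf_5$ forces orbits whose sizes are incompatible with a total count of exactly five, so the count jumps to six.

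The main obstacle is the case analysis that identifies the partial fields $\parf_k$ and establishes that representability over $\parf_k$ is in fact equivalent to having at least $k$ inequivalent $\GF(5)$-representations. Here the Confinement Theorem does the essential bookkeeping by reducing the infinite condition on representations to a finite check on a bounded set of entries, after which standard manipulations of partial fields yield the isomorphism types of the $\parf_k$. Given these identifications, the representability conclusions in (i)--(iv) are then obtained by routine application of the Lift Theorem.
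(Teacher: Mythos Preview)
Your high-level strategy matches the paper's: build partial fields $\hydra_k$ (the \emph{Hydra-$k$} partial fields) by lifting the product $\bigotimes_{i=1}^k\GF(5)$ restricted to matrices whose $k$ projections are pairwise inequivalent, use the Confinement Theorem to control which cross ratios occur, and then read off representability via homomorphisms out of $\hydra_k$. Your guess for $\parf_2$ is correct---it is the Gaussian partial field $\gauss$.

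However, your identification of $\parf_3$ with the near-regular partial field $\uniform_1$ is wrong, and this is a genuine gap rather than a presentational shortcut. The matroid $U_{2,5}$ has six inequivalent $\GF(5)$-representations, hence at least three, but it is not near-regular (it is not $\GF(3)$-representable). So the class of matroids with $\ge 3$ inequivalent $\GF(5)$-representations is strictly larger than the near-regular matroids, and $\parf_3$ cannot be $\uniform_1$. The paper obtains instead
\[
\hydra_3 = (\Q(\alpha),\langle -1,\alpha,\alpha-1,\alpha^2-\alpha+1\rangle),
\]
which has the extra generator $\alpha^2-\alpha+1$; part~\eqref{quin:three} then follows because any field with at least five elements contains a $p$ that is not a root of $x$, $x-1$, or $x^2-x+1$, giving a homomorphism $\hydra_3\to\field$ via $\alpha\mapsto p$. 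Your argument for \eqref{quin:three} would have proved more than is true.

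Your sketches for \eqref{quin:four} and \eqref{quin:five} are also off in mechanism. Part~\eqref{quin:four} is not argued via partial-field embeddings into $\GF(2)$ or $\GF(3)$; the paper simply cites it as a special case of a result of Whittle (and it also follows once one sees $\hydra_4$ has no homomorphism to $\GF(3)$, but the relevant fact is about the matroid, not about an embedding of partial fields). For \eqref{quin:five}, there is no orbit-counting argument: the paper shows directly that a single homomorphism $\hydra_5\to\bigotimes_{i=1}^6\GF(5)$ sends every $\hydra_5$-representation of $U_{2,5}$ to six pairwise inequivalent projections, whence $\hydra_6=\hydra_5$. The ``Galois-type action'' intuition is suggestive but does not substitute for this computation.
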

We note here that we proved \eqref{quin:two} in \cite{PZ08lift}, and that \eqref{quin:four} is a special case of a result by \citet{Whi96}.

We will now give a more detailed overview of the contents of this paper. The framework for our results is the theory of partial fields, introduced by \citet{SW96}. A partial field is an algebraic structure resembling a field, but in which addition is not always defined. Semple and Whittle developed a theory of matroids representable over partial fields. In \cite{PZ08lift} we gave a proof of the theorem by Vertigan that partial fields can be obtained as the restriction of a ring to a subgroup of its group of units. In this paper we will use this as definition of a partial field, rather than the axiomatic setup by Semple and Whittle. We repeat, and sometimes extend, the relevant definitions and results from \citet{SW96} and \citet{PZ08lift} in Section~\ref{sec:pfintro} of this paper. We note here that \citet{Cam68} (translated and updated in \cite{Cam06}) introduced a class of matrices equivalent to our partial-field matrices. His results have almost no overlap with ours.

Sometimes a matroid that is representable over a partial field $\parf$ is in fact also representable over a sub-partial field $\parf'\subseteq \parf$. Let $M,N$ be matroids such that $N$ is a minor of $M$. Suppose that, whenever a $\parf$-representation $A$ of $M$ contains a scaled $\parf'$-representation of $N$, $A$ itself is a scaled $\parf'$-representation of $M$. Then we say that $N$ \emph{confines} $M$ to $\parf'$. The following theorem reduces verifying if $N$ confines $M$ to a finite check.
\begin{theorem}\label{thm:confineintro}
  Let $\parf, \parf'$ be partial fields such that $\parf'$ is an induced sub-partial field of $\parf$. Let $M, N$ be 3-connected matroids such that $N$ is a minor of $M$. Then exactly one of the following holds:
  \begin{enumerate}
    \item $N$ confines $M$ to $\parf'$;
    \item $M$ has a 3-connected minor $M'$ such that
    \begin{itemize}
      \item $N$ does not confine $M'$ to $\parf'$;
      \item $N$ is isomorphic to $M'\contract x$, $M'\delete y$, or $M'\contract x \delete y$ for some $x,y\in E(M')$;
      \item If $N$ is isomorphic to $M'\contract x \delete y$ then at least one of $M'\contract x, M'\delete y$ is 3-connected.
    \end{itemize}
  \end{enumerate}
\end{theorem}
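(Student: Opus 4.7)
The plan is to argue the contrapositive by strong induction on $|E(M)|$: assume $N$ does not confine $M$ to $\parf'$, and exhibit a 3-connected minor $M'$ of $M$ satisfying the three listed conditions. Concretely, let $M'$ be a minor-minimal 3-connected minor of $M$ that still has $N$ as a minor and for which $N$ does not confine $M'$ to $\parf'$; this exists since $M$ itself is a candidate. When $|E(M')| = |E(N)| + 1$ we are immediately in one of the first two allowed forms, so the task reduces to showing $|E(M')| \leq |E(N)| + 2$ and, in the boundary case, forcing the form $N \cong M' \contract x \delete y$ with at least one 3-connected intermediate.

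The workhorse is what I will call the \emph{Gluing Lemma}: if $M$ is 3-connected with distinct elements $e, f \in E(M) \setminus E(N)$ such that $M_e \in \{M \delete e, M \contract e\}$ and $M_f \in \{M \delete f, M \contract f\}$ are both 3-connected, both contain $N$ as a minor, and are each confined to $\parf'$ by $N$, then $N$ confines $M$ to $\parf'$. Granting this lemma, suppose $|E(M')| \geq |E(N)| + 2$. A strengthening of Seymour's Splitter Theorem provides two distinct such elements $e, f \in E(M') \setminus E(N)$ whenever $|E(M')| - |E(N)|$ is large enough; minor-minimality of $M'$ then forces $N$ to confine both $M_e$ and $M_f$; the Gluing Lemma contradicts the non-confinement of $M'$. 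The remaining small cases---$|E(M')| = |E(N)| + 2$ together with the exclusion of the forbidden forms $N = M' \contract x \contract x'$ or $N = M' \delete y \delete y'$---are handled by descending one step further along the chain to supply the second input to the Gluing Lemma, and by matroid duality. What survives is exactly $N = M' \contract x \delete y$, where Seymour's Splitter Theorem ensures that one of $M' \contract x$ or $M' \delete y$ is 3-connected, giving the third bullet.

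The principal obstacle is the Gluing Lemma itself. Let $A$ be a $\parf$-representation of $M$ whose $E(N)$-submatrix is a scaled $\parf'$-representation of $N$. The hypothesis gives invertible diagonal scalings $(D_1^e, D_2^e)$ and $(D_1^f, D_2^f)$ over $\parf$ that move the submatrices $A|_{M_e}$ and $A|_{M_f}$ respectively into $\parf'$. Since these two submatrices together cover all rows and columns of $A$, merging the scalings would force $A$ itself into $\parf'$---provided they can be synchronized. Three-connectivity of $M$, combined with the structure theory of $\parf$-representations recalled in Section~\ref{sec:pfintro}, implies that a scaled representation is determined up to a single global unit once its values on a spanning substructure are fixed; the common scaled $\parf'$-image of the $N$-block, together with the assumption that $\parf'$ is an \emph{induced} sub-partial field, pins the two scalings into one consistent global $(D_1, D_2)$. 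Making this synchronization rigorous---propagating consistency across the bipartite support graph of $A$ and controlling how the two scalings interact on entries outside $E(N)$---is the main technical content of the proof.
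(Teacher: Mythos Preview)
Your inductive strategy via a Gluing Lemma is a genuinely different route from the paper's proof. The paper deduces Theorem~\ref{thm:confineintro} as an immediate corollary of the matrix-level Confinement Theorem (Theorem~\ref{thm:confinement}), whose proof tracks a single ``bad'' entry $A_{xy}\in\parf\setminus\parf'$ in a normalized matrix, and uses pivots together with the blocking-sequence Lemma~\ref{lem:pathshortening} to push that entry down to distance~$\leq 1$ from the $B$-block; the small minor $A'$ is then read off directly. No induction on $|E(M)|$, no appeal to the Splitter Theorem, and no gluing of two separate confinements occurs.

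Your Gluing Lemma is plausible and, in the deletion--deletion case, can be proven roughly along the lines you indicate: normalize $A$ so that $A-e$ is a $\parf'$-matrix, then use that $A-f$ is a \emph{scaled} $\parf'$-matrix together with connectedness of $\bip(A-e-f)$ to force the scaling factors into $\parf'$ and hence pin down column~$e$. The induced-sub-partial-field hypothesis is used exactly where you say. So far so good.

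The real gap is the sentence ``a strengthening of Seymour's Splitter Theorem provides two distinct such elements $e,f$ whenever $|E(M')|-|E(N)|$ is large enough.'' The Splitter Theorem gives you a \emph{chain} $M'=M_0,M_1,\ldots,M_k=N$ of 3-connected matroids, hence one removable element $e_1$ at the top; it does \emph{not} promise a second element $e_2$ with $M'-e_2$ (keeping $e_1$) still 3-connected with an $N$-minor. The second chain-element $e_2$ only guarantees that $M_1-e_2$ is 3-connected, which says nothing about $M'-e_2$. Without a second removable element at the level of $M'$ itself, your Gluing Lemma has no second input, and the induction stalls. The same difficulty undermines your treatment of the ``forbidden'' shapes $N\cong M'\delete y\delete y'$ and $N\cong M'\contract x\contract x'$: if, say, $M'\delete y$ is the unique 3-connected one-step reduction with an $N$-minor and $M'\delete y'$ is not 3-connected, minimality gives you nothing about $M'\delete y'$, and ``descending one step further along the chain'' does not produce a usable $f$ for the Gluing Lemma. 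You would either need to prove a genuine two-element Splitter-type lemma, or weaken the Gluing Lemma so that one of the two inputs need only be connected (and then supply confinement for it by some other means). Neither is supplied, and neither is routine; this is precisely the work that the paper's matrix-level argument, with its blocking-sequence analysis, is doing.
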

We will define \emph{induced} sub-partial fields in Subsection~\ref{ssec:subpf}, but note here that if a sub-partial field is induced then $p+q \in \parf'$ whenever $p,q \in \parf'$ and $p+q \in\parf$.
The main result of this paper, the Confinement Theorem (Theorem~\ref{thm:confinement}) is stated in terms of individual representation matrices. Theorem~\ref{thm:confineintro} is a direct corollary.

The Confinement Theorem closely resembles several results related to inequivalent representations of matroids. These results are Whittle's Stabilizer Theorem~\cite{Whi96b}, the extension to universal stabilizers by \citet{GOVW98}, and the theory of free expansions by the same authors \cite{GOVW02}. In fact, Whittle's Stabilizer Theorem is a corollary of the Confinement Theorem. To prove this we use the observation that multiple representations of a matroid can be combined into a single representation over a bigger partial field.

In most of our applications we combine the Confinement Theorem with the Lift Theorem from \cite{PZ08lift}. We first compute the lift partial field for a class of $\parf$-representable matroids. Then we use the Confinement Theorem to split off certain induced sub-partial fields from this lift partial field. This approach can be used, for instance, to give an alternative proof of Whittle's~\cite{Whi95,Whi97} characterization of the matroids representable over $\GF(3)$ and other fields. This proof can be found in Subsection~\ref{ssec:ternary}.

In Section~\ref{sec:pfdesign} we shift our focus to more algebraic techniques. A question that matroid theorists have considered is, for a fixed matroid $M$, the determination of all primes $p$ such that $M$ is representable over some field of characteristic $p$. \citet{Vam71}, \citet{Wte75a,Wte75b}, and \citet{Fen84} all answer this question by constructing, for a fixed matroid $M$, a ring $\ring_M$, such that representations of $M$ over a field $\field$ are related to ring homomorphisms $\ring_M\rightarrow\field$. Recently \citet{BV03} gave an algorithm to compute the set of characteristics for a given matroid by computing certain Gr\"obner bases over the integers. We refer to Oxley \cite[{Section 6.8}]{oxley} and White \cite[{Chapter 1}]{Wte87} for more details on this subject.

In this paper we strengthen the construction by \citet{Wte75a} to give a partial field $\parf$ and a matrix $A$ with entries in $\parf$, such that every representation of $M$ over a partial field $\parf'$ is equivalent to $\phi(A)$ for some partial-field homomorphism $\phi:\parf\rightarrow\parf'$.
The advantage of our approach over that of the papers mentioned above is that the matrix $A$ is itself a representation of $M$ over $\parf$, rather than an object from which representations can be created. \citet{Fen84} created a smaller ring that retained the universality of White's construction. Likewise we will show that a sub-partial field $\parf_M\subseteq \parf$ suffices to represent $M$. We will prove that $\parf_M$ is the smallest such partial field. We call $\parf_M$ the \emph{universal partial field of $M$}.

In Subsection~\ref{ssec:universal} we compute the universal partial field for two classes of matroids, and show that the partial fields studied in \citet{PZ08lift} are all universal. We conclude Section~\ref{sec:pfdesign} with another corollary of the Confinement Theorem, which we call the Settlement Theorem.

In Subsection~\ref{sec:GFfive} we use the combined power of the Lift Theorem from~\cite{PZ08lift}, the Confinement Theorem, and the algebraic constructions to prove Theorem~\ref{thm:quinary}. First we use the theory of universal partial fields to characterize the number of representations of quinary matroids with no $U_{2,5}$- and $U_{3,5}$-minor. Then we construct, for each $k \in \{ 1,\ldots,6\}$, a partial field $\hydra_k$ over which a 3-connected quinary matroid $M$ with a $U_{2,5}$- or $U_{3,5}$-minor is representable if and only if it has at least $k$ inequivalent representations over $\GF(5)$. The result then follows by considering the homomorphisms $\hydra_k\rightarrow\field$ for fields $\field$.

We conclude in Section~\ref{sec:questions} with a number of unsolved problems. In an appendix we list all partial fields discussed in this paper and in \cite{PZ08lift}, along with some of their properties.

\section{Preliminaries}\label{sec:pfintro}
In Subsections~\ref{ssec:not}--\ref{ssec:crat} we define partial fields and summarize the relevant definitions and results from \citet{SW96} and \citet{PZ08lift}. After that we give some extra definitions and some first new results.

\subsection{Notation}\label{ssec:not}
If $S, T$ are sets, and $f:S\rightarrow T$ is a function, then we define
\begin{align}
  f(S):= \{f(s) \mid s \in S\}.
\end{align}
We denote the restriction of $f$ to $S'\subseteq S$ by $f|_{S'}$. We may simply write $e$ instead of the singleton set $\{e\}$.

If $S$ is a subset of nonzero elements of some group, then $\langle S \rangle$ is the subgroup generated by $S$. If $S$ is a subset of elements of a ring, then $\langle S \rangle$ denotes the \emph{multiplicative} subgroup generated by $S$. All rings are commutative with identity. The group of elements with a multiplicative inverse (the \emph{units}) of a ring $\ring$ is denoted by $\ring^*$. If $\ring$ is a ring and $S$ a set of symbols, then we denote the polynomial ring over $\ring$ on $S$ by $\ring[S]$.

Our graph-theoretic notation is mostly standard. All graphs encountered are simple. We use the term \emph{cycle} for a simple, closed path in a graph, reserving \emph{circuit} for a minimal dependent set in a matroid. An undirected edge (directed edge) between vertices $u$ and $v$ is denoted by $uv$ and treated as a set $\{u, v\}$ (an ordered pair $(u,v)$). We define $\delta(v) := \{e \in E(G) \mid e = uv \textrm{ for some } u \in V\}$. If $G = (V,E)$ and $V'\subseteq V$, then we denote the induced subgraph on $V'$ by $G[V']$. For $S, T \subseteq V$ we denote by $d_G(S,T)$ the length of a shortest $S-T$ path in $G$.

For matroid-theoretic concepts we follow the notation of \citet{oxley}. Familiarity with the definitions and results in that work is assumed.

\subsection{Partial fields}

A \emph{partial field} is a pair $\parf = (\ring, \group)$, where $\ring$ is a commutative ring with identity and $\group$ is a subgroup of the group of units $\ring^*$ of $\ring$ such that $-1 \in G$. If $1 = 0$ in $\ring$ then we say the partial field is $\emph{trivial}$. When $\parf$ is referred to as a set, then it is the set $\group \cup \{0\}$.
We define $\parf^* := \group$. Every field $\field$ can be considered as a partial field $(\field, \field^*)$.

A useful construction is the following.
\begin{definition}If $\parf_1 = (\ring_1, \group_1), \parf_2 = (\ring_2, \group_2)$ are partial fields, then the \emph{direct product} is
\begin{align}
  \parf_1\otimes\parf_2 := (\ring_1 \times \ring_2, \group_1\times \group_2).
\end{align}
\end{definition}
Recall that in the product ring addition and multiplication are defined componentwise. It is readily checked that $\parf_1\otimes\parf_2$ is again a partial field.

A function $\phi: \parf_1 \rightarrow \parf_2$ is a \emph{partial field homomorphism} if
\begin{enumerate}
  \item $\phi(1) = 1$;
  \item for all $p, q \in \parf_1$, $\phi(pq) = \phi(p)\phi(q)$;
  \item for all $p, q \in \parf_1$ such that $p+q \in \parf_1$, $\phi(p) + \phi(q) = \phi(p+q)$.
\end{enumerate}
If $\parf_1 = (\ring_1, \group_1)$, $\parf_2 = (\ring_2, \group_2)$, and $\phi:\ring_1\rightarrow\ring_2$ is a ring homomorphism such that $\phi(\group_1)\subseteq\group_2$, then the restriction of $\phi$ to $\parf_1$ is obviously a partial field homomorphism. However, not every partial field homomorphism extends to a homomorphism between the rings. We refer to \cite[Theorem 5.3]{PZ08lift} for the precise relation between partial field homomorphisms and ring homomorphisms.

Suppose $\parf, \parf_1, \parf_2$ are partial fields such that there exist homomorphisms $\phi_1:\parf\rightarrow \parf_1$ and $\phi_2:\parf\rightarrow\parf_2$. Then we define $\phi_1\otimes \phi_2: \parf \rightarrow \parf_1\otimes\parf_2$ by $(\phi_1\otimes\phi_2)(p) := (\phi_1(p),\phi_2(p))$.
\begin{lemma}[{\cite[{Lemma~2.18}]{PZ08lift}}]$\phi_1\otimes\phi_2$ is a partial field homomorphism.
\end{lemma}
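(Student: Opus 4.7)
The plan is to verify the three defining properties of a partial-field homomorphism componentwise, with each component reducing directly to the assumption that $\phi_1$ and $\phi_2$ are themselves partial-field homomorphisms. The one subtlety, which I would address first, is to check that $(\phi_1\otimes\phi_2)(p)$ actually lies in $\parf_1\otimes\parf_2$ (viewed as a subset of $\ring_1\times\ring_2$), i.e.\ in $(\group_1\times\group_2)\cup\{(0,0)\}$, and not merely in $\ring_1\times\ring_2$. For this I need two preliminary remarks: that any partial-field homomorphism $\phi$ satisfies $\phi(0)=0$ (taking $p=q=0$ in the additivity axiom gives $\phi(0)=\phi(0)+\phi(0)$), and that it maps $\parf^*$ into the receiving $\parf^*$ (from $\phi(p)\phi(p^{-1})=\phi(1)=1$ one sees $\phi(p)\neq0$, hence $\phi(p)\in\group$).

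With those remarks in hand, the image check is immediate: for $p=0$ the image is $(0,0)$, and for $p\in\parf^*$ both coordinates lie in the respective groups, giving an element of $\group_1\times\group_2=(\parf_1\otimes\parf_2)^*$.

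Next I would verify the three axioms. The identity axiom is $(\phi_1\otimes\phi_2)(1)=(\phi_1(1),\phi_2(1))=(1,1)$, which is the identity of the product ring. Multiplicativity follows from
\begin{align*}
(\phi_1\otimes\phi_2)(pq)
&=(\phi_1(pq),\phi_2(pq))
=(\phi_1(p)\phi_1(q),\phi_2(p)\phi_2(q))\\
&=(\phi_1(p),\phi_2(p))\cdot(\phi_1(q),\phi_2(q))
=(\phi_1\otimes\phi_2)(p)\cdot(\phi_1\otimes\phi_2)(q),
\end{align*}
using that multiplication in $\ring_1\times\ring_2$ is componentwise. Finally, for additivity, suppose $p,q\in\parf$ with $p+q\in\parf$. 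Then $p+q\in\parf_1$ and $p+q\in\parf_2$ in the sense required by the axiom for $\phi_1$ and $\phi_2$ respectively, so $\phi_i(p)+\phi_i(q)=\phi_i(p+q)$ for $i=1,2$. Hence
\begin{align*}
(\phi_1\otimes\phi_2)(p)+(\phi_1\otimes\phi_2)(q)
&=(\phi_1(p)+\phi_1(q),\,\phi_2(p)+\phi_2(q))\\
&=(\phi_1(p+q),\,\phi_2(p+q))
=(\phi_1\otimes\phi_2)(p+q),
\end{align*}
and by the image check this sum lies in $\parf_1\otimes\parf_2$, as required.

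There is no real obstacle here; the only thing that could trip one up is overlooking that $\parf_1\otimes\parf_2$ is a proper subset of its ambient ring, so one must verify that images and sums land in the designated group-plus-zero rather than in the larger ring. Once the two preliminary remarks about $\phi(0)$ and $\phi(\parf^*)$ are in place, the rest is routine unpacking of the componentwise operations in the direct product.
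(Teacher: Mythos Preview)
The paper does not actually prove this lemma; it is cited from \cite{PZ08lift} without argument. Your proof is correct and is the natural componentwise verification, including the one point that deserves care, namely that the image lands in $(\group_1\times\group_2)\cup\{(0,0)\}$ rather than merely in $\ring_1\times\ring_2$.
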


A partial field isomorphism $\phi:\parf_1\rightarrow\parf_2$ is a bijective homomorphism with the additional property that $\phi(p+q)\in\parf_2$ if and only if $p+q\in\parf_1$. If $\parf_1$ and $\parf_2$ are isomorphic then we denote this by $\parf_1 \cong \parf_2$. A partial field automorphism is an isomorphism $\phi:\parf\rightarrow\parf$.

\subsection{Partial-field matrices}
Recall that formally, for ordered sets $X$ and $Y$, an $X\times Y$ matrix $A$ with entries in a partial field $\parf$ is a function $A:X\times Y \rightarrow \parf$. If $X'\subseteq X$ and $Y'\subseteq Y$, then we denote by $A[X',Y']$ the submatrix of $A$ obtained by deleting all rows and columns in $X\setminus X'$, $Y\setminus Y'$. If $Z$ is a subset of $X\cup Y$ then we define $A[Z] := A[X\cap Z, Y \cap Z]$. Also, $A-Z := A[X\setminus Z, Y\setminus Z]$. If $X = \{1, \ldots, r\}$ then we say that $A$ is an $r\times Y$ matrix.

An $X\times Y$ matrix $A$ with entries in $\parf$ is a \emph{$\parf$-matrix} if $\det(A')\in\parf$ for every square submatrix $A'$ of $A$.

\begin{definition}\label{def:pivot}Let $A$ be an $X\times Y$ $\parf$-matrix, and let $x \in X, y \in Y$ be such that $A_{xy} \neq 0$. Then we define $A^{xy}$ to be the $((X\setminus x) \cup y) \times ((Y\setminus y) \cup x)$ matrix given by
\begin{align}
  (A^{xy})_{uv} = \left\{ \begin{array}{ll}
    A_{xy}^{-1} \quad & \textrm{if } uv = yx\\
    A_{xy}^{-1} A_{xv} & \textrm{if } u = y, v\neq x\\
    -A_{xy}^{-1} A_{uy} & \textrm{if } v = x, u \neq y\\
    A_{uv} - A_{xy}^{-1} A_{uy} A_{xv} & \textrm{otherwise.}
  \end{array}\right.
\end{align}
\end{definition}
We say that $A^{xy}$ is obtained from $A$ by \emph{pivoting} over $xy$. The pivot operation can be interpreted as adding an $X\times X$ identity matrix to $A$, doing row reduction, followed by a column exchange, and finally removing the new identity matrix.

\begin{lemma}[{\cite[Lemma 2.6]{PZ08lift}}]\label{lem:detpivot}
  Let $A$ be an $X\times Y$ matrix with entries in $\parf$ such that $|X|=|Y|$ and $A_{xy} \neq 0$. If $\det(A^{xy}-\{x,y\})\in\parf$ then $\det(A)\in\parf$, and
  \begin{align}
  \det(A)= (-1)^{x+y}A_{xy} \det(A^{xy}-\{x,y\}).
  \end{align}
\end{lemma}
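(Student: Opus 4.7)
The plan is to reduce the claim to a single classical polynomial identity (the Schur-complement / pivot identity) carried out in the ambient commutative ring, and only at the very end to re-import the conclusion into the partial field. Write $\parf=(\ring,\group)$, so that every entry of $A$ lies in $\ring$ and both $\det(A)$ and $\det(A^{xy}-\{x,y\})$ are well-defined elements of $\ring$ via the Leibniz formula, regardless of whether they happen to lie in $\parf$. Since $A$ is a $\parf$-matrix and $A_{xy}\neq 0$, we have $A_{xy}\in\parf^*=\group$, so $A_{xy}^{-1}$ exists in $\ring$; this is the only inverse needed throughout the argument.

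First, I would establish the identity
\[
\det(A) \;=\; (-1)^{x+y}\, A_{xy}\, \det(A^{xy}-\{x,y\})
\]
in the ring $\ring$. This is the standard Schur-complement formula
\[
\det\begin{pmatrix} a & b^\top \\ c & D\end{pmatrix} \;=\; a\cdot \det(D - a^{-1}c\,b^\top),
\]
which is valid over any commutative ring provided $a$ is a unit; it can be proved directly from the Leibniz formula by grouping permutations according to the image (respectively preimage) of the distinguished row $x$ and column $y$. The sign $(-1)^{x+y}$ is just the sign of the row and column transpositions that move $x$ and $y$ into the top-left position, using the fixed orderings of $X$ and $Y$. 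The entries of the Schur complement, by Definition~\ref{def:pivot}, are exactly $(A^{xy})_{uv} = A_{uv} - A_{xy}^{-1}A_{uy}A_{xv}$ for $u\in X\setminus x$, $v\in Y\setminus y$, which matches $D-a^{-1}cb^\top$.

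Second, I would use the hypothesis to transfer the identity from $\ring$ back into $\parf$. Assume $\det(A^{xy}-\{x,y\})\in\parf=\group\cup\{0\}$. Since $A_{xy}\in\group$, $-1\in\group$, and $\group$ is a group, the right-hand side of the displayed identity is the product of an element of $\group$ with an element of $\group\cup\{0\}$, hence lies in $\parf$. The identity already holds as an equation in $\ring$, so $\det(A)$ equals this element of $\parf$ and therefore itself lies in $\parf$, establishing both assertions of the lemma simultaneously.

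The only real pitfall is a conceptual one: one must resist the temptation to do the algebra ``inside $\parf$,'' where sums such as $A_{uv}-A_{xy}^{-1}A_{uy}A_{xv}$ are not guaranteed to be defined. By working in $\ring$ throughout and using $\parf$ only in the final line, this obstacle disappears, and no further case analysis on which partial sums are defined is needed.
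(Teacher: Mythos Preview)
The paper does not actually prove this lemma; it is quoted from the companion paper \cite{PZ08lift} without proof. So there is no in-paper argument to compare against.

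Your argument is correct and is the standard one: work in the ambient commutative ring $\ring$, where the Schur-complement identity
\[
\det\!\begin{pmatrix} a & b^{\top} \\ c & D \end{pmatrix} \;=\; a\cdot\det\!\bigl(D - a^{-1}cb^{\top}\bigr)
\]
holds whenever $a$ is a unit (one line of row reduction), observe that $A^{xy}-\{x,y\}$ is exactly this Schur complement with the sign $(-1)^{x+y}$ coming from the transpositions placing $x,y$ in the pivot position, and only then invoke the partial-field hypothesis to conclude $\det(A)\in\parf$. Your emphasis on doing the algebra in $\ring$ rather than in $\parf$ is exactly the right point.

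One small terminological slip: you write ``since $A$ is a $\parf$-matrix,'' but the lemma only assumes $A$ has \emph{entries} in $\parf$; being a $\parf$-matrix is the stronger condition that every subdeterminant lies in $\parf$. This does not affect the argument, since $A_{xy}\in\parf\setminus\{0\}=\group$ follows already from the weaker hypothesis.
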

\begin{definition}\label{def:matops}Let $A$ be an $X\times Y$ $\parf$-matrix. We say that $A'$ is a \emph{minor} of $A$ (notation: $A' \minorof A$) if $A'$ can be obtained from $A$ by a sequence of the following operations:
\begin{enumerate}
  \item Multiplying the entries of a row or column by an element of $\parf^*$;
  \item Deleting rows or columns;
  \item Permuting rows or columns (and permuting labels accordingly);
  \item Pivoting over a nonzero entry.
\end{enumerate}
\end{definition}
Be aware that in linear algebra a ``minor of a matrix'' is defined differently. We use Definition~\ref{def:matops} because of its relation with matroid minors, which will be explained in the next section. For a determinant of a square submatrix we use the word \emph{subdeterminant}.
\begin{proposition}[{\cite[Proposition 3.3]{SW96}}]\label{prop:pmatops}Let $A$ be a $\parf$-matrix. Then $A^T$ is also a $\parf$-matrix. If $A'\minorof A$ then $A'$ is a $\parf$-matrix.
\end{proposition}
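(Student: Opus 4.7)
The transpose claim is straightforward: every square submatrix of $A^T$ equals the transpose of a square submatrix of $A$, and determinants are invariant under transpose; so the set of subdeterminants of $A^T$ coincides with that of $A$ and hence lies in $\parf$.

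For the minor claim, since $A' \minorof A$ means that $A'$ is obtained from $A$ by a finite sequence of the operations listed in Definition~\ref{def:matops}, it suffices to verify that each individual operation preserves the $\parf$-matrix property. Three of the four are routine. Deletion is immediate, since every square submatrix of a deletion of $A$ is itself a square submatrix of $A$. Permuting rows or columns multiplies each subdeterminant by $\pm 1 \in \parf^*$. Scaling a row or column by $\alpha \in \parf^*$ either leaves a given subdeterminant unchanged (if the subdeterminant avoids the scaled line) or multiplies it by $\alpha$; either way the result remains in $\parf$ because $\parf^*$ is closed under multiplication.

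The interesting operation is the pivot. Given a square submatrix $B = A^{xy}[X', Y']$ with $X' \subseteq (X \setminus x) \cup y$ and $Y' \subseteq (Y \setminus y) \cup x$, my plan is to express $\det(B)$ as a unit (a sign times a power of $A_{xy}$) times a subdeterminant of $A$, which forces $\det(B) \in \parf$. I would split into four cases according to whether $y \in X'$ and whether $x \in Y'$. The basic case is $y \notin X'$, $x \notin Y'$: here every entry of $A^{xy}[X', Y']$ depends only on entries of $A$ indexed by $X' \cup x$ and $Y' \cup y$, and in fact $A^{xy}[X', Y']$ is precisely the result of pivoting the square submatrix $A[X' \cup x, Y' \cup y]$ on the entry $A_{xy}$. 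Applying Lemma~\ref{lem:detpivot} to this submatrix yields $\det(B) = \pm A_{xy}^{-1} \det(A[X' \cup x, Y' \cup y]) \in \parf$.

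For the remaining three cases I would exploit the explicit formulas in Definition~\ref{def:pivot}: the pivoted row $y$ of $A^{xy}$ is, up to a sign at position $x$, equal to $A_{xy}^{-1}$ times the row $x$ of $A$, and symmetrically for the pivoted column $x$. Using multilinearity and elementary row/column operations to reintroduce a copy of row $x$ (and/or column $y$) of $A$ into $B$ reduces each remaining case to the basic one, at the cost of one or two factors of $A_{xy}^{\pm 1}$ and a sign. The step I expect to be most fiddly is sign bookkeeping: ensuring that the $(-1)^{x+y}$ factor from Lemma~\ref{lem:detpivot} combines correctly with the signs produced by the row/column exchanges in each of the three remaining cases. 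Once that is in place, every subdeterminant of $A^{xy}$ lies in $\parf$, completing the proof.
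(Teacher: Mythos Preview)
The paper does not prove this proposition; it is cited from \cite{SW96} without argument. Your proposal is correct and is essentially the standard proof.

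One small point worth making explicit: Lemma~\ref{lem:detpivot} is phrased with the hypothesis that $\det(A^{xy}-\{x,y\})\in\parf$, which in your basic case is precisely what you want to conclude. This is harmless because the determinant identity in that lemma holds in the ambient ring $\ring$ unconditionally (it is just cofactor expansion along the pivot row followed by a column swap); once you have the identity, the facts that $\det(A[X'\cup x, Y'\cup y])\in\parf$ and $A_{xy}\in\parf^*$ force $\det(B)\in\parf$. For your Case~4 (both $y\in X'$ and $x\in Y'$), the cleanest route is to use that pivoting is an involution, $(A^{xy})^{yx}=A$, so $A^{xy}[X',Y']$ is itself the pivot of the square submatrix $A[(X'\setminus y)\cup x,(Y'\setminus x)\cup y]$, and the same identity applies. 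Finally, the sign bookkeeping you flag as fiddly is irrelevant to the result: since $-1\in\parf^*$, you only need membership in $\parf$, not the exact sign.
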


Let $A$ be an $X\times Y$ $\parf$-matrix, and let $A'$ be an $X'\times Y'$ $\parf$-matrix. Then $A$ and $A'$ are \emph{isomorphic} if there exist bijections $f:X\rightarrow X'$, $g:Y\rightarrow Y'$ such that for all $x\in X, y \in Y$, $A_{xy} = A'_{f(x)g(y)}$.

Let $A$, $A'$ be $X\times Y$ $\parf$-matrices. If $A'$ can be obtained from $A$ by scaling rows and columns by elements from $\parf^*$, then we say that $A$ and $A'$ are \emph{scaling-equivalent}, which we denote by $A\sim A'$.

Let $A$ be an $X\times Y$ $\parf$-matrix, and let $A'$ be an $X'\times Y'$ $\parf$-matrix such that $X\cup Y = X'\cup Y'$. If $A' \minorof A$ and $A \minorof A'$, then we say that $A$ and $A'$ are \emph{strongly equivalent}. If $\phi(A')$ is strongly equivalent to $A$ for some partial field automorphism $\phi$ (see below for a definition), then we say $A'$ and $A$ are \emph{equivalent}.

\begin{proposition}[{\cite[Proposition 5.1]{SW96}}]\label{prop:hom}Let $\parf_1, \parf_2$ be nontrivial partial fields and let $\phi: \parf_1 \rightarrow \parf_2$ be a homomorphism. Let $A$ be a $\parf_1$-matrix. Then
\begin{enumerate}
\item $\phi(A)$ is a $\parf_2$-matrix.
\item If $A$ is square then $\det(A) = 0$ if and only if $\det(\phi(A)) = 0$.
\end{enumerate}
\end{proposition}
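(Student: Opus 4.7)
The plan is to prove the stronger auxiliary statement that for every square submatrix $A'$ of $A$ one has
\begin{align}
  \phi(\det(A')) = \det(\phi(A')),
\end{align}
and then deduce both parts of the proposition from this. Part \emph{(i)} will follow because $\det(A') \in \parf_1$ (as $A$ is a $\parf_1$-matrix), so $\det(\phi(A')) = \phi(\det(A')) \in \phi(\parf_1) \subseteq \parf_2$. Part \emph{(ii)} will follow because $\phi$ sends $0$ to $0$ and, since $\parf_2$ is nontrivial, sends each $p \in \parf_1^*$ to a unit of the underlying ring of $\parf_2$ (as $\phi(p)\phi(p^{-1}) = \phi(1) = 1 \neq 0$); hence $\det(A) = 0$ iff $\phi(\det(A)) = 0$ iff $\det(\phi(A)) = 0$.

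The proof of the auxiliary identity proceeds by induction on the order $n$ of $A'$. The case $n=1$ is immediate since $\det(A') = A'_{xy} \in \parf_1$. For the inductive step, if $A'$ is the zero matrix both determinants vanish, so assume some entry $A'_{xy}$ is nonzero. I would then apply the pivot formula of Lemma~\ref{lem:detpivot}:
\begin{align}
  \det(A') = (-1)^{x+y}\, A'_{xy}\, \det\bigl((A')^{xy} - \{x,y\}\bigr).
\end{align}
The submatrix $(A')^{xy} - \{x,y\}$ has order $n-1$, so the induction hypothesis applies to it provided I can identify $\phi\bigl((A')^{xy} - \{x,y\}\bigr)$ with $\bigl(\phi(A')\bigr)^{xy} - \{x,y\}$.

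The main (and only) technical obstacle is that pivoting uses the formula $A_{uv} - A_{xy}^{-1} A_{uy} A_{xv}$, and $\phi$ only preserves sums that are genuinely defined in $\parf_1$. The crucial observation is that by Proposition~\ref{prop:pmatops}, $A^{xy}$ is itself a $\parf_1$-matrix, so its entry $(A^{xy})_{uv} = A_{uv} - A_{xy}^{-1} A_{uy} A_{xv}$ actually lies in $\parf_1$. Consequently the sum is ``legal'' in the partial-field sense, and the homomorphism property of $\phi$ gives
\begin{align}
  \phi\bigl(A_{uv} - A_{xy}^{-1} A_{uy} A_{xv}\bigr) = \phi(A_{uv}) - \phi(A_{xy})^{-1} \phi(A_{uy}) \phi(A_{xv}).
\end{align}
The other three cases of the pivot formula involve only products (and inverses), which $\phi$ always preserves, so they require no further argument.

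With this, $\phi$ commutes with the pivot: $\phi(A^{xy}) = \phi(A)^{xy}$, and in particular $\phi\bigl((A')^{xy} - \{x,y\}\bigr) = \phi(A')^{xy} - \{x,y\}$. Applying the pivot formula also to $\phi(A')$ (note $\phi(A'_{xy}) \neq 0$ by nontriviality of $\parf_2$) and the inductive hypothesis to the $(n-1)\times(n-1)$ matrix then yields $\phi(\det(A')) = \det(\phi(A'))$, completing the induction and hence the proof.
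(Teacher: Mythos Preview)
The paper does not supply its own proof of this proposition; it is quoted from \cite{SW96}. Your argument is correct and is the natural one: establish $\phi(\det(A')) = \det(\phi(A'))$ by induction on the order, using the pivot formula of Lemma~\ref{lem:detpivot} and the fact (Proposition~\ref{prop:pmatops}) that pivoting preserves $\parf_1$-matrices so that the relevant sums lie in $\parf_1$ and are therefore preserved by $\phi$.

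One small wording issue: as stated, your induction hypothesis ranges over square \emph{submatrices} of the fixed matrix $A$, but in the inductive step you invoke it on $(A')^{xy}-\{x,y\}$, which is a minor of $A'$ rather than a submatrix of $A$. The fix is trivial: state the auxiliary identity for every square $\parf_1$-matrix of order at most $n$ (any such matrix is a $\parf_1$-matrix in its own right, and $(A')^{xy}-\{x,y\}$ is one by Proposition~\ref{prop:pmatops}). With that rephrasing the induction goes through exactly as you wrote.
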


\subsection{Partial-field matroids}
Let $A$ be an $r\times E$ $\parf$-matrix of rank $r$. We define the set
\begin{align}
  \mathcal{B}_A := \{B \subseteq E \mid |B| = r, \det(A[r,B])\neq 0\}.
\end{align}
\begin{theorem}[{\cite[Theorem 3.6]{SW96}}]\label{thm:matrixmatroid}$\mathcal{B}_A$ is the set of bases of a matroid.
\end{theorem}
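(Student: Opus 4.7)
The plan is to reduce the statement to the well-known case of matrices over a field, and then transport the matroid structure back along a partial-field homomorphism. The key bridge is Proposition~\ref{prop:hom}(ii), which says that a partial-field homomorphism $\phi : \parf \to \parf'$ between nontrivial partial fields preserves vanishing and non-vanishing of determinants. Consequently, if we can produce such a $\phi$ into a genuine field, then $\mathcal{B}_A$ and $\mathcal{B}_{\phi(A)}$ coincide as subsets of $2^E$, and the second is the basis set of the vector matroid of $\phi(A)$ in the classical sense.

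To construct the homomorphism, I first dispose of the trivial case: if $\parf$ is trivial, then no matrix of rank $r\ge 1$ exists, so assume $\parf=(\ring,\group)$ with $\ring$ a nonzero commutative ring. By Zorn's lemma, $\ring$ has a maximal ideal $\mathfrak{m}$, and $\field:=\ring/\mathfrak{m}$ is a field. The canonical projection $\pi:\ring\to\field$ is a ring homomorphism with $\pi(1)=1\neq 0$, and since units map to units under any quotient by a proper ideal, $\pi(\group)\subseteq \field^*$. Therefore the restriction $\phi:=\pi|_{\parf}$ is a partial-field homomorphism $\parf\to\field$.

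Now apply Proposition~\ref{prop:hom}(i) to obtain that $\phi(A)$ is a $\field$-matrix, and apply Proposition~\ref{prop:hom}(ii) to each $r\times r$ submatrix $A[r,B]$ to conclude that
\begin{equation*}
\det(A[r,B])\neq 0 \quad\Longleftrightarrow\quad \det(\phi(A)[r,B])\neq 0.
\end{equation*}
Hence $\mathcal{B}_A=\mathcal{B}_{\phi(A)}$. Moreover, since $A$ has rank $r$ there exists at least one $B$ with $\det(A[r,B])\neq 0$, so $\phi(A)$ also has rank $r$. The standard linear-algebra theorem (e.g.\ \cite[Proposition~1.1.1]{oxley}) asserts that for any full-row-rank matrix over a field, the collection of column sets indexing nonsingular maximal square submatrices is exactly the basis family of a matroid. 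Applied to $\phi(A)$, this yields that $\mathcal{B}_A=\mathcal{B}_{\phi(A)}$ is the set of bases of a matroid.

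The only point where anything could go wrong is the passage from the ring homomorphism to a partial-field homomorphism, which hinges on $\phi(\group)\subseteq \field^*$; this is immediate once one observes that elements of $\group$ are units of $\ring$ and $\field$ is nontrivial. Everything else then follows mechanically from results already stated in the excerpt and the classical vector-matroid theorem over a field. (An entirely self-contained alternative is to verify the basis exchange axiom directly: after pivoting $A$ into a scaled form in which the columns indexed by $B_1$ are the identity, Proposition~\ref{prop:pmatops} guarantees we still have a $\parf$-matrix, and if $B_2\in\mathcal{B}_A$ with $x\in B_1\setminus B_2$ then the row of the resulting off-diagonal block corresponding to $x$ cannot be identically zero on $B_2\setminus B_1$, producing the desired exchange element $y$. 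I prefer the homomorphism route since it avoids any determinant bookkeeping.)
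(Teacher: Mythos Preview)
Your proof is correct and follows essentially the same route as the paper: quotient the underlying ring by a maximal ideal to obtain a field, observe that the quotient map restricts to a partial-field homomorphism, and then invoke Proposition~\ref{prop:hom} to identify $\mathcal{B}_A$ with $\mathcal{B}_{\phi(A)}$, which is a matroid by the classical field case. The paper's version is slightly terser (and handles the trivial case by noting $\mathcal{B}_A=\emptyset$ rather than by vacuity), but the argument is the same.
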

\begin{proof}
  If $\parf$ is trivial then $\mathcal{B}_A = \emptyset$, and the theorem holds. So suppose $\parf = (\ring, \group)$ is nontrivial. If $\ring$ is a field then the theorem follows immediately. Let $I$ be a maximal ideal of $\ring$, and define $\field := \ring/I$. Two results in commutative algebra are that $I$ exists, and that $\field$ is a field. Let $\phi:\ring\rightarrow \field$ be defined by $\phi(p) = p + I$ for all $p \in \ring$. Then $\phi$ is a ring homomorphism, which also gives a partial-field homomorphism. From Proposition~\ref{prop:hom} we have $\mathcal{B}_A = \mathcal{B}_{\phi(A)}$, and since the latter is the set of bases of a matroid the theorem follows.
\end{proof}
We denote this matroid by $M[A] = (E,\mathcal{B}_A)$. Observe that, since matrices are labelled in this paper, the ground set of $M[A]$ is fixed by $A$. If $M$ is a matroid of rank $r$ on ground set $E$ and there exists an $r\times E$ $\parf$-matrix $A$ such that $M = M[A]$, then we say that $M$ is $\parf$-representable.

A matroid is \emph{representable} if there exist a field $\field$ and a matrix $A$ over $\field$ such that $M = M[A]$. The proof of Theorem~\ref{thm:matrixmatroid} shows that every matroid representable over some partial field is also representable over some field. Conversely, every field is also a partial field, so we can equally well say that a matroid is representable if there exist a partial field $\parf$ and a $\parf$-matrix $A$ such that $M = M[A]$.

Partial fields may provide more insight in the representability of a matroid. The following result is also a corollary of Proposition~\ref{prop:hom}.
\begin{corollary}[{\cite[Corollary 5.3]{SW96}}]\label{cor:hom}Let $\parf_1$ and $\parf_2$ be partial fields and let $\phi: \parf_1 \rightarrow \parf_2$ be a nontrivial homomorphism. If $A$ is a $\parf_1$-matrix then $M[\phi(A)] = M[A]$. \end{corollary}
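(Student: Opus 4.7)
The plan is to show the two matroids $M[A]$ and $M[\phi(A)]$ have identical base families, then invoke the definition of $M[\cdot]$. Since $A$ has rank $r$ by hypothesis (as an $r\times E$ matrix defining $M[A]$), the first thing to verify is that $\phi(A)$ also has rank $r$, so that $M[\phi(A)]$ is well-defined in the same format. This uses that $A$ has some $r\times r$ submatrix $A'$ with $\det(A')\neq 0$; applying Proposition~\ref{prop:hom}(ii) to $A'$ gives $\det(\phi(A'))=\det(\phi(A)[r,B])\neq 0$ in $\parf_2$ (nontriviality of $\phi$ is needed here so that $1\neq 0$ in $\parf_2$ and the statement $\det\neq 0$ is meaningful).

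Next, I would run the same argument uniformly over all $r$-subsets of $E$. For any $B\subseteq E$ with $|B|=r$, the $r\times r$ submatrix $A[r,B]$ satisfies $\phi(A[r,B]) = \phi(A)[r,B]$ because $\phi$ is applied entrywise and commutes with row/column selection. By Proposition~\ref{prop:hom}(ii) applied to the square matrix $A[r,B]$, one has
\begin{equation*}
  \det(A[r,B]) = 0 \iff \det(\phi(A)[r,B]) = 0.
\end{equation*}
Taking contrapositives and quantifying over $B$, this yields $\mathcal{B}_A = \mathcal{B}_{\phi(A)}$, and therefore $M[A] = M[\phi(A)]$.

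There is really no hard step: everything reduces to Proposition~\ref{prop:hom}, which is already in hand. The only subtlety is a bookkeeping one, namely making sure that $\phi(A)$ is still a rank-$r$ matrix so that the set $\mathcal{B}_{\phi(A)}$ defined via $r\times r$ submatrices is nonempty and corresponds to the bases of $M[\phi(A)]$ under the convention of Theorem~\ref{thm:matrixmatroid}. That is precisely where the ``nontrivial'' hypothesis on $\phi$ enters, ruling out the degenerate case where the target partial field collapses and all determinants become $0$ trivially.
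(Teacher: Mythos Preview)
Your argument is correct and is exactly what the paper intends: it states the result as a direct corollary of Proposition~\ref{prop:hom} without further elaboration, and your proof simply spells out how part~(ii) of that proposition, applied to each $r\times r$ submatrix, yields $\mathcal{B}_A = \mathcal{B}_{\phi(A)}$. Your remark about why nontriviality is needed is also on point.
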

It follows that, if $M$ is a $\parf_1$-representable matroid, then $M$ is also $\parf_2$-representable.

\begin{lemma}[{\cite[Proposition 4.1]{SW96}}]
  Let $A$ be an $r\times E$ $\parf$-matrix, and $B$ a basis of $M[A]$. Then there exists a $\parf$-matrix $A'$ such that $M[A'] = M[A]$ and $A'[r,B]$ is an identity matrix.
\end{lemma}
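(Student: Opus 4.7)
The plan is to exhibit $A'$ as the result of a sequence of $r$ pivot operations (together with row scalings and row/column permutations) applied to $A$; by Proposition~\ref{prop:pmatops} each such minor operation preserves the $\parf$-matrix property, so the output is automatically a $\parf$-matrix. Conceptually, the target matrix is $(A[r,B])^{-1}A$, computed in the ambient ring $\ring$ (well-defined because $\det(A[r,B])\in\parf^*$ is a unit of $\ring$): it satisfies $A'[r,B]=I$ by construction, and the identity $\det(A'[r,S]) = \det((A[r,B])^{-1})\det(A[r,S])$ for every $r$-subset $S\subseteq E$ shows that $M[A']=M[A]$. The real task is to realize this inverse-multiplication via partial-field operations, so that the result stays inside $\parf$.

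I proceed by induction on $r$, the base case $r=0$ being vacuous. Since $\det(A[r,B])\neq 0$ in $\ring$, not every column of $A[r,B]$ can be identically zero, so some entry $A_{x,b_1}$ with $x\in\{1,\ldots,r\}$ and $b_1\in B$ is nonzero. After permuting rows so that $x=1$ and scaling row $1$ so that $A_{1,b_1}=1$ (valid operations from Definition~\ref{def:matops}), I pivot over $(1,b_1)$, obtaining a $\parf$-matrix $A^{1b_1}$. In the conceptual augmented matrix $[I_X\mid A]$ (with $I_X$ the $r\times r$ identity on the row labels $X=\{1,\ldots,r\}$) this pivot exchanges the identity column labelled $1$ with the explicit column labelled $b_1$, moving the identity block from $X$ to $(X\setminus\{1\})\cup\{b_1\}$. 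Applying induction to the submatrix on rows $\{2,\ldots,r\}$ with basis $B\setminus b_1$ yields $r-1$ further pivots that together shift the identity block onto all of $B$; after all $r$ pivots the resulting matrix has row set $B$ and column set $X\cup(E\setminus B)$.

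To finish, I delete the $X$-columns (operation 2 of Definition~\ref{def:matops}) from the pivoted matrix, leaving a $\parf$-matrix on row set $B$ and column set $E\setminus B$; then I relabel rows from $B$ back to $X$ via a bijection and prepend an $r\times r$ identity block on the $B$-columns, obtaining an $r\times E$ matrix $A'$ with $A'[r,B]=I$. Expanding any subdeterminant of $A'$ along the identity $B$-columns shows that every subdeterminant of $A'$ equals $\pm$ a subdeterminant of its $(E\setminus B)$-part, which is a submatrix of the pivoted $\parf$-matrix and hence lies in $\parf$; thus $A'$ is a $\parf$-matrix. The main obstacle is the bookkeeping of row and column labels through the $r$-fold pivot sequence, together with showing that a nonzero pivot entry exists at each intermediate stage; the latter follows from Lemma~\ref{lem:detpivot}, which guarantees that pivoting preserves (up to nonzero scaling) the determinant of the remaining basis submatrix, so that the cofactor-expansion argument used in the first step can be reapplied at every subsequent step.
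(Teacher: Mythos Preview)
The paper does not supply its own proof of this lemma; it is quoted from \cite[Proposition 4.1]{SW96} and used as a black box. So there is no in-paper argument to compare against.

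Your argument is correct. The two-layer structure---first observing over the ambient ring $\ring$ that $A':=(A[r,B])^{-1}A$ has $A'[r,B]=I$ and $M[A']=M[A]$, then realizing $A'$ by a sequence of pivots, scalings, and permutations so that Proposition~\ref{prop:pmatops} guarantees the $\parf$-matrix property---is exactly the right idea, and your use of Lemma~\ref{lem:detpivot} to ensure a nonzero pivot entry survives at each inductive step is the key technical point. One small presentational remark: the detour through the augmented matrix $[I_X\mid A]$, followed by deleting the $X$-columns and re-attaching an identity block, is unnecessary. It is cleaner to note (as the paper does immediately after the lemma) that once you have the $B\times(E\setminus B)$ $\parf$-matrix $\hat A$ produced by the pivot sequence, the matrix $[I\ \hat A]$ is automatically a $\parf$-matrix because every subdeterminant of $[I\ \hat A]$ equals $\pm$ a subdeterminant of $\hat A$; and $M[I\ \hat A]=M[A]$ follows since pivoting over $xy$ in $A$ is precisely the basis exchange $X\to(X\setminus x)\cup y$ in $M[I\ A]$, so after $r$ pivots and deletion of the $X$-columns you recover $M[I\ A]\delete X=M[A]$. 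You should also remark, to avoid label collisions during the pivots, that one may assume $\{1,\ldots,r\}\cap E=\emptyset$.
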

Now let $A$ be a $B\times (E\setminus B)$ matrix with entries in $\parf$. Let $A'$ be the $B\times E$ matrix $A' = [I\: A]$, where $I$ is a $B\times B$ identity matrix. For all $B' \subseteq E$ with $|B'| = |B|$ we have $\det(A'[B,B']) = \pm \det(A[B\setminus B', (E\setminus B)\cap B'])$. Hence $A'$ is a $\parf$-matrix if and only if $A$ is a $\parf$-matrix. We say that $[I\: A]$ is a \emph{$B$-representation} of $M$ for basis $B$.

It follows that the following function is indeed a rank function for a $\parf$-matrix $A$:
\begin{align}
  \rank(A) := \max \{r \mid A \textrm{ has an } r\times r \textrm{ submatrix } A' \textrm{ with } \det(A') \neq 0\}.
\end{align}

\begin{lemma}\label{lem:matrixmatroid}
  Let $A$ be an $X\times Y$ $\parf$-matrix, and $S\subseteq X, T \subseteq Y$.
  If $M = M[I\: A]$ and $N = M\contract S \delete T$, then $N = M[I'\: A']$, where $A' = A - S - T$.
\end{lemma}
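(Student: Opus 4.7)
The plan is to verify that $M[I'\:A']$ has exactly the same bases as $N = M\contract S \delete T$, by matching up determinants of the corresponding square submatrices of $[I\:A]$ and $[I'\:A']$. Since $B = X$ is a basis of $M$ and $S\subseteq X$, the set $S$ is independent in $M$; hence $N$ has rank $|X|-|S|$, matching the row dimension of $[I'\:A']$. Standard matroid theory then gives that a set $B''\subseteq (X\setminus S)\cup(Y\setminus T)$ with $|B''| = |X|-|S|$ is a basis of $N$ if and only if $B''\cup S$ is a basis of $M$.

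The main computation is the determinant identity
\begin{align}
\det\bigl([I\:A][X,\,B''\cup S]\bigr) \;=\; \det\bigl([I'\:A'][X\setminus S,\,B'']\bigr).
\end{align}
To see this, I would partition the rows of $[I\:A][X,B''\cup S]$ as $S \sqcup (X\setminus S)$ and the columns as $S \sqcup B''$. For each $s\in S$, the column of $[I\:A]$ labelled $s$ has a $1$ in row $s$ and zeros elsewhere. Thus the $S\times S$ block is the identity and the $(X\setminus S)\times S$ block is zero, so the matrix is block-triangular and its determinant equals $\det\bigl([I\:A][X\setminus S,\,B'']\bigr)$.

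Next, writing $B'' = B''_X \cup B''_Y$ with $B''_X := B''\cap X \subseteq X\setminus S$ and $B''_Y := B''\cap Y \subseteq Y\setminus T$, the submatrix $[I\:A][X\setminus S,\,B'']$ has, in its $B''_X$-columns, the columns of an $(X\setminus S)\times(X\setminus S)$ identity, and in its $B''_Y$-columns, the columns of $A$ restricted to rows $X\setminus S$, which are precisely the corresponding columns of $A' = A - S - T$. Hence this submatrix agrees with $[I'\:A'][X\setminus S,\,B'']$, completing the identity and so showing $\mathcal{B}_{[I'\:A']} = \mathcal{B}_N$.

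There is no real obstacle here; the only care required is bookkeeping of row and column index sets and observing that the block-triangular structure makes the cofactor expansion along the $S$-rows trivial. One minor sanity check worth including is that $[I'\:A']$ has full row rank $|X|-|S|$, which follows immediately since $X\setminus S$ is itself a basis of $N$ (as $X$ is a basis of $M$ and $S$ is independent in $M$).
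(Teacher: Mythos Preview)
Your proof is correct. The paper states this lemma without proof, treating it as a routine fact, so there is nothing to compare against; your direct verification that the bases of $M[I'\:A']$ coincide with those of $M\contract S\delete T$ via the block-triangular determinant identity is exactly the natural argument. One cosmetic remark: the displayed determinant identity holds only up to sign (from reordering rows and columns into the $S$-first form), but since you only need nonvanishing to match bases, this has no effect on the conclusion.
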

Let $X,Y$ be finite, disjoint sets, let $A_1$ be an $X\times Y$ $\parf_1$-matrix, and let $A_2$ be an $X\times Y$ $\parf_2$-matrix. Let $A := A_1\otimes A_2$ be the $X\times Y$ matrix such that $A_{uv} = ((A_1)_{uv},(A_2)_{uv})$.
\begin{lemma}[{\cite[{Lemma~2.19}]{PZ08lift}}]\label{lem:listoffields}If $A_1$ is a $\parf_1$-matrix, $A_2$ is a $\parf_2$-matrix, and $M[I\: A_1] = M[I\: A_2]$ then $A_1\otimes A_2$ is a $\parf_1\otimes\parf_2$-matrix and $M[I\: A_1\otimes A_2] = M[I\: A_1]$.
\end{lemma}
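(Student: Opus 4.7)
The plan is to exploit the fact that in the product ring $\ring_1\times\ring_2$ every ring operation is performed componentwise, so that for any square submatrix indexed by $S\subseteq X$ and $T\subseteq Y$ with $|S|=|T|$ one has the identity
\begin{equation*}
\det\bigl(A[S,T]\bigr)=\bigl(\det(A_1[S,T]),\,\det(A_2[S,T])\bigr).
\end{equation*}
Hence proving that $A$ is a $\parf_1\otimes\parf_2$-matrix reduces to showing that each such pair lies in $\group_1\times\group_2\cup\{(0,0)\}$.

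First I would translate the hypothesis $M[I\:A_1]=M[I\:A_2]$ into a statement about subdeterminants of the $A_i$. Expanding along the identity block (cf.\ the discussion preceding Lemma~\ref{lem:matrixmatroid}) shows that a set $B=(X\setminus S)\cup T$ is a basis of $M[I\:A_i]$ if and only if $\det(A_i[S,T])\neq 0$. Therefore the equality of matroids is equivalent to the combinatorial statement
\begin{equation*}
\det(A_1[S,T])=0\ \Longleftrightarrow\ \det(A_2[S,T])=0\quad\text{for every }S\subseteq X,\ T\subseteq Y,\ |S|=|T|.
\end{equation*}

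Now for any square submatrix of $A$ there are two cases. If both $\det(A_1[S,T])$ and $\det(A_2[S,T])$ vanish, then $\det(A[S,T])=(0,0)$, which is the zero of $\parf_1\otimes\parf_2$. Otherwise, by the equivalence above, both determinants are nonzero; since $A_i$ is a $\parf_i$-matrix, each nonzero value of $\det(A_i[S,T])$ must lie in $\parf_i^*=\group_i$. Consequently $\det(A[S,T])\in\group_1\times\group_2=(\parf_1\otimes\parf_2)^*$. In either case the subdeterminant lies in $\parf_1\otimes\parf_2$, so $A$ is a $\parf_1\otimes\parf_2$-matrix.

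Finally, to identify the matroid $M[I\:A]$, I would again use the basis characterization via subdeterminants together with the componentwise formula for $\det(A[S,T])$: a set $B=(X\setminus S)\cup T$ is a basis of $M[I\:A]$ iff $\det(A[S,T])\neq 0$, iff $\det(A_1[S,T])\neq 0$ (using one direction of the combinatorial equivalence), iff $B$ is a basis of $M[I\:A_1]$. This yields $M[I\:A]=M[I\:A_1]$, completing the proof. The argument is essentially bookkeeping; the only point requiring care is keeping the two directions of ``$\det\neq 0$'' in lockstep between $A_1$ and $A_2$, which is precisely what the hypothesis $M[I\:A_1]=M[I\:A_2]$ provides.
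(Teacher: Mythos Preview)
Your argument is correct. The paper does not supply its own proof of this lemma; it is quoted from \cite[Lemma~2.19]{PZ08lift} and stated without proof here, so there is nothing in the present paper to compare against. Your approach---using that determinants in the product ring $\ring_1\times\ring_2$ are computed componentwise, and that the hypothesis $M[I\:A_1]=M[I\:A_2]$ forces the vanishing of $\det(A_1[S,T])$ and $\det(A_2[S,T])$ to coincide---is exactly the natural and standard proof of this fact.
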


\subsection{Cross ratios and fundamental elements}
Let $A$ be an $X\times Y$ $\parf$-matrix. We define the \emph{cross ratios} of $A$ as the set
\begin{align}
    \crat(A) := \left\{p \mid \left[\begin{smallmatrix}1 & 1\\p & 1\end{smallmatrix}\right] \minorof A \right\}.
\end{align}
\begin{lemma}\label{lem:cratminor}
  If $A' \minorof A$ then $\crat(A') \subseteq \crat(A)$.
\end{lemma}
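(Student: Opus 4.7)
The plan is to reduce the claim immediately to the transitivity of the minor relation $\minorof$ on partial-field matrices, which itself is built into Definition~\ref{def:matops}.

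First I would observe that $\minorof$ is transitive. By Definition~\ref{def:matops}, $A' \minorof A$ means that $A'$ is obtained from $A$ by a finite sequence of the four allowed operations: scaling of rows or columns, deletion of rows or columns, permutation of rows or columns, and pivoting over a nonzero entry. If $A'' \minorof A'$ and $A' \minorof A$, then concatenating the two witnessing sequences yields a single sequence of the same operations from $A$ to $A''$, so $A'' \minorof A$.

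Next I would apply this to the defining property of cross ratios. Take an arbitrary $p \in \crat(A')$; by definition this means
\[
\left[\begin{smallmatrix}1 & 1\\ p & 1\end{smallmatrix}\right] \minorof A'.
\]
Combining with the hypothesis $A' \minorof A$ and transitivity, we get $\left[\begin{smallmatrix}1 & 1\\ p & 1\end{smallmatrix}\right] \minorof A$, so $p \in \crat(A)$. Since $p$ was arbitrary, $\crat(A') \subseteq \crat(A)$.

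The lemma is essentially a definitional unfolding, so I do not anticipate any obstacle; the only thing to be careful about is making explicit that the class of ``sequences of operations'' from Definition~\ref{def:matops} is closed under concatenation, which is immediate. No appeal to Proposition~\ref{prop:pmatops} or to the pivot identity of Lemma~\ref{lem:detpivot} is needed.
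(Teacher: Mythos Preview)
Your proof is correct; the paper states this lemma without proof, precisely because it is the immediate definitional unfolding via transitivity of $\minorof$ that you give.
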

An element $p \in \parf$ is called \emph{fundamental} if $1-p \in \parf$. We denote the set of fundamental elements of $\parf$ by $\fun(\parf)$.

Suppose $F\subseteq \fun(\parf)$. We define the \emph{associates} of $F$ as
\begin{align}
  \assoc F := \bigcup_{p\in F}\crat\left( \left[\begin{smallmatrix}
    1 & 1\\
    p & 1
  \end{smallmatrix}\right]
  \right).
\end{align}
We have
\begin{proposition}If $p \in \fun(\parf)$ then
  $\assoc\{p\}\subseteq \fun(\parf)$.
\end{proposition}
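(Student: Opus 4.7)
The plan is to deduce the statement directly from Proposition~\ref{prop:pmatops}, which guarantees that the property of being a $\parf$-matrix is preserved under the minor operations of Definition~\ref{def:matops}. Fix $p \in \fun(\parf)$ and set $A_p := \left[\begin{smallmatrix} 1 & 1 \\ p & 1 \end{smallmatrix}\right]$. The first step is to verify that $A_p$ itself is a $\parf$-matrix: the $1\times 1$ subdeterminants are either $1$ or $p$, both of which lie in $\parf$ since $p \in \fun(\parf)\subseteq \parf$, and the full $2\times 2$ determinant is $1-p$, which lies in $\parf$ by the very definition of $p$ being fundamental. So $A_p$ is a $\parf$-matrix.

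Now take an arbitrary $q \in \assoc\{p\}$. By definition $\assoc\{p\} = \crat(A_p)$, and $q \in \crat(A_p)$ means precisely that $A_q := \left[\begin{smallmatrix} 1 & 1 \\ q & 1 \end{smallmatrix}\right]\minorof A_p$. By Proposition~\ref{prop:pmatops}, $A_q$ is then a $\parf$-matrix as well. In particular its $2\times 2$ determinant $1-q$ must lie in $\parf$, which by definition says $q \in \fun(\parf)$, as desired.

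There is really no obstacle to overcome. The only thing to be careful about is the implicit use of $p \in \parf$ (not just $1-p \in \parf$), but this is baked into the convention that $\fun(\parf)\subseteq \parf$. Notably, one does not need to enumerate the classical six associates $p,\,1-p,\,1/p,\,1/(1-p),\,(p-1)/p,\,p/(p-1)$ and check each one by hand; the abstract closure of $\parf$-matrices under minors subsumes all of the individual pivot-and-scale calculations that would otherwise be required.
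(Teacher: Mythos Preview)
The paper states this proposition without proof, so there is no argument to compare against. Your proof is correct and is the natural one: once you observe that $A_p$ is a $\parf$-matrix (its entries $1,p$ lie in $\parf$ and its determinant $1-p$ lies in $\parf$ by the definition of fundamental), Proposition~\ref{prop:pmatops} guarantees that every minor $A_q\minorof A_p$ is again a $\parf$-matrix, whence both $q\in\parf$ (as a $1\times 1$ subdeterminant) and $1-q\in\parf$ (as the $2\times 2$ determinant), i.e.\ $q\in\fun(\parf)$.
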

The following lemma gives a complete description of the structure of $\assoc\{p\}$.
\begin{lemma}\label{lem:assocdesc}
If $p \in \{0,1\}$ then $\assoc\{p\} = \{0,1\}$. If $p \in \fun(\parf)\setminus\{0,1\}$ then
\begin{align}
  \assoc\{p\} = \big\{p, 1-p,\frac{1}{1-p},\frac{p}{p-1},\frac{p-1}{p},\frac{1}{p}\big\}.\label{eq:associates}
\end{align}
\end{lemma}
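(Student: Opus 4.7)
The plan is to analyze $\assoc\{p\} = \crat(A_p)$ for $A_p := \left(\begin{smallmatrix}1 & 1\\ p & 1\end{smallmatrix}\right)$, determining which $2\times 2$ matrices of shape $\left(\begin{smallmatrix}1 & 1\\ q & 1\end{smallmatrix}\right)$ arise from $A_p$ via the allowed minor operations. Because $A_p$ is already $2\times 2$, no deletion is possible; only scalings, row/column permutations, and pivots are available.

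For the degenerate case $p\in\{0,1\}$, the matroid $M[I\: A_p]$ is a rank-$2$ matroid on four elements containing a parallel pair, rather than $U_{2,4}$. Since the three admissible minor operations preserve the represented matroid, any minor of shape $\left(\begin{smallmatrix}1 & 1\\ q & 1\end{smallmatrix}\right)$ again represents a matroid with a parallel pair; as $\left(\begin{smallmatrix}1 & 1\\ q & 1\end{smallmatrix}\right)$ represents $U_{2,4}$ whenever $q\notin\{0,1\}$, we must have $q\in\{0,1\}$. The reverse inclusion is obtained from two direct computations: $q=0$ by the identity operation, and $q=1$ by pivoting $A_0$ on its $(1,2)$-entry (or $A_1$ on its $(1,1)$-entry) and rescaling.

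For the main case $p\in\fun(\parf)\setminus\{0,1\}$, the inclusion $\supseteq$ is settled by exhibiting each listed value explicitly: $q=p$ from the identity; $q=1/p$ from a row swap with rescaling; $q=p/(p-1)$ from the pivot on the $(1,1)$-entry (via Definition~\ref{def:pivot}) followed by rescaling; $q=1/(1-p)$ from the pivot on the $(1,2)$-entry; and the remaining values $1-p$ and $(p-1)/p$ by composing these. For $\subseteq$, the key observation is that the quantity $bc/(ad)$ of $\left(\begin{smallmatrix}a & b\\ c & d\end{smallmatrix}\right)$ is invariant under scaling and equals $q$ after normalization to $\left(\begin{smallmatrix}1 & 1\\ q & 1\end{smallmatrix}\right)$. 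Row and column swaps transform this invariant as $q\mapsto 1/q$, while by direct Definition~\ref{def:pivot} calculation each pivot on $A_p$ yields, after normalization, either $q\mapsto q/(q-1)$ (diagonal pivots) or $q\mapsto 1/(1-q)$ (anti-diagonal pivots). These transformations generate a group on the space of cross-ratios; by the classical fact that $\langle q\mapsto 1-q,\ q\mapsto 1/q\rangle$ is isomorphic to $S_3$, this group has exactly six elements, and the orbit of $p$ is precisely the set in \eqref{eq:associates}.

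The main obstacle is making the upper bound rigorous: one must show that no composition of the allowed operations escapes the six-element orbit. I would handle this by reducing every pivot to a $(1,1)$-pivot preceded by permutations, so that the transformation group acting on normalized cross-ratios is generated by the two maps $q\mapsto 1/q$ and $q\mapsto q/(q-1)$, and then verifying by a short direct check that this group has order exactly $6$.
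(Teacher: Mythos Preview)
Your proof is correct and follows essentially the same approach as the paper's sketch: both reduce the computation to the classical anharmonic group acting on cross ratios. The paper phrases this as checking the $24$ ordered row/column arrangements of the four elements and observing that at most $6$ distinct normalized values occur, while you track the scaling-invariant $bc/(ad)$ and identify the group generated by permutations and pivots as $S_3$; these are two presentations of the same underlying fact, and your framing is arguably the cleaner one.
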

The key observation for a proof is that a minor of a matrix depends, up to scaling, only on the sequences of row and column indices of the final matrix. For a $2\times 2$ matrix there are 24 choices for these. After scaling these so that the appropriate entries are equal to 1, at most 6 distinct values appear in the bottom left corner.

By Lemma~\ref{lem:cratminor}, $\assoc\{p\} \subseteq \crat(A)$ for every $p \in \crat(A)$.

\subsection{Normalization}

Let $M$ be a rank-$r$ matroid with ground set $E$, and let $B$ be a basis of $M$. Let $G(M,B)$ be the bipartite graph with vertices $V(G) = B \cup (E\setminus B)$ and edges $E(G) = \{xy \in B\times (E\setminus B) \mid (B\setminus x) \cup y\in \mathcal{B}\}$. For each $y \in E\setminus B$ there is a unique matroid circuit $C_{B,y} \subseteq B\cup y$, the $B$-\emph{fundamental circuit} of $y$ (see \cite[Corollary 1.2.6]{oxley}). By combining results found, for instance, in \cite[Sections 6.4 and 7.1]{oxley}, it is straightforward to deduce the following:

\begin{lemma}\label{lem:bipconn}
  Let $M$ be a matroid, and $B$ a basis of $M$.
  \begin{enumerate}
    \item $xy \in E(G)$ if and only if $x \in C_{B,y}$.
    \item\label{bip:conn} $M$ is connected if and only if $G(M,B)$ is connected.
    \item\label{bip:threeconn} If $M$ is 3-connected, then $G(M,B)$ is 2-connected.
  \end{enumerate}
\end{lemma}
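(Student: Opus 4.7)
Part (i) is immediate from the fundamental-circuit characterisation of basis exchanges (Oxley, Corollary~1.2.6): $(B\setminus x)\cup y$ is a basis if and only if $y\notin \closure_M(B\setminus x)$, which in turn is equivalent to $x\in C_{B,y}$ by the (strong) circuit elimination axiom. So $xy$ is an edge of $G(M,B)$ exactly when $x\in C_{B,y}$.

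For part (ii), one direction is easy: every $y\in E\setminus B$ lies in a common circuit $C_{B,y}$ with each of its neighbours in $G(M,B)$, so connectedness of $G(M,B)$ forces all of $E$ into one matroid-connected component of $M$. For the converse, assume $M$ is connected and let $e,f\in E$ be two elements. By Oxley's Proposition~4.1.2 they lie in a common circuit $C$ of $M$; walk along $C$, replacing each $y\in C\setminus B$ by a two-edge detour through $y$ in $G(M,B)$ to some neighbour $x\in B\cap C_{B,y}$ (the existence of such $x$ being guaranteed by part (i) applied to $C_{B,y}$). This converts $C$ into a walk in $G(M,B)$ from $e$ to $f$.

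For part (iii) I would contrapose. Suppose $v$ is a cut vertex of $G=G(M,B)$ and write $V(G)\setminus\{v\}=V_1\sqcup V_2$ with no $V_1$--$V_2$ edges, both $V_i$ non-empty. I aim to exhibit a 2-separation of $M$. Assume first $v\in B$ and set $A_1:=V_1$, $A_2:=V_2\cup\{v\}$. By part (i), for every $y\in V_i\cap(E\setminus B)$ we have $C_{B,y}\setminus\{y\}\subseteq(V_i\cap B)\cup\{v\}$. If $v\notin C_{B,y}$ then $y\in \closure(V_i\cap B)$; otherwise $C_{B,y}$ is a circuit inside $V_i\cup\{v\}$ containing $v$, and strong elimination puts $v\in\closure(V_i)$. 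A short bookkeeping argument with these closure relations yields
\[
  r(A_1)+r(A_2)\leq |B\cap V_1|+|B\cap V_2|+1 = r(M)+1,
\]
i.e.\ a 2-separation, provided $|A_1|,|A_2|\geq 2$. The case $v\in E\setminus B$ is symmetric (place $v$ on the $V_1$ side).

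The main obstacle is verifying the size conditions $|A_1|,|A_2|\geq 2$ and handling the degenerate configurations in which some $V_i$ reduces to a single pendant vertex. These are ruled out by using that a $3$-connected matroid has no loops or coloops (so $G(M,B)$ has no isolated vertex and every $y\in E\setminus B$ has degree at least one), together with part (ii) applied to $M$, which is in particular connected. Once these boundary cases are excluded the 2-separation produced above contradicts the 3-connectivity of $M$, completing the proof.
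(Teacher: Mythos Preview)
The paper does not prove this lemma; it simply remarks that the three parts follow from standard facts in Oxley, Sections~6.4 and~7.1. So your write-up is being compared against a citation, not a proof.

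Parts~(i) and~(iii) of your argument are essentially correct. In~(iii) the rank bookkeeping does give $\lambda_M(A_1)\le 1$ as you claim. For the boundary cases, however, you need slightly more than ``no loops or coloops'': a singleton $V_i$ forces either a $2$-circuit (if the lone vertex lies in $E\setminus B$ and has $v$ as its unique neighbour) or a $2$-cocircuit (if it lies in $B$), and it is the absence of parallel and series pairs in a $3$-connected matroid on at least four elements that rules these out. Absence of loops and coloops only excludes isolated vertices, not pendant ones.

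The genuine gap is in the converse direction of part~(ii). Your plan is to take a circuit $C$ through $e$ and $f$ and ``walk along $C$'' to obtain a walk in $G(M,B)$, but a matroid circuit carries no cyclic order, and the elements of $C$ need not lie in a single component of any natural subgraph built from $C$. Concretely, take $M=M(K_4)$ with $B$ the three edges of the star at one vertex, and let $C$ be one of the $4$-cycles of $K_4$; then $G(M,B)[C]$ is the disjoint union of two edges, so no walk inside $C$ (even allowing your detours through fundamental circuits) connects the two halves. The clean fix is the contrapositive, exactly parallel to what you already do in~(iii): if $G(M,B)$ has a component with vertex set $V_1$ and complement $V_2$, then $C_{B,y}\subseteq V_i$ for every $y\in V_i\setminus B$, whence $r(V_i)=|V_i\cap B|$ and $r(V_1)+r(V_2)=|B|=r(M)$, a $1$-separation of $M$.
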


Let $A$ be an $X\times Y$ matrix, $X\cap Y = \emptyset$. With $A$ we associate a bipartite graph $\bip(A) := (V,E)$, where $V: = X \cup Y$ and let $E := \{xy \in X\times Y \mid A_{xy} \neq 0\}$. The following lemma generalizes a result of \citet{BL76}, which can be found in \citet[Theorem 6.4.7]{oxley}. Recall that $\sim$ denotes scaling-equivalence.

\begin{lemma}\label{lem:bipproperties}
  Let $\parf$ be a partial field and $A$ a $\parf$-matrix. Suppose $M=M[I\: A]$.
  \begin{enumerate}
    \item \label{eq:bipnonzeroentries}$G(M,X) = \bip(A)$.
    \item \label{eq:bipscaling}Let $T$ be a maximal spanning forest of $\bip(A)$ with edges $e_1, \ldots, e_k$. Let $p_1, \ldots, p_k \in \parf^*$. Then there exists a unique matrix $A'\sim A$ such that $A'_{e_i} = p_i$.
  \end{enumerate}
\end{lemma}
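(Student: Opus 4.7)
The plan is to handle parts \eqref{eq:bipnonzeroentries} and \eqref{eq:bipscaling} separately, and to split \eqref{eq:bipscaling} further into existence and uniqueness.

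For \eqref{eq:bipnonzeroentries}, I would do a direct determinant computation. Fix $x\in X$ and $y\in Y$ and consider the $X\times X$ submatrix $[I\: A][X,(X\setminus x)\cup y]$. Each column indexed by $v\in X\setminus x$ is the standard basis vector $e_v$, so when one expands the determinant as a sum over permutations of $X$, only the permutation sending $v\mapsto v$ for $v\in X\setminus x$ and $x\mapsto y$ can contribute a nonzero term. Hence $\det([I\: A][X,(X\setminus x)\cup y]) = \pm A_{xy}$, so $(X\setminus x)\cup y$ is a basis of $M$ precisely when $A_{xy}\neq 0$, which is the edge condition defining both $G(M,X)$ and $\bip(A)$.

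For \eqref{eq:bipscaling}, observe that scaling row $x$ by $\alpha_x\in\parf^*$ and column $y$ by $\beta_y\in\parf^*$ sends $A_{xy}$ to $\alpha_x\beta_y A_{xy}$, so the requirement $A'_{e_i}=p_i$ at an edge $e_i=xy\in T$ is the single equation $\alpha_x\beta_y = p_i/A_{xy}$. For existence, in each connected component $C$ of $\bip(A)$ I would pick an arbitrary root vertex and set its scalar equal to $1$. Because $T\cap C$ is a spanning tree of $C$, walking outward from the root along $T\cap C$ determines all remaining scalars of $C$ one at a time from the equation above. An isolated vertex corresponds to a zero row or zero column and imposes no constraint, so an arbitrary nonzero scalar works there.

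For uniqueness, suppose $A'$ and $A''$ both satisfy the conclusion. Then $A'' = D_X A' D_Y$ for diagonal matrices with entries $\gamma_x := (D_X)_{xx}$ and $\delta_y := (D_Y)_{yy}$ in $\parf^*$, and the hypothesis $A''_{e_i} = A'_{e_i} = p_i$ gives $\gamma_x\delta_y=1$ on every edge of $T$. For any other edge $xy$ of $\bip(A)$, the vertices $x$ and $y$ lie in the same component of $\bip(A)$, which is also a component of $T$, so there is a path $x=x_1,y_1,x_2,y_2,\ldots,x_k,y_k=y$ in $T$ between them. The relations $\gamma_{x_i}\delta_{y_i} = 1 = \gamma_{x_{i+1}}\delta_{y_i}$ along this path collapse to $\gamma_{x_1}=\cdots=\gamma_{x_k}$ and $\delta_{y_1}=\cdots=\delta_{y_k}$, so $\gamma_x\delta_y = \gamma_{x_1}\delta_{y_1} = 1$. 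Hence $A''_{xy} = A'_{xy}$ on every edge of $\bip(A)$, and trivially on non-edges, so $A' = A''$. I expect the main obstacle to be handling the gauge freedom of the scaling in the existence step cleanly; the spanning-tree structure keeps it under control, since the arbitrary choice at each root is the only freedom and produces the same final matrix $A'$ by the uniqueness argument.
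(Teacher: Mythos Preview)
Your argument is correct. The paper itself does not supply a proof of this lemma; it is stated as a generalization of a result of Brylawski and Lucas (see the reference to \cite{BL76} and \cite[Theorem~6.4.7]{oxley} just before the statement), and left to the reader. What you have written is essentially the standard proof: part~\eqref{eq:bipnonzeroentries} via the observation that the relevant subdeterminant of $[I\: A]$ is $\pm A_{xy}$, and part~\eqref{eq:bipscaling} by rooting each component of $T$ to propagate scalars for existence, then using connectivity within $T$ to force $\gamma_x\delta_y=1$ on every edge for uniqueness. One cosmetic point: in your uniqueness step you might mention explicitly that isolated vertices of $\bip(A)$ index zero rows or columns, so although their scalars $\gamma_x$ or $\delta_y$ are undetermined, this does not affect the entries of $A'$; you allude to this in the existence step but it is worth a word in uniqueness too.
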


Let $A$ be a matrix and $T$ a maximal spanning forest for $\bip(A)$. We say that $A$ is \emph{$T$-normalized} if $A_{xy} = 1$ for all $xy \in T$. By the lemma there is always an $A'\sim A$ that is $T$-normalized. We say that $A$ is \emph{normalized} if it is $T$-normalized for some maximal spanning forest $T$, the \emph{normalizing} spanning forest.

A \emph{walk} in a graph $G = (V,E)$ is a sequence $W = (v_0, \ldots, v_n)$ of vertices such that $v_iv_{i+1} \in E$ for all $i \in \{0, \ldots, n-1\}$. If $v_n = v_0$ and $v_i \neq v_j$ for all $0 \leq i < j < n$ then we say that $W$ is a \emph{cycle}.

\begin{definition}\label{def:sig}
  Let $A$ be an $X\times Y$ matrix with entries in a partial field $\parf$, with $X\cap Y = \emptyset$. The \emph{signature} of $A$ is the function $\sigma_A:(X\times Y) \cup (Y\times X) \rightarrow \parf$ defined by $\sigma_A(vw) = 0$ if $vw$ is not an edge of $\bip(A)$, and by
  \begin{align}
    \sigma_A(vw) := \left\{ \begin{array}{cl} A_{vw}\quad & \textrm{if } v \in X, w \in Y\\
                                             1/A_{vw}\quad & \textrm{if } v \in Y, w \in X
                           \end{array}\right.
  \end{align}
  if $vw$ is an edge of $\bip(A)$. If $C = (v_0, v_1, \ldots, v_{2n-1},v_{2n})$ is a cycle of $\bip(A)$ then we define
  \begin{align}
    \sigma_A(C) := (-1)^{|V(C)|/2}\prod_{i=0}^{2n-1} \sigma_A(v_iv_{i+1}).
  \end{align}
\end{definition}

\begin{lemma}[{\cite[Lemma 2.27]{PZ08lift}}]\label{lem:signature}
  Let $A$ be an $X\times Y$ matrix with entries from a partial field $\parf$.
  \begin{enumerate}
    \item\label{sgn:scale} If $A'\sim A$ then $\sigma_{A'}(C)= \sigma_A(C)$ for all cycles $C$ in $\bip(A)$.
    \item\label{sgn:pivot} Let $C = (v_0, \ldots, v_{2n})$ be an induced cycle of $\bip(A)$ with $v_0 \in X$ and $n \geq 3$. Suppose $A' := A^{v_0v_1}$ is such that all entries are in $\parf$. Then $C' = (v_2, v_3, \ldots, v_{2n-1},v_2)$ is an induced cycle of $\bip(A')$ and $\sigma_{A'}(C') = \sigma_A(C)$.
    \item\label{sgn:det} Let $C = (v_0, \ldots, v_{2n})$ be an induced cycle of $\bip(A)$. If $A'$ is obtained from $A$ by scaling rows and columns such that $A'_{v_iv_{i+1}} = 1$ for all $i > 0$, then $A'_{v_0v_1} = (-1)^{|V(C)|/2}\sigma_A(C)$ and $\det(A[V(C)]) = 1-\sigma_A(C)$.
  \end{enumerate}
\end{lemma}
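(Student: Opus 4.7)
My plan is to verify each part by direct computation, unwinding the definition of $\sigma_A$ and invoking either the scaling or the pivot formulas. Parts (i) and (ii) are essentially independent, while (iii) combines (i) with an explicit determinant calculation on a cyclic bidiagonal matrix.

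\textbf{For (i)}, let $A'$ arise from $A$ by scaling row $x$ by $\alpha_x$ for $x\in X$ and column $y$ by $\beta_y$ for $y\in Y$. A quick check of the definition shows that, for an edge $xy\in E(\bip(A))$ with $x\in X$, $\sigma_{A'}(xy)=\alpha_x\beta_y\sigma_A(xy)$ while $\sigma_{A'}(yx)=\alpha_x^{-1}\beta_y^{-1}\sigma_A(yx)$. Since $C$ alternates sides of the bipartition, each vertex $v\in V(C)$ contributes its scaling factor exactly twice to the product $\prod_i\sigma_{A'}(v_iv_{i+1})$ -- once from each of its two incident oriented edges, with opposite exponents -- so all scaling factors cancel. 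The sign $(-1)^{|V(C)|/2}$ is untouched, and hence $\sigma_{A'}(C)=\sigma_A(C)$.

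\textbf{For (ii)}, I would apply the pivot formula $A'_{uw}=A_{uw}-A_{v_0v_1}^{-1}A_{uv_1}A_{v_0w}$ to every pair $u,w$ in $V(C)\setminus\{v_0,v_1\}$ lying on opposite sides of the bipartition in $A'$. Because $C$ is induced and $n\geq 3$, for such a pair at most one of $uv_1$ or $v_0w$ can be a cycle edge, and the other is then a chord; hence the correction term vanishes and $A'_{uw}=A_{uw}$, except when $(u,w)=(v_2,v_{2n-1})$. In that exceptional case both $A_{v_2v_1}$ and $A_{v_0v_{2n-1}}$ are nonzero while $A_{v_2v_{2n-1}}=0$ (since $v_2v_{2n-1}$ is a chord of the induced cycle $C$), so $A'_{v_2v_{2n-1}}=-A_{v_0v_1}^{-1}A_{v_2v_1}A_{v_0v_{2n-1}}\neq 0$. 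This simultaneously shows that $C'$ is an induced cycle of $\bip(A')$ and yields $\sigma_{A'}(v_{2n-1}v_2)=-\sigma_A(v_0v_1)\sigma_A(v_1v_2)\sigma_A(v_{2n-1}v_0)$. The introduced minus sign precisely absorbs the change $(-1)^{|V(C')|/2}=-(-1)^{|V(C)|/2}$, and the remaining edges of $C'$ contribute signatures identical to the corresponding edges of $C$, so $\sigma_{A'}(C')=\sigma_A(C)$.

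\textbf{For (iii)}, I would first use Lemma~\ref{lem:bipproperties}\eqref{eq:bipscaling} to rescale so that $A'_{v_iv_{i+1}}=1$ for every $i>0$; the $2n-1$ cycle edges other than $v_0v_1$ form a path and thus lie in some maximal spanning forest of $\bip(A)$. Part (i) gives $\sigma_{A'}(C)=\sigma_A(C)$, and substituting the unit entries into the definition immediately produces $A'_{v_0v_1}=(-1)^{|V(C)|/2}\sigma_A(C)$. For the determinant claim, after ordering rows and columns compatibly with the cyclic order on $V(C)$, the matrix $A'[V(C)]$ becomes the $n\times n$ matrix with $A'_{v_0v_1}$ in one corner, $1$'s on two cyclic ``diagonals,'' and $0$'s elsewhere; a cofactor expansion along the first row, or equivalently a short induction on $n$ via Lemma~\ref{lem:detpivot}, evaluates this determinant to $1-A'_{v_0v_1}=1-\sigma_A(C)$. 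The main obstacle throughout is the sign bookkeeping: the factor $(-1)^{|V(C)|/2}$ in the definition of $\sigma_A(C)$, the sign from the pivot correction in (ii), and the sign in the cyclic-matrix determinant in (iii) (which depends on the chosen row/column ordering) must all line up correctly, and this mechanical but error-prone tracking is the most delicate aspect of the verification.
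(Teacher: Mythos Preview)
The paper does not prove this lemma; it is quoted from \cite{PZ08lift} and stated without argument. So there is no ``paper's own proof'' to compare against, and I can only assess your proposal on its merits.

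Your approach is the natural one and is correct in all essentials. Parts (i) and (ii) are fully correct as you have them: in (i) the telescoping of scaling factors is exactly right, and in (ii) your case analysis of the pivot correction term $A_{v_0v_1}^{-1}A_{uv_1}A_{v_0w}$ is sound (for $u\in V(C')\cap X$ and $w\in V(C')\cap Y$, the factor $A_{uv_1}$ vanishes unless $u=v_2$ and $A_{v_0w}$ vanishes unless $w=v_{2n-1}$, since $C$ is induced), and the sign bookkeeping checks out.

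In part (iii) there is a small slip in the last line. With the natural cyclic ordering of rows $v_0,v_2,\ldots,v_{2n-2}$ and columns $v_1,v_3,\ldots,v_{2n-1}$, cofactor expansion gives $\det(A'[V(C)]) = A'_{v_0v_1} + (-1)^{n+1}$, not $1-A'_{v_0v_1}$. Combined with $A'_{v_0v_1}=(-1)^n\sigma_A(C)$ this yields $\det(A'[V(C)]) = (-1)^{n+1}\bigl(1-\sigma_A(C)\bigr)$. Since the determinant of a set-indexed square matrix is only defined up to sign (the paper never fixes an ordering on $X$ or $Y$), this is exactly the intended conclusion; you simply collapsed two separate sign factors into one. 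You already flagged the sign tracking here as the delicate point, so this is a bookkeeping lapse rather than a gap in the argument. Note also that the ``$\det(A[V(C)])$'' in the displayed statement is almost certainly a typo for $\det(A'[V(C)])$, as scaling genuinely changes the determinant.
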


\begin{corollary}\label{cor:signaturefund}
  Let $A$ be an $X\times Y$ $\parf$-matrix. If $C$ is an induced cycle of $\bip(A)$ then $\sigma_A(C) \in \crat(A) \subseteq \fun(\parf)$.
\end{corollary}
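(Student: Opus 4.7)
The statement splits into two inclusions. The inclusion $\crat(A) \subseteq \fun(\parf)$ is the easier half: if $p \in \crat(A)$ then $\left[\begin{smallmatrix}1 & 1\\ p & 1\end{smallmatrix}\right] \minorof A$, and by Proposition~\ref{prop:pmatops} this $2\times 2$ matrix is itself a $\parf$-matrix, so its determinant $1-p$ belongs to $\parf$ and hence $p \in \fun(\parf)$.

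For $\sigma_A(C) \in \crat(A)$, my plan is to induct on $n$, where $C$ has length $2n$. In the base case $n = 2$, the cycle $C$ spans the $2\times 2$ submatrix $A[V(C)]$. Applying Lemma~\ref{lem:signature}\eqref{sgn:det} to this induced $4$-cycle says that, after rescaling rows and columns so that the three entries along the spanning path $v_1v_2,\,v_2v_3,\,v_3v_0$ are all equal to $1$, the remaining entry equals $(-1)^{2}\sigma_A(C) = \sigma_A(C)$. A single row swap then exhibits $\left[\begin{smallmatrix}1 & 1\\ \sigma_A(C) & 1\end{smallmatrix}\right]$ as a minor of $A$, proving $\sigma_A(C) \in \crat(A)$.

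For the inductive step $n \geq 3$, I pivot over $v_0v_1$. The hypothesis of Lemma~\ref{lem:signature}\eqref{sgn:pivot} requires that every entry of $A^{v_0v_1}$ lie in $\parf$, and this is guaranteed by Proposition~\ref{prop:pmatops} because $A$ is a $\parf$-matrix and pivoting is one of the operations of Definition~\ref{def:matops}. That lemma then delivers an induced cycle $C' = (v_2, v_3, \ldots, v_{2n-1}, v_2)$ in $\bip(A^{v_0v_1})$ of length $2(n-1)$ with $\sigma_{A^{v_0v_1}}(C') = \sigma_A(C)$. The induction hypothesis applied to the $\parf$-matrix $A^{v_0v_1}$ and the shorter cycle $C'$ yields $\sigma_A(C) \in \crat(A^{v_0v_1})$, and Lemma~\ref{lem:cratminor}, applied to the minor relation $A^{v_0v_1} \minorof A$, finishes the proof.

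I do not anticipate any significant obstacle: all of the real work is absorbed into Lemma~\ref{lem:signature} and Proposition~\ref{prop:pmatops}. The only point that merits a moment of care is confirming that the pivoted matrix has all entries in $\parf$ so that Lemma~\ref{lem:signature}\eqref{sgn:pivot} is applicable, which is a direct consequence of the fact that the class of $\parf$-matrices is closed under the minor operations.
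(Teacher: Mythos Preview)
Your proposal is correct and is precisely the argument the paper intends: the statement is labelled a corollary of Lemma~\ref{lem:signature} and is meant to follow, without further comment, from parts~\eqref{sgn:pivot} and~\eqref{sgn:det} exactly as you outline (reduce to a $4$-cycle by pivots, then read off the $2\times 2$ minor). The one cosmetic point you might add is that Lemma~\ref{lem:signature}\eqref{sgn:pivot} assumes $v_0\in X$, which is harmless since you may start the cycle at any vertex.
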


\subsection{Examples of partial fields}
The following partial fields were studied in \cite{PZ08lift}. We collected their basic properties in the appendix of this paper.
\begin{description}
  \item[Regular.] $\reg = (\Q, \langle -1\rangle)$;
  \item[Near-regular.] $\nreg = (\Q(\alpha),\langle -1, \alpha, 1-\alpha\rangle)$;
  \item[Dyadic.] $\dyadic = (\Q,\langle -1, 2\rangle)$;
  \item[Sixth-roots-of-unity.] $\psru = (\C,\langle\zeta\rangle)$, where $\zeta$ is a primitive complex sixth root of unity, i.e. a root of $x^2-x+1=0$;
  \item[Golden ratio.] $\golrat = (\R,\langle -1,\tau\rangle)$, where $\tau$ is the \emph{golden ratio}, i.e. a root of $x^2-x-1=0$;
  \item[$k$-Cyclotomic.] $\cyclo_k = (\Q(\alpha),\langle -1,\alpha, \alpha-1,\alpha^2-1, \ldots, \alpha^k -1\rangle)$;
  \item[Gaussian.] $\gauss = (\C,\langle i,1-i\rangle)$;
  \item[Near-regular mod 2.] $\uniform_1^{(2)} = (\GF(2)(\alpha),\langle\alpha, 1-\alpha\rangle)$.
\end{description}

\subsection{The Lift Theorem}\label{ssec:crat}
If ${\cal A}$ is a set of matrices then we define
\begin{align}
  \Crat({\cal A}) := \bigcup_{A \in {\cal A}} \Crat(A).
\end{align}
The following is a slight modification of \cite[Definition 5.11]{PZ08lift}; see also \cite[Definition 4.3.1]{vZ09}.

\begin{definition}\label{def:liftclass}
  Let $\parf$ be a partial field and ${\cal A}$ a set of $\parf$-matrices.
  We define the ${\cal A}$-\emph{lift} of $\parf$ as
  \begin{align}
     \lift_{\cal A}\parf := (\ring_{\cal A}/I_{\cal A}, \langle\tilde F_{\cal A} \cup -1\rangle),
  \end{align}
  where $\tilde F_{\cal A} := \{\tilde p \mid p \in \Crat({\cal A})\}$ is a set of symbols, one for every cross ratio of a matrix in ${\cal A}$, $\ring_{\cal A} := \Z[\tilde F_{\cal A}]$ is the polynomial ring over $\Z$ with indeterminates $\tilde F_{\cal A}$, and $I_{\cal A}$ is the ideal generated by the following polynomials in $\ring_{\cal A}$:
  \begin{enumerate}
    \item\label{lc:one} $\tilde 0 - 0$; $\tilde 1 - 1$;
    \item\label{lc:two} $\tilde{-1} + 1$ if $-1 \in \Crat({\cal A})$;
    \item\label{lc:three} $\tilde p+\tilde q-1$, where $p,q \in \Crat({\cal A})$, $p + q = 1$;
    \item\label{lc:four} $\tilde p\tilde q-1$, where $p,q \in \Crat({\cal A})$, $pq = 1$;
    \item\label{lc:five} $\tilde p\tilde q\tilde r-1$, where $p,q,r \in \Crat({\cal A})$, $pqr = 1$, and
    \begin{align}
      \begin{bmatrix}1 & 1 & 1\\ 1 & p & q^{-1}
      \end{bmatrix} \minorof A
    \end{align}
    for some $A \in {\cal A}$.
  \end{enumerate}
\end{definition}

The following result is, essentially, \cite[Lemma 5.12]{PZ08lift}. However, the changes in the definition above require \cite[Theorem 3.5]{PZ08lift} to be restated in terms of cross ratios for a proof; see \cite[Theorem 4.3.3]{vZ09}.
\begin{theorem}\label{thm:liftclass}
  Let $\parf$ be a partial field, let ${\cal A}$ be a set of $\parf$-matrices, and let $M$ be a matroid. If $M = M[I\: A]$ for some $A \in {\cal A}$ then $M$ is $\lift_{\cal A}\parf$-representable.
\end{theorem}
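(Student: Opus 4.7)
The plan is to construct an explicit $\parf'$-representation of $M$, where $\parf' := \lift_{\cal A}\parf$, by lifting an entry-by-entry description of some $A \in {\cal A}$ with $M = M[I\: A]$. First I would observe that the relations generating $I_{\cal A}$ are designed exactly so that the map $\tilde p \mapsto p$ extends to a partial-field homomorphism $\phi: \parf' \to \parf$; in particular, $\parf'$ is nontrivial whenever $\parf$ is (which is the only non-trivial case since otherwise $\mathcal{B}_A = \emptyset$).

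Next, pick $A \in {\cal A}$ with $M = M[I\: A]$ and, by Lemma~\ref{lem:bipproperties}\eqref{eq:bipscaling}, scale $A$ to be $T$-normalized for a maximal spanning forest $T$ of $\bip(A)$, so that $A_{xy} = 1$ on every tree edge. By Lemma~\ref{lem:signature}\eqref{sgn:det}, for every non-tree edge $xy$ with $A_{xy} \neq 0$ we have $A_{xy} = \pm \sigma_A(C_{xy})$ where $C_{xy}$ is the fundamental cycle in $T + xy$. When $C_{xy}$ is induced, Corollary~\ref{cor:signaturefund} places $\sigma_A(C_{xy})$ in $\Crat(A) \subseteq \Crat({\cal A})$, so the corresponding symbol $\tilde p$ is available in $\parf'$. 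For a non-induced $C_{xy}$ one argues by induction on the length of the cycle: any chord in $\bip(A)$ splits $C_{xy}$ into two strictly shorter cycles whose signatures, via Definition~\ref{def:sig}, combine (up to sign and up to a factor involving the chord entry) to $\sigma_A(C_{xy})$. This expresses every $A_{xy}$ as a signed monomial in cross ratios, allowing one to define $\tilde A_{xy}$ as the corresponding signed monomial in the symbols $\tilde p$ (and $\tilde A_{xy} = 0$ resp.\ $\tilde A_{xy} = 1$ for zero entries resp.\ tree edges).

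The heart of the argument — and the main obstacle — is verifying that $\tilde A$ is a $\parf'$-matrix, i.e., that every subdeterminant lies in $\parf' \cup \{0\}$. By induction on size together with Lemma~\ref{lem:detpivot} this reduces to verifying $2\times 2$ and $2\times 3$ subdeterminants: a generic $2\times 2$ subdeterminant, after scaling, takes the form $1 - \tilde p$, and relation~\eqref{lc:three} in Definition~\ref{def:liftclass} supplies $1 - \tilde p = \widetilde{1-p} \in \parf'$; relation~\eqref{lc:four} handles the consistency of cross ratios under the six-element orbit of Lemma~\ref{lem:assocdesc}; and relation~\eqref{lc:five} is precisely what is needed to close the computation on those $2\times 3$ patterns that arise after a pivot and thus govern how pivoting on $\tilde A$ produces new entries that must still be expressible via the symbols. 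Tracking these signatures across pivots, using Lemma~\ref{lem:signature}\eqref{sgn:pivot}, is where the delicate bookkeeping lies, and this is the step for which \cite[Theorem~3.5]{PZ08lift} (restated in terms of cross ratios as in \cite[Theorem~4.3.3]{vZ09}) is invoked.

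Once $\tilde A$ is known to be a $\parf'$-matrix, the conclusion is immediate: by construction $\phi(\tilde A) = A$, so Proposition~\ref{prop:hom} together with Corollary~\ref{cor:hom} gives $M[I\: \tilde A] = M[I\: \phi(\tilde A)] = M[I\: A] = M$, proving that $M$ is $\lift_{\cal A}\parf$-representable.
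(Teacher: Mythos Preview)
The paper does not supply a self-contained proof of this theorem; it simply records that the result is essentially \cite[Lemma~5.12]{PZ08lift}, with the caveat that \cite[Theorem~3.5]{PZ08lift} must be restated in terms of cross ratios as in \cite[Theorem~4.3.3]{vZ09}. Your sketch follows precisely that route---normalize $A$, lift each entry to a signed monomial in the symbols $\tilde p$, and then verify that the lifted matrix is a $\parf'$-matrix using the relations \eqref{lc:three}--\eqref{lc:five} of Definition~\ref{def:liftclass}---and you even cite the same external references for the hard step. So your proposal is correct and coincides with the approach the paper points to.
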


\subsection{Sub-partial fields}\label{ssec:subpf}
$(\ring',\group')$ is a sub-partial field of $(\ring,\group)$ if $\ring'$ is a subring of $\ring$ and $\group'$ is a subgroup of $\group$ with $-1 \in \group'$.
\begin{definition}
  Let $\parf = (\ring, \group)$ be a partial field, and let $S \subseteq \parf^*$. Then
\begin{align}
\parf[S] := (\ring,\langle S\cup \{-1\}\rangle).
\end{align}
\end{definition}
We say that a sub-partial field $(\ring',\group')$ of $(\ring,\group)$ is \emph{induced} if there exists a subring $\ring''\subseteq \ring'$ such that $\group' = \group\cap \ring''$.
If $\parf'$ is an induced sub-partial field of $\parf$ then
\begin{align}\label{eq:funclosed}
  \fun(\parf') = \fun(\parf)\cap \parf'.
\end{align}
Not every sub-partial field is induced. Consider, for example, $\cyclo_2[\alpha, 1-\alpha] \cong U_1$. We have $\alpha^2\in \fun(\cyclo_2)$ and $\alpha^2 \in \uniform_1$, but $\alpha^2 \not\in \fun(\uniform_1)$.

\begin{definition}
  Let $\parf, \parf'$ be partial fields with $\parf'\subseteq \parf$, and let $A$ be a $\parf$-matrix. We say that $A$ is a \emph{scaled $\parf'$-matrix} if $A \sim A'$ for some $\parf'$-matrix $A'$.
\end{definition}
Normalization plays an important role:
\begin{lemma}
  If $A$ is a scaled $\parf'$-matrix and $A$ is normalized, then $A$ is a $\parf'$-matrix.
\end{lemma}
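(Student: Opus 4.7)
The plan is to invoke the uniqueness part of Lemma~\ref{lem:bipproperties}\eqref{eq:bipscaling} to pin down $A$ inside its scaling-equivalence class, and then to show that the rescaling taking the given $\parf'$-representative of that class to $A$ itself uses only scalars in $\parf'^*$. Since scaling by elements of $\parf'^*$ preserves both membership of entries in $\parf'$ and membership of subdeterminants in $\parf'$, this will give the conclusion.

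More concretely, let $A'$ be a $\parf'$-matrix with $A \sim A'$. Since scaling does not change the zero pattern, $\bip(A') = \bip(A)$, so the normalizing spanning forest $T$ for $A$ is also a spanning forest of $\bip(A')$, and $A'_e \in \parf'^*$ for every $e \in T$. Using the tree structure of $T$, I want to construct diagonal matrices $D_X, D_Y$ (indexed by the row and column sets of $A$) with entries in $\parf'^*$ such that the matrix $A'' := D_X A' D_Y$ satisfies $A''_e = 1$ for every $e \in T$. This is done component by component of $\bip(A')$: pick a root vertex in each component, set its diagonal entry to $1 \in \parf'^*$, and then propagate along $T$ by the rule that if $xy \in T$ and the scalar at $x$ has already been fixed to some $c_x \in \parf'^*$, then the scalar at $y$ is forced to be $(c_x A'_{xy})^{-1}$, which again lies in $\parf'^*$ because $\parf'^*$ is a group. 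This propagation terminates because $T$ is a forest.

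By construction $A''$ is $T$-normalized, and $A''\sim A'\sim A$. The uniqueness statement in Lemma~\ref{lem:bipproperties}\eqref{eq:bipscaling} (applied with all $p_i = 1$) then forces $A'' = A$. Finally, for any square submatrix $A[X_0, Y_0]$, one has
\begin{align}
  \det(A[X_0,Y_0]) \;=\; \Bigl(\prod_{x\in X_0}(D_X)_{xx}\Bigr)\Bigl(\prod_{y \in Y_0}(D_Y)_{yy}\Bigr)\det(A'[X_0,Y_0]),
\end{align}
and the right-hand side is a product of elements of $\parf'^* \cup \{0\} = \parf'$, hence lies in $\parf'$. Thus every subdeterminant of $A$ lies in $\parf'$, so $A$ is a $\parf'$-matrix.

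The only place where care is needed is the inductive scalar propagation: if one tried to fix the scalars arbitrarily one could leave $\parf'$, so the argument has to start from the normalized matrix $A$ (whose values on $T$ are $1 \in \parf'^*$) and walk along $T$ using only divisions by entries of $A'$ on $T$, which are guaranteed to lie in $\parf'^*$. This is essentially the only obstacle, and it is resolved by the fact that $T$ is acyclic together with the fact that $\parf'^*$ is closed under products and inverses.
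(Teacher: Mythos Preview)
Your proof is correct and follows essentially the same route as the paper's: both take a $\parf'$-matrix $A'\sim A$, produce a $T$-normalized $A''\sim A'$ using only $\parf'^*$-scalars, and then invoke the uniqueness in Lemma~\ref{lem:bipproperties}\eqref{eq:bipscaling} to conclude $A''=A$. The paper is simply terser: it applies Lemma~\ref{lem:bipproperties}\eqref{eq:bipscaling} directly over $\parf'$ to get the $T$-normalized $\parf'$-matrix $A''$ (which absorbs your explicit tree-propagation and your final subdeterminant check into Proposition~\ref{prop:pmatops}), whereas you unpack both of these steps by hand.
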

\begin{proof}
  Let $T$ be a normalizing spanning forest for $A$, and let $A'\sim A$ be a $\parf'$-matrix. By Lemma~\ref{lem:bipproperties}\eqref{eq:bipscaling} there exists a $T$-normalized $\parf'$-matrix $A'' \sim A'$. But by the same lemma, $A'' = A$.
\end{proof}
\begin{lemma}\label{lem:entrynotinpprime}
  Let $\parf, \parf'$ be partial fields such that $\parf'$ is an induced sub-partial field of $\parf$. Let $A$ be a $\parf$-matrix such that all entries of $A$ are in $\parf'$. Then $A$ is a $\parf'$-matrix.
\end{lemma}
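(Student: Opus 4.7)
The plan is to unfold the definition of an induced sub-partial field and then observe that determinants respect the ambient subring, so the only thing left to check is the ``nonzero implies in $\group'$'' part, which is precisely what being induced buys us.

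More concretely, write $\parf = (\ring,\group)$ and $\parf' = (\ring',\group')$, and let $\ring''\subseteq \ring'$ be a subring witnessing that $\parf'$ is induced, so $\group' = \group\cap \ring''$. First I would verify that every entry of $A$ lies in $\ring''$: by hypothesis each entry lies in $\parf' = \group'\cup\{0\}\subseteq \ring''\cup\{0\}\subseteq \ring''$ (the last inclusion uses that $\ring''$ is a subring and hence contains $0$). Since $\ring''$ is closed under addition and multiplication, the Leibniz expansion of the determinant of any square submatrix of $A$ produces an element of $\ring''$.

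Next, because $A$ is a $\parf$-matrix, every such subdeterminant $d$ lies in $\parf = \group\cup\{0\}$. Combining the two observations gives
\begin{align}
  d \;\in\; (\group\cup\{0\})\cap\ring'' \;=\; (\group\cap\ring'')\cup\{0\} \;=\; \group'\cup\{0\} \;=\; \parf',
\end{align}
where the middle equality uses the induced hypothesis $\group' = \group\cap \ring''$. This is exactly the condition that $A$ be a $\parf'$-matrix, so the lemma follows.

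There is no real obstacle here; the only subtle point is recognising that one cannot simply conclude ``$d \in \ring'$ implies $d \in \parf'$''—that would fail for a non-induced sub-partial field, as the example $\cyclo_2[\alpha,1-\alpha]\cong \uniform_1$ mentioned immediately above shows. The induced hypothesis is used precisely to rule out this failure mode via the identity $\group' = \group\cap\ring''$.
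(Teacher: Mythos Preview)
Your proof is correct and takes a genuinely different, more direct route than the paper. The paper first deduces from \eqref{eq:funclosed} that if $p,q\in\parf'$ and $p+q\in\parf$ then $p+q\in\parf'$, and then argues by pivoting: since a pivot over an entry of $A$ produces entries of the form $A_{uv}-A_{xy}^{-1}A_{uy}A_{xv}$, this closure-under-sums fact keeps all entries in $\parf'$, and Lemma~\ref{lem:detpivot} reduces each subdeterminant to a product of such entries. Your argument bypasses both the fundamental-element machinery and the pivot induction: you work directly in the witnessing subring $\ring''$, use the Leibniz formula to see that every subdeterminant lies in $\ring''$, and then intersect with $\group\cup\{0\}$ using $\group'=\group\cap\ring''$. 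This is shorter and makes the role of the induced hypothesis completely transparent; the paper's approach, on the other hand, isolates the additive closure property $p,q\in\parf',\ p+q\in\parf\Rightarrow p+q\in\parf'$ as a reusable fact and stays within the partial-field formalism rather than dropping to the ambient ring.
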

\begin{proof}
  From \eqref{eq:funclosed} it is straightforward to deduce that, if $p,q \in \parf'$ and $p+q \in \parf$, then $p+q \in \parf'$. Combined with the definition of a pivot and Lemma~\ref{lem:detpivot} the result now follows easily.
\end{proof}
The following theorem will be used in Section~\ref{sec:pfdesign}.
\begin{theorem}\label{thm:crossratpf}
  Let $A$ be an $X\times Y$ $\parf$-matrix. Then $A$ is a scaled $\parf[\crat(A)]$-matrix.
\end{theorem}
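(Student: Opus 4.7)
My plan is to show that a $T$-normalization of $A$ is itself a $\parf'$-matrix, where $\parf':=\parf[\crat(A)]$. Pick a maximal spanning forest $T$ of $\bip(A)$ and, by Lemma~\ref{lem:bipproperties}\eqref{eq:bipscaling}, let $A'\sim A$ be the unique $T$-normalized matrix. It then suffices to verify that every entry and every subdeterminant of $A'$ lies in $\parf'\cup\{0\}$.

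For entries: tree entries of $A'$ equal $1$, and for a non-tree edge $xy$ I examine its fundamental cycle $C$ in $T+xy$. If $C$ is induced in $\bip(A)$, Lemma~\ref{lem:signature}\eqref{sgn:det} gives $A'_{xy}=\pm\sigma_A(C)$, and Corollary~\ref{cor:signaturefund} places $\sigma_A(C)$ in $\crat(A)$. If $C$ has a chord $e$, I split $C$ at $e$ into two shorter cycles $C_1,C_2$ sharing only $e$ and check by direct computation that $\sigma_A(C)=-\sigma_A(C_1)\sigma_A(C_2)$: the two traversals of $e$ in $C_1,C_2$ contribute reciprocals that cancel, and the sign factor changes because $|V(C_1)|/2+|V(C_2)|/2=|V(C)|/2+1$. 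Iterating on further chords expresses $\sigma_A(C)$ as a signed product of signatures of induced subcycles of $\bip(A)$, each in $\crat(A)$, so $A'_{xy}\in\langle\crat(A)\cup\{-1\}\rangle$.

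For the subdeterminants I prove the following auxiliary claim by induction on $k$: if $D$ is any $\parf$-matrix with entries in $\parf'\cup\{0\}$ and $\crat(D)\subseteq\crat(A)$, then every $k\times k$ subdeterminant of $D$ is in $\parf'\cup\{0\}$. For $k=1,2$ this is direct; when $k=2$ either $D$ has a zero entry, so $\det D$ is a product of at most two entries in $\parf'$, or $\bip(D)$ is a $4$-cycle, automatically induced in the bipartite setting, and Lemma~\ref{lem:signature}\eqref{sgn:det} combined with the closure of $\crat(A)$ under associates (Lemma~\ref{lem:assocdesc}) gives $\det D\in\crat(A)\cup\{0\}$. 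For $k\geq 3$, pick a nonzero entry $D_{x_0y_0}$ and use Lemma~\ref{lem:detpivot} to write $\det D=\pm D_{x_0y_0}\cdot\det(D^{x_0y_0}-\{x_0,y_0\})$. Every entry of $D^{x_0y_0}$ is, by the pivot formula, either a ratio of two entries of $D$ or a $2\times 2$ subdeterminant of $D$ divided by $D_{x_0y_0}$, so by the $k\leq 2$ cases all entries of $D^{x_0y_0}$ remain in $\parf'\cup\{0\}$; moreover $\crat(D^{x_0y_0})\subseteq\crat(D)\subseteq\crat(A)$ by Lemma~\ref{lem:cratminor}. The inductive hypothesis then applies to $D^{x_0y_0}-\{x_0,y_0\}$, giving $\det D\in\parf'\cup\{0\}$. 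Specializing the claim to $D=A'$ finishes the proof. The main obstacle is the chord-decomposition argument in the entry analysis: getting the sign and the parity bookkeeping right for non-induced cycles is the most delicate part; afterwards, the pivot-based induction is essentially algebraic manipulation of already established pieces.
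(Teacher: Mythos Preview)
Your proof is correct and shares the paper's overall architecture: normalize, show all entries lie in $\parf':=\parf[\crat(A)]$, then show all subdeterminants do. The tactical choices differ in both halves.

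For the entries, the paper avoids your chord-decomposition entirely: it takes a non-$\parf'$ edge $xy$ together with a \emph{shortest} $x$--$y$ path $P$ inside the subgraph of $\parf'$-entries (which contains $T$); minimality forces $C=P\cup xy$ to be induced, so Corollary~\ref{cor:signaturefund} applies directly and yields a contradiction. Your route via fundamental cycles and iterated chord-splitting is longer but perfectly valid; the identity $\sigma_A(C)=-\sigma_A(C_1)\sigma_A(C_2)$ you state is correct, with exactly the parity count you give. One small presentational point: you use $A'_{xy}=\pm\sigma_A(C)$ even when $C$ is not induced, whereas Lemma~\ref{lem:signature}\eqref{sgn:det} is stated only for induced cycles. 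The relation still holds straight from the definition of $\sigma$ (all other edges of $C$ are tree edges with entry~$1$ in $A'$), but you should say one word about this rather than cite the lemma.

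For the subdeterminants, the paper argues by minimal counterexample and first re-normalizes so that every edge incident with the pivot entry equals~$1$ (its Claim~\ref{cl:Tnormclaim}), then pivots and appeals to minimality. Your auxiliary claim---inducting on $k$ uniformly over all $\parf$-matrices with entries in $\parf'$ and cross ratios contained in $\crat(A)$---sidesteps that re-normalization and is arguably cleaner; the key observation that the entries of $D^{x_0y_0}$ stay in $\parf'$ because they are ratios of $2\times 2$ subdeterminants of $D$ by a nonzero entry is exactly right. A minor imprecision: in your $k=2$ case the determinant is not literally in $\crat(A)\cup\{0\}$ but is a product of entries (in $\parf'$) times an element of $\crat(A)\cup\{0\}$, which is still in $\parf'\cup\{0\}$ and is all you need.
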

\begin{proof}
  Let $A$ be a counterexample with $|X|+|Y|$ minimal, and define $\parf' := \parf[\crat(A)]$. Without loss of generality we assume that $A$ is normalized with normalizing spanning forest $T$.
  \begin{claim}\label{cl:Tnormclaim}
    If every entry of $A$ is in $\parf'$ and $A' \sim A$ is $T'$-normalized for some maximal spanning forest $T'$ then every entry of $A'$ is in $\parf'$.
  \end{claim}
  \begin{subproof}
    We prove this for the case $T' = (T\setminus xy)\cup x'y'$ for edges $xy, x'y'$ with $x,x' \in X$ and $y,y' \in Y$. The claim follows by induction. Without loss of generality assume $T, T'$ are trees. Let $X_1\cup Y_1, X_2\cup Y_2$ be the components of $T\setminus e$ such that $x \in X_1, y \in Y_2$. Let $p := A_{x'y'}$. Then $A'$ is the matrix obtained from $A$ by multiplying all entries in $A[X,Y_2]$ by $p^{-1}$ and all entries in $A[X_2,Y]$ by $p$. Since $p \in \parf'$ the claim follows.
  \end{subproof}
  \begin{claim}Every entry of $A$ is in $\parf'$.
  \end{claim}
  \begin{subproof}
    Suppose this is not the case. Let $H$ be the subgraph of $\bip(A)$ consisting of all edges $x'y'$ such that $A_{x'y'} \in \parf'$. Let $xy$ be an edge of $\bip(A)\setminus H$, i.e. $p := A_{xy} \in \parf\setminus\parf'$. Clearly $1 \in \parf'$, so $T\subseteq H$. Therefore $H$ contains an $x-y$ path $P$. Choose $xy$ and $P$ such that $P$ has minimum length. Then $C := P\cup xy$ is an induced cycle of $\bip(A)$. By Corollary \ref{cor:signaturefund}, $\sigma_A(C) \in \crat(A)$. By Definition \ref{def:sig} we have $\sigma_A(C) = q p$ for some $q \in \parf'$. But then $p = q^{-1} \sigma_A(C) \in \parf'$, a contradiction.
  \end{subproof}
  Suppose $A$ has a square submatrix $A'$ such that $\det(A') \not\in\parf'$. Since $|X|+|Y|$ is minimal and we can extend a maximal spanning forest of $A'$ to a maximal spanning forest of $A$, we have that $A = A'$. Observe that $A$ can not be a $2\times 2$ matrix, since all possible determinants of such matrices are in $\parf'$ by definition. Pick an edge $xy$ such that $A_{xy}\neq 0$. By Claim \ref{cl:Tnormclaim} we may assume that the normalizing spanning forest $T$ of $A$ contains all edges $xy'$ such that $A_{xy'} \neq 0$ and $x'y$ such that $A_{x'y} \neq 0$. Consider $A^{xy}$, the matrix obtained from $A$ by pivoting over $xy$. All entries of this matrix are in $\parf'$. By Lemma~\ref{lem:detpivot} we have $\det(A) = (-1)^{x+y}\det(A^{xy}-\{x,y\})$. The latter is the determinant of a strictly smaller matrix which is, by induction, a $\parf'$-matrix, a contradiction.
\end{proof}
\begin{corollary}
  $A$ is a scaled $\parf'$-matrix if and only if $\crat(A)\subseteq\parf'$.
\end{corollary}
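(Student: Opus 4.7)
The plan is to dispatch the two implications separately, with the nontrivial direction being essentially a direct appeal to Theorem~\ref{thm:crossratpf}. Throughout, fix a sub-partial field $\parf'$ of $\parf$, say $\parf' = (\ring', \group')$, and recall that $\parf' = \group' \cup \{0\}$ as a set.

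For the ``only if'' direction, suppose $A \sim A'$ for some $\parf'$-matrix $A'$. I would first argue that $\crat(A) = \crat(A')$. Scaling rows and columns by nonzero elements of $\parf^*$ is one of the allowed operations in Definition~\ref{def:matops}, and it is invertible by scaling with the reciprocals, so $A' \minorof A$ and $A \minorof A'$. Lemma~\ref{lem:cratminor} then yields inclusions in both directions, giving the equality. Next, every element $p \in \crat(A')$ witnesses $\left[\begin{smallmatrix}1 & 1\\ p & 1\end{smallmatrix}\right] \minorof A'$, which by Proposition~\ref{prop:pmatops} is again a $\parf'$-matrix; in particular the entry $p$ lies in $\parf'$. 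Hence $\crat(A) = \crat(A') \subseteq \parf'$.

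For the ``if'' direction, suppose $\crat(A) \subseteq \parf'$. By Theorem~\ref{thm:crossratpf}, $A$ is a scaled $\parf[\crat(A)]$-matrix, so there is some $A' \sim A$ that is a $\parf[\crat(A)]$-matrix. By definition $\parf[\crat(A)]^* = \langle \crat(A) \cup \{-1\}\rangle$. Since $-1 \in \group'$ and $\crat(A) \setminus \{0\} \subseteq \group'$, and $\group'$ is a multiplicative group, the subgroup $\langle \crat(A)\cup\{-1\}\rangle$ is contained in $\group'$. Consequently every entry and every subdeterminant of $A'$ lies in $\group' \cup \{0\} = \parf'$; using Lemma~\ref{lem:entrynotinpprime} (or simply noting that $\ring'$ inherits its arithmetic from $\ring$), $A'$ is in fact a $\parf'$-matrix, and therefore $A$ is a scaled $\parf'$-matrix.

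There isn't really a hard step here: the argument is a bookkeeping exercise in what ``scaled $\parf'$-matrix'' and ``cross ratio'' mean, packaged around Theorem~\ref{thm:crossratpf}. The only point that might require a moment of care is the observation that scaling-equivalence preserves the cross-ratio set (used in both directions), which follows immediately from Lemma~\ref{lem:cratminor} once one notes that scaling is both a $\minorof$-operation and invertible by another scaling.
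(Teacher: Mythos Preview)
Your proof is correct and matches the paper's approach, which treats the corollary as immediate from Theorem~\ref{thm:crossratpf} and gives no explicit argument. One small remark: the appeal to Lemma~\ref{lem:entrynotinpprime} in the ``if'' direction is superfluous and would impose an ``induced'' hypothesis that the corollary does not carry; you have already shown that every subdeterminant of $A'$ lies in $\{0\}\cup\langle\crat(A)\cup\{-1\}\rangle\subseteq\parf'$, which is exactly the definition of $A'$ being a $\parf'$-matrix, so nothing further is needed there.
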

Clearly $\parf[\crat(A)]$ is the smallest partial field $\parf'\subseteq \parf$ such that $A$ is a scaled $\parf'$-matrix. As a corollary we have the following (which was stated without proof as Proposition~5.4 in \cite{PZ08lift}).
\begin{corollary}\label{cor:closureispf}
  If a matroid $M$ is representable over a partial field $\parf$, then $M$ is representable over $\parf[\fun(\parf)]$.
\end{corollary}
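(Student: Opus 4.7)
The plan is to deduce this directly from Theorem~\ref{thm:crossratpf} by showing that the cross ratios of any $\parf$-matrix lie inside $\fun(\parf)$. Since scaling a matrix does not change the matroid it represents, if we can exhibit an $r \times E$ $\parf[\fun(\parf)]$-matrix scaling-equivalent to any given $\parf$-representation of $M$, we are done.

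Concretely, I would first take an $r \times E$ $\parf$-matrix $A$ with $M = M[A]$, which exists by hypothesis. By Theorem~\ref{thm:crossratpf}, $A$ is a scaled $\parf[\crat(A)]$-matrix, so there is a matrix $A' \sim A$ that is a $\parf[\crat(A)]$-matrix. Since $A' \sim A$ we have $M[A'] = M[A] = M$, so it suffices to show that $\parf[\crat(A)] \subseteq \parf[\fun(\parf)]$, i.e., that every cross ratio of $A$ is a fundamental element of $\parf$.

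For the inclusion $\crat(A) \subseteq \fun(\parf)$, I would argue straight from the definition: if $p \in \crat(A)$ then $\left[\begin{smallmatrix}1 & 1\\ p & 1\end{smallmatrix}\right]\minorof A$, and by Proposition~\ref{prop:pmatops} this $2\times 2$ matrix is itself a $\parf$-matrix. Its determinant is $1-p$, which must therefore lie in $\parf$; hence $p \in \fun(\parf)$ by definition. (Equivalently, one can observe that this was already noted in Corollary~\ref{cor:signaturefund}.) Thus $\crat(A) \subseteq \fun(\parf)$, giving $\langle \crat(A) \cup \{-1\} \rangle \subseteq \langle \fun(\parf) \cup \{-1\} \rangle$ inside $\ring$, so $\parf[\crat(A)] \subseteq \parf[\fun(\parf)]$ and $A'$ is a $\parf[\fun(\parf)]$-matrix, completing the argument.

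There is no real obstacle here: the substance of the result is entirely absorbed by Theorem~\ref{thm:crossratpf}, and the only thing left to verify is the essentially tautological inclusion $\crat(A) \subseteq \fun(\parf)$.
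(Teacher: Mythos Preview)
Your argument is correct and follows exactly the route the paper intends: the corollary is stated immediately after Theorem~\ref{thm:crossratpf} (and its unnamed corollary) without an explicit proof, precisely because it drops out from $\crat(A)\subseteq\fun(\parf)$ together with Theorem~\ref{thm:crossratpf}. Your verification of the inclusion via Proposition~\ref{prop:pmatops} is the standard one, and the rest is bookkeeping.
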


\subsection{Connectivity}
Let $M$ be a matroid with ground set $E$. For $Z \subseteq E$, define the connectivity function $\lambda_M(Z) := \rank(Z) + \rank(E-Z) - \rank(E)$. A partition of the ground set $(Z_1,Z_2)$ is a \emph{$k$-separation} if $|Z_1|,|Z_2|\geq k$ and $\lambda_M(Z_1) < k$. A $k$-separation is \emph{exact} if $\lambda_M(Z_1) = k-1$. A matroid is \emph{$k$-connected} if it has no $k'$-separation for any $k'<k$, and it is \emph{connected} if it is 2-connected.

We now translate the concept of connectivity into our language of matrices. We say that a matrix $A$ is \emph{$k$-connected} if $M[I\: A]$ is $k$-connected. We define $\lambda_A := \lambda_{M[I\: A]}$. The following lemma gives a characterization of the connectivity function in terms of the ranks of certain submatrices of $A$.
\begin{lemma}[\citet{TruI}]\label{lem:matrixconn}
  Suppose $A$ is an $(X_1\cup X_2)\times(Y_1\cup Y_2)$ $\parf$-matrix (where $X_1, X_2, Y_1, Y_2$ are pairwise disjoint). Then
  \begin{align}
    \lambda_A(X_1\cup Y_1) = \rank(A[X_1,Y_2]) + \rank(A[X_2,Y_1]).
  \end{align}
\end{lemma}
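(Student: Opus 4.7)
Write $E := X_1\cup X_2\cup Y_1\cup Y_2$ and $M := M[I\: A]$, so $\lambda_A = \lambda_M$. Set $Z := X_1\cup Y_1$. I will compute $\rank_M(Z)$, $\rank_M(E\setminus Z)$, and $\rank_M(E)$ separately and plug them into $\lambda_M(Z) = \rank_M(Z)+\rank_M(E\setminus Z)-\rank_M(E)$.

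Because the rank function of $M[A]$ is determined by which square submatrices of $[I\: A]$ have nonzero determinant, and because we may (as in the proof of Theorem~\ref{thm:matrixmatroid}) fix a partial-field homomorphism $\phi$ from $\parf = (\ring,\group)$ to a field $\field = \ring/I$ for some maximal ideal $I$, Proposition~\ref{prop:hom} ensures that $\rank_M$ agrees with the column rank of $\phi([I\: A])$ over $\field$. So I may work with ordinary linear algebra over $\field$, with the only caveat that one has to keep track of which entries originally lived in $\parf$. For the purposes of this lemma only ranks (equivalently, matroid rank, equivalently, whether certain minors are nonzero) are being compared, so the reduction is clean.

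Over $\field$, the columns of $[I\: A]$ indexed by $X_1$ are the standard basis vectors supported on the rows $X_1$; they are independent and span exactly the coordinate subspace on $X_1$. Quotienting by this subspace kills the rows $X_1$ of the columns indexed by $Y_1$, leaving the matrix $A[X_2,Y_1]$. Hence
\begin{align}
  \rank_M(Z) = |X_1| + \rank(A[X_2,Y_1]).
\end{align}
By the symmetric argument applied to $E\setminus Z = X_2\cup Y_2$,
\begin{align}
  \rank_M(E\setminus Z) = |X_2| + \rank(A[X_1,Y_2]).
\end{align}
Finally, $\rank_M(E) = |X_1|+|X_2|$ since the first $|X_1|+|X_2|$ columns of $[I\: A]$ already form an identity matrix of that size.

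Substituting these three identities into the definition of $\lambda_M(Z)$, the terms $|X_1|$ and $|X_2|$ cancel and we obtain
\begin{align}
  \lambda_A(X_1\cup Y_1) = \rank(A[X_1,Y_2]) + \rank(A[X_2,Y_1]),
\end{align}
as required. The only nontrivial point is the ``quotient'' step computing $\rank_M(Z)$, which is a standard block-matrix argument; the main obstacle, which the homomorphism to a field sidesteps, is justifying that ranks of submatrices behave as they would over a field when only partial addition is available.
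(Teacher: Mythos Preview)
Your argument is correct and is essentially the standard proof of this formula. Note that the paper does not give its own proof of this lemma; it is stated with a citation to Truemper and used as a black box. The reduction you make via a homomorphism to a field (as in the proof of Theorem~\ref{thm:matrixmatroid}, invoking Proposition~\ref{prop:hom}) is exactly the right way to import the ordinary linear-algebra computation of $\rank_M(X_i\cup Y_i) = |X_i| + \rank(A[X_{3-i},Y_i])$ into the partial-field setting, and after that the identity drops out of the definition of $\lambda_M$.
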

For the proof of the Confinement Theorem we need a more detailed understanding of separations. The following definitions are taken from \citet{GGK}. Our notation is different because we define the concepts only for representation matrices, but it is close to that of \citet{GHW05}. \citet{TruIII} discusses the same concepts, and also gives a very detailed analysis of the structure of the resulting matrices. Let $A$ be an $X\times Y$ $\parf$-matrix, and let $A' := A[E']$ for some $E' \subseteq X\cup Y$. Suppose $(Z_1',Z_2')$ is a $k$-separation of $A'$. We say that this $k$-separation is \emph{induced} in $A$ if there exists a $k$-separation $(Z_1,Z_2)$ of $A$ with $Z_1' \subseteq Z_1$ and $Z_2' \subseteq Z_2$.
\begin{definition}\label{def:blockseq}
A \emph{blocking sequence} for $(Z_1',Z_2')$ is a sequence of elements $v_1, \ldots, v_t$ of $E\setminus E'$ such that
\begin{enumerate}
  \item\label{bls:first} $\lambda_{A[E'\cup v_1]}(Z_1') = k$;
  \item\label{bls:middle} $\lambda_{A[E'\cup \{v_i,v_{i+1}\}]}(Z_1'\cup v_i) = k$ for $i = 1, \ldots, t-1$;
  \item\label{bls:last} $\lambda_{A[E'\cup v_t]}(Z_1'\cup v_t) = k$;
  \item No proper subsequence of $v_1, \ldots, v_t$ satisfies the first three properties.
\end{enumerate}
\end{definition}
We need the following results, which can be found in both \citet{GGK} and \citet{TruIII}:
\begin{lemma}\label{lem:blockseq}
  Let $(Z_1',Z_2')$ be an exact $k$-separation of a submatrix $A[E']$ of $A$. Exactly one of the following holds:
  \begin{enumerate}
    \item There exists a blocking sequence for $(Z_1',Z_2')$;
    \item $(Z_1',Z_2')$ is induced.
  \end{enumerate}
\end{lemma}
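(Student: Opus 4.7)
The plan is to prove the two alternatives are mutually exclusive and exhaustive, treating each direction separately. The underlying tool in both directions is the submodularity of the connectivity function $\lambda_A$ together with the rank description of Lemma~\ref{lem:matrixconn}.

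For mutual exclusivity, I would suppose for contradiction that $(Z_1',Z_2')$ is induced by a $k$-separation $(Z_1,Z_2)$ of $A$, and that $v_1,\ldots,v_t$ is a blocking sequence. Assign each $v_i$ the side $s_i \in \{1,2\}$ for which $v_i \in Z_{s_i}$. Condition \eqref{bls:first} of Definition~\ref{def:blockseq} forces $v_1 \in Z_2$ (since $\lambda_{A[E'\cup v_1]}(Z_1' \cup v_1)$ equals $\lambda_A$ restricted to this submatrix, and $v_1 \in Z_1$ together with $(Z_1,Z_2)$ being a $k$-separation would give $\lambda_{A[E'\cup v_1]}(Z_1'\cup v_1) \leq k-1$, not $k$). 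Symmetrically, \eqref{bls:last} forces $v_t \in Z_1$. Hence the side sequence $s_1,\ldots,s_t$ switches at least once, say $s_i = 2, s_{i+1} = 1$. Applying the same argument to \eqref{bls:middle} at step $i$, and combining the resulting rank equations via submodularity of $\lambda_A$ with the global inequality $\lambda_A(Z_1) \leq k-1$, one derives that $v_i,\ldots,v_{i+1}$ can be excised from the sequence, contradicting minimality \eqref{bls:first}--\eqref{bls:last} (property~(iv) in the Definition).

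For the existence direction, suppose no $k$-separation of $A$ extends $(Z_1',Z_2')$. I would construct the sequence greedily. Because the separation is not induced, the partition $(Z_1'\cup (E\setminus E'), Z_2')$ cannot be a $k$-separation, and shrinking the first side by one element at a time, a first obstruction produces an element $v_1 \in E\setminus E'$ with $\lambda_{A[E'\cup v_1]}(Z_1') = k$. Iteratively, given $v_1,\ldots,v_i$ satisfying \eqref{bls:first}--\eqref{bls:middle}, either $\lambda_{A[E'\cup v_i]}(Z_1' \cup v_i) = k$ and we stop, having achieved \eqref{bls:last}, or the non-inducedness again produces $v_{i+1}$ with $\lambda_{A[E'\cup \{v_i,v_{i+1}\}]}(Z_1'\cup v_i) = k$. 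Termination follows from $|E\setminus E'| < \infty$: if the process failed to terminate, iterating across all of $E\setminus E'$ would contradict non-inducedness. Finally, replace the sequence with any shortest subsequence still satisfying \eqref{bls:first}--\eqref{bls:last} to ensure~(iv).

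The main obstacle is the submodular bookkeeping in the mutual-exclusivity argument: tracking the rank contributions of the submatrices $A[Z_1\cap\ldots]$ and $A[Z_2\cap\ldots]$ along a switching pair $v_i,v_{i+1}$ and showing that they can be absorbed into the fixed $k$-separation $(Z_1,Z_2)$ requires careful use of Lemma~\ref{lem:matrixconn} together with the standard $\lambda$-submodularity identity $\lambda(S) + \lambda(T) \geq \lambda(S\cup T) + \lambda(S\cap T)$. Since the result is a direct reformulation of a well-known lemma of \citet{GGK} and \citet{TruIII}, I would keep the writeup brief and point the reader to those sources for the combinatorial details.
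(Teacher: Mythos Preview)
The paper does not prove this lemma; it is quoted without proof from \citet{GGK} and \citet{TruIII}, so there is nothing to compare your argument against on the paper's side. Your overall strategy---monotonicity of $\lambda$ under taking minors for mutual exclusivity, and a greedy construction for existence---is the standard one used in those references.

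That said, your mutual-exclusivity sketch has the sides reversed and cites the wrong quantity. Condition~\eqref{bls:first} of Definition~\ref{def:blockseq} concerns $\lambda_{A[E'\cup v_1]}(Z_1')$, not $\lambda_{A[E'\cup v_1]}(Z_1'\cup v_1)$. If $v_1\in Z_2$, then the partition $(Z_1',\,Z_2'\cup v_1)$ of $E'\cup v_1$ is induced by $(Z_1,Z_2)$, so monotonicity gives $\lambda_{A[E'\cup v_1]}(Z_1')\le k-1$, contradicting~\eqref{bls:first}. Hence $v_1\in Z_1$, not $Z_2$ as you wrote; symmetrically~\eqref{bls:last} forces $v_t\in Z_2$. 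At a switch $v_i\in Z_1$, $v_{i+1}\in Z_2$, condition~\eqref{bls:middle} then yields an immediate contradiction (the partition $(Z_1'\cup v_i,\,Z_2'\cup v_{i+1})$ is again induced by $(Z_1,Z_2)$), so no excision or appeal to minimality is needed. Your existence sketch is plausible but loose; since the paper itself defers to the literature here, simply citing \citet{GGK} or \citet{TruIII} is adequate.
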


\begin{lemma}\label{lem:blockseqalternating}If $v_1, \ldots, v_t$ is a blocking sequence, then $v_i\in X$ implies $v_{i+1} \in Y$ and $v_i \in Y$ implies $v_{i+1} \in X$ for $i = 1, \ldots, t-1$.
\end{lemma}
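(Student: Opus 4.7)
My plan is to argue by contradiction, using Lemma~\ref{lem:matrixconn} to split the middle-condition \eqref{bls:middle} into a sum of two rank terms and then exhibit a strictly shorter blocking sequence. Suppose, for contradiction, that $v_i,v_{i+1}\in X$; the case $v_i,v_{i+1}\in Y$ would then follow by transposing $A$ and repeating the argument.

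Write $X_j' := Z_j'\cap X$ and $Y_j' := Z_j'\cap Y$ for $j\in\{1,2\}$. Since $(Z_1',Z_2')$ is an exact $k$-separation of $A[E']$, Lemma~\ref{lem:matrixconn} gives $\rank(A[X_1',Y_2'])+\rank(A[X_2',Y_1'])=k-1$. Applying the same lemma to $A[E'\cup\{v_i,v_{i+1}\}]$ under the partition $(Z_1'\cup v_i, Z_2'\cup v_{i+1})$, condition \eqref{bls:middle} rewrites as
\begin{align*}
\rank(A[X_1'\cup v_i,Y_2'])+\rank(A[X_2'\cup v_{i+1},Y_1'])=k.
\end{align*}
Since appending a single row changes rank by $0$ or $1$, exactly one of these two terms must exceed its $E'$-counterpart by $1$ while the other remains unchanged.

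I would then split on which term jumps. If the first term jumps, a third application of Lemma~\ref{lem:matrixconn} yields $\lambda_{A[E'\cup v_i]}(Z_1'\cup v_i)=k$, so $v_i$ satisfies \eqref{bls:last}; combined with the inherited \eqref{bls:first} for $v_1$ and \eqref{bls:middle} for $j<i$, this exhibits $v_1,\ldots,v_i$ as a strictly shorter sequence meeting the first three properties. If instead the second term jumps, then $\lambda_{A[E'\cup v_{i+1}]}(Z_1')=k$, so $v_{i+1}$ satisfies \eqref{bls:first}, and the tail $v_{i+1},\ldots,v_t$ is a strictly shorter sequence meeting the first three properties. Either alternative contradicts the minimality clause of Definition~\ref{def:blockseq}.

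The main (and essentially only) obstacle I anticipate is bookkeeping: correctly identifying which of the two rank-increments has occurred and therefore which end of the sequence to truncate. Once that matching is pinned down, every condition required of a blocking sequence transfers to the chosen truncation for free, and the minimality clause delivers the contradiction.
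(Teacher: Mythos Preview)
Your argument is correct and is essentially the standard proof; the paper itself does not prove this lemma but cites \citet{GGK} and \citet{TruIII}, where the same rank-splitting via Lemma~\ref{lem:matrixconn} followed by truncation to a shorter subsequence is used.
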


\section{The Confinement Theorem}\label{sec:conf}
\begin{definition}\label{def:confinement}
    Let $\parf,\parf'$ be partial fields with $\parf'\subseteq\parf$, $B$ a $\parf'$-matrix, and $M$ a $\parf$-representable matroid. Then $B$ \emph{confines} $M$ if, for all $\parf$-matrices $A$ such that $M = M[I\: A]$ and $B \minorof A$, $A$ is a scaled $\parf'$-matrix.
\end{definition}
\begin{definition}
  Let $\parf, \parf'$ be partial fields with $\parf'\subseteq\parf$, and $N, M$ matroids such that $N \minorof M$. Then $N$ \emph{confines} $M$ if $B$ confines $M$ for every $\parf'$-matrix $B$ with $N = M[I\: B]$.
\end{definition}
Note that if $B$ confines $M$, then every $\parf'$-matrix $B'$ strongly equivalent to $B$ confines $M$, and $B^T$ confines $M^*$.

The following theorem reduces verifying whether $B$ confines a matroid $M$ to a finite check, provided that $M$ and $B$ are 3-connected and $\parf'$ is induced.
\begin{theorem}[Confinement Theorem]\label{thm:confinement}
  Let $\parf, \parf'$ be partial fields such that
  $\parf' \subseteq \parf$ and $\parf'$ is induced.
  Let $B$ be a 3-connected scaled $\parf'$-matrix. Let $A$ be a 3-connected $\parf$-matrix with $B$ as a submatrix. Then exactly one of the following is true:
  \begin{enumerate}
    \item \label{eq:stablemat} $A$ is a scaled $\parf'$-matrix;
    \item \label{eq:notstablemat} $A$ has a 3-connected minor $A'$ with rows $X'$, columns $Y'$, such that
    \begin{itemize}
      \item $A'$ is not a scaled $\parf'$-matrix.
      \item $B$ is isomorphic to $A'-U$ for some $U$ with $|U\cap X'|\leq 1,|U\cap Y'|\leq 1$;
      \item If $B$ is isomorphic to $A'-\{x,y\}$ then at least one of $A'-x, A'-y$ is 3-connected.
    \end{itemize}
  \end{enumerate}
\end{theorem}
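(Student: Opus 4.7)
Assume statement (1) fails: $A$ is not a scaled $\parf'$-matrix; the task is to produce the minor $A'$ of (2). Consider the class $\mathcal{S}$ of all minor matrices $A'$ of $A$ such that $A'$ is 3-connected, some submatrix of $A'$ is isomorphic to $B$, and $A'$ is not a scaled $\parf'$-matrix. Since $A$ itself lies in $\mathcal{S}$, this class is nonempty; pick $A' \in \mathcal{S}$ with $|X'| + |Y'|$ minimum. Identify the displayed copy of $B$ as $A'[X_B, Y_B]$ and set $U := (X' \cup Y') \setminus (X_B \cup Y_B)$. Since $B$ is scaled $\parf'$ while $A'$ is not, $|U| \geq 1$. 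The goal is to show $|U \cap X'| \leq 1$ and $|U \cap Y'| \leq 1$, together with the 3-connectivity condition when $|U| = 2$.

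The main engine will be Seymour's Splitter Theorem applied to the matroid pair $(M[I\: A'], N)$, where $N := M[I\: B]$. Outside the wheel/whirl exceptional case, there exists $e \in E(M[I\: A']) \setminus E(N)$ such that $M[I\: A'] \delete e$ or $M[I\: A'] \contract e$ is 3-connected and still has $N$ as a minor. Because $e$ lies outside the rows and columns of $B$, this matroid minor can be realized by a matrix minor $A''$ of $A'$ that still exhibits $B$ as a submatrix; if $e$ is of the wrong row/column type to be removed directly, a single pivot of $A'$ at a nonzero entry \emph{outside} the rows and columns of $B$ switches $e$ to the correct type without disturbing the copy of $B$. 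By minimality of $A'$, every such $A''$ must be a scaled $\parf'$-matrix—so deleting or contracting $e$ must destroy the failure of confinement.

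To bound $|U|$, I localize that failure. By Theorem~\ref{thm:crossratpf}, $A'$ not being scaled $\parf'$ is equivalent to $\crat(A') \not\subseteq \parf'$; fix $p \in \crat(A') \setminus \parf'$, witnessed by a minor $\left[\begin{smallmatrix}1 & 1 \\ p & 1\end{smallmatrix}\right]$ of $A'$. This $2 \times 2$ minor lives in two rows and two columns of some pivot-equivalent matrix $\tilde{A}'$—the \emph{bad rows} and \emph{bad columns}. If $|U| \geq 3$, then $U$ contains an element $e$ that (after at most one controlled pivot outside $B$) lies outside both the rows/columns of $B$ and the bad rows/columns of $\tilde{A}'$; deleting or contracting such an $e$ preserves the bad $2 \times 2$ minor, so the resulting $A''$ is still not scaled $\parf'$. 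Coupling this with a Splitter-type choice of $e$ preserving 3-connectivity yields a strictly smaller element of $\mathcal{S}$, contradicting minimality. A parallel localization argument rules out the case where $U$ consists of two rows or two columns, forcing $|U \cap X'| \leq 1$ and $|U \cap Y'| \leq 1$.

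The final 3-connectivity clause, for the case $B \cong A' - \{x, y\}$ with $x \in X'$ and $y \in Y'$, is handled via blocking sequences. If neither $A' - x$ nor $A' - y$ were 3-connected, each would carry an exact 2-separation; by Lemma~\ref{lem:blockseq} each such 2-separation either extends to a 2-separation of $A'$ (directly contradicting 3-connectivity) or admits a blocking sequence whose vertices alternate between rows and columns by Lemma~\ref{lem:blockseqalternating}. Concatenating the two blocking sequences, using $x$ and $y$ as the endpoints, yields a 2-separation of $A'$ itself—the sought contradiction. The wheel/whirl exception to the Splitter Theorem is dealt with by direct structural inspection: these matroids admit only smaller wheels/whirls as proper 3-connected minors, and the bound $|U| \leq 2$ is verified explicitly on the short list of resulting excesses. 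The chief technical obstacle throughout is the simultaneous bookkeeping of pivots so that the displayed copy of $B$, the bad $2 \times 2$ witness, and the Splitter-chosen element $e$ can all be tracked; Lemma~\ref{lem:signature} and Lemma~\ref{lem:bipproperties} provide the tools to manage this coordination.
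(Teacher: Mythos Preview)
Your approach has a genuine gap at its core. The Splitter Theorem hands you \emph{some} element $e$ whose removal preserves 3-connectivity and the $N$-minor, but you do not get to choose which one. Your argument then requires that this same $e$ also avoid the ``bad rows and columns'' carrying the offending cross ratio $p$; nothing guarantees these two constraints are simultaneously satisfiable. Worse, the $2\times 2$ minor witnessing $p \in \crat(A')\setminus\parf'$ may only be reachable from $A'$ via a sequence of pivots that pass through rows or columns of $X_B\cup Y_B$, so the very notion of ``bad rows and columns of $\tilde A'$'' relative to the fixed displayed copy of $B$ is not well-defined. A related problem: when $e$ is a column to be contracted, you assert that a pivot ``at a nonzero entry outside the rows and columns of $B$'' will swap its type---but every nonzero entry in column $e$ might lie in a row of $X_B$, in which case any pivot destroys the displayed $B$.

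Your blocking-sequence argument for the final 3-connectivity clause is also backwards: a blocking sequence certifies that a 2-separation of a submatrix does \emph{not} extend to the ambient matrix (Lemma~\ref{lem:blockseq}); concatenating two of them cannot manufacture a 2-separation of $A'$.

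The paper's proof takes a completely different route and never invokes the Splitter Theorem. It fixes a normalizing spanning tree $T$ extending one for $B$, and tracks a single bad \emph{entry} $A_{xy}\in\parf\setminus\parf'$ together with its distance $d_A(xy)$ to $X_0\cup Y_0$ in the bipartite graph $\bip(A)$. The heart of the argument (the long claim in the proof) shows by explicit pivot-and-renormalize computations, using Lemmas~\ref{lem:pivotii}--\ref{lem:scalecompatible}, that one can always find a bad triple with $d_A(xy)\le 1$. When a 2-separation obstructs this reduction, Lemma~\ref{lem:pathshortening} (proved via blocking sequences) repairs it. Once the bad entry is at distance $\le 1$, a short case analysis directly exhibits the small minor required by~(ii). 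The simultaneous bookkeeping you correctly identify as the chief obstacle is handled not by the Splitter Theorem but by this distance function and the $U$-tree machinery.
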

Let $\parf, \parf', B$ be as in Definition~\ref{def:confinement}. If there exists a $p \in \fun(\parf)\setminus\fun(\parf')$, then the 2-sum of $M[I\: B]$ with $U_{2,4}$ will have a representation by a $\parf$-matrix $A$ that has a minor $\left[\begin{smallmatrix}
   1 & 1\\
   p & 1
 \end{smallmatrix}\right]$, and therefore $A$ is not a scaled $\parf'$-matrix. It follows that the 3-connectivity requirements in the theorem are essential. The following technical lemma is used in the proof of Theorem~\ref{thm:confinement} to deal with 2-separations that may crop up in certain minors of $A$.
\begin{lemma}\label{lem:pathshortening}
  Let $\parf, \parf'$ be partial fields such that $\parf'$ is an induced sub-partial field of $\parf$. Let $A$ be a 3-connected $X\times Y$ $\parf$-matrix that has a submatrix $A' = A[V,W]$ such that
  \begin{enumerate}
    \item\label{pre1} $V = X_0 \cup x_1$, $W = Y_0 \cup \{y_1,y_2\}$ for some nonempty $X_0, Y_0$ and $x_1 \in X\setminus X_0, y_1, y_2 \in Y\setminus Y_0$;
    \item\label{pre2} $A[X_0, Y_0\cup\{y_1\}]$ is connected;
    \item\label{pre3} $A[X_0, Y_0\cup \{y_1\}]$ is a scaled $\parf'$-matrix;
    \item\label{pre4} $A'$ is not a scaled $\parf'$-matrix;
    \item\label{pre5} $\lambda_{A'} (X_0 \cup Y_0) = 1$.
  \end{enumerate}
  Then there exists an $\tilde X \times\tilde Y$ $\parf$-matrix $\tilde A$ strongly equivalent to $A$ with a submatrix $\tilde A' = \tilde A[\tilde V, \tilde W]$ such that
  \let\saveenumi\theenumi
  \renewcommand{\theenumi}{\Roman{enumi}}
  \begin{enumerate}
    \item\label{post1} $|\tilde V|= |V|$, $|\tilde W| \leq |W|$;
    \item\label{post2} $X_0 \subset \tilde V$, $Y_0 \subset \tilde W$, and $\tilde A[X_0, Y_0] = A[X_0, Y_0]$;
    \item\label{post3} There exists a $\tilde y_1 \in \tilde W\setminus Y_0$ such that $\tilde A[X_0, \tilde y_1]\cong A[X_0, y_1]$;
    \item\label{post4} $\tilde A'$ is not a scaled $\parf'$-matrix;
    \item\label{post5} $\lambda_{\tilde A'}(X_0\cup Y_0) \geq 2$.
  \end{enumerate}
  \let\theenumi\saveenumi
\end{lemma}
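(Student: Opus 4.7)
The plan is to induct on the length $t \geq 1$ of a shortest blocking sequence $v_1, \dots, v_t$ for the 2-separation $(X_0 \cup Y_0, \{x_1, y_1, y_2\})$ of $A' := A[V, W]$. Such a sequence exists by Lemma~\ref{lem:blockseq}, since the 3-connectivity of $A$ and hypothesis (v) preclude the separation from being induced in $A$. Before the induction I would extract structural information: Lemma~\ref{lem:matrixconn} together with (v) gives $\rank A[X_0, \{y_1, y_2\}] + \rank A[x_1, Y_0] = 1$, and (ii) forces $A[X_0, y_1] \neq 0$ (otherwise $y_1$ would be isolated in $\bip(A[X_0, Y_0 \cup y_1])$). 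Hence $\rank A[X_0, \{y_1, y_2\}] = 1$ and $A[x_1, Y_0] = 0$; in particular $A[X_0, y_2] = c \cdot A[X_0, y_1]$ for some $c \in \parf$.

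\textbf{Base case ($t = 1$).} After transposing if needed I may assume $v_1 \in Y$. Combining Definition~\ref{def:blockseq} with the preceding analysis yields $A_{x_1 v_1} \neq 0$ and $A[X_0, v_1]$ linearly independent from $A[X_0, y_1]$. I take $\tilde A := A$, $\tilde V := V$, and $\tilde W := Y_0 \cup \{y_1, v_1\}$ (letting $v_1$ replace $y_2$). Conditions (I)--(III) are immediate, and (V) follows from $\lambda_{\tilde A'}(X_0 \cup Y_0) = \rank A[X_0, \{y_1, v_1\}] + 0 = 2$. For (IV) I would argue by contradiction: if $\tilde A'$ were a scaled $\parf'$-matrix, then by extending the normalization of $A[X_0, Y_0 \cup y_1]$ via Lemma~\ref{lem:bipproperties}(\ref{eq:bipscaling}), all entries of $\tilde A'$ can be placed in $\parf'$. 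Applying Corollary~\ref{cor:signaturefund} to suitable induced cycles of $\bip(A[V, Y_0 \cup \{y_1, y_2, v_1\}])$ through $y_2$, and invoking Lemma~\ref{lem:entrynotinpprime} to close the resulting relations into $\parf'$, would force $c$ and $A_{x_1 y_2}/c$ into $\parf'$. But then $A'$ itself is a scaled $\parf'$-matrix, contradicting (iv).

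\textbf{Inductive step ($t \geq 2$).} By Lemma~\ref{lem:blockseqalternating} consecutive blocking-sequence elements lie on opposite sides of the bipartition. I would perform a standard blocking-sequence shortening pivot as in \citet{GGK} and \citet{TruIII}---on an entry outside $V \cup W$---to produce a strongly equivalent matrix $\tilde A_0$ whose analogous submatrix has a shorter blocking sequence. Crucially, the hypothesis $A[x_1, Y_0] = 0$ ensures that the cross terms in the pivot formula of Definition~\ref{def:pivot} do not perturb either the $X_0 \times Y_0$ block or the $y_1$-column, so hypotheses (i)--(v) continue to hold for $\tilde A_0$ at $V \cup W$. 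The inductive hypothesis applied to $\tilde A_0$ then delivers the required $\tilde A$.

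The main obstacle lies in condition (IV) of the base case. Assuming $\tilde A'$ is scaled $\parf'$ constrains only the entries of $A$ touching $Y_0 \cup \{y_1, v_1\}$, whereas the failure of (iv) may be traced to the column $A[V, y_2]$, which is outside $\tilde A'$. Bridging this gap requires a delicate cross-ratio/signature computation on cycles of $\bip(A)$ through $y_2$, coupling the unknown scalar $c$ and the entry $A_{x_1 y_2}$ to the $\parf'$-entries of $\tilde A'$. The assumption that $\parf'$ is \emph{induced} in $\parf$ is essential here: Lemma~\ref{lem:entrynotinpprime} supplies the closure $p + q \in \parf'$ for $p, q \in \parf'$ with $p + q \in \parf$, without which the cross-ratio relations cannot be assembled and pulled from $\parf$ into $\parf'$.
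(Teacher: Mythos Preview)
Your overall framework---blocking sequences and minimality of $t$---matches the paper, but there are genuine gaps. First, the reduction ``after transposing if needed I may assume $v_1 \in Y$'' is invalid: the lemma is asymmetric, with one extra row $x_1$ but two extra columns $y_1,y_2$, and the conclusions $|\tilde V|=|V|$, $|\tilde W|\le |W|$ are likewise asymmetric. Transposing $A'$ would leave you with two extra rows and one extra column, so hypothesis~\eqref{pre1} fails. The paper accordingly treats $v_t\in X$ and $v_t\in Y$ as two separate cases with genuinely different arguments; the $v_t\in X$ case in particular requires a pivot over $x_1y_2$ (an entry \emph{inside} $V\cup W$) that replaces the bad quadruple $(A,x_1,y_1,y_2)$ by $(\tilde A,y_2,y_1,x_1)$ before the shortening can proceed.

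Second, your handling of~(IV) in the base case is the wrong mechanism. You try to recover the bad entry in column $y_2$ from cross ratios of cycles through $y_2$, but $y_2$ is not in $\tilde A'$ and you give no concrete cycle or computation; you yourself flag this as the main obstacle. The paper avoids this entirely: before selecting $\tilde W$, it scales or pivots so that the bad entry $p\notin\parf'$ migrates into the column that will be kept. Concretely, in Case~II it first arranges $A_{x_1v_t}\notin\parf'$ (by scaling row $x_1$ by $p^{-1}$ if necessary, swapping the roles of $y_1,y_2$), so that $\tilde A'=A[X_0\cup x_1,\,Y_0\cup\{y_1,v_t\}]$ visibly fails to be a scaled $\parf'$-matrix. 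Similarly, in Case~I with $r/s\in\parf'$ the pivot over $x_1y_2$ produces a new bad entry $\frac{rp-s}{p-1}\notin\parf'$ in the right place. Finally, your inductive step assumes the shortening pivot can be taken on an entry outside $V\cup W$; the paper's proof shows this is not so---pivots over $x_1y_1$ and $x_1y_2$ are used, and these alter which elements play the roles of $x_1,y_1,y_2$ in the bad quadruple.
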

\begin{proof}
  Let $\parf$, $\parf'$, $A$, $X_0$, $Y_0$, $x_1$, $y_1$, $y_2$ be as in the lemma.
  We say that a quadruple $(\tilde A, \tilde x_1,\tilde y_1,\tilde y_2)$ is \emph{bad} if $\tilde A$ is strongly equivalent to $A$, Conditions~\eqref{post1}--\eqref{post4} hold with $\tilde V = X_0\cup \tilde x_1$ and $\tilde W = Y_0\cup \{\tilde y_1,\tilde y_2\}$, but $\lambda_{\tilde A'}(X_0\cup Y_0) =1$. Clearly $(A,x_1,y_1,y_2)$ is a bad quadruple.

  Let $(\tilde A, \tilde x_1, \tilde y_1, \tilde y_2)$ be a bad quadruple. Since $A$ is 3-connected, there exists a blocking sequence for the 2-separation $(X_0 \cup Y_0, \{\tilde x_1, \tilde y_1,\tilde y_2\})$ of $\tilde A[\tilde V \cup \tilde W]$. Suppose $(\tilde A, \tilde x_1, \tilde y_1, \tilde y_2)$ was chosen such that the length of a shortest blocking sequence $v_1, \ldots, v_t$ is as small as possible. Without loss of generality $(\tilde A, \tilde x_1, \tilde y_1, \tilde y_2) = (A,x_1,y_1,y_2)$.

  $A[X_0, y_2]$ cannot consist of only zeroes, because otherwise $A'$ could not be anything other than a scaled $\parf'$-matrix. By scaling we may assume that
  \begin{align}\label{eq:pathshortform}
    A' = \kbordermatrix{
           & Y_0 & y_1 & y_2 \\
       X_0 & A_0 & c & c\\
       x_1 & 0   & 1 & p
    },
  \end{align}
  with $X_0, Y_0$ nonempty, $p \not \in \parf'$, $c_i \in \parf'$ for all $i \in X_0$, and $c_i = 1$ for some $i \in X_0$. We will now analyze the blocking sequence $v_1, \ldots, v_t$.

  \paragraph{Case I.}Suppose $v_t \in X$. By Definition~\ref{def:blockseq}\eqref{bls:last} and Lemma~\ref{lem:matrixconn} we have $\rank(A[X_0\cup v_t, \{y_1,y_2\}]) = 2$. If $A_{v_ty_2} = 0$ then $A_{v_ty_1} \neq 0$. Since $(A, x_1, y_2,y_1)$ is a bad quadruple that also has $v_1, \ldots, v_t$ as blocking sequence, we may assume that $A_{v_ty_2} \neq 0$. Define $r := A_{v_ty_1}$ and $s := A_{v_ty_2}$. Then $r \neq s$.

  Suppose $r/s \not \in \parf'$. If $t > 1$ then $A_{v_ty} = 0$ for all $y \in Y_0$. But then $(A,v_t,y_1,y_2)$ is again a bad quadruple, and $v_1, \ldots, v_{t-1}$ is a blocking sequence for the 2-separation $(X_0 \cup Y_0, \{v_t,y_1,y_2\})$ of $A[X_0\cup v_t, Y_0 \cup \{y_1,y_2\}]$, contradicting our choice of $(A,x_1,y_1,y_2)$. If $t = 1$ then there is some $y \in Y_0$ such that $A_{v_ty} \neq 0$. Let $\tilde A$ be obtained from $A$ by multiplying row $v_t$ with $(A_{v_ty})^{-1}$. Then $A_{v_ty_i} \not\in \parf'$ for exactly one $i \in \{1,2\}$. Then $\tilde A$, $\tilde V := X_0 \cup v_t$, $\tilde W := Y_0 \cup y_i$ satisfy \eqref{post1}--\eqref{post5}.

  Therefore $r/s \in \parf'$. Consider the matrix $\tilde A$ obtained from $A^{x_1y_2}$ by scaling column $y_1$ by $(1-p^{-1})^{-1}$, column $x_1$ by $-p$, and row $y_2$ by $(1-p^{-1})$. Then
  \begin{align}
        \tilde A[X_0 \cup \{v_t,y_2\}, Y_0 \cup \{y_1,x_1\}] = \kbordermatrix{
           & Y_0 & y_1 & x_1 \\
       X_0 & A_0 & c & c\\
       v_t & d   & \frac{rp-s}{p-1} & s\\
       y_2 & 0   & 1 & 1-p
    }.
  \end{align}
  Clearly $(\tilde A, y_2, y_1, x_1)$ is a bad quadruple. Suppose $\frac{rp-s}{p-1} = q \in \parf'$. Then $(q-r)p = q-s$. But this is only possible if $q-r = q-s = 0$, contradicting the fact that $r \neq s$. The set $\{v_1, \ldots, v_{t}\}$ still forms a blocking sequence of this matrix. Hence we can apply the arguments of the previous case and obtain again a shorter blocking sequence.

  \paragraph{Case II.}Suppose $v_t \in Y$. Then $A_{x_1v_t} \neq 0$, again by Definition~\ref{def:blockseq}\eqref{bls:last} and Lemma~\ref{lem:matrixconn}. Suppose all entries of $A[X_0,v_t]$ are zero. Let $\tilde A$ be the matrix obtained from $A^{x_1y_1}$ by multiplying column $y_1$ with $-1$, column $y_2$ with $(1-p)^{-1}$, and row $x_1$ with $-1$. Then $(\tilde A, y_1,x_1,y_2)$ is a bad quadruple, $v_1, \ldots, v_t$ is a blocking sequence, and $\tilde A[X_0,v_t]$ is parallel to $A[X_0,y_1]$. Therefore we may assume that some entry of $A[X_0,v_t]$ is nonzero.

  If $A_{x_1v_t} \in \parf'$ then let $\tilde A$ be the matrix obtained from $A$ by scaling row $x_1$ by $p^{-1}$. Otherwise $\tilde A = A$. Then $(\tilde A, x_1, y_2, y_1)$ is again a bad quadruple, and $v_1, \ldots, v_t$ is still a blocking sequence. Hence we may assume that $A_{x_1v_t} \not \in \parf'$. Suppose $t > 1$. Since $v_1, \ldots, v_{t-1}$ is not a blocking sequence, we must have $A_{v_{t-1}y_1} = A_{v_{t-1}y_2}$. But then $v_1, \ldots, v_{t-1}$ is a blocking sequence for the 2-separation $(X_0 \cup Y_0, \{x_1,y_1,v_t\})$ of $A[X_0 \cup x_1, Y_0 \cup \{y_1,v_t\}]$. But $(A, x_1,y_1,v_t)$ is a bad quadruple, contradicting minimality of $v_1, \ldots, v_t$.

  Hence $t = 1$. But then $\rank(A[X_0, \{v_t,y_1,y_2\}]) = 2$ and therefore $A$, $\tilde V := X_0 \cup x_1$, $\tilde W := Y_0 \cup \{y_1,v_t\}$ satisfy \eqref{post1}--\eqref{post5}.
\end{proof}
Truemper \cite[{Theorem 13.2}]{TruIII} and \citet{GHW05} show that, in the worst case, a minimum blocking sequence for a 2-separation has size 5. The difference between that result and Lemma~\ref{lem:pathshortening} is that in our case the minor we wish to preserve is contained in one side of the separation. This is analogous to what happens in proofs of the Splitter Theorem.

We need three more preliminary results before proving Theorem~\ref{thm:confinement}.
The effect of a pivot over $xy$ is limited to entries having a distance close to that of $x$ and $y$. The following lemma makes this explicit.
\begin{lemma}\label{lem:pivotii}
  Let $A$ be an $X\times Y$ $\parf$-matrix, and let $d$ be the distance function of $\bip(A)$. Let $x \in X$, $y \in Y$ be such that $A_{xy} \neq 0$.
  Let $X' := \{x' \in X \mid d_{\bip(A)}(x',y) >1 \}$ and $Y' := \{y' \in Y \mid d_{\bip(A)}(x,y') > 1\}$. Then $A^{xy}[X',Y\setminus y] = A[X',Y\setminus y]$ and $A^{xy}[X\setminus x, Y'] = A[X\setminus x, Y']$.
\end{lemma}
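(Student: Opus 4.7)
The plan is to verify both equalities directly from the pivot formula in Definition~\ref{def:pivot}. The key observation is the following interpretation of $X'$ and $Y'$ in terms of zero entries of $A$: if $x' \in X'$ then $d_{\bip(A)}(x',y) > 1$, so in particular $x'y$ is not an edge of $\bip(A)$, hence $A_{x'y} = 0$. Symmetrically, if $y' \in Y'$ then $A_{xy'} = 0$. (Note that the case $x' = x$ is excluded by $d(x,y)=1$, and $y' = y$ is excluded similarly, so $X' \subseteq X\setminus x$ and $Y' \subseteq Y\setminus y$.)

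Granted this, the rest is a one-line check in each case. For any $x' \in X'$ and any $v \in Y\setminus y$, both indices lie in the ``otherwise'' case of Definition~\ref{def:pivot}, so
\begin{align}
  (A^{xy})_{x'v} = A_{x'v} - A_{xy}^{-1} A_{x'y} A_{xv} = A_{x'v},
\end{align}
since $A_{x'y}=0$. This yields $A^{xy}[X',Y\setminus y] = A[X',Y\setminus y]$. Symmetrically, for $u \in X\setminus x$ and $y' \in Y'$,
\begin{align}
  (A^{xy})_{uy'} = A_{uy'} - A_{xy}^{-1} A_{uy} A_{xy'} = A_{uy'},
\end{align}
since $A_{xy'}=0$, giving the second equality.

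There is no real obstacle here; the lemma is essentially a restatement of the locality that is already built into the pivot formula, once one observes that distance $>1$ from $y$ in $\bip(A)$ forces the corresponding entry of column $y$ to vanish (and dually for row $x$).
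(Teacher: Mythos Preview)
Your proof is correct and follows essentially the same approach as the paper's: observe that $d_{\bip(A)}(x',y)>1$ forces $A_{x'y}=0$ (and dually $A_{xy'}=0$), then read the result off the pivot formula in Definition~\ref{def:pivot}. Your version is simply more explicit in writing out the ``otherwise'' case of the formula.
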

\begin{proof}
  $A_{xy'} = 0$ whenever $d_{\bip(A)}(x,y') > 1$. Likewise, $A_{x'y} = 0$ whenever $d_{\bip(A)}(x',y) > 1$. The result follows immediately from Definition~\ref{def:pivot}.
\end{proof}
\begin{definition}
  Let $G = (V,E)$ be a connected graph, and let $U \subseteq V$ be such that $G[U]$ is connected. A \emph{$U$-tree} $T$ is a spanning tree for $G$ such that $T$ contains a shortest $v-U$ path for every $v \in V\setminus U$. If $T'$ is a spanning tree of $G[U]$ then $T$ is a \emph{$U$-tree extending $T'$} if $T$ is a $U$-tree and $T'\subseteq T$.
\end{definition}
\begin{lemma}\label{lem:Utree}
  Let $G = (V,E)$ be a connected graph, let $U\subseteq V$, and let $T$ be a $U$-tree for $G$.
  Let $x,y,y' \in V\setminus U$ such that $d_G(U,y) = d_G(U,y') = d_G(U,x) - 1$, $xy \in T$. Then $T' := (T \setminus xy)\cup xy'$ is a $U$-tree.
\end{lemma}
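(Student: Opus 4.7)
The plan is to verify the two defining properties of a $U$-tree for $T'$: first, that $T'$ is a spanning tree of $G$; second, that $T'$ contains a shortest $v$-$U$ path for every $v\in V\setminus U$. Observe that the lemma is only meaningful when $xy'\notin T$, since otherwise $T'$ would have strictly fewer edges than $T$; I will assume this throughout.

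For the spanning-tree property, the strategy is tree exchange: since $T$ is a tree and $xy'\notin T$, the graph $T+xy'$ contains a unique cycle, formed by $xy'$ together with the unique $T$-path $P$ from $y'$ to $x$, and removing any edge of this cycle yields a spanning tree. Hence it suffices to show $xy\in P$, or equivalently that $y$ and $y'$ lie in the same component $C_y$ of $T\setminus xy$. The $U$-tree property enters here through a length argument: the shortest $T$-path from $y'$ to $U$ has length $d_G(U,y')=d_G(U,y)$, and if it crossed $xy$ then splitting it at that edge would give length at least $d_T(y',x)+1+d_T(y,U)\geq 2+d_G(U,y)$, a contradiction with $d_G(U,y')=d_G(U,y)$. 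The same argument applies with $y$ in place of $y'$. Comparing the endpoints of these two shortest $T$-paths in $U$ then places $y$ and $y'$ into the same component of $T\setminus xy$.

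For the shortest-path property, let $C_x,C_y$ denote the components of $T\setminus xy$. For $v\in C_y$, the same length argument shows that the shortest $T$-path from $v$ to $U$ does not cross $xy$, hence it lies in $T\setminus xy\subseteq T'$ and remains a shortest $v$-$U$ path in $G$. For $v\in C_x$, the shortest $T$-path from $v$ to $U$ takes the form $v\to\cdots\to x\to y\to\cdots\to u^*$; in $T'$ one replaces the segment after $x$ with a shortest $T$-path from $y'$ to $U$, obtaining a $T'$-path of length $d_T(v,x)+1+d_G(U,y')=d_T(v,x)+1+d_G(U,y)=d_G(U,v)$. So this is a shortest $v$-$U$ path in $G$ as required.

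The main obstacle is the concluding step of the spanning-tree argument: it is not enough to know that the individual shortest $T$-paths from $y$ and from $y'$ to $U$ avoid $xy$; one must also rule out the possibility that $y$ and $y'$ sit on opposite sides of $xy$. Handling this configuration carefully, using the $U$-tree property to track where the shortest-path witnesses in $U$ can lie, is the heart of the proof.
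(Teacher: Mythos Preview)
Your plan matches the paper's: show that $y'$ lies in the component $C_y$ of $T\setminus xy$ containing $y$, conclude that $T'$ is a spanning tree, then verify the shortest-path property. The paper compresses the first step into the single assertion that $d_G(U,v)\geq d_G(U,x)$ for every $v$ in the component $W$ of $x$, whence $y'\notin W$ since $d_G(U,y')<d_G(U,x)$.

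The obstacle you isolate at the end is genuine, and it cannot be overcome: as stated, the lemma is false. Take $V=\{u^*,m,w_1,x,y',y,z,u'\}$ with $U=\{u^*,u'\}$; let $T$ consist of the path $u^*\,m\,w_1\,x\,y\,z\,u'$ together with the pendant edge $my'$, and set $E(G)=E(T)\cup\{u^*u',\,xy'\}$. Then $G[U]$ is connected, $T$ is a $U$-tree, $d_G(U,x)=3$ and $d_G(U,y)=d_G(U,y')=2$, $xy\in T$ and $xy'\in E(G)\setminus T$; yet $y'\in W=\{u^*,m,w_1,x,y'\}$ and $T'$ contains the cycle $x\,w_1\,m\,y'\,x$, so $T'$ is not even a tree. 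In this example $u^*\in U\cap W$ has $d_G(U,u^*)=0$, so the paper's intermediate inequality also fails. The hypothesis that is tacitly in force in every application in the paper is that $T$ \emph{extends} a spanning tree of $G[U]$, so that $T[U]$ is connected. Under that hypothesis $U$ lies in a single component of $T\setminus xy$; your length argument for $y$ shows this component is $C_y$, hence $U\cap W=\emptyset$. Then every $T$-path from a vertex of $W$ to $U$ must cross $xy$, giving $d_G(U,v)=d_T(v,U)\geq d_T(v,x)+1+d_G(U,y)\geq d_G(U,x)$ for all $v\in W$, so $y'\notin W$, and the remainder of your outline (and the paper's ``from which the result follows'') goes through cleanly.
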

\begin{proof}
  Let $W\subseteq V$ be the set of vertices of the component containing $x$ in $T\setminus xy$. For all $v \in W$, $d_G(U,v) \geq d_G(U,x)$. Therefore $y' \not \in W$ and $T'$ is a spanning tree of $\bip(A)$. Clearly $T'$ contains a shortest $U-x$ path, from which the result follows.
\end{proof}
\begin{lemma}\label{lem:scalecompatible}
  Let $A$ be a connected $X\times Y$ $\parf$-matrix, let $U \subseteq X\cup Y$, and let $T$ be a $U$-tree for $\bip(A)$. Let $x\in X\setminus U$, $y,y' \in Y$ be such that $d_{\bip(A)}(U,y) = d_{\bip(A)}(U,y') = d_{\bip(A)}(U,x) - 1$, $xy \in T$. Let $W$ be the set of vertices of the component containing $x$ in $T\setminus xy$. Suppose $A$ is $T$-normalized. If $A' \sim A$ is $((T\setminus xy)\cup xy')$-normalized, then $A'[X\setminus W, Y\setminus W] = A[X\setminus W, Y\setminus W]$.
\end{lemma}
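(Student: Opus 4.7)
The plan is to exhibit $A'$ explicitly and then invoke the uniqueness clause of Lemma~\ref{lem:bipproperties}\eqref{eq:bipscaling}. Set $p := A_{xy'}$, which is nonzero because $xy'$ must be an edge of $\bip(A)$ for $T' := (T\setminus xy)\cup xy'$ to be a spanning tree. Let $W_X := W\cap X$ and $W_Y := W\cap Y$. I will define a candidate $\hat A \sim A$ by multiplying row $u$ by $p^{-1}$ for every $u \in W_X$ and column $v$ by $p$ for every $v \in W_Y$, and scaling every other row and column by $1$. The goal is then to verify that $\hat A$ is $T'$-normalized, so that $\hat A = A'$ by uniqueness; the conclusion of the lemma is immediate from this, because rows indexed by $X\setminus W$ and columns indexed by $Y\setminus W$ have been multiplied by $1$.

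The first thing to check is that $y' \notin W$. Because $T$ is a $U$-tree, every vertex $v \in W$ satisfies $d_{\bip(A)}(U, v) \geq d_{\bip(A)}(U, x)$: the shortest $v$-$U$ path in $T$ for a descendant of $x$ must pass through $x$ and therefore cross the edge $xy$, so it uses at least one extra edge compared with the shortest $x$-$U$ path. Since $d_{\bip(A)}(U, y') = d_{\bip(A)}(U, x) - 1$, this forces $y' \notin W$. Combined with $x \in W_X$, one obtains $\hat A_{xy'} = p^{-1}\cdot 1\cdot A_{xy'} = 1$.

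Next I would check that $\hat A_{uv} = 1$ for every edge $uv \in T \cap T' = T \setminus xy$. The crucial observation is that $W$ is a component of $T\setminus xy$, so every such edge has both endpoints in $W$ or both in $V\setminus W$. In the first case the two scaling factors cancel and $\hat A_{uv} = A_{uv} = 1$; in the second case neither factor applies and again $\hat A_{uv} = A_{uv} = 1$. Together with $\hat A_{xy'} = 1$, this verifies that $\hat A$ is $T'$-normalized, and the uniqueness clause of Lemma~\ref{lem:bipproperties}\eqref{eq:bipscaling} identifies $\hat A$ with $A'$. The claim $A'[X\setminus W, Y\setminus W] = A[X\setminus W, Y\setminus W]$ then follows directly from the construction.

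The main obstacle, modest as it is, is the distance estimate $d_{\bip(A)}(U, v) \geq d_{\bip(A)}(U, x)$ for $v \in W$, which is what ultimately separates the scaled part from the unscaled part; once this is in hand, everything else is bookkeeping on how scaling factors interact with tree edges lying entirely on one side of the cut $(W, V\setminus W)$.
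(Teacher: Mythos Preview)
Your proof is correct and follows exactly the same approach as the paper: the paper's proof is the single sentence ``$A'$ is obtained from $A$ by scaling all rows in $X\cap W$ by $(A_{xy'})^{-1}$ and all columns in $Y\cap W$ by $A_{xy'}$,'' and you have simply supplied the verification (that $y'\notin W$, that tree edges stay normalized, and that uniqueness from Lemma~\ref{lem:bipproperties}\eqref{eq:bipscaling} applies) which the paper leaves implicit.
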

\begin{proof}
   $A'$ is obtained from $A$ by scaling all rows in $X\cap W$ by $(A_{xy'})^{-1}$ and all columns in $Y\cap W$ by $A_{xy'}$.
\end{proof}
\begin{proof}[Proof of Theorem~\ref{thm:confinement}]
\addtocounter{theorem}{-5}

Let $\parf,\parf'$ be partial fields such that $\parf'$ is an induced sub-partial field, and let $B$ be an $X_0\times Y_0$ $\parf'$-matrix. We may assume that $B$ is normalized, say with spanning tree $T_0$. Note that the theorem holds for $A, B$ if and only if it holds for $A^T, B^T$.
Suppose now that the theorem is false. Then there exists an $X\times Y$ $\parf$-matrix $A$ with the following properties:
\begin{itemize}
  \item $A$ is 3-connected;
  \item $X_0 \subseteq X$, $Y_0 \subseteq Y$, and $B = A[X_0,Y_0]$;
  \item Neither \eqref{eq:stablemat} nor \eqref{eq:notstablemat} holds.
\end{itemize}
We call such a matrix \emph{bad}. The following is clear:
\begin{claim}If $A$ is a bad matrix and $\tilde A$ is strongly equivalent to $A$ such that $\tilde A[X_0,Y_0] = B$, then $\tilde A$ is also bad.
\end{claim}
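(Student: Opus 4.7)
The claim asserts that if $A$ is a bad matrix and $\tilde A$ is strongly equivalent to $A$ with $\tilde A[X_0, Y_0] = B$, then $\tilde A$ is again bad. My plan is to verify each of the four conditions defining \emph{bad} in turn for $\tilde A$.

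Three of the conditions would transfer directly. Strong equivalence is a composition of scalings, row/column permutations, and pivots; each such operation preserves the represented matroid, so $M[I\: \tilde A] = M[I\: A]$ is 3-connected. The submatrix identity $\tilde A[X_0, Y_0] = B$ is supplied as part of the hypothesis. For the absence of a 3-connected minor witnessing case \eqref{eq:notstablemat}: any such witness $A'$ for $(\tilde A, B)$ would, via the chain $A' \minorof \tilde A \minorof A$, be a 3-connected minor of $A$ as well, satisfying the same constraints on $U$; this would contradict the badness of $A$.

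The remaining condition---that $\tilde A$ is not a scaled $\parf'$-matrix---I would deduce from the corollary to Theorem~\ref{thm:crossratpf}, which characterizes scaled $\parf'$-matrices as precisely those $\parf$-matrices $C$ with $\crat(C) \subseteq \parf'$. Because Lemma~\ref{lem:cratminor} shows that the cross-ratio set is monotone under the minor relation, and because strong equivalence yields both $A \minorof \tilde A$ and $\tilde A \minorof A$, we obtain $\crat(A) = \crat(\tilde A)$. Hence $A$ fails to be a scaled $\parf'$-matrix if and only if $\tilde A$ does, giving the last condition.

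I do not foresee a serious obstacle. The claim is essentially an assembly of preceding results: preservation of both the represented matroid and the minor relation under the ingredient operations of strong equivalence, the monotonicity of cross ratios from Lemma~\ref{lem:cratminor}, and the characterization of scaled $\parf'$-matrices via cross ratios obtained as a corollary of Theorem~\ref{thm:crossratpf}. This justifies why the author flags the claim as clear and places it at the very beginning of the proof: it permits all subsequent arguments to replace $A$ freely by any strongly equivalent matrix with the same submatrix at $X_0 \cup Y_0$.
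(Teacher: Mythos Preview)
Your proposal is correct. The paper offers no proof beyond the phrase ``The following is clear'', so there is no approach to compare against; your argument is a sound elaboration of why the claim holds. The one step that is not entirely trivial---that being a scaled $\parf'$-matrix is invariant under strong equivalence, which involves pivots and not just scalings---you handle cleanly via the cross-ratio characterization (the corollary to Theorem~\ref{thm:crossratpf}) together with Lemma~\ref{lem:cratminor}, and this is indeed the most direct route available from the preceding material.
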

We say that a triple $(A,T,xy)$ is a \emph{bad triple} if
 \begin{itemize}
   \item $A$ is bad;
   \item $T$ is an $(X_0\cup Y_0)$-tree extending $T_0$;
   \item $A$ is $T$-normalized;
   \item $x\in X$, $y \in Y$, and $A_{xy} \in \parf\setminus\parf'$.
 \end{itemize}
Since we assumed the existence of bad matrices, by Lemma~\ref{lem:entrynotinpprime} bad triples must also exist.

For $v \in X\cup Y$ we define $d_A(v) := d_{\bip(A)}(v,X_0\cup Y_0)$. If $xy$ is an edge of $\bip(A)$ then $d_A(xy) := \max\{d_A(x),d_A(y)\}$. If $xy$ is an edge of $\bip(A)$ then $|d_A(x) -d_A(y)| \leq 1$.
\begin{claim}
  There exists a bad triple $(A,T,xy)$ with $d_A(xy) \leq 1$.
\end{claim}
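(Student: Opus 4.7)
The strategy is a contradiction argument driven by minimality and a single tree pivot. Among all bad triples, choose $(A, T, xy)$ minimizing $d := d_A(xy)$, and suppose for a contradiction that $d \geq 2$. After transposing $A$ if necessary, assume $d_A(y) = d$; let $x'$ be the neighbor of $y$ on the unique $T$-path from $y$ to $U := X_0 \cup Y_0$, so $d_A(x') = d-1$, and let $y'$ be the next vertex along that path, so $d_A(y') = d-2$ (with $y' \in Y_0$ when $d = 2$). I pivot the tree edge $x'y$ to form $A' := A^{x'y}$ and argue that a bad triple $(A', T', e)$ with $d_{A'}(e) \leq d-1$ exists, contradicting minimality.

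First I verify that $A'$ is again a bad matrix containing $B$. The equality $A'[X_0, Y_0] = B$ follows because $d_A(y) \geq 2$ forces $A_{iy} = 0$ for every $i \in X_0$ (otherwise $iy$ would be an edge of $\bip(A)$, giving $d_A(y) \leq 1$), which kills the correction term in the pivot formula on $X_0 \times Y_0$. The matrix $A'$ is $3$-connected since it determines the same matroid as $A$. Finally, $A'$ cannot be a scaled $\parf'$-matrix, because a direct computation shows that scaling and pivoting commute up to diagonal scaling, so pivoting a $\parf'$-witness for $A'$ over $yx'$ (using Proposition~\ref{prop:pmatops}) would yield a scaled $\parf'$-witness for $A$ as well.

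Next I locate a bad entry of $A'$ close to $U$. By the minimality of $d$, every entry of $A$ at $\bip(A)$-distance less than $d$ from $U$ lies in $\parf'$. Using $A_{x'y} = A_{x'y'} = 1$ from $T$-normalization, the pivot formula (Definition~\ref{def:pivot}) gives
\begin{align*}
  A'_{xy'} \;=\; A_{xy'} - A_{xy} A_{x'y'} \;=\; A_{xy'} - p, \qquad p := A_{xy} \in \parf \setminus \parf'.
\end{align*}
In the case $d_A(x) = d-1$ one has $d_A(xy') = d - 1 < d$, so $A_{xy'} \in \parf'$, whence $A'_{xy'} \notin \parf'$ (since $\parf'$ is induced). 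In the remaining case $d_A(x) = d$, an analogous argument, distinguishing on whether $A_{xy'}$ itself lies in $\parf'$ and in the negative sub-case pivoting over the tree edge incident to $x$ towards $U$, produces a bad entry of $A'$ at distance at most $d-1$ as well.

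It remains to choose a normalizing $(X_0 \cup Y_0)$-tree $T'$ for $\bip(A')$ extending $T_0$, avoiding the edge carrying the identified bad entry, and consisting of $T$-edges of $A$ that survive the pivot (whose values remain $1$, by Lemma~\ref{lem:pivotii}, because the relevant row-endpoints lie at distance $\leq d-2$ from $U$ and hence are not adjacent to $y$ in $\bip(A)$). In particular, $d_{A'}(y') \leq d-2$, and combined with either the surviving $T$-path from $x$ to $U$ or the new fill-in edge $xx'$ of value $-p$, one obtains $d_{A'}(xy') \leq d-1$. After normalizing $A'$ with respect to $T'$, the normalization scalars are products of $\parf'$-entries, so the identified bad entry remains outside $\parf'$. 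This yields a bad triple with $d_{A'} \leq d-1$, contradicting the minimality of $d$. The main obstacle is the final bookkeeping: one must choose $T'$ so that no normalization scalar absorbs the identified bad entry into $\parf'$, which is arranged by routing $T'$ along surviving unit-valued tree edges of $T$ that Lemma~\ref{lem:pivotii} guarantees are unaffected by the pivot.
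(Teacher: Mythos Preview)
Your overall strategy—choose a minimum-distance bad triple and pivot to push a bad entry closer to $X_0\cup Y_0$—matches the paper's, but you pivot over the \emph{tree edge} $x'y$ whereas the paper pivots over the \emph{bad edge} $xy$ itself. That choice is where your argument breaks.

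The gap is in reaching $x$ with the new tree $T'$. You claim $T'$ can be built from surviving $T$-edges whose row-endpoints lie at distance $\leq d-2$ from $U$; but those edges do not reach the row $x$. In your case $d_A(x)=d-1$, the row $x$ is adjacent to $y$ (since $A_{xy}=p\neq 0$), so pivoting over $x'y$ alters every entry of row $x$: for any column $v$ with $d_A(v)\leq d-2$ one gets $A'_{xv}=A_{xv}-p\,A_{x'v}$, which lies outside $\parf'$ whenever $A_{x'v}\neq 0$. Nothing guarantees a neighbour $v$ of $x$ at distance $\leq d-2$ with $A_{x'v}=0$, and your fallback fill-in edge $xx'$ has value $-p\notin\parf'$ as well. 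Hence the $T'$-path from $x$ may be forced through a non-$\parf'$ edge, the row-scaling for $x$ is then not in $\parf'$, and you cannot conclude that the normalized entry at $xy'$ remains outside $\parf'$. The paper's pivot over $xy$ avoids exactly this: the new bad entry lands at $x'y'$ with $d_A(x')=d_A(y')=i-1$, and the $T$-paths from $x'$ and $y'$ toward $U$ use only columns at distance $\leq i-2$ (hence not adjacent to $x$), so by Lemma~\ref{lem:pivotii} they survive with value $1$ and the normalization scalars for $x',y'$ are trivial.

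Your case $d_A(x)=d_A(y)=d$ is only a gesture and faces the same obstruction. The paper handles the equal-distance and unequal-distance cases separately (via a secondary maximization of $|d_A(x)-d_A(y)|$), and the latter requires Lemma~\ref{lem:pathshortening} to break a possible rank-$1$ degeneracy between the two relevant columns before the pivot can be carried out; your outline does not confront that difficulty either.
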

\begin{subproof}
Let $(A,T,xy)$ be chosen among all bad triples such that $d_A(xy)$ is minimal, and after that such that $|d_A(x)-d_A(y)|$ is maximal. By transposing $A, B$ if necessary we may assume that $d_A(x) \geq d_A(y)$. For $i \geq 1$ we define $X_i := \{x \in X \mid d_A(x) = i\}$ and $Y_i := \{y \in Y \mid d_A(y) = i\}$. We also define $X_i^\leq := X_0 \cup \cdots \cup X_i$ and $Y_i^\leq := Y_0 \cup \cdots \cup Y_i$. Suppose $d_A(xy)> 1$. We distinguish two cases.
\paragraph{Case I.} Suppose $d_A(x)=d_A(y)=i$. If $X_{i-1} = \emptyset$ then $Y_{i}=\emptyset$, contradicting our choice of $y$. Since $A$ is normalized, $A_{xy'} = 1$ for some $y' \in Y_{i-1}$, and $A_{x'y} = 1$ for some $x' \in X_{i-1}$. Let $p := A_{xy}$ and $q := A_{x'y'}$. Then $q\in \parf'$.

Let $\tilde A$ be the matrix obtained from $A^{xy}$ by multiplying row $y$ with $p$ and column $x$ with $-p$.

Let $\tilde T$ be an $(X_0\cup Y_0)$-tree extending $T_0$ in $\bip(A^{xy})$, such that $uv \in \tilde T$ for all $uv \in T[(X\setminus x) \cup Y_{i-2}^\leq]$ and all $uv \in T[X_{i-2}^\leq \cup (Y\setminus y)]$. By Lemma~\ref{lem:pivotii} such a tree exists. Let $\tilde A \sim A^{xy}$ be $\tilde T$-normalized. By Lemma~\ref{lem:pivotii} and Lemma~\ref{lem:scalecompatible}, $\tilde A_{x'y'} = (A^{xy})_{x'y'}$. But $\tilde A_{x'y'} = q - p^{-1} \not \in \parf'$, so $(\tilde A, \tilde T, x'y')$ is a bad triple with $d_{\tilde A}(x'y') = i-1 < i$, a contradiction.

\paragraph{Case II.} Suppose $d_A(x) = i+1$, $d_A(y) = i$.
Since $A$ is normalized, $A_{xy'} = 1$ for some $y'\in Y$ with $d_A(y') = i$. If $\rank(A[X_i^\leq, \{y',y\}]) = 1$ then we apply Lemma~\ref{lem:pathshortening} with $A' = A[X_i^\leq\cup x, Y_{i-1}^\leq \cup \{ y' , y\}]$. If $|\tilde W| < |W|$ then $\tilde A[\tilde x_1, Y_0]$ has some nonzero entry. But then $(\tilde A, \tilde T, \tilde x_1\tilde y_1)$ would be a bad triple for some $(X_0\cup Y_0)$-tree $\tilde T$ with $d_{\tilde A}(\tilde x_1\tilde y_1) \leq i$, a contradiction. Therefore $\tilde W = Y_0 \cup \{\tilde y_1, \tilde y_2\}$ for some $\tilde y_1, \tilde y_2$, and $\rank(\tilde A[X_0, \{\tilde y_1, \tilde y_2\}]) = 2$. Now $\tilde A[X_0, \tilde W]$ must be a scaled $\parf'$-matrix, since $d_{\tilde A}(v) \leq i$ for all $v \in X_0 \cup \tilde W$.

It follows that we may assume that $(A,T,xy)$ were chosen such that $xy' \in T$ and $\rank(A[X_i^\leq, \{y',y\}]) = 2$.
Suppose there exists an $x_1 \in X_i^\leq$ with $d_A(x_1) = i-1$ such that $A_{x_1y} \neq 0$ and $A_{x_1y'} \neq 0$.
Again by Lemma~\ref{lem:scalecompatible} we may assume that $x_1y, x_1y' \in T$. Since
\begin{align}
  \rank(A[X'',\{y',y\}]) = 2,
\end{align}
there is a row $x_2\in X_i^\leq$ such that
\begin{align}
  A[\{x_1,x_2,x\},\{y',y\}] = \begin{bmatrix}1 & 1\\
                                                    r & s\\
                                                    1 & p
  \end{bmatrix}
\end{align}
with $r \neq s$ and $p \in \parf\setminus\parf'$. Consider $A^{xy}$. By Lemma~\ref{lem:pivotii} we have $d_{A^{xy}}(x_1) = i-1$ and $d_{A^{xy}}(y') = i$. By the same lemma, there is a spanning tree $T'$ of $\bip(A^{xy})$ with $yy', x_1y',x_1x \in T'$ and, for all $u \in X\setminus x$ and $v\in Y$ with $d_{A^{xy}}(v) \leq i-1$, $uv \in T'$ if and only if $uv \in T$. Let $A' \sim A^{xy}$ be $T'$-normalized. Then
\begin{align}
  A'[\{x_1,x_2,y\},\{y',x\}] = \begin{bmatrix}1 & 1\\
                                                    \frac{pr-s}{p-1} & s\\
                                                    1 & 1-p
  \end{bmatrix}
\end{align}
But $\frac{rp-s}{p-1} \not \in \parf'$. Therefore $(A',T', x_2y')$ is a bad triple, and $d_{A'}(x_2y') \leq i$, contradicting our choice of $(A,T,xy)$.
Therefore we cannot find an $x_1$ such that $A_{x_1y'} \neq 0$ and $A_{x_1y} \neq 0$. But in that case there exist $x_1,x_2$ with $d_A(x_1) = d_A(x_2) = i-1$ and $A_{x_1y'} \neq 0$, $A_{x_2y} \neq 0$. Again we may assume without loss of generality that $x_1y', x_2y, xy' \in T$. Then
\begin{align}
   A[\{x_1,x_2,x\},\{y',y\}] = \begin{bmatrix}1 & 0\\
                                                    0 & 1\\
                                                    1 & p
  \end{bmatrix}.
\end{align}
Again, consider $A^{xy}$. By Lemma~\ref{lem:pivotii} we have $d_{A^{xy}}(x_1) = d_{A^{xy}}(x_2) = i-1$ and $d_{A^{xy}}(y') = i$. By the same lemma, there is a spanning tree $T'$ of $\bip(A^{xy})$ with $yy', x_1y',x_2x \in T'$ and, for all $u \in X\setminus x$ and $v\in Y$ with $d_{A^{xy}}(v) \leq i-1$, $uv \in T'$ if and only if $uv \in T$. Let $A' \sim A^{xy}$ be $T'$-normalized. Then
\begin{align}
  A'[\{x_1,x_2,y\},\{y',x\}] = \begin{bmatrix}1 & 0\\
                                                    -p^{-1} & 1\\
                                                    1 & -p
  \end{bmatrix}
\end{align}
But then $(A',T', x_2y')$ is a bad triple, and $d_{A'}(x_2y') \leq i$, again contradicting our choice of $(A,T,xy)$.
\end{subproof}

Let $(A,T,xy)$ be a bad triple with $d_A(xy) = 1$.
\begin{claim}
  $d_A(x) = d_A(y) = 1$.
\end{claim}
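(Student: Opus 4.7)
The plan is as follows. Suppose for contradiction that $(d_A(x), d_A(y)) \neq (1,1)$. Since $d_A(xy) = 1$ and $|d_A(x)-d_A(y)| \leq 1$, we have $\{d_A(x), d_A(y)\} = \{0,1\}$. By the symmetry exchanging $(A,B) \leftrightarrow (A^T,B^T)$ noted at the start of the proof, I may assume $d_A(x) = 1$ and $d_A(y) = 0$, so $x \not\in X_0$ and $y \in Y_0$. Since $T$ is an $(X_0 \cup Y_0)$-tree extending $T_0$ and contains a shortest $x$--$(X_0 \cup Y_0)$ path, it contains an edge $xz$ with $z \in Y_0$; by $T$-normalization $A_{xz} = 1$. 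Because $A_{xy} \not\in \parf'$ while $1 \in \parf'$, we have $z \neq y$, and so row $x$ has two distinct nonzero entries in columns of $Y_0$, namely at $z$ and at $y$.

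Set $A' := A[X_0 \cup \{x\}, Y_0]$, a minor of $A$ obtained by deletion. I first show that $A'$ is not a scaled $\parf'$-matrix. Suppose for contradiction that some rescaling by $\{c_u\}_{u \in X_0 \cup x} \cup \{c_v\}_{v \in Y_0}$ in $\parf^*$ sends $A'$ into $\parf'$. Restricted to $(X_0, Y_0)$ this rescales $B$ into a $\parf'$-matrix, so $c_u c_v \in \parf'$ for every edge $uv$ of $\bip(B)$. Since $\bip(B)$ is connected (by 3-connectivity of $B$ and Lemma~\ref{lem:bipconn}) and $\parf'$ is induced, after multiplying one side of the bipartition by a common coset representative we may assume $c_u \in \parf'$ for all $u \in X_0 \cup Y_0$; this does not affect whether $A'$ becomes a $\parf'$-matrix. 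Then $A'_{xz} \mapsto c_x c_z \cdot 1 \in \parf'$ together with $c_z \in \parf'$ forces $c_x \in \parf'$, and subsequently $A'_{xy} \mapsto c_x c_y A_{xy} \in \parf'$ forces $A_{xy} \in \parf'$, a contradiction.

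Next I verify that $M[I\:A']$ is 3-connected. The contraction $M[I\:A']/x = M[I\:B]$ is 3-connected by hypothesis. Row $x$ of $A'$ has at least two nonzero entries in $Y_0$ (at $z$ and $y$), so the fundamental cocircuit of $x$ has size at least $3$, and hence $x$ lies in no 2-cocircuit of $M[I\:A']$. Since $B$ has no zero column (its bipartite graph has no isolated vertex by 2-connectivity), no column of $[I\:A']$ is parallel to the standard basis vector $e_x$, so $x$ lies in no 2-circuit of $M[I\:A']$ either. Invoking the standard fact that if $N/e$ is 3-connected and $e$ lies in no 2-circuit and no 2-cocircuit of $N$, then $N$ is 3-connected (see, e.g., \citet{oxley}), we conclude that $M[I\:A']$ is 3-connected. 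Thus $A'$ is a 3-connected minor of $A$, is not a scaled $\parf'$-matrix, and satisfies $B \cong A' - U$ for $U = \{x\}$ with $|U \cap X'| = 1$, $|U \cap Y'| = 0$; the clause about $|U|=2$ is vacuous. This exhibits condition~(\textit{ii}) of Theorem~\ref{thm:confinement}, contradicting the assumption that $A$ is bad.

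The main technical obstacle is verifying the 3-connectivity of $M[I\:A']$: the two distinct nonzero entries in row $x$ exclude small cocircuits through $x$, while the 3-connectivity of $B$ excludes small circuits through $x$, and together with a standard single-element-contraction criterion this yields 3-connectivity. The rest of the argument is a rigidity statement about scalings of a submatrix of a connected $\parf'$-matrix, which follows routinely from Lemma~\ref{lem:bipproperties} and the induced-ness of $\parf'$.
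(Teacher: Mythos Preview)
Your proof is correct and follows essentially the same approach as the paper: build the one-element extension $A'$ of $B$, show it is not a scaled $\parf'$-matrix and is 3-connected, and conclude that Case~\eqref{eq:notstablemat} of the theorem holds, contradicting badness. The only cosmetic difference is that you handle the transposed case (row at distance~$1$, column in $Y_0$) and supply considerably more detail---your coset argument for ``not scaled $\parf'$'' and your explicit 3-connectivity check via the single-element-contraction criterion are spelled out where the paper simply asserts that ``$A'$ satisfies the conditions of Case~\eqref{eq:notstablemat}.''
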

\begin{subproof}
Suppose that $x \in X_0, y \in Y_1$. Let $A' := A[X_0, Y_0\cup y]$. Then $A'[X_0,y]$ contains a 1, since $y$ is at distance 1 from $B$ therefore spanned by $T_1$. It also contains an entry equal to $p$, so it has at least two nonzero entries and cannot be a multiple of a column of $B$. It follows that $A'$ satisfies the conditions of Case~\eqref{eq:notstablemat} of the theorem, a contradiction.
\end{subproof}
Therefore $x \in X_1, y \in Y_1$. Consider the submatrix $A' := A[X_0 \cup x, Y_0 \cup y]$. Row $A_{xy_0} = 1$ for some  $y_0 \in Y_0$, $A_{x_0y} = 1$ for some $x_0 \in X_0$. Define $b := A[X_0,y]$ and $c := A[x,Y_0]$.
\begin{claim}
  Without loss of generality, $b$ is parallel to $A[X_0,y_0]$ for some $y_0 \in Y_0$ and $c$ is a unit vector with $A_{xy_0} = 1$.
\end{claim}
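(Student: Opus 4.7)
The strategy is to leverage the badness of $A$ on two intermediate ``half'' extensions of $B$ inside $A'$, and then collapse the resulting alternatives to the canonical form by the transposition symmetry together with an internal pivot of $B$.

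First I would apply the badness of $A$ to the single-column extension $A'_1 := A[X_0, Y_0 \cup y] = [B\; b]$. Since $B \cong A'_1 - \{y\}$ with $|U\cap X_0|=0$ and $|U\cap (Y_0\cup y)|=1$, the third bullet of case~\eqref{eq:notstablemat} is vacuous, so the failure of case~\eqref{eq:notstablemat} for the minor $A'_1$ leaves only two possibilities: either $M[I\: A'_1]$ fails to be 3-connected, or $A'_1$ is itself a scaled $\parf'$-matrix. Since $M[I\: B]$ is 3-connected, a single-element extension fails 3-connectivity precisely when it creates a parallel pair, so the first alternative forces $b$ to be a scalar multiple of some column $A[X_0,y_0]$ of $B$ or of a unit vector $e_{x_0}$. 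The same analysis, applied to $A'_2 := A[X_0\cup x, Y_0] = [B; c]$, forces $c$ to be a scalar multiple of some row $A[x_0,Y_0]$ of $B$, or of a unit row-vector $e_{y_0}^T$, or else $A'_2$ is scaled $\parf'$.

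Second I would absorb the ``scaled $\parf'$'' branch. If $[B\; b]$ is scaled $\parf'$, there is $\mu\in \parf$ with $\mu b\in (\parf')^{X_0}$; the normalization $A_{x_0 y}=1$ combined with the induced-subpartial-field property~\eqref{eq:funclosed} forces $\mu\in\parf'$, so after this rescaling of column $y$ the entry $A_{xy}=\mu p$ remains outside $\parf'$ while $b$ sits in $(\parf')^{X_0}$. In this rescaled form $b$ is either parallel to a column of $B$, which is the desired shape, or $[B\;b]$ is 3-connected and scaled $\parf'$; but in the latter case the absence of case~\eqref{eq:notstablemat} applied back to a slightly different extension (using that the rescaled $A_{xy}\not\in\parf'$ still requires some 3-connected minor of $A$ to certify non-representability over $\parf'$) contradicts badness. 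The symmetric argument disposes of the scaled-$\parf'$ possibility for $c$.

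Third, having narrowed the options to (parallel-to-column or unit) for $b$ and (parallel-to-row or unit) for $c$, I would exploit the transposition symmetry $A\mapsto A^T$, $B\mapsto B^T$ (under which the hypotheses and target canonical form are invariant, but $b$ and $c$ swap roles) to eliminate the sub-case ($b$ unit, $c$ parallel-to-row) by transposing into ($b$ parallel-to-column, $c$ unit). The two residual sub-cases ($b$ parallel, $c$ parallel) and ($b$ unit, $c$ unit) would be handled by a pivot inside the 3-connected $B$ (which is 2-connected as a bipartite graph by Lemma~\ref{lem:bipconn}\eqref{bip:threeconn}) chosen to realign the column index to which $b$ is parallel with the unit index of $c$, giving $b\propto A[X_0,y_0]$ and $c=e_{y_0}$; the normalization $A_{xy_0}=1$ is then enforced using the scaling freedom of Lemma~\ref{lem:bipproperties}\eqref{eq:bipscaling}. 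The main technical obstacle is the second step, where the induced-subfield property must be invoked carefully to guarantee both that no rescaling of column $y$ carries $p$ into $\parf'$ and that the residual scaled-$\parf'$ configuration already refutes badness of $A$.
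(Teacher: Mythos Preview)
Your strategy has a structural gap in step~2. You apply the failure of Case~\eqref{eq:notstablemat} to the single-column extension $A'_1 = [B\; b]$, but $A'_1$ does not contain the offending entry $p = A_{xy}$. Consequently the branch ``$A'_1$ is 3-connected and a scaled $\parf'$-matrix'' is a genuine possibility that cannot be ruled out by looking at $A'_1$ alone: $b$ may perfectly well be a generic $\parf'$-vector not parallel to any column of $B$. Your attempt to dispose of this branch (``applied back to a slightly different extension'') names no specific extension and gives no argument; as stated it does not work.

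The paper avoids this by always working with the full $A' = A[X_0\cup x, Y_0\cup y]$, which contains $p$ and is therefore never a scaled $\parf'$-matrix. The case analysis is then: if $b$ is neither a unit vector nor parallel to a column of $B$, then $A'-x = [B\;b]$ is 3-connected, so $A'$ itself witnesses Case~\eqref{eq:notstablemat} with $U=\{x,y\}$. If both $b$ and $c$ are unit vectors, a single pivot $A^{xy_0}$ produces a one-column extension of (a relabelled) $B$ containing a bad entry, again witnessing Case~\eqref{eq:notstablemat}. After transposing to ensure $b$ is parallel to a column $y'$ of $B$, the remaining case ``$c$ has a nonzero at some $y_0\neq y'$'' is handled not by a pivot inside $B$ (your step~4) but by the column swap $y'\leftrightarrow y$: since $b$ equals $A[X_0,y']$ after scaling, $A[X_0,(Y_0\setminus y')\cup y]$ is isomorphic to $B$, and $A[X_0\cup x,(Y_0\setminus y')\cup y]$ is then a single-row extension of $B$ containing $p$, giving Case~\eqref{eq:notstablemat} directly. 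Your proposed pivot in step~4 does not obviously convert the sub-case ($b$ parallel, $c$ parallel-to-row) into the target form.
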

\begin{subproof}
  If $b$ is not a unit vector and not parallel to a column of $B$, then $A'$ satisfies all conditions of Case~\eqref{eq:notstablemat}, a contradiction. If both $b$ and $c$ are unit vectors, and $c$ is such that $A_{xy_0} = 1$, then $A^{xy_0}[X_0,(Y_0\setminus y_0 \cup x)\cup y]$ satisfies all conditions of Case~\eqref{eq:notstablemat}, a contradiction.

  By transposing $A, B$ if necessary we may assume that $b$ is parallel to some column $y'$ of $B$. We scale column $y$ so that the entries of $b$ are equal to those of $A[X_0,y']$. If $c$ has a nonzero in a column $y_0 \neq y'$, then the matrix $A[X_0, Y_0\setminus y' \cup y]$ is isomorphic to $B$, and the matrix $A'' := A[X_0\cup x, (Y_0\setminus y') \cup y]$ satisfies all conditions of \eqref{eq:notstablemat}, a contradiction.
\end{subproof}
Now we apply Lemma~\ref{lem:pathshortening} with $A' = A[X_0\cup x,Y_0 \cup y]$, where $y_1 = y_0$ and $y_2 = y$. But the resulting minor $\tilde A$ satisfies all conditions of Case~\eqref{eq:notstablemat}, a contradiction.
\end{proof}
\addtocounter{theorem}{5}

Whittle's Stabilizer Theorem~\cite{Whi96b} is an easy corollary of the Confinement Theorem.
\begin{definition}
  Let $\parf$ be a partial field, and $N$ a 3-connected $\parf$-representable matroid on ground set $X'\cup Y'$, where $X'$ is a basis.
  Let $M$ be a 3-connected matroid on ground set $X\cup Y$ with minor $N$, such that $X$ is a basis of $M$, $X'\subseteq X$, and $Y'\subseteq Y$.
  Let $A_1, A_2$ be $X\times Y$ $\parf$-matrices such that $M = M[I\: A_1] = M[I\: A_2]$.
  Then $N$ is a \emph{$\parf$-stabilizer for $M$} if $A_1[X',Y'] \sim A_2[X',Y']$ implies $A_1 \sim A_2$ for all choices of $A_1, A_2$.
\end{definition}
\begin{theorem}[Stabilizer Theorem]\label{cor:stabilizer}
  Let $\parf$ be a partial field, and $N$ a 3-connected $\parf$-representable matroid. Let $M$ be a 3-connected $\parf$-representable matroid having an $N$-minor. Then at least one of the following is true:
  \begin{enumerate}
    \item \label{eq:stablemat2} $N$ stabilizes $M$;
    \item \label{eq:notstablemat2} $M$ has a 3-connected minor $M'$ such that
    \begin{itemize}
      \item $N$ does not stabilize $M'$;
      \item $N$ is isomorphic to $M'\contract x$, $M'\delete y$, or $M'\contract x \delete y$, for some $x,y \in E(M')$;
      \item If $N$ is isomorphic to $M'\contract x \delete y$ then at least one of $M'\contract x, M'\delete y$ is 3-connected.
    \end{itemize}
  \end{enumerate}
\end{theorem}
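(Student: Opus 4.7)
The idea is to encode the two $\parf$-representations of $M$ as a single matrix over $\parf\otimes\parf$ and apply the Confinement Theorem with $\parf'$ taken to be the diagonal copy of $\parf$ inside $\parf\otimes\parf$. Concretely, suppose $N$ does not stabilize $M$, so there exist $X\times Y$ $\parf$-matrices $A_1,A_2$ with $M=M[I\: A_1]=M[I\: A_2]$ and $A_1[X',Y']\sim A_2[X',Y']$ but $A_1\not\sim A_2$. After rescaling we may assume $A_1[X',Y']=A_2[X',Y']$. Let $\parf_0:=\parf\otimes\parf$ and let $\parf'$ be the sub-partial field $\{(p,p) : p\in\parf\}$. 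Then $\parf'$ is \emph{induced}: it equals $\parf_0^*\cap\ring'$ for the diagonal subring $\ring'=\{(r,r):r\in\ring\}$ of $\ring\times\ring$. By Lemma~\ref{lem:listoffields} the matrix $A:=A_1\otimes A_2$ is a $\parf_0$-matrix with $M[I\: A]=M$, and by Lemma~\ref{lem:entrynotinpprime} the submatrix $B:=A[X',Y']$, whose entries all lie on the diagonal, is a $\parf'$-matrix with $M[I\: B]=N$.

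Next I apply Theorem~\ref{thm:confinement} to the 3-connected matrices $A$ and $B$. If the first alternative holds, then $A\sim\tilde A$ for some $\parf'$-matrix $\tilde A$, so there are invertible diagonal matrices $D=D_1\otimes D_2$ and $E=E_1\otimes E_2$ with $DAE$ having entries in the diagonal, i.e.\ $D_1A_1E_1=D_2A_2E_2$. This rearranges to $A_1\sim A_2$, contradicting our assumption. Hence the second alternative applies: there is a 3-connected minor $A''$ of $A$ on rows $X''$ and columns $Y''$ that is not a scaled $\parf'$-matrix, with $B\cong A''-U$ for some $U$ having $|U\cap X''|\leq 1$, $|U\cap Y''|\leq 1$; and if $|U|=2$ then at least one of $A''-x$, $A''-y$ is 3-connected.

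The crux is to translate this back to $M$. Every operation of Definition~\ref{def:matops} on the tensor product factors coordinate-wise: scaling by a unit $(d_1,d_2)\in\parf^*\times\parf^*$ scales $A_1$ by $d_1$ and $A_2$ by $d_2$; deletion acts identically on both factors; and a pivot at $xy$ requires $A_{xy}$ to be a unit of $\parf_0$ (so both $(A_1)_{xy}$ and $(A_2)_{xy}$ are nonzero), after which the pivot formula of Definition~\ref{def:pivot} evaluates componentwise. Thus the sequence of operations producing $A''$ from $A$ yields $\parf$-matrices $A_1'',A_2''$ with $A''=A_1''\otimes A_2''$, both representing the same 3-connected minor $M'$ of $M$. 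The matroid minor taking $M'$ to $N$ is prescribed by $U$: it is $M'\contract x$ if $U=\{x\}\subseteq X''$, $M'\delete y$ if $U=\{y\}\subseteq Y''$, or $M'\contract x\delete y$ if $U=\{x,y\}$, with the 3-connectivity side condition inherited directly from the Confinement Theorem. Finally, $A''-U$ is (isomorphic to) a $\parf'$-matrix, so $A_1''[X''\setminus U,Y''\setminus U]\sim A_2''[X''\setminus U,Y''\setminus U]$, while the diagonal-scaling argument of the previous paragraph applied in reverse shows that $A''$ failing to be a scaled $\parf'$-matrix forces $A_1''\not\sim A_2''$. Hence $N$ does not stabilize $M'$, placing $M'$ in the role required by Case~\eqref{eq:notstablemat2}.

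The only nontrivial point is the bookkeeping between ``scaled $\parf'$-matrix'' on the tensor-product side and ``$\sim$'' on the $\parf$ side; once this equivalence is in hand, the Confinement Theorem supplies exactly the statement of the Stabilizer Theorem.
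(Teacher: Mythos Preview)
Your proposal is correct and follows exactly the same route as the paper: set $\parf_0=\parf\otimes\parf$, take $\parf_0'$ to be the diagonal copy of $\parf$ (an induced sub-partial field via the diagonal subring), and apply Theorem~\ref{thm:confinement} to $A=A_1\otimes A_2$ and $B=A[X',Y']$. The paper's own proof is a three-sentence pointer to this construction; you have simply unpacked the details (that $\parf_0'$ is induced, that the matrix-minor operations factor componentwise, and that ``scaled $\parf_0'$-matrix'' on the tensor side is equivalent to $A_1\sim A_2$ on the $\parf$ side), all of which are correct.
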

\begin{proof}
  Consider the product partial field $\parf_0 := \parf \otimes \parf$, and define $\parf_0' := \{(p,p) \mid p \in \parf\}$. Then $\parf_0'$ is an induced sub-partial field of $\parf_0$. Apply Theorem~\ref{thm:confinement} to all matrices $A, B$ such that $M = M[I\: A]$, $N = M[I\: B]$, $B\minorof A$, $A$ is a $\parf_0$-matrix, and $B$ is a $\parf_0'$-matrix.
\end{proof}

\section{The universal partial field of a matroid}\label{sec:pfdesign}
\subsection{The bracket ring}
In this section we find the ``most general'' partial field over which a single matroid is representable.
Our construction is based on the bracket ring from \citet{Wte75a}.
Let $M = (E,{\cal B})$ be a rank-$r$ matroid.  For every $r$-tuple $Z \in E^r$ we introduce a symbol $[Z]$, the ``bracket'' of $Z$, and a symbol $\bar{[Z]}$. Suppose $Z = (x_1, \ldots, x_r)$. Define $\{Z\} := \{x_1, \ldots, x_r\}$, and $Z/x\rightarrow y$ as the $r$-tuple obtained from $Z$ by replacing each occurrence of $x$ by $y$. We define
\begin{align}
  {\cal Z}_M := \{[Z] \mid Z \in E^r\}\cup\{\bar{[Z]} \mid \{Z\} \textrm{ is a basis of } M\}.
\end{align}
\begin{definition}\label{def:bracketring}
$I_M$ is the ideal in $\Z[{\cal Z}_M]$ generated by the following polynomials:
\begin{enumerate}
  \item\label{bra:dependent}$[Z]$, for all $Z$ such that $\{Z\} \not \in {\cal B}$;
  \item \label{bra:antisymmetry}$[Z] - \sign(\sigma)[Z^\sigma]$, for all $Z$ and all permutations $\sigma:\{1,\ldots,r\}\rightarrow\{1,\ldots,r\}$;
  \item \label{bra:syzygies}$[x_1,x_2,U][y_1,y_2,U] - [y_1,x_2,U][x_1,y_2,U] - [y_2, x_2, U][y_1,x_1,U]$, for all $x_1,x_2,y_1,y_2 \in E$ and $U \in E^{r-2}$;
  \item \label{bra:inverse}$[Z] \bar{[Z]} - 1$, for all $Z$ such that $\{Z\} \in {\cal B}$;
  for all $Z \in E^r$.
\end{enumerate}
\end{definition}
Now we define
\begin{align}
  \bracketring_M := \Z[{\cal Z}_M]/I_M.
\end{align}
Relations \eqref{bra:dependent}--\eqref{bra:syzygies}
are the same as those in White's construction~\cite{Wte75a}. They accomplish that the brackets behave like determinants in $\bracketring_M$. A special case of \eqref{bra:dependent} occurs when $|\{Z\}| <r$. In that case $Z$ must have repeated elements. Relations~\eqref{bra:inverse} are not present in the work of White.
\begin{lemma}\label{lem:bracketringhom}
  Let $\parf = (\ring, \group)$ be a partial field and $A$ an $r\times E$ $\parf$-matrix such that $M = M[A]$. Then there exists a ring homomorphism $\phi:\bracketring_M\rightarrow\ring$.
\end{lemma}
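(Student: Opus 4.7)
The natural map to try is the ``determinant evaluation map''. Define a ring homomorphism $\hat\phi : \Z[{\cal Z}_M] \to \ring$ on generators by
\begin{align*}
  \hat\phi([Z]) &:= \det(A[\{1,\ldots,r\}, Z]) \qquad (Z \in E^r),\\
  \hat\phi(\bar{[Z]}) &:= \det(A[\{1,\ldots,r\}, Z])^{-1} \qquad (\{Z\} \in {\cal B}),
\end{align*}
and extend uniquely from these values on the free generators. For this to make sense I first need to see that the inverse in the second line actually exists in $\ring$: since $A$ is a $\parf$-matrix, every subdeterminant lies in $\parf = \group \cup \{0\}$, and when $\{Z\}$ is a basis of $M = M[A]$ the value $\det(A[\{1,\ldots,r\}, Z])$ is nonzero and therefore lies in $\group \subseteq \ring^*$.

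The main task is then to show that $\hat\phi$ vanishes on each of the four families of generators of $I_M$ listed in Definition~\ref{def:bracketring}; once this is done, $\hat\phi$ descends to the desired ring homomorphism $\phi : \bracketring_M \to \ring$. Relation~\eqref{bra:dependent} is immediate from $M = M[A]$: if $\{Z\}$ is not a basis then the columns indexed by $Z$ are linearly dependent (either because they repeat or because $Z \notin {\cal B}_A$), so $\det(A[\{1,\ldots,r\}, Z]) = 0$. Relation~\eqref{bra:antisymmetry} is the standard antisymmetry of the determinant under column permutations, valid in any commutative ring. Relation~\eqref{bra:inverse} holds by construction of $\hat\phi(\bar{[Z]})$ as a multiplicative inverse.

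The only step that requires a genuine identity is relation~\eqref{bra:syzygies}, the Grassmann--Pl\"ucker (three-term) syzygy
\[
  [x_1,x_2,U][y_1,y_2,U] = [y_1,x_2,U][x_1,y_2,U] + [y_2,x_2,U][y_1,x_1,U].
\]
This is a classical polynomial identity satisfied by $r\times r$ determinants over any commutative ring (it follows, for example, from Laplace expansion along two columns, or from the exterior algebra structure). Since this identity holds universally in $\ring$, applying it to the columns of $A$ indexed by $x_1,x_2,y_1,y_2,U$ shows that $\hat\phi$ kills this generator as well. Hence $\hat\phi(I_M) = 0$, and $\hat\phi$ induces the required ring homomorphism $\phi : \bracketring_M \to \ring$. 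The main (and essentially only) obstacle is recognising that all four relations hold automatically: three are formal bookkeeping, and the fourth is the standard Pl\"ucker identity for determinants.
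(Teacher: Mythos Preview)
Your proof is correct and follows essentially the same approach as the paper: define the evaluation map on generators via determinants (and their inverses for bases), then verify that each family of generators of $I_M$ lies in the kernel using, respectively, the definition of $M[A]$, antisymmetry of the determinant, the three-term Grassmann--Pl\"ucker identity, and the construction of $\hat\phi(\bar{[Z]})$. If anything, you are more explicit than the paper, which omits the observation that nonzero subdeterminants of a $\parf$-matrix lie in $\group\subseteq \ring^*$ and does not separately mention relation~\eqref{bra:inverse}.
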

\begin{proof}
  Let $\phi':\Z[{\cal Z}_M]\rightarrow\field$ be determined by $\phi'([Z]) = \det(A[r,Z])$ and $\phi'(\bar{[Z]}) = \det(A[r,Z])^{-1}$. We show that $I_M \subseteq\ker(\phi')$, from which the result follows. Relations \eqref{bra:dependent} follow from linear dependence, Relations \eqref{bra:antisymmetry} from antisymmetry, and Relations~\eqref{bra:syzygies} from the \emph{3-term Grassmann-Pl\"ucker relations} (see, for example, \citet[{Page~127}]{oriented}).
\end{proof}

With our addition to White's construction we are actually able to represent $M$ over the partial field $(\bracketring_M, \langle {\cal Z}_M\cup\{-1\}\rangle)$. Note that, as soon as $\rank(M)\geq 2$, we can pick a basis $Z$ and an odd permutation $\sigma$ of the elements of $Z$ to obtain $[Z^\sigma]\bar{[Z]} = -1 \in \langle {\cal Z}_M\rangle$, making the $-1$ in the definition of the partial field redundant.
\begin{definition}
  Let $M$ be a rank-$r$ matroid. Let $B \in E^r$ be such that $\{B\}$ is a basis of $M$. Then $A_{M,B}$ is the $B\times (E\setminus B)$ matrix with entries in $\bracketring_M$ given by
  \begin{align}
    (A_{M,B})_{uv} := [B/u \rightarrow v]/[B].
  \end{align}
\end{definition}
\begin{lemma}\label{lem:Brepmat}$A_{M,B}$ is a $(\bracketring_M, \bracketring_M^*)$-matrix.
\end{lemma}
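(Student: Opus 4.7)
The plan is to establish, for every $U\subseteq B$ and $V\subseteq E\setminus B$ with $|U|=|V|=k$, the explicit formula
\[
\det(A_{M,B}[U,V]) \;=\; \pm\,[B']\cdot\bar{[B]}\qquad\text{in }\bracketring_M,
\]
where $B'$ is the $r$-tuple obtained from $B$ by replacing the elements of $U$ (in a fixed order) by those of $V$ (in the corresponding order). Given this identity, Lemma~\ref{lem:Brepmat} follows at once: by Relation~\eqref{bra:inverse}, $\bar{[B]}$ is a unit of $\bracketring_M$; if $\{B'\}$ is a basis of $M$ then $[B']$ is also a unit, so $\det(A_{M,B}[U,V])\in\bracketring_M^*$, while if $\{B'\}$ is not a basis then $[B']=0$ by Relation~\eqref{bra:dependent}, so the determinant vanishes.

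I would prove the formula by induction on $k$. The base $k=1$ is the defining equation $(A_{M,B})_{uv}=[B/u\to v]\bar{[B]}$. For the inductive step, fix some $u\in U$ and Laplace-expand the determinant along the row indexed by $u$. The inductive hypothesis rewrites each appearing $(k-1)\times(k-1)$ minor as $\pm[C_v]\bar{[B]}$, where $C_v$ is the $r$-tuple that swaps $U\setminus u$ for $V\setminus v$ inside $B$. The identity to be proved then reduces to the ``higher'' Grassmann--Pl\"ucker syzygy
\[
[B']\cdot[B] \;=\; \sum_{v\in V}\pm\,[B/u\to v]\cdot[C_v]\qquad\text{in }\bracketring_M.
\]
Such a higher syzygy is derivable from the 3-term relation~\eqref{bra:syzygies} by a secondary induction on $k$: each step applies \eqref{bra:syzygies} to trade a product $[B/u\to v]\cdot[C_v]$ for two products in which one index has been partially resolved, and the $k=2$ base case is precisely~\eqref{bra:syzygies} after relabelling.

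The principal obstacle is the careful sign bookkeeping that translates between the ordered $r$-tuples used in Laplace expansion and the antisymmetric bracket identities (Relation~\eqref{bra:antisymmetry}) governing $\bracketring_M$. A minor subtlety is the degenerate subcase in which every $(A_{M,B})_{uv}$ with $v\in V$ is zero in $\bracketring_M$; then the determinant is $0$ for row-degeneracy reasons, and one checks that $[B']$ itself must vanish in $\bracketring_M$ by expanding via the same higher Pl\"ucker relation, each term of which contains a vanishing factor of the form $[B/u\to v]$. Once the explicit formula is in hand, the lemma follows immediately as explained above, and no further appeal to Lemma~\ref{lem:detpivot} or to a specific field representation is needed.
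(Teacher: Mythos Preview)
Your approach is correct and yields the sharper closed form $\det(A_{M,B}[U,V]) = \pm[B']\,\bar{[B]}$, but it takes a different route from the paper. You proceed by Laplace expansion and induction on $k$, which forces you to derive the higher Grassmann--Pl\"ucker syzygy $[B'][B] = \sum_{v} \pm[B/u\to v][C_v]$ from the 3-term relation~\eqref{bra:syzygies} via a secondary induction; this is classical, but the sign bookkeeping is, as you note, the real work. The paper instead shows by a direct entry-by-entry computation---using~\eqref{bra:syzygies} exactly once, for the generic entry $(A^{xy})_{uv}$---that a pivot over $xy$ sends $A_{M,B}$ to $A_{M,B'}$ where $B'=(B\setminus x)\cup y$. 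Lemma~\ref{lem:detpivot} then recursively expresses any $k\times k$ subdeterminant as a product $\prod_{i} [Z_i]/[B_i]$ with each $\{B_i\}$ a basis, and such a product is manifestly in $\bracketring_M^*\cup\{0\}$. The pivot route never needs the higher syzygies---each pivot step is a single application of~\eqref{bra:syzygies}---so it is shorter and fits the matrix-minor machinery already developed in Section~\ref{sec:pfintro}, at the cost of not producing your explicit single-bracket formula. Your degenerate-case remark is harmless but unnecessary: once the higher Pl\"ucker identity is established as a polynomial identity in $\bracketring_M$, the case where a row vanishes is covered automatically, since $[B]$ is a unit.
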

\begin{proof}
Let $A := A_{M,B}$. Let $x\in B, y \in E\setminus B$ be such that $B' := (B\setminus x) \cup y$ is again a basis. We study the effect of a pivot over $xy$. Let $u \in \{B\}\setminus x, v \in (E\setminus \{B\})\setminus y$. We have
\begin{align}
  (A^{xy})_{yx} & = A_{xy}^{-1} = [B]/[B/x\rightarrow y],\\
  (A^{xy})_{yv} & = A_{xy}^{-1} A_{xv} = ([B]/[B/x\rightarrow y])([B/x\rightarrow v]/[B])\notag\\
                & = [B'/y\rightarrow v]/[B/x\rightarrow y],\\
  (A^{xy})_{ux} & = -A_{xy}^{-1} A_{uy} = -([B]/[B/x\rightarrow y])([B/u\rightarrow y]/[B])\notag\\
                & = [(B/x\rightarrow y)/u\rightarrow x]/[B/x\rightarrow y],\label{eq:thirdpivotbracket}\\
  (A^{xy})_{uv} & = A_{uv} - A_{xy}^{-1} A_{uy} A_{xv} \notag\\
                & = \frac{[B/u\rightarrow v]}{[B]} - \frac{[B]}{[B/x\rightarrow y]}\frac{[B/u\rightarrow y]}{[B]} \frac{[B/x\rightarrow v]}{[B]}\notag\\
                & = \frac{[B/x\rightarrow y][B/u\rightarrow v]-[B/u\rightarrow y][B/x\rightarrow v]}{[B][B/x\rightarrow y]}\notag\\
                & = \frac{[B][(B/x\rightarrow y)/u\rightarrow v]}{[B][B/x\rightarrow y]}.\label{eq:fourthpivotbracket}
\end{align}
For \eqref{eq:thirdpivotbracket} we note that $[(B/x\rightarrow y)/u\rightarrow x]$ is a permutation of $[B/u\rightarrow y]$; by \ref{def:bracketring}\eqref{bra:antisymmetry} the minus sign vanishes. For \eqref{eq:fourthpivotbracket} we use \ref{def:bracketring}\eqref{bra:syzygies}. In short, for every entry $u \in B', v \in (E\setminus B')$ we have
\begin{align}
  (A^{xy})_{uv} = [B'/u\rightarrow v]/[B'],
\end{align}
so $(A_{M,B})^{xy} = A_{M,B'}$. By Lemma~\ref{lem:detpivot} we find that every subdeterminant is equal to $\prod_{i=1}^k[Z_i]/[B_i]$ for some $Z_i,B_i\in E^r$ with all $\{B_i\}$ bases, and therefore, by \ref{def:bracketring}\eqref{bra:inverse}, every subdeterminant is either equal to zero or invertible. The lemma follows.
\end{proof}
\begin{lemma}\label{lem:matrixrepresentsM}
  Let $M$ be a matroid such that $\bracketring_M$ is nontrivial. If $B$ is a basis of $M$ then $M = M[I\: A_{M,B}]$.
\end{lemma}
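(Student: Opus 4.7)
The plan is to establish the key identity that for any $r$-tuple $Z \in E^r$,
\[
  \det\bigl(A_{M,B}[B\setminus \{Z\},\{Z\}\setminus B]\bigr) = \pm\, [Z]/[B]
\]
in $\bracketring_M$ (the sign depending on the chosen orderings of rows and columns). Once this is in hand, the lemma follows quickly: for any $r$-subset $B'' \subseteq E$, expanding $\det([I\: A_{M,B}][B,B''])$ along the identity columns indexed by $B\cap B''$ reduces it to a $\pm$-signed copy of the above determinant for any ordering of $B''$ as a tuple, so it equals $\pm [B'']/[B]$. Since $\bracketring_M$ is nontrivial and $[B]$ is invertible by \ref{def:bracketring}\eqref{bra:inverse}, this is nonzero if and only if $[B''] \neq 0$, which by \ref{def:bracketring}\eqref{bra:dependent} and \ref{def:bracketring}\eqref{bra:inverse} is equivalent to $\{B''\}$ being a basis of $M$. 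Hence $\mathcal{B}_{[I\: A_{M,B}]} = \mathcal{B}(M)$.

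I would prove the identity by induction on $k := |B\setminus \{Z\}|$. The base case $k=0$ is immediate: the empty determinant equals $1$, while $[Z] = \pm [B]$ by \ref{def:bracketring}\eqref{bra:antisymmetry}. For the inductive step let $T := B\setminus \{Z\}$, $S := \{Z\}\setminus B$. If some entry $(A_{M,B})_{xy}$ with $x\in T$, $y\in S$ is nonzero, then $\{B/x\rightarrow y\}$ is a basis, so $[B/x\rightarrow y]$ is a unit in $\bracketring_M$ and hence so is $(A_{M,B})_{xy}$. The calculation in the proof of Lemma~\ref{lem:Brepmat} showed $(A_{M,B})^{xy} = A_{M,B/x\rightarrow y}$, so Lemma~\ref{lem:detpivot} gives
\[
\det\bigl(A_{M,B}[T,S]\bigr) = \pm\, (A_{M,B})_{xy}\cdot\det\bigl(A_{M,B/x\rightarrow y}[T\setminus x,S\setminus y]\bigr).
\]
Applying the inductive hypothesis to the smaller submatrix, together with the identity $(A_{M,B})_{xy} = [B/x\rightarrow y]/[B]$, yields $\pm \bigl([B/x\rightarrow y]/[B]\bigr)\bigl([Z]/[B/x\rightarrow y]\bigr) = \pm [Z]/[B]$, the intermediate bracket cancelling.

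The remaining case is that every entry of $A_{M,B}[T,S]$ vanishes, so the determinant is $0$. Then $[B/x\rightarrow y] = 0$ for every $x\in T,y\in S$, meaning no single basis exchange from $B$ using an element of $S$ produces a basis of $M$. By the basis exchange axiom, $\{Z\}$ itself cannot be a basis of $M$, so $[Z] = 0$ by \ref{def:bracketring}\eqref{bra:dependent}, and both sides of the identity are $0$. The principal obstacle is merely the bookkeeping of signs, reconciling the $(-1)^{x+y}$ in Lemma~\ref{lem:detpivot} with the permutation signs produced by \ref{def:bracketring}\eqref{bra:antisymmetry}; but since only the nonvanishing of the determinant is needed for the conclusion, any consistent choice of orderings throughout the induction suffices.
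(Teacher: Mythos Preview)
Your proof is correct and is essentially what the paper's one-line proof (``Clearly $M$ and $M[I\: A_{M,B}]$ have the same set of bases'') takes for granted: the pivoting identity $(A_{M,B})^{xy} = A_{M,B'}$ established in the proof of Lemma~\ref{lem:Brepmat}, together with Lemma~\ref{lem:detpivot}, makes the subdeterminants telescope to $\pm[Z]/[B]$, and you have spelled this out carefully, including the degenerate case via basis exchange.
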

\begin{proof}
  Clearly $M$ and $M[I\: A_{M,B}]$ have the same set of bases.
\end{proof}
The following theorem gives a characterization of representability:
\begin{theorem}\label{lem:branontriv}
  $M$ is representable if and only if $\bracketring_M$ is nontrivial.
\end{theorem}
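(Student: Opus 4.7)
The two directions follow almost directly from the lemmas already established, so the plan is mostly to assemble the pieces cleanly.

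For the forward implication, suppose $M$ is representable, so there exist a field $\field$ and an $r\times E$ matrix $A$ over $\field$ with $M = M[A]$. Viewing $\field$ as the partial field $(\field,\field^*)$, Lemma~\ref{lem:bracketringhom} provides a ring homomorphism $\phi:\bracketring_M\rightarrow\field$. Since $\phi(1)=1\neq 0$ in $\field$, the elements $0$ and $1$ of $\bracketring_M$ are distinct, i.e.\ $\bracketring_M$ is nontrivial.

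For the reverse implication, assume $\bracketring_M$ is nontrivial. Pick any basis $B$ of $M$ (existence is guaranteed since $\bracketring_M$ being nontrivial forces the generator $[B]\bar{[B]}=1$ from Definition~\ref{def:bracketring}\eqref{bra:inverse} to witness the presence of at least one basis; if $M$ has rank $0$ then $M$ is trivially representable). By Lemma~\ref{lem:Brepmat}, $A_{M,B}$ is a matrix over the partial field $\parf_0 := (\bracketring_M, \bracketring_M^*)$, and by Lemma~\ref{lem:matrixrepresentsM}, $M = M[I\: A_{M,B}]$. To pass from a partial-field representation to a field representation, I would mimic the argument used in the proof of Theorem~\ref{thm:matrixmatroid}: choose a maximal ideal $J$ of $\bracketring_M$, let $\field := \bracketring_M/J$, and let $\phi:\bracketring_M\rightarrow\field$ be the quotient map. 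Since $\bracketring_M$ is nontrivial, $J$ exists and $\field$ is a (nontrivial) field; the restriction of $\phi$ is a partial field homomorphism $\parf_0\rightarrow\field$. By Corollary~\ref{cor:hom}, $M[\phi(A_{M,B})] = M[A_{M,B}] = M$, so $\phi(A_{M,B})$ is a field representation of $M$.

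There is essentially no obstacle here beyond book-keeping: the substantive work has already been done in Lemmas~\ref{lem:bracketringhom},~\ref{lem:Brepmat},~\ref{lem:matrixrepresentsM} and in the standard maximal-ideal trick. The one point to be careful about is that the passage from the partial field $\parf_0$ to the field $\field$ really needs $\field$ to be nontrivial, which is exactly ensured by $\bracketring_M$ being nontrivial (so that maximal ideals are proper and quotients are honest fields).
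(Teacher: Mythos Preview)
Your proof is correct and follows essentially the same approach as the paper: the forward direction uses Lemma~\ref{lem:bracketringhom} and $\phi(1)=1$, and the backward direction uses Lemmas~\ref{lem:Brepmat} and~\ref{lem:matrixrepresentsM}. The only difference is that you explicitly pass from the partial field $(\bracketring_M,\bracketring_M^*)$ to a field via a maximal ideal, whereas the paper simply invokes the earlier-stated equivalence (immediately following Theorem~\ref{thm:matrixmatroid}) that partial-field representability implies field representability; your parenthetical about $\bracketring_M$ witnessing the existence of a basis is unnecessary, since every matroid has a basis by definition.
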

\begin{proof}
  $\phi(1) = 1$ for any homomorphism $\phi$. Therefore, if $M$ is representable then Lemma~\ref{lem:bracketringhom} implies that $\bracketring_M$ is nontrivial. Conversely, if $\bracketring_M$ is nontrivial then Lemma~\ref{lem:matrixrepresentsM} shows that $M$ is representable over the partial field $(\bracketring_M, \bracketring_M^*)$.
\end{proof}
The following lemma can be proven by adapting the proof of the corresponding result in \citet[{Theorem~8.1}]{Wte75a}:
\begin{lemma}
  $\bracketring_{M^*} \cong \bracketring_M$.
\end{lemma}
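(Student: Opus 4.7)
The plan is to build an explicit ring isomorphism $\phi:\bracketring_{M^*}\rightarrow\bracketring_M$ by sending each bracket of $M^*$ to a signed complementary bracket of $M$, and then to verify that $\phi$ respects each of the four families of relations in Definition~\ref{def:bracketring}. Since $M^{**}=M$, once $\phi$ is shown to be a well-defined ring homomorphism the analogous map in the reverse direction will be a two-sided inverse, giving the isomorphism.

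First I would fix a linear order on $E$, writing $r=\rank(M)$ and $r^*=|E|-r$. For an $r^*$-tuple $Z \in E^{r^*}$, let $\bar Z \in E^{r}$ denote the unique tuple listing $E\setminus\{Z\}$ in increasing order if $|\{Z\}|=r^*$, and leave $\bar Z$ undefined otherwise. Let $\epsilon(Z)\in\{-1,0,+1\}$ be the sign of the permutation that reorders the concatenation $(Z,\bar Z)$ into the fixed total order of $E$, with $\epsilon(Z):=0$ when $|\{Z\}|<r^*$. Then I would define $\phi$ on generators by
\begin{align}
  \phi([Z]) \;:=\; \epsilon(Z)\,[\bar Z], \qquad \phi(\bar{[Z]}) \;:=\; \epsilon(Z)\,\bar{[\bar Z]},
\end{align}
the second being well-defined when $\{Z\}$ is a basis of $M^*$, since then $\{\bar Z\}$ is a basis of $M$ and the RHS is invertible in $\bracketring_M$.

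Next I would check that $\phi$ kills each family of generators of $I_{M^*}$. Relation \eqref{bra:dependent} is immediate: $\{Z\}$ is a basis of $M^*$ iff $\{\bar Z\}$ is a basis of $M$, and when it is not a basis, either $\epsilon(Z)=0$ or $[\bar Z]\in I_M$ by \eqref{bra:dependent} in $\bracketring_M$. Relation~\eqref{bra:antisymmetry} follows from the behaviour of $\epsilon$ under permutation of the entries of $Z$, matched against antisymmetry of $[\bar Z]$. Relation~\eqref{bra:inverse} is immediate from the definition of $\phi$ on the barred generators.

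The real work, and the main obstacle, is the Grassmann--Pl\"ucker relation~\eqref{bra:syzygies}: one must show that each $3$-term syzygy among brackets of $M^*$ maps to zero in $\bracketring_M$. The standard tool is that for any $(r^*-2)$-tuple $U$ of $M^*$ and any $x_1,x_2,y_1,y_2$ in $E$, the complementary $(r+2)$-tuple gives an $(r+2)$-element syzygy in $\bracketring_M$ which, after an appropriate Laplace-style expansion and sign bookkeeping driven by $\epsilon$, is precisely the image under $\phi$ of the desired $M^*$-syzygy. This is exactly the correspondence between Pl\"ucker coordinates of a subspace and of its orthogonal complement; algebraically it is carried out in \citet[Theorem~8.1]{Wte75a}, and the proof here is an adaptation of that argument. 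The extra barred generators $\bar{[Z]}$ do not disturb the verification since, by \eqref{bra:inverse}, they are uniquely determined by $[Z]$ once $[Z]$ is invertible.

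Finally, applying the same construction to the dual pair $(M^*,M^{**})=(M^*,M)$ yields a homomorphism $\psi:\bracketring_M\rightarrow\bracketring_{M^*}$. A direct check on generators (noting that $\bar{\bar Z}=Z$ and $\epsilon(Z)\epsilon(\bar Z) = (-1)^{rr^*}$, which is absorbed uniformly) shows $\psi\circ\phi$ and $\phi\circ\psi$ are the identity on generators, hence on $\bracketring_{M^*}$ and $\bracketring_M$ respectively, completing the proof.
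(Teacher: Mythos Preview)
Your approach is correct and is precisely what the paper has in mind: the paper does not give its own proof but merely states that the lemma ``can be proven by adapting the proof of the corresponding result in \citet[Theorem~8.1]{Wte75a}'', and your sketch carries out exactly that adaptation, with the only new wrinkle being the barred generators $\bar{[Z]}$, which you correctly observe cause no difficulty since they are forced by relation~\eqref{bra:inverse}. Two small points of care in the final paragraph: strictly $\bar{\bar Z}$ equals the increasing reordering of $Z$ rather than $Z$ itself, so you should invoke antisymmetry~\eqref{bra:antisymmetry} there; and ``absorbed uniformly'' for the global sign $(-1)^{rr^*}$ should be spelled out as a redefinition of $\psi$ on generators, checking that the defining relations remain in the kernel (they do, since~\eqref{bra:syzygies} is homogeneous of degree two and~\eqref{bra:inverse} picks up $\epsilon^2=1$).
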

Finally we consider the effect of taking a minor.
\begin{definition}
  Let $M = (E,{\cal B})$ be a matroid, and let $U,V\subseteq E$ be disjoint ordered subsets such that $U$ is independent and $V$ coindependent.
  Then we define
  \begin{align}
    \tilde\phi_{M,U,V} : \bracketring_{M\contract U \delete V} \rightarrow \bracketring_M,
  \end{align}
  by $\tilde\phi_{M,U,V}([Z]) := [Z\,\, U]$ for all $Z \in (E\setminus(U\cup V))^{r-|U|}$.
\end{definition}
Note that, in a slight misuse of notation, we have written $M\contract U\delete V$ instead of $M\contract\{U\}\delete \{V\}$.
\begin{lemma}\label{lem:bracketringminor}
  $\tilde\phi_{M,U,V}$ is a ring homomorphism.
\end{lemma}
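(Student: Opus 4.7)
The plan is to define $\tilde\phi_{M,U,V}$ first as a ring homomorphism on the free polynomial ring $\Z[{\cal Z}_{M\contract U\delete V}]$, and then to verify that it kills the ideal $I_{M\contract U\delete V}$, so that it descends to the quotient $\bracketring_{M\contract U\delete V}$. Write $r$ for the rank of $M$ and set $M' := M\contract U\delete V$, which has rank $r-|U|$. Since $U$ is independent and $V$ is coindependent in $M$, a subset $\{Z\}\subseteq E\setminus(U\cup V)$ is a basis of $M'$ if and only if $\{Z\}\cup\{U\}$ is a basis of $M$. In particular, if $\{Z\}$ is a basis of $M'$, then $\bar{[Z\,\,U]}$ is a defined generator of $\bracketring_M$, so we may extend the definition by $\tilde\phi_{M,U,V}(\bar{[Z]}) := \bar{[Z\,\,U]}$. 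By the universal property of polynomial rings, setting these values on the generators ${\cal Z}_{M'}$ determines a unique ring homomorphism $\Z[{\cal Z}_{M'}]\to\bracketring_M$.

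Next I check that each of the four families of generators of $I_{M'}$ from Definition~\ref{def:bracketring} is sent to zero. For family~\eqref{bra:dependent}, if $\{Z\}$ is not a basis of $M'$ then $\{Z\}\cup\{U\}$ is not a basis of $M$, so $[Z\,\,U]=0$ in $\bracketring_M$. For family~\eqref{bra:antisymmetry}, a permutation $\sigma$ on the first $r-|U|$ coordinates of $(Z\,\,U)$ has the same sign as the corresponding permutation of $Z$, so the antisymmetry relation of $\bracketring_M$ applied to $[Z\,\,U]$ yields the image of the required relation. For family~\eqref{bra:syzygies}, the image is literally a 3-term Grassmann--Pl\"ucker syzygy for the $(r-2)$-tuple $(W\,\,U)$ in $\bracketring_M$, hence lies in $I_M$. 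Finally, for family~\eqref{bra:inverse}, if $\{Z\}$ is a basis of $M'$ then $\{Z\}\cup\{U\}$ is a basis of $M$, so $[Z\,\,U]\bar{[Z\,\,U]}-1\in I_M$.

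Thus the map descends to a well-defined ring homomorphism $\bracketring_{M'}\to\bracketring_M$, which is exactly $\tilde\phi_{M,U,V}$. No step here is a real obstacle; the only minor subtlety is making sure that one treats the bar generators correctly and that one uses the basis correspondence between $M$ and $M\contract U\delete V$ when verifying relations~\eqref{bra:dependent} and~\eqref{bra:inverse}. Everything else is a direct translation of the defining relations of $\bracketring_{M'}$ into relations that already hold in $\bracketring_M$.
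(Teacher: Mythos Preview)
Your proof is correct and follows exactly the same approach as the paper: define the map on the free polynomial ring and check that the generators of $I_{M\contract U\delete V}$ land in $I_M$. The paper's own proof is a two-line sketch (``It is easy to see that $I_{M\contract U\delete V}\subseteq\ker(\tilde\phi')$''), and you have simply unpacked that verification in full, including the treatment of the $\bar{[Z]}$ generators, which the paper leaves implicit.
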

\begin{proof}
  Let $\tilde\phi':\Z[{\cal Z}_{M\contract U \delete V}]\rightarrow\bracketring_M$ be determined by $\tilde\phi'([Z]) := [Z\,\, U]$. It is easy to see that $I_{M\contract U \delete V} \subseteq \ker(\tilde\phi')$. The result follows.
\end{proof}

\subsection{The universal partial field}
In principle Theorem~\ref{lem:branontriv} gives a way to compute whether a matroid is representable: all one needs to do is to test whether $1 \in I_M$, which can be achieved by computing a Groebner basis over the integers for $I_M$ (see \citet{BV03} for details). However, for practical computations the partial field $(\bracketring_M,\bracketring_M^*)$ is somewhat unwieldy. In this subsection we rectify this problem.

If $M$ is a matroid then we define the set of \emph{cross ratios of $M$} as
  \begin{align}
    \crat(M) := \crat(A_{M,B}).
  \end{align}
Note that $\crat(M)$ does not depend on the choice of $B$. We introduce the following subring of $\bracketring_M$:
  \begin{align}
    \ring_M := \Z[\crat(M)].
  \end{align}
Now we define the \emph{universal partial field of $M$} as
\begin{align}
  \parf_M := (\ring_M,\langle\crat(M)\cup\{-1\}\rangle).
\end{align}

By Theorem~\ref{thm:crossratpf} we have that, if $M$ is representable, then $M$ is representable over $\parf_M$.
We give an alternative construction of this partial field. Let $M = (E,{\cal B})$ be a rank-$r$ matroid on a ground set $E$, let $B\in{\cal B}$, and let $T$ be a maximal spanning forest for $\bip(M,B)$. For every $x \in B, y \in E\setminus B$ we introduce a symbol $a_{xy}$. For every $B' \in {\cal B}$ we introduce a symbol $i_{B'}$. We define
\begin{align}\label{eq:calY}
  {\cal Y}_M := \{a_{xy} \mid x \in B, y \in E\setminus B\}\cup\{i_{B'} \mid B' \in {\cal B}\}.
\end{align}

Let $\hat A_{M,B}$ be the $B\times (E\setminus B)$ matrix with entries $a_{xy}$.

\begin{definition}\label{def:bracketring2}
  $I_{M,B,T}$ is the ideal in $\Z[{\cal Y}_M]$ generated by the following polynomials:
  \begin{enumerate}
      \item $\det(\hat A_{M,B}[B\setminus Z, (E\setminus B)\cap Z])$ if $Z \not\in {\cal B}$;
      \item $\det(\hat A_{M,B}[B\setminus Z, (E\setminus B)\cap Z]) i_Z-1$ if $Z \in {\cal B}$;
      \item $a_{xy} - 1$ if $xy \in T$;
  \end{enumerate}
  for all $Z \subseteq E$ with $|Z| = r$.
\end{definition}
Now we define
\begin{align}
  \bracketring_{M,B,T} := \Z[{\cal Y}_M]/I_{M,B,T}
\end{align}
and
\begin{align}
  \parf_{M,B,T} := (\bracketring_{M,B,T}, \langle \{i_{B'} \mid B' \in {\cal B}\} \cup -1\rangle).
\end{align}
Finally, $\hat A_{M,B,T}$ is the matrix $\hat A_{M,B}$, viewed as a matrix over $\parf_{M,B,T}$.

The construction of $\parf_{M,B,T}$ is essentially the same as the construction in \citet{Fen84}. The difference between his construction and ours is that we ensure that the determinant corresponding to every basis is invertible.
The proof of Lemma~\ref{lem:bracketringhom} can be adapted to prove the following lemma.
\begin{lemma}\label{lem:pfuniversal}Let $\parf = (\ring, \group)$, and let $M = M[I\: A]$ for some $B\times (E\setminus B)$ $\parf$-matrix $A$ that is $T$-normalized for a maximal spanning forest $T$ of $\bip(A)$.
Then there exists a ring homomorphism $\phi:\bracketring_{M,B,T}\rightarrow\ring$ such that $\phi(\hat A_{M,B,T})= A$.
\end{lemma}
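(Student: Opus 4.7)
The plan is to mimic the proof of Lemma~\ref{lem:bracketringhom}: construct a homomorphism first on the polynomial ring $\Z[{\cal Y}_M]$, check that it kills every generator of $I_{M,B,T}$, and then use the universal property of the quotient to descend to $\bracketring_{M,B,T}$.

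First I would define $\phi':\Z[{\cal Y}_M]\rightarrow\ring$ on generators by $\phi'(a_{xy}) := A_{xy}$ for all $x\in B$, $y \in E\setminus B$, and $\phi'(i_Z) := \det(A[B\setminus Z,(E\setminus B)\cap Z])^{-1}$ for each $Z\in{\cal B}$. The key well-definedness point is that these inverses actually exist in $\ring$: since $A$ is a $\parf$-matrix, every subdeterminant lies in $\parf = \group\cup\{0\}$, and when $Z$ is a basis of $M=M[I\: A]$ the subdeterminant is nonzero, hence in $\group\subseteq\ring^*$. Because $\Z[{\cal Y}_M]$ is a free commutative ring on ${\cal Y}_M$, this extends uniquely to a ring homomorphism $\phi'$.

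Next I would check that $I_{M,B,T}\subseteq\ker(\phi')$ by verifying each of the three families of generators in Definition~\ref{def:bracketring2}. For family (1), observe that $\phi'(\det(\hat A_{M,B}[B\setminus Z,(E\setminus B)\cap Z])) = \det(A[B\setminus Z,(E\setminus B)\cap Z])$; expanding the determinant of $[I\: A][B,Z]$ along the unit columns indexed by $B\cap Z$ shows that this subdeterminant equals $\pm\det([I\: A][B,Z])$, which is zero precisely because $Z$ is not a basis of $M[I\: A]=M$. For family (2) the relation holds by construction of $\phi'(i_Z)$ as the inverse of the image of the corresponding determinant. For family (3), the hypothesis that $A$ is $T$-normalized gives $A_{xy}=1$ for every $xy\in T$, so $\phi'(a_{xy}-1)=0$.

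Having shown $I_{M,B,T}\subseteq\ker(\phi')$, the universal property of the quotient ring yields a well-defined ring homomorphism $\phi:\bracketring_{M,B,T}\rightarrow\ring$ making the obvious square commute. By construction $\phi$ sends the residue class of $a_{xy}$ to $A_{xy}$, so $\phi(\hat A_{M,B,T})=A$ entrywise, completing the proof. The only nontrivial step is the first one -- ensuring that the assignment $\phi'(i_Z) := \det(A[B\setminus Z,(E\setminus B)\cap Z])^{-1}$ makes sense in $\ring$ -- but this is immediate from the definition of a $\parf$-matrix together with the fact that $Z$ is a basis of $M[I\: A]$; everything else is formal verification of the three relation families.
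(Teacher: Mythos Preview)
Your proof is correct and is exactly the adaptation of the proof of Lemma~\ref{lem:bracketringhom} that the paper indicates: define $\phi'$ on the free generators of $\Z[{\cal Y}_M]$, verify that each of the three families of generators of $I_{M,B,T}$ lies in $\ker(\phi')$, and pass to the quotient. The only substantive point---that $\det(A[B\setminus Z,(E\setminus B)\cap Z])$ is a unit of $\ring$ when $Z$ is a basis---you have handled correctly.
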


Since two normalized representations of a matroid are equivalent if and only if they are equal, the following is an immediate consequence of this lemma:

\begin{corollary}\label{cor:universalbijection}
  There is a bijection between nonequivalent representations of a matroid $M$ over a partial field $\parf$ and partial-field homomorphisms $\parf_M \rightarrow \parf$.
\end{corollary}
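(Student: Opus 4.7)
The plan is to fix a basis $B$ of $M$ and a maximal spanning forest $T$ of $\bip(M,B)$, and to exhibit the bijection explicitly via $\phi \mapsto \phi(A^\star)$, where $A^\star$ denotes the $T$-normalized form of the universal matrix $A_{M,B}$. First I would apply Theorem~\ref{thm:crossratpf} to $A_{M,B}$ viewed as a matrix over $(\bracketring_M,\bracketring_M^*)$ to conclude that it is a scaled $\parf_M$-matrix; normalizing with respect to $T$ via Lemma~\ref{lem:bipproperties}(\ref{eq:bipscaling}) gives an $A^\star$ whose entries lie in $\ring_M$, and Lemma~\ref{lem:matrixrepresentsM} guarantees $M[I\: A^\star]=M$. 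Then, for every $\phi \in \mathrm{Hom}(\parf_M,\parf)$, Proposition~\ref{prop:hom} and Corollary~\ref{cor:hom} make $\phi(A^\star)$ a $\parf$-matrix representing $M$, and $\phi(A^\star)$ is $T$-normalized since $\phi(1)=1$.

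Because each scaling-equivalence class of $B$-representations of $M$ contains a unique $T$-normalized member (Lemma~\ref{lem:bipproperties}(\ref{eq:bipscaling})), nonequivalent representations of $M$ over $\parf$ are in bijection with the $T$-normalized $\parf$-matrices representing $M$ on basis $B$. So it suffices to show $\phi \mapsto \phi(A^\star)$ is a bijection onto that set.

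For injectivity I would use Lemma~\ref{lem:signature}(\ref{sgn:det}): in any $T$-normalized matrix the nonzero non-tree entries are, up to sign, signatures of induced fundamental cycles, and by Corollary~\ref{cor:signaturefund} these signatures are exactly the cross ratios. Thus the entries of $A^\star$ together with $\{-1\}$ generate $\crat(M)\cup\{-1\}$ multiplicatively, and since $\parf_M=(\Z[\crat(M)],\langle \crat(M)\cup\{-1\}\rangle)$, any two homomorphisms agreeing on the entries of $A^\star$ must agree on all of $\parf_M$. For surjectivity, given a $T$-normalized $\parf$-matrix $A'$ representing $M$, apply Lemma~\ref{lem:pfuniversal} to obtain a ring homomorphism $\psi : \bracketring_{M,B,T} \to \ring$ with $\psi(\hat A_{M,B,T})=A'$. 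Since cross ratios are scaling-invariant, the subring of $\bracketring_{M,B,T}$ generated by $\crat(\hat A_{M,B,T})$ is canonically isomorphic to $\ring_M\subseteq \bracketring_M$; under this identification the restriction of $\psi$ sends $\crat(M)$ into $\crat(A')\subseteq\parf^*$, so it yields a partial field homomorphism $\phi:\parf_M\to\parf$, and by construction $\phi(A^\star)=A'$.

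The main obstacle is the bookkeeping needed for the identification of $\ring_M$ with the cross-ratio subring of $\bracketring_{M,B,T}$: one has to verify that the formal relations satisfied by the cross ratios of $A_{M,B}$ in $\bracketring_M$ coincide with those satisfied by the cross ratios of $\hat A_{M,B,T}$ in $\bracketring_{M,B,T}$, so that $\psi$ really descends to a homomorphism out of $\parf_M$ and not merely out of the larger $\parf_{M,B,T}$. Once that identification is in hand, both directions of the bijection follow directly from the lemmas already in place, and the uniqueness of $T$-normalized representatives takes care of well-definedness on equivalence classes.
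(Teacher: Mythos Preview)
Your overall strategy is the same as the paper's: the paper derives the corollary from Lemma~\ref{lem:pfuniversal} together with the identification $\parf_M\cong\parf_{M,B,T}$, which is exactly the ``main obstacle'' you flag and which the paper proves immediately afterwards as Theorem~\ref{thm:universalequalsconst}. Your surjectivity argument is the paper's argument spelled out.

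There is, however, a genuine error in your injectivity step. You assert that the entries of $A^\star$ together with $-1$ generate $\crat(M)\cup\{-1\}$ \emph{multiplicatively}. This is false. For $M=U_{2,5}$ with $T$ chosen so that
\[
A^\star=\begin{bmatrix}1&1&1\\ 1&\alpha_1&\alpha_2\end{bmatrix},
\]
we have $1-\alpha_1\in\crat(M)$, yet $1-\alpha_1$ is not of the form $\pm\alpha_1^a\alpha_2^b$ in $\Q(\alpha_1,\alpha_2)$. Your appeal to Lemma~\ref{lem:signature}\eqref{sgn:det} and Corollary~\ref{cor:signaturefund} only shows that each entry of $A^\star$ lies in $\langle\crat(M)\cup\{-1\}\rangle$, not the reverse inclusion; note also that the fundamental cycle of a non-tree edge with respect to $T$ need not be induced, so Corollary~\ref{cor:signaturefund} does not apply to it directly.

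Injectivity does hold, but for a different reason. Once the identification $\parf_M\cong\parf_{M,B,T}$ is made, the entries of $A^\star$ correspond to the generators $a_{xy}$ of $\bracketring_{M,B,T}$, and the remaining generators $i_{B'}$ are forced by the relations $\det(\hat A_{M,B}[B\setminus B',B'\setminus B])\,i_{B'}=1$; since a partial-field homomorphism commutes with pivoting on a $\parf_M$-matrix, it sends each such subdeterminant of $A^\star$ to the corresponding subdeterminant of the image, so the values on the entries determine the values on all $i_{B'}$ and hence on all of $\parf_M$. In other words, the identification you already need for surjectivity is also what gives you injectivity; the multiplicative-generation claim is both unnecessary and incorrect.
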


Now we can prove that the two constructions described yield isomorphic partial fields.

\begin{theorem}\label{thm:universalequalsconst}$\bracketring_{M,B,T} \cong \ring_M$ and $\parf_{M,B,T} \cong \parf_M$.
\end{theorem}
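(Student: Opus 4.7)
The plan is to build two natural ring homomorphisms in opposite directions and verify they are mutually inverse, then promote this to an isomorphism of partial fields by matching unit groups. Let $A^*$ be the unique $T$-normalized matrix scaling-equivalent to $A_{M,B}$, provided by Lemma~\ref{lem:bipproperties}\eqref{eq:bipscaling}. Theorem~\ref{thm:crossratpf}, applied to $A_{M,B}$ as a matrix over $(\bracketring_M,\bracketring_M^*)$, says it is a scaled $\parf_M$-matrix; since $A^*$ is already $T$-normalized, the lemma preceding Lemma~\ref{lem:entrynotinpprime} gives that $A^*$ is itself a $\parf_M$-matrix. So every entry of $A^*$ lies in $\ring_M$, every nonzero subdeterminant lies in $\parf_M^*$, and $M=M[I\:A^*]$.

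Lemma~\ref{lem:pfuniversal} now supplies a ring homomorphism $\phi:\bracketring_{M,B,T}\to\ring_M$ sending $\hat A_{M,B,T}$ to $A^*$ and each $i_{B'}$ to $\det(A^*[B,B'])^{-1}$. In the other direction, Lemma~\ref{lem:bracketringhom} applied to $[I\:\hat A_{M,B,T}]$, viewed as a representation of $M$ over $\parf_{M,B,T}$, yields a ring homomorphism $\psi:\bracketring_M\to\bracketring_{M,B,T}$ with $\psi([Z])=\det([I\:\hat A_{M,B,T}][B,Z])$. For any cross ratio $p\in\crat(M)$ written as a ratio of single-element-swap brackets $[B/x_i\to y_j]$, $\psi(p)$ is the corresponding cross ratio of $\hat A_{M,B,T}$, and $\phi$ sends the latter back to the corresponding cross ratio of $A^*$; since cross ratios are scaling-invariant this equals $p$. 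This establishes $\phi\circ\psi|_{\ring_M}=\mathrm{id}_{\ring_M}$.

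The converse composition $\psi\circ\phi=\mathrm{id}$ on $\bracketring_{M,B,T}$ is the technical heart. Writing $A^*=D_1 A_{M,B} D_2$ for diagonal scaling matrices, inside $\bracketring_M$ we have $\phi(a_{xy})=d_{1,x}d_{2,y}\cdot[B/x\to y]/[B]$. By Lemma~\ref{lem:bipproperties}\eqref{eq:bipscaling}, rooting $T$ at some $x_0$ and recursively solving $d_{1,x'}d_{2,y'}(A_{M,B})_{x'y'}=1$ for $x'y'\in T$ expresses each $d_{1,x}$ and $d_{2,y}$ as an alternating product of the brackets $[B/x'\to y']^{\pm 1}$ with $x'y'\in T$ and factors $[B]^{\pm 1}$. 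Under $\psi$ each such bracket with $x'y'\in T$ maps to $a_{x'y'}=1$ and $[B]$ maps to $1$, so the scaling factors collapse to $1$ and $\psi(\phi(a_{xy}))=a_{xy}/1=a_{xy}$; an entirely analogous collapse handles $\psi(\phi(i_{B'}))=i_{B'}$, using that $\det(A^*[B,B'])$ differs from $\det([I\:\hat A_{M,B,T}][B,B'])$ by the same kind of $T$-scaling product.

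Finally, to pass from the ring isomorphism $\phi$ to the partial-field isomorphism $\parf_{M,B,T}\cong\parf_M$, I biject the unit groups. The generators $\{i_{B'}\}\cup\{-1\}$ of $\parf_{M,B,T}^*$ map under $\phi$ into $\parf_M^*$ by construction. Conversely, every generator of $\parf_M^*$, that is, every cross ratio of $A_{M,B}$, is a signed ratio of single-element-swap brackets, each equal to a signed basis-subdeterminant of $A^*$ (up to the scaling factors that cancel in the ratio), hence lies in $\phi(\parf_{M,B,T}^*)$. The main obstacle is the bookkeeping in the $\psi\circ\phi$ calculation, and the clean reason it succeeds is that the $T$-edges carry entry $1$ in both $A^*$ and $\hat A_{M,B,T}$, which forces every $T$-edge bracket on the $\bracketring_M$ side to map under $\psi$ to $1$.
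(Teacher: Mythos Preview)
Your proof is correct and follows the same overall strategy as the paper: build $\phi:\bracketring_{M,B,T}\to\ring_M$ via Lemma~\ref{lem:pfuniversal} and $\psi:\bracketring_M\to\bracketring_{M,B,T}$ via Lemma~\ref{lem:bracketringhom}, then show they restrict to mutually inverse ring isomorphisms. The only tactical difference is in how the inverse property is established. You verify both compositions directly, carrying out the scaling-factor bookkeeping to check $\psi\circ\phi=\mathrm{id}$ on the generators $a_{xy}$ and $i_{B'}$. The paper instead notes that both $\phi$ and $\psi$ are surjective (the image of $\psi$ contains all entries and inverse subdeterminants of $\hat A_{M,B,T}$, hence all of $\bracketring_{M,B,T}$; the image of $\phi$ contains $\crat(A_{M,B,T})=\crat(M)$, hence all of $\ring_M$), and then only checks $\phi\circ\psi=\mathrm{id}_{\ring_M}$ on the generating set $\crat(M)$, which together with surjectivity of $\psi$ forces $\psi$ to be an isomorphism. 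Your explicit computation of $\psi\circ\phi$ is exactly what underlies the paper's throwaway remark that $\psi'(A_{M,B,T})=\hat A_{M,B,T}$, so the two arguments are really the same; the paper's version just packages the bookkeeping more economically.
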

\begin{proof}
  Let $A_{M,B,T}$ be the unique $T$-normalized matrix with $A_{M,B,T}\sim A_{M,B}$. By Theorem~\ref{thm:crossratpf} $A_{M,B,T}$ is a $\parf_M$-matrix. By Lemma~\ref{lem:pfuniversal} there exists a homomorphism $\phi:\bracketring_{M,B,T}\rightarrow\ring_M$ such that $\phi(\hat A_{M,B,T}) = A_{M,B,T}$. By Lemma~\ref{lem:bracketringhom} there exists a homomorphism $\psi':\bracketring_M\rightarrow\bracketring_{M,B,T}$ such that $\psi'(A_{M,B}) = \hat A_{M,B,T}$. Note that also $\psi'(A_{M,B,T}) = \hat A_{M,B,T}$. Let $\psi := \psi'|_{\ring_M}$. Now $\phi$ and $\psi$ are both surjective and $\phi(\psi(A_{M,B})) = A_{M,B}$, so that we have $\phi(\psi(p)) = p$ for all $p\in\crat(M)$. Since $\ring_M$ is generated by $\crat(M)$, the result follows.
\end{proof}
We say that a partial field $\parf$ is \emph{universal} if $\parf = \parf_M$ for some matroid $M$. The next lemma, which has a straightforward proof, gives a good reason to study universal partial fields.
\begin{lemma}\label{lem:universalimplieshom}
  Let $\parf$ be a universal partial field, and let ${\cal M}$ be the class of $\parf$-representable matroids. Then all $M \in {\cal M}$ are $\parf'$-representable if and only if there exists a homomorphism $\phi:\parf\rightarrow\parf'$.
\end{lemma}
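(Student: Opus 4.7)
The statement is an ``if and only if'', and each direction reduces to a result already established in the paper, so my plan is essentially to identify which prior result supplies each implication and glue them together.

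For the ``if'' direction, suppose a homomorphism $\phi:\parf\rightarrow\parf'$ exists. Given any $M \in {\cal M}$, pick a $\parf$-matrix $A$ with $M = M[I\: A]$. Then $\phi(A)$ is a $\parf'$-matrix by Proposition~\ref{prop:hom}, and Corollary~\ref{cor:hom} gives $M[\phi(A)] = M[A]$, hence $M$ is $\parf'$-representable. (If $\phi$ happens to be trivial this still works, since then $\parf'$ is trivial and the class of representable matroids is empty, but in our setup $\parf'$ is nontrivial as soon as ${\cal M}$ is nonempty.)

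For the ``only if'' direction, this is where the universality of $\parf$ is used. Write $\parf = \parf_N$ for some matroid $N$, which is possible by the definition of universal. First I would verify $N\in{\cal M}$: by Theorem~\ref{thm:crossratpf}, for any basis $B$ of $N$ the matrix $A_{N,B}$ (which is a $(\bracketring_N,\bracketring_N^*)$-matrix representing $N$ by Lemmas~\ref{lem:Brepmat} and~\ref{lem:matrixrepresentsM}) is a scaled $\parf_N$-matrix, so $N$ is $\parf_N$-representable, i.e. $N \in {\cal M}$. By the hypothesis, $N$ is then $\parf'$-representable, so there exists a $\parf'$-matrix representing $N$. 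Applying Corollary~\ref{cor:universalbijection}, the existence of such a representation is equivalent to the existence of a partial-field homomorphism $\parf_N \rightarrow \parf'$, and since $\parf_N = \parf$ this is exactly the desired $\phi$.

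There is no real obstacle; the work has all been done by Theorem~\ref{thm:crossratpf} (which guarantees $N$ lives in ${\cal M}$) and Corollary~\ref{cor:universalbijection} (which converts a representation into a homomorphism). The only minor subtlety to state cleanly is that ``universal'' is being used as the definitional hook $\parf = \parf_N$, so that $N$ itself is available as a distinguished member of ${\cal M}$ on which to apply the hypothesis. Once $N$ is identified, both directions are one-line applications of previously proved results.
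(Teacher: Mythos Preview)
Your proof is correct and is exactly the natural argument the paper has in mind; note that the paper itself does not spell out a proof but merely calls it ``straightforward,'' so there is no alternative approach to compare against. Both directions are handled properly: the ``if'' direction via Corollary~\ref{cor:hom}, and the ``only if'' direction by applying the hypothesis to the distinguished matroid $N$ with $\parf = \parf_N$ and invoking Corollary~\ref{cor:universalbijection}.
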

We conclude this subsection by studying the effect of taking a minor on the universal partial field. The proof of the following lemma is straightforward.
\begin{definition}\label{def:canonichom}
  Let $M = (E,{\cal B})$ be a matroid, and $U,V\subseteq E$ disjoint ordered subsets such that $U$ is independent and $V$ coindependent. Then we define $\phi_{M,U,V}$ as the restriction of $\tilde\phi_{M,U,V}$ to $\Z[\crat(M\contract U\delete V)]$.
\end{definition}
\begin{lemma}\label{lem:universalminorhom}
   $\phi_{M,U,V}$ is a ring homomorphism $\ring_{M\contract U \delete V} \rightarrow \ring_M$.
\end{lemma}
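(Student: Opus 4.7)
The strategy is to observe that Lemma~\ref{lem:bracketringminor} already provides the ring homomorphism $\tilde\phi_{M,U,V}:\bracketring_N \to \bracketring_M$, where $N := M\contract U\delete V$, and that $\phi_{M,U,V}$ is by definition its restriction to the subring $\ring_N = \Z[\crat(N)] \subseteq \bracketring_N$. Since the restriction of a ring homomorphism to a subring is automatically a ring homomorphism onto its image in $\bracketring_M$, it suffices to verify that $\tilde\phi_{M,U,V}(\ring_N)\subseteq \ring_M$, or equivalently, that $\tilde\phi_{M,U,V}(\crat(N))\subseteq \ring_M$.

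The key step is to identify the image of the canonical representation matrix $A_{N,B}$ under $\tilde\phi_{M,U,V}$. Since $U$ is independent in $M$ and $B$ is a basis of $N$, the concatenation $B\cup U$ is a basis of $M$; fix the ordering that lists the entries of $B$ first and those of $U$ after. Unwinding the definitions, the $(u,v)$-entry of $A_{N,B}$ is $[B/u\rightarrow v]/[B]$, and its image under $\tilde\phi_{M,U,V}$ is
\begin{align*}
  \tilde\phi_{M,U,V}\bigl([B/u\rightarrow v]/[B]\bigr) = [B/u\rightarrow v\,\,U]/[B\,\,U] = [(B\cup U)/u\rightarrow v]/[B\cup U],
\end{align*}
which is precisely the $(u,v)$-entry of $A_{M,B\cup U}$. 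Consequently $\tilde\phi_{M,U,V}$ sends $A_{N,B}$ entrywise onto the submatrix $A_{M,B\cup U}[B, E\setminus (B\cup U\cup V)]$, as is also consistent with Lemma~\ref{lem:matrixmatroid}.

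Since cross ratios are computed from $2\times 2$ submatrices, every element of $\crat(N)=\crat(A_{N,B})$ is carried by $\tilde\phi_{M,U,V}$ to a cross ratio of the corresponding $2\times 2$ submatrix of $A_{M,B\cup U}$. Deleting rows and columns is among the matrix minor operations of Definition~\ref{def:matops}, so by Lemma~\ref{lem:cratminor} this image lies in $\crat(A_{M,B\cup U}) = \crat(M)$. Since $\ring_N = \Z[\crat(N)]$, this shows $\tilde\phi_{M,U,V}(\ring_N)\subseteq \Z[\crat(M)] = \ring_M$, proving that $\phi_{M,U,V}$ is the claimed ring homomorphism. No substantial obstacle is anticipated: the whole argument is bookkeeping with definitions, the only mild subtlety being the convention that $U$ is appended after $B$ in the concatenation of tuples so that $[B/u\rightarrow v\,\,U]$ really equals $[(B\cup U)/u\rightarrow v]$ without incurring an unwanted sign.
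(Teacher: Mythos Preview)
Your proposal is correct and is precisely the straightforward argument the paper alludes to (the paper gives no proof, only the remark that it is straightforward). One minor wording quibble: cross ratios are defined via $2\times 2$ \emph{minors} in the sense of Definition~\ref{def:matops}, not just submatrices, so the step ``every element of $\crat(A_{N,B})$ is carried to a cross ratio of $\tilde\phi_{M,U,V}(A_{N,B})$'' relies on the (routine) fact that ring homomorphisms commute with scaling and pivoting as well as with deletion; once you make that explicit, the rest of your argument via Lemma~\ref{lem:cratminor} goes through verbatim.
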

Note that, because of the restriction to cross ratios, $\phi_{M,U,V}$ does not depend on the particular ordering of $U$ and $V$. The function $\phi_{M,U,V}$ is the \emph{canonical homomorphism} $\ring_{M\contract U\delete V}\rightarrow\ring_M$ and induces a partial field homomorphism $\parf_{M\contract U \delete V}\rightarrow\parf_M$.
\begin{lemma}\label{lem:universalhomsubmat}
  Let $M = (E,{\cal B})$ be a matroid, and $U,V\subseteq E$ disjoint subsets such that $U$ is independent and $V$ coindependent. Let $B \in {\cal B}$ be such that $U \subseteq B$, and let $T$ be a maximal spanning forest for $G(M,B)$ extending a maximal spanning forest $T'$ for $G(M\contract U \delete V, B\delete U)$. Then
  \begin{align}
    \phi_{M,U,V}(A_{M\contract U \delete V, B\setminus U, T'}) = A_{M,B,T} - U - V.
  \end{align}
\end{lemma}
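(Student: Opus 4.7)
The plan is to reduce the identity to uniqueness of $T'$-normalized scalings, after a direct bracket-level computation; throughout I write $N := M\contract U\delete V$ and $B_N := B\setminus U$.

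First I would verify the entrywise identity $\tilde\phi_{M,U,V}(A_{N,B_N}) = (A_{M,B})-U-V$. The $(u,v)$-entry of $A_{N,B_N}$ is $[B_N/u\to v]_N / [B_N]_N$, where brackets subscripted by $N$ live in $\bracketring_N$. Under $\tilde\phi_{M,U,V}$ each such bracket has the tuple $U$ appended, producing $[B/u\to v]_M / [B]_M$, which is precisely $(A_{M,B})_{uv}$. Zeros are preserved for the same reason: for $u\in B_N$ and $v\in(E\setminus B)\setminus V$, the set $(B_N\setminus u)\cup v$ is a basis of $N$ iff $(B\setminus u)\cup v$ is a basis of $M$. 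Consequently $\bip(A_{N,B_N})$ is the induced subgraph $\bip(A_{M,B})[B_N \cup ((E\setminus B)\setminus V)]$, and $T'$ is a maximal spanning forest of $\bip((A_{M,B})-U-V)$.

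Next I would show that both $\phi_{M,U,V}(A_{N,B_N,T'})$ and $A_{M,B,T} - U - V$ are row and column scalings of the matrix $(A_{M,B})-U-V$. For the first, write $A_{N,B_N,T'} = D_1 A_{N,B_N} D_2$ with diagonal $D_1, D_2$ whose entries are units of $\parf_N$; since $\phi_{M,U,V}$ is a ring homomorphism sending units to units,
\begin{align}
\phi_{M,U,V}(A_{N,B_N,T'}) = \phi_{M,U,V}(D_1)\,((A_{M,B})-U-V)\,\phi_{M,U,V}(D_2),
\end{align}
which is again a row and column scaling. For the second, $A_{M,B,T}$ is itself a row and column scaling of $A_{M,B}$, and this restricts to a row and column scaling of the indicated submatrix. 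On both scalings every edge $xy\in T'$ has entry $1$: for the first because $(A_{N,B_N,T'})_{xy}=1$ and ring homomorphisms preserve $1$; for the second because $T'\subseteq T$ and every $T$-edge of $A_{M,B,T}$ equals $1$ by construction.

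The conclusion then follows from Lemma~\ref{lem:bipproperties}\eqref{eq:bipscaling}: a scaling of $(A_{M,B})-U-V$ is determined uniquely by prescribed values on the edges of any maximal spanning forest, so prescribing $1$ on every edge of $T'$ forces $\phi_{M,U,V}(A_{N,B_N,T'}) = A_{M,B,T} - U - V$. The only delicate point is verifying that $\phi_{M,U,V}(D_1)$ and $\phi_{M,U,V}(D_2)$ are invertible diagonal matrices, i.e.\ that the images of the diagonal entries remain units; but this is built into the definition of a partial-field homomorphism, so no substantive obstacle is expected.
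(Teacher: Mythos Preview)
Your argument is correct in outline, and the paper itself does not supply a proof of this lemma (it is left implicit as straightforward). One point, however, deserves tightening.

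You write $A_{N,B_N,T'} = D_1\, A_{N,B_N}\, D_2$ with diagonal entries of $D_1,D_2$ ``units of $\parf_N$'', and then apply $\phi_{M,U,V}$ to the $D_i$. But the scaling factors needed to $T'$-normalize $A_{N,B_N}$ are products of bracket ratios such as $[B_N]/[B_N/x\to y]$; these lie in $\bracketring_N^*$ and not, in general, in $\ring_N = \Z[\crat(N)]$. Since $\phi_{M,U,V}$ is by definition the restriction of $\tilde\phi_{M,U,V}$ to $\ring_N$, it is not a priori defined on the entries of $D_1,D_2$. The ``delicate point'' you flag at the end is therefore not invertibility of the images (any ring homomorphism preserves units) but rather whether the $D_i$ are in the domain at all.

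The repair is immediate: carry out the scaling argument with the ambient ring homomorphism $\tilde\phi_{M,U,V}:\bracketring_N\to\bracketring_M$ (Lemma~\ref{lem:bracketringminor}), which is defined on all of $\bracketring_N$ and certainly sends units to units. This yields $\tilde\phi_{M,U,V}(A_{N,B_N,T'}) \sim (A_{M,B})-U-V$. Then observe that the entries of $A_{N,B_N,T'}$ lie in $\ring_N$ (by Theorem~\ref{thm:crossratpf}), so on that matrix $\tilde\phi_{M,U,V}$ agrees with $\phi_{M,U,V}$. With this adjustment your uniqueness step via Lemma~\ref{lem:bipproperties}\eqref{eq:bipscaling} goes through exactly as written.
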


\subsection{Examples}\label{ssec:universal}
In this section we will see that several well-known partial fields are universal. 
Consider the \emph{dyadic} partial field
\begin{align}
  \dyadic := (\Z[\tfrac{1}{2}], \langle -1, 2\rangle).
\end{align}
Consider also the matroid $P_8$, which is a rank-4 matroid with no 3-element dependent sets and exactly ten 4-element dependent sets, indicated by the ten planes in Figure \ref{fig:P8}.

\begin{figure}[tbp]
  \begin{center}
  \includegraphics{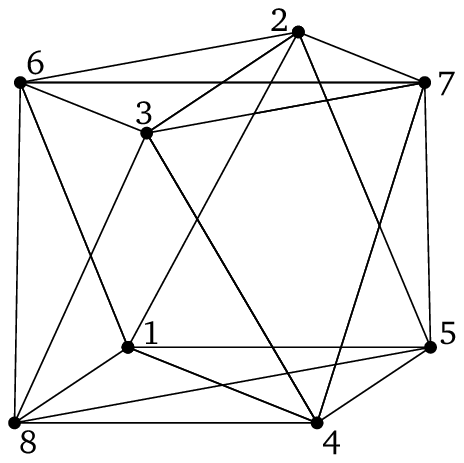}
  \end{center}
  \caption{Geometric representation of the matroid $P_8$}\label{fig:P8}
\end{figure}

\begin{theorem}\label{thm:P8}
  The dyadic partial field $\dyadic$ is the universal partial field of $P_8$.
\end{theorem}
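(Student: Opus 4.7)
The plan is to identify $\parf_{P_8}$ by fixing a basis and spanning forest, writing down the normalised representation with formal indeterminates, and then reading off all of the polynomial constraints that arise from the list of $10$ non-bases together with the invertibility of the $60$ bases, as prescribed by Definition~\ref{def:bracketring2}. By Theorem~\ref{thm:universalequalsconst} this presentation computes $\parf_{P_8}$ exactly.

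First I would pick a basis $B$ of $P_8$ (say the four ``top'' elements in the figure) and a spanning tree $T$ of $G(P_8,B)$ that kills as many indeterminates as possible. Because $P_8$ is paving, no $3$-element subset is dependent, so every $xy$ with $x\in B, y\in E\setminus B$ is an edge of $G(P_8,B)$; hence $T$ has $4+4-1=7$ edges and leaves only $9$ free entries $a_{xy}$ in $\hat A_{P_8,B}$. Next I would write down, for each of the ten $4$-element non-bases, the determinantal relation $\det(\hat A_{P_8,B}[B\setminus Z,(E\setminus B)\cap Z])=0$ coming from Definition~\ref{def:bracketring2}(i). These ten equations impose enough linear and multiplicative conditions on the nine indeterminates that, after successively eliminating them, each surviving entry will be forced to lie in $\langle -1, 2\rangle \cup \{0\}$. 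Combined with the invertibility relations of Definition~\ref{def:bracketring2}(ii) one obtains, after this elimination, the ring $\Z[\tfrac12]$ and the group $\langle -1,2\rangle$, i.e.\ exactly $\dyadic$.

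To turn this calculation into an isomorphism, I would proceed in two steps. The known dyadic representation of $P_8$ (see, e.g., the explicit matrix in~\cite{Whi97}) together with Lemma~\ref{lem:pfuniversal}, applied through Corollary~\ref{cor:universalbijection}, supplies a partial-field homomorphism $\phi:\parf_{P_8}\to\dyadic$. Conversely, the presentation above exhibits a surjection $\dyadic\twoheadrightarrow\parf_{P_8}$: every cross ratio of $A_{P_8,B,T}$ is, by the elimination, a product of $\pm1$ and $\pm 2^{\pm1}$, so the map $2\mapsto$ (the unique nontrivial cross ratio produced) extends to a ring map $\Z[\tfrac12]\to\ring_{P_8}$. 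Composing the two shows both maps are isomorphisms.

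The main obstacle is purely bookkeeping: there are ten non-basis relations and one must check that the resulting system of polynomial equations in the nine free entries has, up to the scaling freedom already absorbed by the choice of $T$, a single solution over any target partial field, and that this unique solution generates $\dyadic$. One expects here a small case analysis of the form ``pick a pivot $xy$ in a non-basis quadrangle, use Lemma~\ref{lem:detpivot} with Lemma~\ref{lem:signature}\eqref{sgn:det} to express $\sigma_{A_{P_8,B,T}}(C)=1-\det$'' to iteratively force the remaining entries into $\{-1,\pm 2,\pm\tfrac12\}$. Once this has been done carefully for the first two or three non-bases, the remaining relations become linear in the last few indeterminates and close the computation.
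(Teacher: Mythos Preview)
Your approach is essentially the paper's: fix a basis $B$ and spanning tree $T$, write down $\hat A_{P_8,B,T}$, impose the ten non-basis determinantal relations of Definition~\ref{def:bracketring2}(i), solve for the free entries, and verify that all basis subdeterminants are units in $\Z[\tfrac12]$. The paper carries this out explicitly with $B=\{1,2,3,4\}$ and a specific tree, obtaining a matrix whose five undetermined entries are forced to $1$ or $2$ by six of the non-basis equations.

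One factual slip: your claim that $G(P_8,B)$ is complete bipartite is wrong. Paving only rules out $3$-element circuits; it does not make every $4$-set a basis. For the basis $B=\{1,2,3,4\}$ used in the paper, four of the ten non-bases have the form $(B\setminus x)\cup y$, so four entries of $\hat A_{P_8,B,T}$ are zero and the bipartite graph is $K_{4,4}$ minus a perfect matching. Consequently there are $12-7=5$ free entries, not $9$, and only six of the ten non-basis relations remain nontrivial after the four ``$a_{xy}=0$'' conditions. This does not affect the strategy, but you must choose $T$ inside the actual graph, and the bookkeeping is lighter than you anticipated.
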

\begin{proof}[Sketch of proof]
  Pick $B = \{1,2,3,4\}$ as basis, and a spanning tree $T$ of $G(M,B)$ with edges $16,17,25,27,28,38,47$. Then
  \begin{align}
    \hat A_{P_8,B,T} = \kbordermatrix{ & 5 & 6 & 7 & 8\\
                                1 & 0 & 1 & 1 & x\\
                                2 & 1 & 0 & 1 & 1\\
                                3 & y & z & 0 & 1\\
                                4 & u & v & 1 & 0},
  \end{align}
  where $x = a_{18}$, $y = a_{35}$, and so on (the $a_{ij}$ are as in \eqref{eq:calY}). Since $\{1,4,5,8\}$ is dependent, it follows that
  \begin{align}
    \det(A[\{2,3\},\{5,8\}]) = 1 - y = 0,
  \end{align}
  so $y = 1$ in $\bracketring_{P_8,B,T}$. Since $\{2,3,6,7\}$ is dependent, it follows that $v = 1$. From the dependency of, respectively, $\{4,6,7,8\}$, $\{1,5,6,7\}$, $\{2,5,6,8\}$, and $\{3,5,7,8\}$ we deduce
  \begin{align}
    1 + (1-x) & = 0\\
    z + (1 - z u) & = 0\\
    u + x(1-z u) & = 0\\
    u + x(1-u) & = 0.
  \end{align}
  Hence $x = 2$. Substituting this in the fourth equation gives $u = 2$. Substituting that in the second equation gives $z = 1$. Note that the third equation is also satisfied this way.

  To complete the proof we should verify that, for all $\left(\!\begin{smallmatrix}8\\4\end{smallmatrix}\!\right)-10$ bases $B'$, $i_{B'}$ is in $\Z[\frac{1}{2}]$. This is equivalent to the fact that all subdeterminants of
  \begin{align}
    \kbordermatrix{ & 5 & 6 & 7 & 8\\
                                1 & 0 & 1 & 1 & 2\\
                                2 & 1 & 0 & 1 & 1\\
                                3 & 1 & 1 & 0 & 1\\
                                4 & 2 & 1 & 1 & 0}
  \end{align}
  are powers of $2$. We leave out this routine but somewhat lengthy check.
\end{proof}

Next we describe, for each $q$, a rank-3 matroid on $3q+1$ elements for which the universal partial field is $\GF(q)$.
For $q$ a prime power, let $Q_q$ be the rank-3 matroid consisting of three distinct $(q+1)$-point lines $L_1, L_2, L_3 \subset \PG(2,q)$ such that $L_1\cap L_2\cap L_3 = \emptyset$. Then $Q_q = Q_3(\GF(q)^*)$, the rank-3 Dowling geometry for the multiplicative group of $\GF(q)$. Now $Q_q^+$ is the matroid obtained from $Q_q$ by adding a point $e \in \PG(2,q)\setminus(L_1\cup L_2\cup L_3)$. For instance, $\deltadot_2 \cong F_7$.
\begin{theorem}
  $\parf_{\deltadot_q} \cong \GF(q)$.
\end{theorem}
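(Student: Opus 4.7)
The plan is to combine the natural $\GF(q)$-representation of $\deltadot_q$ coming from its construction inside $\PG(2,q)$ with a direct computation of the universal normalized matrix. Realizing $\deltadot_q$ inside $\PG(2,q)$ by placing the three triangle vertices at the standard basis of $\GF(q)^{3}$ and $e=[1,1,1]^{T}$, the $q-1$ non-vertex points of the three lines may be coordinatized as $[\alpha,1,0]^{T}$, $[\alpha,0,1]^{T}$, and $[0,\alpha,1]^{T}$ respectively, as $\alpha$ ranges over $\GF(q)^{*}$. The resulting $\GF(q)$-matrix $A_{0}$ is a representation of $\deltadot_q$, and by Lemma~\ref{lem:bracketringhom} and Theorem~\ref{thm:universalequalsconst} it induces a ring homomorphism $\phi\colon\ring_{\deltadot_q}\to\GF(q)$. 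Since the cross ratios of $A_{0}$ exhaust $\GF(q)^{*}$, $\phi$ is surjective, so it remains only to establish injectivity.

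To that end I would write out $\hat A_{\deltadot_q,B,T}$ with $B$ the set of triangle vertices and $T$ a spanning tree of $G(\deltadot_q,B)$ chosen so that, after normalization, the column of $e$ equals $[1,1,1]^{T}$ and, on each line $L_{i}$, one distinguished reference point (the unique point of $L_{i}$ lying on the $\PG(2,q)$-line through $e$ and the triangle vertex opposite $L_{i}$) is normalized to a column with entries in $\{0,1\}$. Every remaining non-basis column then contains exactly one indeterminate, which I denote on the three lines by $\alpha_{i}$, $\beta_{j}$, $\gamma_{k}$ respectively. The $3$-element circuits of $\deltadot_q$ produce, via the vanishing of the corresponding $3\times 3$ subdeterminants, two families of relations in $\ring_{\deltadot_q}$:
\begin{align*}
\beta_{j}+\alpha_{i}\gamma_{k}&=0 & &(\text{triangular triple off }e),\\
\beta_{j}+\alpha_{i}-1&=0 & &(\text{triple through }e\text{ joining the first two lines}),
\end{align*}
together with the two cyclic analogues of the second relation corresponding to the other pairs of lines.

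Solving, each $\alpha_{i}$ forces $\beta_{j}=1-\alpha_{i}$ and $\gamma_{k}=1-\alpha_{i}^{-1}$, after which the triangular relation is automatic. Hence the subring of $\ring_{\deltadot_q}$ generated by all non-reference indeterminates is in fact generated by a single set of $q-1$ elements $\{1,\alpha_{1},\ldots,\alpha_{q-2}\}$, mapped bijectively to $\GF(q)^{*}$ by $\phi$. Because $\phi$ separates these elements, no two of them are equal; and the circuit relations above force them (together with $0$) to reproduce exactly the addition and multiplication tables of $\GF(q)$. Thus these $q$ elements constitute a subring of $\ring_{\deltadot_q}$ isomorphic to $\GF(q)$, and since by Theorem~\ref{thm:crossratpf} every cross ratio of $\hat A_{\deltadot_q,B,T}$ lies in this subring, it must coincide with $\ring_{\deltadot_q}$.

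The main obstacle is the bookkeeping needed to verify that no further relation sneaks in: one must check that every cross ratio of $\hat A_{\deltadot_q,B,T}$ is expressible via the generators above, that no $4$-element circuit of $\deltadot_q$ imposes a constraint beyond those already obtained from $3$-element circuits, and that every basis of $\deltadot_q$ yields an invertible subdeterminant in the resulting ring, so that the symbols $i_{B'}$ of \eqref{eq:calY} are all accounted for. These checks parallel, but are more systematic than, the computation concluding the proof of Theorem~\ref{thm:P8}. Once they are completed, $\phi$ has trivial kernel and is therefore an isomorphism, giving $\parf_{\deltadot_q}\cong\GF(q)$.
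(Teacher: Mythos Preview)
Your plan follows essentially the same route as the paper: take the triangle vertices $\{e_1,e_2,e_3\}$ as basis, write the normalized universal matrix $\hat A_{\deltadot_q,B,T}$, and use vanishing subdeterminants coming from $3$-element circuits to force the ring of cross ratios to collapse to $\GF(q)$. Your identification of the two families of relations (Dowling triples and triples through $e$) is correct, and the observation that a surjection $\phi\colon\ring_{\deltadot_q}\to\GF(q)$ exists and separates the generators is a useful sanity check.

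The gap is in the step you label ``the circuit relations above force them \dots\ to reproduce exactly the addition and multiplication tables of $\GF(q)$.'' This is the entire content of the theorem, and you assert it rather than prove it. Your substitution $\beta_{j}=1-\alpha_{i}$, $\gamma_{k}=1-\alpha_{i}^{-1}$ comes from the line through $e$ meeting $L_1,L_2,L_3$ at $a_i,b_j,c_k$; the triangular relation that you then call ``automatic'' is only the one for \emph{that same triple} $(a_i,b_j,c_k)$. The Dowling triples $\{a_{i'},b_{j'},c_{k'}\}$ lying on lines \emph{not} through $e$ are not automatic at all: they are precisely what encode the multiplicative law, and after your substitution they become relations of the form $(1-\alpha_{f(j')})+\alpha_{i'}(1-\alpha_{g(k')}^{-1})=0$, where $f,g$ are bijections that already presuppose the additive structure of $\GF(q)$. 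Extracting a clean statement such as $\alpha_i\alpha_j=\alpha_{i+j}$ from this is not bookkeeping; it is the proof.

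The paper avoids this tangle by a different organizing step: before eliminating anything, it uses the Dowling circuits containing the point playing the role of $-1$ to prove $x_l=y_l=z_l$ for all $l$ (Claims~2 and~3). Once the three families of indeterminates are identified, the Dowling relations read directly as $x_kx_l=x_m$ whenever $\alpha^k\alpha^l=\alpha^m$, and the through-$e$ relations read as $x_k=x_l+1$ whenever $\alpha^k=\alpha^l+1$. That single preliminary identification is what turns your ``bookkeeping'' into a two-line derivation; without it the argument does not close.
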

\begin{proof}
  Let $\{e_1\} = L_2\cap L_3$, $\{e_2\} = L_1 \cap L_3$, and $\{e_3\} = L_1 \cap L_2$. Then $B := \{e_1,e_2,e_3\}$ is a basis of $\deltadot_q$. If $\alpha$ is a generator of $\GF(q)^*$ then a $B$-representation of $\deltadot_q$ is the following:
  \begin{align}
    A =
    \kbordermatrix{ & e & a_0 & a_1 & & a_{q-2} & b_0 & & b_{q-2} & c_0 & & c_{q-2}\\
                e_1 & 1 &  0  &  0  & \cdots &  0      &  1  & \cdots &  1      &  1  &\cdots &  1\\
                e_2 & 1 &  1  &  1  & &  1      &  0  & &  0      &  1  & &  \alpha^{q-2}\\
                e_3 & 1 &  1  &  \alpha  & \cdots &  \alpha^{q-2}      &  1  &\cdots &  \alpha^{q-2}      &  0  &\cdots &  0
    }.
  \end{align}
  Let $T$ be the spanning tree of $\bip(A)$ with edges $e_1x,e_2x,e_3x$ and, for all $i \in \{0,\ldots,q-2\}$, edges $e_2a_i, e_1b_i, e_1c_i$. Then
  \begin{align}
    \hat A_{\deltadot_q,B,T} =
    \kbordermatrix{ & e & a_0 & a_1 & & a_{q-2} & b_0 & & b_{q-2} & c_0 & & c_{q-2}\\
                e_1 & 1 &  0  &  0  &\cdots &  0      &  1  &\cdots &  1      &  1  &\cdots &  1\\
                e_2 & 1 &  1  &  1  & &  1      &  0  & &  0      &  z_0  & &  z_{q-2}\\
                e_3 & 1 &  x_0  &  x_1  &\cdots &  x_{q-2}      &  y_0  &\cdots &  y_{q-2}      &  0  &\cdots &  0
    }.
  \end{align}
  \begin{claim}$x_0 = y_0 = z_0 = 1$.
  \end{claim}
  \begin{subproof}
    $\det(A[B\setminus e_1,\{e,a_0\}]) = 0$, so $\det(\hat A_{\deltadot_q,B,T}[B\setminus e_1,\{e,a_0\}]) = x_0 - 1 = 0$. Similarly $y_0 =1$ and $z_0 = 1$.
  \end{subproof}
  \begin{claim}
    If $\alpha^k = -1$ then $x_k = y_k = z_k = -1$.
  \end{claim}
  \begin{subproof}
    $\det(A[B,\{a_0,b_0,c_k\}]) = \det(\alpha^k + 1) = 0$, so\\ $\det(\hat A_{\deltadot_q,B,T}[B,\{a_0,b_0,c_k\}]) = z_k + 1 = 0$ and $z_k = -1$. Similarly $x_k = -1$ and $y_k = -1$.
  \end{subproof}
  \begin{claim}
    $x_l = y_l = z_l$ for all $l$.
  \end{claim}
  \begin{subproof}
    Let $k$ be such that $x_k = -1$. Note that, if $\GF(q)$ has characteristic $2$, then $k = 0$. Then $\det(A[B,\{a_k, b_l, c_l\}]) = 0$, so
    \begin{align}
      \det(\hat A_{\deltadot_q,B,T}[B,\{a_k,b_l,c_l\}]) = y_l - z_l = 0.
    \end{align}
    Therefore $y_l = z_l$. Similarly $y_l = x_l$.
  \end{subproof}
  By replacing $a_k$ by $a_0$ in the previous subproof we obtain
  \begin{claim}
    If $\alpha^m = - \alpha^l$ then $x_m = - x_l$, for all $k,l$.
  \end{claim}
  Now we establish the multiplicative structure of $\GF(q)$:
  \begin{claim}If $\alpha^k \alpha^l = \alpha^m$ then $x_k x_l = x_m$.
  \end{claim}
  \begin{subproof}
    Let $n$ be such that $\alpha^m = - \alpha^n$. Then
    $\det(A[B,\{a_k, b_n, c_l\}]) = \alpha^k \alpha^l + \alpha^n = 0$, so $\det(\hat A_{\deltadot_q,B,T}[B,\{a_k,b_n,c_l\}]) = x_k x_l + x_n = 0$, so $x_k x_l = x_m$.
  \end{subproof}
  Finally we establish the additive structure.
  \begin{claim}If $\alpha^k = \alpha^l + 1$ then $x_k = x_l + 1$.
  \end{claim}
  \begin{subproof}
    Let $m$ be such that $\alpha^m = - \alpha^l$. Then $\det(A[B,\{e,a_k,b_m\}]) = \alpha^k + \alpha^m - 1 = 0$, so $\det(\hat A_{\deltadot_q,B,T}[B,\{e,a_k,b_m\}]) = x_k + x_m - 1 = 0$, so $x_k = x_l + 1$.
  \end{subproof}
  This completes the proof.
\end{proof}
We made no attempt to find a smallest matroid with $\GF(q)$ as universal partial field. For $q$ prime it is known that fewer elements suffice: one may restrict the line $L_3$ to $e_2, e_3$, and the point collinear with $e_1$ and $e$.  \citet{Bry82} showed that yet more points may be omitted. \citet{Laz58} described, for primes $p$, a rank-$(p+1)$ matroid with characteristic set $\{p\}$.

\begin{table}[tp]
  \begin{center}
  \begin{tabular}{llllllll}
    \toprule
    $\parf_M\phantom{X}$ & $\GF(q)$ & $\uniform_k$ & $\dyadic$ & $\psru$  \\
    \addlinespace
    $M$ & $\deltadot_q$ & $U_{2,k+3}$ & $F_7^{-}$ & $\AG(2,3)$ \\
    \midrule
    $\parf_M$  & $\golmean$ & $\cyclo_2$ & $\uniform_{1}^{(2)}$ & $\gauss$\\
    \addlinespace
    $M$  & $M[I\: A_1]$ & $M[I\: A_2]$ & $M[I\: A_3]$ & $Q_5$\\
    \bottomrule
  \end{tabular}
  \end{center}
  \caption{Some universal partial fields.}\label{tab:universal}
\end{table}

Without proof we give Table~\ref{tab:universal}, which states that many partial fields that we have encountered so far are indeed universal. In this table we have
\begin{align}
  A_1 &:= \begin{bmatrix}
    1 & 0 & 1 & 1\\
    1 & 1 & \tau^{-1} & \tau\\
    0 & 1 & -1 & \tau\\
    1 & -\tau^{-1} & \tau^{-1} & 0
  \end{bmatrix},\label{eq:royleGR}\\
   A_2 &:= \begin{bmatrix}
    -1 & 0 & 1 & 1\\
    1 & -1 & 0 & \alpha\\
    0 & 1 & -1 & -1
  \end{bmatrix},\\
  A_3 &:= \begin{bmatrix}
    1 & 0 & 1 & 1 & 1\\
    1 & 1 & 0 & 1 & \alpha\\
    0 & 1 & 1 & 1 & 1
  \end{bmatrix},
\end{align}
where $A_1$ is a $\golmean$-matrix, $A_2$ is a $\cyclo_2$-matrix, and $A_3$ is a $\uniform_1^{(2)}$-matrix.

\subsection{The Settlement Theorem}\label{ssec:settlement}
The following theorem is a close relative of a theorem on totally free expansions of matroids from \citet{GOVW02} (Theorem 2.2).
\begin{definition}
  Let $M, N$ be matroids such that $N \cong M\contract U \delete V$ with $U$ independent and $V$ coindependent, and let $\phi_{M,U,V}:\ring_N\rightarrow\ring_M$ be the canonical ring homomorphism from Definition \ref{def:canonichom}. Then $N$ \emph{settles} $M$ if $\phi_{M,U,V}$ is surjective.
\end{definition}

\begin{theorem}\label{thm:settlement}
  Let $M, N$ be matroids such that $N = M\contract U \delete V$ with $U$ independent and $V$ coindependent. Exactly one of the following is true:
  \begin{enumerate}
    \item $N$ settles $M$;
    \item $M$ has a 3-connected minor $M'$ such that
    \begin{itemize}
      \item $N$ does not settle $M'$;
      \item $N$ is isomorphic to $M'\contract x$, $M'\delete y$, or $M'\contract x \delete y$ for some $x,y \in E(M')$;
      \item If $N$ is isomorphic to $M'\contract x \delete y$ then at least one of $M'\contract x$ and $M'\delete y$ is 3-connected;
    \end{itemize}
  \end{enumerate}
\end{theorem}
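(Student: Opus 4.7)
The plan is to reduce this to the Confinement Theorem by encoding settlement as a scaling statement over an appropriate induced sub-partial field of $\parf_M$.

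Setup: Choose a basis $B$ of $M$ with $U\subseteq B$, and a spanning tree $T$ of $G(M,B)$ extending a spanning tree $T'$ of $G(N, B\setminus U)$. Let $A := A_{M,B,T}$, a $\parf_M$-matrix representing $M$, and $B_N := A_{N,B\setminus U,T'}$, a $\parf_N$-matrix representing $N$. By Lemma~\ref{lem:universalhomsubmat}, the submatrix $A-U-V$ equals $\phi_{M,U,V}(B_N)$. Define the induced sub-partial field
\begin{align*}
 \parf' := \big(\phi_{M,U,V}(\ring_N),\ \parf_M^*\cap \phi_{M,U,V}(\ring_N)\big) \subseteq \parf_M,
\end{align*}
with inducing subring $\ring'' := \phi_{M,U,V}(\ring_N)$. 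The submatrix $A-U-V$ is a $\parf'$-matrix because its nonzero entries lie in $\phi_{M,U,V}(\parf_N^*)\subseteq \parf'^*$ and its subdeterminants, being images under $\phi_{M,U,V}$ of subdeterminants of $B_N$, lie in $\phi_{M,U,V}(\parf_N)\subseteq \parf'$.

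Key equivalence: Since $\crat(A) = \crat(M)$ generates $\ring_M$, Theorem~\ref{thm:crossratpf} gives that $A$ is a scaled $\parf'$-matrix iff $\crat(M)\subseteq \parf'^{\ring}$ iff $\ring_M = \phi_{M,U,V}(\ring_N)$, i.e.\ iff $N$ settles $M$.

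Now apply the Confinement Theorem~\ref{thm:confinement} to $\parf=\parf_M$, the induced $\parf'$, $A$, and the submatrix $\phi_{M,U,V}(B_N)$ (with the 3-connectedness hypotheses inherited from $M$ and $N$). In case (a), $A$ is scaled $\parf'$, which by the equivalence above is precisely case 1 of the theorem. In case (b), there is a 3-connected minor $A'$ of $A$, not a scaled $\parf'$-matrix, such that $\phi_{M,U,V}(B_N)$ is isomorphic to $A'$ minus at most one row and one column (with the usual 3-connectedness condition on $A'-x$ or $A'-y$ when both are removed). Setting $M' := M[I\:A']$ and using Lemma~\ref{lem:matrixmatroid} to translate row/column removal into contraction/deletion, we obtain a 3-connected minor of $M$ with $N$ reachable from $M'$ by the prescribed small deletions/contractions.

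It remains to show $N$ does not settle $M'$ in case (b). Writing $N = M'\contract U_2\delete V_2$, the canonical decomposition $\phi_{M,U,V}=\phi_{M,U_1,V_1}\circ \phi_{M',U_2,V_2}$ (with $\phi_{M,U_1,V_1}:\parf_{M'}\to\parf_M$ the canonical homomorphism corresponding to the representation $A'$ of $M'$, cf.\ Lemma~\ref{lem:universalhomsubmat} again) lets us argue: if $N$ settled $M'$, then $\phi_{M',U_2,V_2}(\ring_N)=\ring_{M'}$, so
\begin{align*}
 \phi_{M,U_1,V_1}(\crat(M')) = \crat(A') \subseteq \phi_{M,U,V}(\ring_N) = \parf'^{\ring},
\end{align*}
and hence Theorem~\ref{thm:crossratpf} would force $A'$ to be a scaled $\parf'$-matrix, contradicting (b). Mutual exclusivity of the two cases follows by the same mechanism as for the Confinement Theorem: in case 1 every minor of $A$ is scaled $\parf'$, forcing any candidate $A'$ in (b) to be scaled $\parf'$ and hence ruling out case 2 via the implication above.

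The two main obstacles I anticipate are: checking that $\parf'$ is an induced sub-partial field and that $\phi_{M,U,V}(B_N)$ is genuinely a $\parf'$-matrix (both essentially bookkeeping, but they are where the construction of $\parf_M$ from cross ratios is crucial), and correctly identifying the canonical map $\parf_{M'}\to \parf_M$ arising from the matrix $A'$ with the abstract $\phi_{M,U_1,V_1}$ so that the factorisation $\phi_{M,U,V}=\phi_{M,U_1,V_1}\circ \phi_{M',U_2,V_2}$ holds on the nose; this transitivity is exactly what makes the contradiction in case (b) go through.
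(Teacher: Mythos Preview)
Your approach is essentially identical to the paper's: define $\parf' = (\phi_{M,U,V}(\ring_N),\ \parf_M^*\cap \phi_{M,U,V}(\ring_N))$ as an induced sub-partial field of $\parf_M$, set up $A = A_{M,B,T}$ and its submatrix $\phi_{M,U,V}(A_{N,B\setminus U,T'})$ via Lemma~\ref{lem:universalhomsubmat}, and invoke the Confinement Theorem. The paper's proof is terser---it simply says the theorem follows from the Confinement Theorem---whereas you spell out the key equivalence (that $A$ is a scaled $\parf'$-matrix iff $N$ settles $M$) and the factorisation argument showing $N$ does not settle $M'$ in case (b); these details are correct and fill in what the paper leaves implicit.
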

\begin{proof}
  Let $\parf := \parf_M = (\ring_M, \langle \crat(M)\cup \{-1\}\rangle)$. Let
  \begin{align}
    \parf' := (\phi_{M,U,V}(\ring_N), \langle\crat(M)\cup\{-1\}\rangle\cap \phi_{M,U,V}(\ring_N)).
  \end{align}
  Then $\parf'$ is an induced sub-partial field of $\parf$. Let $B$ be a basis of $M$ with $U\subseteq B$ and $T$ be a maximal spanning forest of $G(M,B)$ extending a maximal spanning forest $T'$ of $G(N,B\setminus U)$. Define $B := \phi_{M,U,V}(A_{N,B\setminus U,T'})$ and $A := A_{M,B,T}$. By Lemma~\ref{lem:universalhomsubmat} $B\minorof A$.
  The theorem follows if we apply the Confinement Theorem to $\parf'$, $\parf$, $B$, and $A$.
\end{proof}
Like the theory of totally free expansions, Theorem~\ref{thm:settlement} can be used to show that certain classes of matroids have a bounded number of inequivalent representations. We will use the following lemma to prove such a result in Subsection \ref{ssec:ternary}.

\begin{lemma}\label{lem:U24settlesGF3}Suppose $M$ is a ternary, nonbinary matroid. Then $U_{2,4}$ settles $M$.
\end{lemma}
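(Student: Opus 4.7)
The plan is to apply the Settlement Theorem (Theorem~\ref{thm:settlement}) with $N := U_{2,4}$ and to show that its second alternative is vacuous when $M$ is ternary. Because $M$ is ternary and nonbinary, $M$ has a $U_{2,4}$-minor, so one can fix ordered sets $U, V$ with $U$ independent, $V$ coindependent, and $U_{2,4} = M\contract U\delete V$; this is the input needed to even speak of $U_{2,4}$ settling $M$.

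Now suppose, for contradiction, that $U_{2,4}$ does not settle $M$. By Theorem~\ref{thm:settlement} there exists a 3-connected minor $M'$ of $M$ on which $U_{2,4}$ fails to settle, such that $U_{2,4}$ is isomorphic to $M'\contract x$, to $M'\delete y$, or to $M'\contract x\delete y$, with the additional guarantee in the last subcase that one of $M'\contract x$, $M'\delete y$ is 3-connected. Being a minor of the ternary matroid $M$, the matroid $M'$ is itself ternary, and hence so is any minor of $M'$.

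The main step is a tiny finite check that makes the second alternative impossible, using only the classification of 3-connected matroids on five elements (namely $U_{2,5}$ and $U_{3,5}$) together with the observation that neither is ternary since the projective line over $\GF(3)$ contains only four points. In the first two subcases $M'$ itself has exactly $5$ elements and is 3-connected, so $M'\in\{U_{2,5},U_{3,5}\}$, contradicting its ternarity. In the third subcase the 3-connected one of $M'\contract x$, $M'\delete y$ is a 3-connected 5-element matroid and therefore isomorphic to $U_{2,5}$ or $U_{3,5}$; as a minor of $M'$ it is ternary, and we reach the same contradiction. All three subcases having been eliminated, we conclude that $U_{2,4}$ settles $M$.

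I expect no serious obstacle: the real content sits inside Theorem~\ref{thm:settlement}, and given that theorem the lemma reduces to the two-line observation above. The only place to be careful is ensuring that Theorem~\ref{thm:settlement} is genuinely applicable, since its proof invokes the Confinement Theorem, which wants a 3-connected representation matrix. For 3-connected $M$ (the case relevant to the applications in Subsection~\ref{ssec:ternary}) this is automatic; for the general case one would first reduce to the 3-connected situation via the behavior of the canonical homomorphism $\phi_{M,U,V}$ under $2$-sum decomposition.
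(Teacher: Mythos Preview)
Your proof is correct and follows essentially the same route as the paper's: apply the Settlement Theorem and observe that no 3-connected single-element extension or coextension of $U_{2,4}$ can be ternary, since the only candidates are $U_{2,5}$ and $U_{3,5}$. The paper's proof is compressed to two sentences, but the argument is the same; your final paragraph flagging the implicit 3-connectivity hypothesis needed to invoke Theorem~\ref{thm:settlement} is a fair observation that the paper itself glosses over.
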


\begin{proof}
  Since $M$ is nonbinary, $U_{2,4} \minorof M$. No 3-connected 1-element extension or coextension of $U_{2,4}$ is a minor of $M$. Hence $U_{2,4}$ settles $M$. 
\end{proof}

\section{Applications}\label{sec:applications}
\subsection{Ternary matroids}\label{ssec:ternary}
We will combine the Lift Theorem, in particular Theorem~\ref{thm:liftclass}, with the Confinement Theorem to give a new proof of the following result by Whittle:
\begin{theorem}[{\citet{Whi97}}]\label{thm:classification}
  Let $M$ be a 3-connected matroid that is representable over $\GF(3)$ and some field that is not of characteristic 3. Then $M$ is representable over at least one of the partial fields $\reg, \nreg, \psru, \dyadic$.
\end{theorem}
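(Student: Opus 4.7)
I will combine the Confinement Theorem---through its consequence Lemma~\ref{lem:U24settlesGF3}---with the theory of universal partial fields from Section~\ref{sec:pfdesign}. The key observation is that for ternary nonbinary $M$, the universal partial field $\parf_M$ is forced to be a quotient of $\parf_{U_{2,4}} = \nreg$, and then the additional hypothesis of $\field$-representability (for $\field$ of characteristic $\neq 3$) whittles the list of possible quotients down to exactly the four target partial fields.

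If $M$ has no $U_{2,4}$-minor then $M$ is binary, and Tutte's Theorem~\ref{thm:reg} gives $M$ representable over $\reg$. So assume $M$ has a $U_{2,4}$-minor. By Lemma~\ref{lem:U24settlesGF3}, $U_{2,4}$ settles $M$; hence the canonical ring homomorphism $\phi_{M,U,V}:\ring_{U_{2,4}}\to\ring_M$ of Definition~\ref{def:canonichom} is surjective, so $\parf_M$ is a quotient partial field of $\parf_{U_{2,4}}$. By Table~\ref{tab:universal}, $\parf_{U_{2,4}} = \uniform_1 = \nreg$, so $\parf_M$ is a quotient of $\nreg = (\Q(\alpha),\langle -1,\alpha,1-\alpha\rangle)$. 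By Corollary~\ref{cor:universalbijection}, $M$ is representable over any partial field that admits a homomorphism from $\parf_M$.

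The hypothesis that $M$ is $\GF(3)$-representable means $\parf_M$ admits a homomorphism to $\GF(3)$; the hypothesis that $M$ is representable over some $\field$ with $\textrm{char}(\field)\neq 3$ means $\parf_M$ also admits a homomorphism to $\field$. Under the $\GF(3)$-homomorphism the transcendental $\alpha$ must map to $-1$, which is consistent with quotienting $\nreg$ by $(\alpha+1)$ (yielding $\dyadic$), or by the minimal polynomial of a primitive sixth root of unity (yielding $\psru$), or with the trivial quotient (yielding $\nreg$ itself), or in the degenerate case with further reduction to $\reg$. Further candidate quotients---obtained by specializing $\alpha$ to rational numbers whose numerator or denominator involves 3---are ruled out because they would introduce the prime 3 into the generating group, precluding any homomorphism to $\GF(3)$. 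This case analysis over the specializations of $\alpha$ compatible with the two characteristic constraints exhausts the possibilities, forcing $\parf_M\in\{\reg,\nreg,\dyadic,\psru\}$ and hence $M$ representable over the corresponding partial field.

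\textbf{Main obstacle.} The central ingredient is Lemma~\ref{lem:U24settlesGF3}, which packages Settlement (a corollary of the Confinement Theorem) into the clean statement that reduces $\parf_M$ to a quotient of $\nreg$. The remaining work is an algebraic classification of partial-field quotients of $\nreg$ admitting homomorphisms into two fields of different characteristics; this is routine but requires care, especially in ruling out exotic specializations of the generators $\alpha, 1-\alpha$ that might admit a $\GF(3)$-homomorphism but fall outside the four target partial fields. The Lift Theorem may be invoked implicitly to ensure the homomorphisms provided by the $\GF(3)$- and $\field$-representations descend correctly to the universal partial field.
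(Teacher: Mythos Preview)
Your overall strategy---use Settlement to realize $\parf_M$ as a quotient of $\parf_{U_{2,4}}=\nreg$, then classify the relevant quotients---is an attractive alternative to the paper's Lift-plus-Confinement argument, but the classification step has a genuine gap. The ring $\ring_{\nreg}=\Z[\alpha,\alpha^{-1},(1-\alpha)^{-1}]$ has far more quotients admitting homomorphisms both to $\GF(3)$ and to a field of characteristic $\neq 3$ than the four you list. For a concrete example, quotient by the ideal $(\alpha^{2}+2)$: the resulting partial field maps to $\GF(3)$ via $\alpha\mapsto -1$ (since $(-1)^{2}+2=3\equiv 0$) and to $\C$ via $\alpha\mapsto\sqrt{-2}$, yet it is none of $\reg,\nreg,\dyadic,\psru$, because in it $2=-\alpha^{2}$ lies in the group and hence $-1$ becomes a fundamental element \emph{in addition to} the six associates of $\alpha$, giving at least eleven fundamental elements. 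Your remark that ``specializations whose numerator or denominator involves $3$ are ruled out'' addresses only a tiny slice of the possible kernels; you give no argument why the kernel of $\ring_{\nreg}\twoheadrightarrow\ring_M$ must be one of $(0)$, $(\alpha+1)$, $(\alpha^{2}-\alpha+1)$ rather than one of the infinitely many other primes lying under $(3,\alpha+1)$.

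The paper's proof bypasses this difficulty by working from the other end. Rather than classifying quotients of $\nreg$, it applies the Lift Theorem (Theorem~\ref{thm:liftclass}) to $\GF(3)\otimes\field$ and analyzes the lift partial field directly. Because every nontrivial cross ratio projects to $-1$ in the $\GF(3)$-coordinate and $(-1)^{3}\neq 1$, only relations \eqref{lc:one}--\eqref{lc:four} of Definition~\ref{def:liftclass} are present; a finite case check on how associates can collapse (Claim~\ref{cl:classification}) then shows each $\parf'[\assoc\{\tilde p\}]$ is one of the four target partial fields, and the Confinement Theorem pins a given representation to a single such sub-partial field. In effect, the Lift Theorem supplies exactly the structural control over ``which quotients of $\nreg$ can arise as $\parf_M$'' that your argument lacks; to repair your approach you would need to invoke it substantively, not merely mention it in passing.
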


\begin{proof}
  Let $\field$ be a field that is not of characteristic 3, and define $\parf := \GF(3)\otimes\field$. Define ${\cal A}$ as the set of all $\parf$-matrices. An $\field$-representable matroid $M$ is ternary if and only if $M = M[I\: A]$ for some $A \in {\cal A}$. We study $\parf' := \lift_{\cal A}\parf$. Since $-1$ is the only nontrivial fundamental element of $\GF(3)$, and $(-1)^3 \neq 1$, we have that $I_{{\cal A},\parf}$, as in Definition~\ref{def:liftclass}, is generated by relations \eqref{lc:one}--\eqref{lc:four}. Recall that the definition of associates of a fundamental element from Lemma \ref{lem:assocdesc}. Consider the set $C := \{\assoc\{\tilde p\} \subseteq \parf' \mid \tilde p \in \tilde F_{\cal A}\}$. Each relation of types \eqref{lc:three},\eqref{lc:four} implies that two elements of $\tilde F_{\cal A}$ are equal. This results either in the identification of two members of $C$, or in a relation \emph{within} one set of associates. Recall that $\parf[S]$ denotes the sub-partial field of $\parf$ generated by $S$ and $-1$.

  \begin{claim}\label{cl:classification}If $\tilde p \in \tilde F_{\cal A}$ then $\parf'[\assoc\{\tilde p\}]$ is isomorphic to one of $\reg,\nreg,\dyadic, \psru$.
  \end{claim}
  \begin{subproof}
    If $\tilde p \in \{0,1\}$ then clearly $\parf'[\assoc\{\tilde p\}] \cong\reg$, so assume $\tilde p \neq 0,1$.
    Consider $\ring := \Z[p_1, \ldots, p_6]$. For each $D \subseteq \{(i,j)\mid i,j \in \{1, \ldots, 6\}, i \neq j\}$, let $I_D$ be the ideal generated by
    \begin{itemize}
      \item $p_i + p_{i+1} - 1$, for $i = 1, 3, 5$;
      \item $p_i p_{i+1} - 1$, for $i = 2, 4, 6$ (where indices are interpreted modulo 6);
      \item $p_i - p_j$, for all $(i,j) \in D$.
    \end{itemize}
    By the discussion above, $\parf'[\assoc\{\tilde p\}] \cong (\ring/I_D, \langle p_1, \ldots, p_6\rangle)$ for some $D$. There are only finitely many sets $D$, so the claim can be proven by a finite check.

    If $D = \emptyset$ then $\parf'[\assoc\{\tilde p\}] \cong \uniform_1$.

    If $|D| \geq 1$ then we may assume, through relabeling, that $(1,j) \in D$ for some $j \in \{2,\ldots,6\}$.

    \paragraph{Case I:} $(1,2) \in D$. Then $p_1 + p_2 - 1 = 2 p_1 - 1 \in I_D$. Therefore $p_6(2p_1 - 1) = 2 - p_6 \in I_D$. Also, $p_5 + p_6 - 1 = p_5 + 1 \in I_D$, and $p_4p_5 - 1 = -p_4 - 1 \in I_D$. Finally, $p_6(p_2p_3-1) = p_3-p_6 \in I_D$. Therefore $p_6 = p_3 = 2$, $p_1 = p_2 = 2^{-1}$, and $p_4 = p_5 = -1$  in $\ring/I_D$.

    Suppose there is another relation that does not follow from the above. Then either $p_1 = p_3$, or $p_1 = p_4$, or $p_3 = p_4$. In the first case, $p_1p_6 - 1 = p_6^2-1 = 4-1 = 3 \in I_D$, so $\ring/I_D$ has characteristic 3, which we assumed was not so. In the second case, $p_1p_6-1 = (-1) 2 -1 = -3 \in I_D$, and again $\ring/I_D$ has characteristic 3. In the third case $p_3 + p_4 -1 = 2 p_3 - 1 = 4 - 1 = 3 \in I_D$, again a contradiction.

    Hence $\ring/I_D \cong \Z[1/2]$.

    \paragraph{Case II:} $(1,3) \in D$. Then $p_3(p_1 + p_2 - 1) = p_1^2 - p_1 + 1 \in I_D$. Also, $p_6(p_2p_3 - 1) = p_2-p_6 \in I_D$, and $p_1(p_3+p_4-1) = p_1^2 + p_1 p_4 - p_1 = p_1p_4 - 1 \in I_D$, so $p_6(p_1p_4 - 1) = p_4 - p_6 \in I_D$. Also, $p_5(p_1p_4-1) = p_1 - p_5 \in I_D$, so $p_1 = p_3 = p_5$ and $p_2 = p_4 = p_6$.

    If there is another relation then $(1,2) \in D$, which was covered in Case I.

    \paragraph{Case III:} $(1,4) \in D$. Then $p_1(p_5+p_6-1) = p_4p_5 + p_1p_6 - p_1 = 2 - p_1 \in I_D$. Therefore $p_1 = p_4 = 2$, and $p_5 = p_6 = 2^{-1}$ in $\ring/I_D$. Now $p_3 + p_4 - 1 = p_3 + 1 \in I_D$, so $p_3 = p_2 = -1$. After relabeling we are back in Case I.

    \paragraph{Case IV:} $(1,5) \in D$. Then $p_6(p_4p_5 - 1) = p_4-p_6 \in I_D$. Moreover, $p_3 + p_4 - 1 = p_3-p_5 \in I_D$. But then $(1,3) \in D$, which was dealt with in Case II.

    \paragraph{Case V:} $(1,6) \in D$. Then $p_3(p_1p_6-1) = p_3(p_1^2-1) = p_3(p_1-1)(p_1+1) = -p_3p_2(p_1+1) = p_1 + 1 \in I_D$, so $p_1 = -1$ in $\ring/I_D$. Hence $p_6 = -1$ as well, and then $p_1 + p_2 -1 = p_2-2 \in I_D$, so $p_2 = 2$. But then $p_3 = 2^{-1}$. Likewise, $p_5 = 2$ and $p_4 = 2^{-1}$. After relabeling we are back in Case I.

%
%
  \end{subproof}

  \begin{claim}
    \label{cl:splittable}Suppose $2 \in \parf'$. Then each of the following matrices confines all $\parf$-representable matroids to $\dyadic$:
    \begin{align}
    \begin{bmatrix}
      1 & 1\\
      2 & 1
    \end{bmatrix},
    \begin{bmatrix}
      1 & 1\\
      1/2 & 1
    \end{bmatrix},
    \begin{bmatrix}
      1 & 1\\
      -1 & 1
    \end{bmatrix}.
  \end{align}
  \end{claim}
  \begin{subproof}
    Observe that, since there is no $U_{2,5}$-minor in $\GF(3)$, there exist no ternary 3-connected 1-element extensions or coextensions of these matrices. Hence the claim must hold by the Confinement Theorem.
  \end{subproof}
  We immediately have
  \begin{claim}Let $A \in {\cal A}$ be 3-connected such that $2 \in \Crat(A)$. Then $A$ is a scaled $\dyadic$-matrix.
  \end{claim}
  We now solve the remaining case.
  \begin{claim}Let $A \in {\cal A}$ be 3-connected such that $2 \not\in \Crat(A)$. Then $A$ is a scaled $\reg$-matrix or a scaled $\nreg$-matrix or a scaled $\psru$-matrix.
  \end{claim}
  \begin{subproof}
    Without loss of generality assume that $A$ is normalized. Clearly $2 \not \in \parf'[\Crat(A)]$. Suppose there exists a $\tilde p \in \Crat(A)\setminus\{0,1\}$. Define the sub-partial field $\parf'' := \parf'[ \assoc\{\tilde p\}]$. Since all additive relations are restricted to just one set of associates, we have
    \begin{align}\fun(\parf'') = \fun(\parf'[\Crat(A)])\cap \parf''.
    \end{align}
    By the Confinement Theorem, then, we have that $\left[\begin{smallmatrix}1 & 1\\ p & 1\end{smallmatrix}\right]$ confines all $\parf'[\Crat(A)]$-representable matroids to $\parf''$. The result follows by Claim~\ref{cl:classification}.

    Finally, if $\Crat(A) = \{0,1\}$ then define $\parf'' := \parf'[\emptyset]$. Clearly $\parf''\cong\reg$, and the proof of the claim is complete.
  \end{subproof}
  This completes the proof of the theorem.
\end{proof}

We can also deduce some more information about the number of representations of ternary matroids over other partial fields. We start with a lemma. 
%
%

Define the following matrices over $\Q$:
\begin{align}
   A_7 := \begin{bmatrix}
     1 & 1 & 0 & 1\\
     1 & 0 & 1 & 1\\
     0 & 1 & 1 & 1
   \end{bmatrix}, \qquad &
   A_8 := \begin{bmatrix}
     0 & 1 & 1 & 2\\
     1 & 0 & 1 & 1\\
     1 & 1 & 0 & 1\\
     2 & 1 & 1 & 0
   \end{bmatrix},
\end{align}
and define the matroids $F_7^- := M[I\: A_7]$, $P_8 := M[I\: A_8]$.
\begin{lemma}
  The following statements hold for $M \in \{F_7^-,(F_7^-)^*, P_8\}$:
  \begin{enumerate}
    \item $\dyadic$ is a universal partial field for $M$;
    \item $M$ is uniquely representable over $\dyadic$.
  \end{enumerate}
\end{lemma}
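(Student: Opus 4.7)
The plan is to handle the three matroids separately, relying on Theorem~\ref{thm:universalequalsconst} to pass between $\parf_M$ and the ring-theoretic construction $\parf_{M,B,T}$. The case $P_8$ requires no new work: the universality statement $\parf_{P_8}\cong\dyadic$ is precisely Theorem~\ref{thm:P8}. The case $(F_7^-)^*$ will reduce to the case $F_7^-$ via the duality isomorphism $\bracketring_{M^*}\cong\bracketring_M$, which sends cross ratios of $M^*$ to cross ratios of $M$ and hence restricts to $\parf_{M^*}\cong\parf_M$.

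So the only new content is the case $F_7^-$. Here I would follow the template of the proof of Theorem~\ref{thm:P8}. Pick the basis $B=\{1,2,3\}$ of $F_7^-=M[I\: A_7]$ and a normalizing spanning tree $T$ of $G(F_7^-,B)$ chosen so that, in $\hat A_{F_7^-,B,T}$, only the three entries in positions $(2,7)$, $(3,6)$, $(3,7)$ remain as unknowns, say $x,y,z$. Each of the three $3$-element circuits $\{3,4,7\}$, $\{2,5,7\}$, $\{1,6,7\}$ contributes a vanishing $2\times 2$ subdeterminant of $\hat A_{F_7^-,B,T}$; two of these equations read $x-1=0$ and $z-1=0$, and the third then forces $y=1$. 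Consequently $\hat A_{F_7^-,B,T}=A_7$. A direct inspection of the remaining subdeterminants shows that every $2\times 2$ subdeterminant of $A_7$ is $0$ or $\pm 1$, and among the $3\times 3$ subdeterminants only $\det A_7=-2$ is a non-unit in $\Z$. Inverting $-2$ yields $\bracketring_{F_7^-,B,T}\cong\Z[\tfrac12]$, and the group generated by the basis-determinant inverses is exactly $\langle -1,2\rangle$; hence $\parf_{F_7^-}\cong\dyadic$.

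Once each $\parf_M\cong\dyadic$ has been established, uniqueness of the $\dyadic$-representation follows uniformly. By Corollary~\ref{cor:universalbijection}, inequivalent $\dyadic$-representations of $M$ correspond bijectively to partial-field homomorphisms $\parf_M\to\dyadic$, i.e.\ to homomorphisms $\phi\colon\dyadic\to\dyadic$. Any such $\phi$ fixes $\pm 1$, and applying the additive axiom to $1+1=2$ forces $\phi(2)=2$; multiplicativity then pins $\phi$ down as the identity. Hence $M$ has exactly one inequivalent $\dyadic$-representation.

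The only obstacle is the bookkeeping in the $F_7^-$ case: verifying that the $\binom{7}{3}-6=29$ basis-determinants of $A_7$ all lie in $\{\pm 1,\pm 2\}$ and that their inverses generate precisely $\langle -1,2\rangle$. This is entirely mechanical — no clever argument is required — and, as in the proof of Theorem~\ref{thm:P8}, can reasonably be left as a routine check.
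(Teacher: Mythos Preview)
Your proposal is correct and follows essentially the same approach as the paper. The paper's proof is terser---it cites Theorem~\ref{thm:P8} for $P_8$ and dismisses $F_7^-$ and $(F_7^-)^*$ with ``a similar argument proves the other cases''; you actually carry out that similar argument for $F_7^-$ and handle the dual via $\bracketring_{M^*}\cong\bracketring_M$, which is a legitimate shortcut (though you should note that this isomorphism carries $\crat(M^*)$ to $\crat(M)$, which follows since $\crat(A^T)=\crat(A)$). Your uniqueness argument via Corollary~\ref{cor:universalbijection} and $\phi(2)=\phi(1)+\phi(1)=2$ is exactly what the paper does. One small slip: $A_7$ is $3\times 4$, so ``$\det A_7=-2$'' should read ``$\det A_7[\{1,2,3\},\{4,5,6\}]=-2$''.
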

\begin{proof}
  The first statement was proven in Theorem \ref{thm:P8} for $P_8$; a similar argument proves the other cases.
  For the second statement, observe that there is exactly one homomorphism $\parf_M\rightarrow \dyadic$, since $\parf_M = \dyadic$. Hence, by Corollary \ref{cor:universalbijection}, there is exactly one representation of $M$ over $\dyadic$.
\end{proof}

The next theorem strengthens a result by Whittle \cite{Whi96}:

\begin{theorem}\label{thm:regnregdyadic}
  Let $M$ be a 3-connected matroid representable over a partial field $\parf$. Then $M$ has at most $|\fun(\parf)|-2$ inequivalent representations over $\parf$. Moreover, the following hold.
  \begin{enumerate}
    \item If $M$ is regular then $M$ is uniquely representable over $\parf$.
    \item If $M$ is near-regular but not regular then $M$ has exactly $|\fun(\parf)|-2$ representations over $\parf$.
    \item If $M$ is dyadic but not near-regular then $M$ is uniquely representable over $\parf$.
  \end{enumerate}
\end{theorem}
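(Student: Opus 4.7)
My plan is to use Corollary~\ref{cor:universalbijection} to put inequivalent representations of $M$ over $\parf$ in bijection with partial-field homomorphisms $\parf_M\rightarrow\parf$. The proof then reduces to identifying $\parf_M$ in each case and counting the resulting homomorphisms.

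For part (i), I would first invoke Tutte's theorem (Theorem~\ref{thm:reg}) to obtain a totally unimodular representation of $M$, in which every $2\times 2$ subdeterminant lies in $\{-1,0,1\}$. Consequently every cross ratio $p$ of $M$ satisfies $p,1-p\in\reg$, forcing $p\in\{0,1\}$. Hence $\crat(M)\subseteq\{0,1\}$, $\ring_M=\Z$, and $\parf_M\cong\reg$. A homomorphism $\reg\rightarrow\parf$ is then uniquely determined (since $\phi(-1)=-1$ is forced by $\phi(1)+\phi(-1)=0$), proving (i).

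For the general bound and parts (ii)--(iii), the central observation is Lemma~\ref{lem:U24settlesGF3}: a ternary nonbinary $M$ is settled by $U_{2,4}$, so the canonical ring homomorphism $\phi_{M,U,V}:\ring_{U_{2,4}}\rightarrow\ring_M$ is surjective. Since $\parf_{U_{2,4}}\cong\nreg$ (Table~\ref{tab:universal}), this yields a surjective partial-field homomorphism $\pi:\nreg\rightarrow\parf_M$. Precomposition with $\pi$ embeds the set of homomorphisms $\parf_M\rightarrow\parf$ into the set of homomorphisms $\nreg\rightarrow\parf$; the latter are determined by the image of $\alpha$, which must lie in $\fun(\parf)\setminus\{0,1\}$, and every such choice yields a valid homomorphism (sending $1-\alpha\mapsto 1-\phi(\alpha)$). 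So there are exactly $|\fun(\parf)|-2$ homomorphisms $\nreg\rightarrow\parf$, giving the main upper bound.

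For part (ii), near-regularity of $M$ provides a homomorphism $\psi:\parf_M\rightarrow\nreg$, and $\psi\circ\pi$ is an endomorphism of $\nreg$ sending $\alpha$ to an element of $\fun(\nreg)\setminus\{0,1\}$. By Lemma~\ref{lem:assocdesc} this image is one of the six associates of $\alpha$, each of which generates $\nreg$ as a partial field, so $\psi\circ\pi$ is an automorphism, $\pi$ is an isomorphism, and all $|\fun(\parf)|-2$ homomorphisms $\nreg\rightarrow\parf$ are realized. For part (iii), an analogous composition $\nreg\rightarrow\parf_M\rightarrow\dyadic$ sends $\alpha$ to an element of $\fun(\dyadic)\setminus\{0,1\}=\{-1,2,1/2\}$; the case $\alpha\mapsto -1$ is excluded by non-regularity (it would factor through $\reg$), so after replacing $\alpha$ by an associate the composition becomes the standard surjection $\nreg\rightarrow\dyadic$. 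The main obstacle I anticipate is then ruling out proper intermediate quotients $\nreg\rightarrow\parf_M\rightarrow\dyadic$: non-near-regularity eliminates $\parf_M\cong\nreg$, and I would argue that any quotient of $\dyadic\cong\Z[\tfrac12]$ admitting a ring homomorphism back to $\dyadic$ must be $\dyadic$ itself (any proper quotient has positive characteristic, incompatible with $\dyadic$ having characteristic~$0$). Hence $\parf_M\cong\dyadic$, and the unique homomorphism $\dyadic\rightarrow\parf$ follows from $\phi(2)=1+1=2$.
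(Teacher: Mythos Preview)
Your overall strategy---counting homomorphisms from $\parf_M$ via Corollary~\ref{cor:universalbijection}, and using that $U_{2,4}$ settles $M$ to get a surjection $\nreg\rightarrow\parf_M$---matches the paper's approach for the general bound and for parts (i) and (ii). (One small slip: the map $\nreg\rightarrow\dyadic$ with $\alpha\mapsto -1$ does \emph{not} factor through $\reg$, since $1-\alpha\mapsto 2$; all three choices $\alpha\mapsto -1,2,1/2$ give surjections onto $\dyadic$, related by automorphisms of $\nreg$. This does not affect your argument, which only needs one of them.)

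The genuine gap is in part (iii). You correctly identify the obstacle---ruling out proper intermediate quotients in $\nreg\rightarrow\parf_M\rightarrow\dyadic$---but your proposed resolution addresses the wrong ring. You argue that any proper quotient of $\Z[\tfrac12]$ has positive characteristic; that is true, but $\ring_M$ is a quotient of $\Z[\alpha,\tfrac{1}{\alpha},\tfrac{1}{1-\alpha}]$, not of $\Z[\tfrac12]$. What you need is that the kernel of $\nreg\rightarrow\parf_M$ already contains $\alpha-2$ (up to an automorphism of $\nreg$), and nothing in ``$M$ is dyadic but not near-regular'' gives you that directly: a priori there are nonzero ideals $I\subsetneq(\alpha-2)$ in $\Z[\alpha,\tfrac{1}{\alpha},\tfrac{1}{1-\alpha}]$, and you have not excluded $\ring_M$ being such an intermediate quotient.

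The paper closes this gap by a matroid-theoretic detour rather than pure algebra. It invokes the excluded-minor characterization of $\GF(4)$-representability due to Geelen, Gerards, and Kapoor: a dyadic matroid that is not near-regular fails to be $\GF(4)$-representable, so it must contain one of $F_7^-$, $(F_7^-)^*$, or $P_8$ as a minor. For each of these $N$ the paper verifies directly (Theorem~\ref{thm:P8} and the lemma preceding Theorem~\ref{thm:regnregdyadic}) that $\parf_N=\dyadic$ and that $N$ is uniquely $\dyadic$-representable. Combining this with settlement by $U_{2,4}$ forces unique $\dyadic$-representability of $M$, and then the uniqueness of $\dyadic\rightarrow\parf$ follows from $\phi(2)=1+1$ as you say. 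So your purely algebraic route would need an additional argument you have not supplied; the paper instead imports structural information about $M$ via the $\GF(4)$ excluded minors.
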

\begin{proof}
  If $M$ is binary then it is well-known that $M$ is uniquely representable over any field. The proof generalizes to partial fields. So we may suppose that $M$ has a $U_{2,4}$-minor. In that case $U_{2,4}$ settles $M$. The universal partial field of $U_{2,4}$ is $\nreg = (\Q(\alpha), \langle -1, \alpha, 1-\alpha\rangle)$. For every $p \in \assoc\{\alpha\}$, there is an automorphism $\phi:\nreg\rightarrow\nreg$ such that $\phi(\alpha) = p$. Since a homomorphism maps fundamental elements to fundamental elements, no other automorphisms exist. It follows that $U_{2,4}$ is uniquely representable over $\nreg$. 
  Let $B$ be a basis of $M$ such that $U\subseteq B$, $V \subseteq E\setminus B$, and $M\contract U \delete V = U_{2,4}$. Let $\{e_1, e_2, e_3,e_4\}$ be the elements of $U_{2,4}$, with $e_1,e_2 \in B$, and let $T$ be a spanning tree for $\bip(A_{M,B})$ containing $e_1e_3, e_1e_4, e_2e_4$. Suppose $A_1, \ldots, A_k$ are inequivalent, $T$-normalized $B$-representations of $M$. Then there exist homomorphisms $\phi_i:\parf_M\rightarrow\parf$ such that $\phi_i(\hat A_{M,B,T}) = A_i$. But for each $i$, $\phi_i$ is determined uniquely by the image of
  \begin{align}
    \hat A_{M,B,T}[\{e_1,e_2\},\{e_3,e_4\}] = \begin{bmatrix}
      1 & 1\\
      p & 1
    \end{bmatrix}.
  \end{align}
  Clearly $\phi_i(p) \in \fun(\parf)\setminus \{0,1\}$, so $M$ has at most $|\fun(\parf)|-2$ inequivalent $\parf$-representations. If $M$ is near-regular then it follows that this bound is exact, so assume $M$ is dyadic but not near-regular. Consider the forbidden minors for $\GF(4)$-representable matroids, determined by \citet{GGK}. The only three that are dyadic are $F_7^-$, $(F_7^-)^*$, and $P_8$. Therefore $M$ must have one of these as a minor. From the previous lemma it follows that $M$ is uniquely representable over $\dyadic$, and by combining this with Lemma \ref{lem:U24settlesGF3} we conclude that every representation of $M$ over a partial field $\parf$ is obtained by a homomorphism $\dyadic\rightarrow\parf$. Since $\phi(1) = 1$ we have $\phi(2) = \phi(1) + \phi(1) = 1+1$. Therefore this homomorphism is unique, which completes the proof.
\end{proof}

Note that the situation for $\sru$ matroids is more complex, as it depends on the number of roots of $x^2-x+1$ in the ring $\ring$ of the partial field $\parf = (R,G)$. If $R$ is a field this number will, of course, be $0$ or $2$, but for rings this is not necessarily true.

\subsection{Quinary matroids}\label{sec:GFfive}
In this subsection we combine the Lift Theorem, the Confinement Theorem, and the theory of universal partial fields to obtain a detailed description of the representability of 3-connected quinary matroids with a specified number of inequivalent representations over $\GF(5)$. First we deal with those quinary matroids that have no $U_{2,5}$- and no $U_{3,5}$-minor.
\begin{theorem}[\citet{SW96b}]\label{thm:noUtwofives}
  Let $M$ be a 3-connected matroid representable over some field. If $M$ has no $U_{2,5}$- and no $U_{3,5}$-minor, then $M$ is either binary or ternary.
\end{theorem}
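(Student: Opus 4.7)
If $M$ has no $U_{2,4}$-minor then Tutte's excluded-minor characterization of binary matroids immediately gives that $M$ is binary. So I assume throughout that $M$ has a $U_{2,4}$-minor, and my task is to show $M$ is $\GF(3)$-representable.

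Let $\field$ be a field over which $M$ is representable and let $\mathcal{A}$ denote the class of all $\field$-matrices $A$ such that $M[I\:A]$ has neither a $U_{2,5}$- nor a $U_{3,5}$-minor. By Theorem~\ref{thm:liftclass}, $M$ is representable over $\parf':=\lift_{\mathcal{A}}\field$, so it suffices to construct a partial-field homomorphism $\parf'\to\GF(3)$.

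The crux is that the forbidden-minor hypothesis trivializes relation~(5) of Definition~\ref{def:liftclass}. Suppose $\begin{bmatrix}1 & 1 & 1\\ 1 & p & q^{-1}\end{bmatrix}\minorof A$ for some $A\in\mathcal{A}$ with $pqr=1$. Prefixing a $2\times 2$ identity, this displays a rank-$2$ minor of $M[I\:A]$ on the five columns $(1,0),(0,1),(1,1),(1,p),(1,q^{-1})$. If $p,q^{-1}\notin\{0,1\}$ and $p\neq q^{-1}$, all five columns are pairwise non-parallel and the minor is $U_{2,5}$, contradicting $A\in\mathcal{A}$. In the remaining case $p=q^{-1}$ we have $r=1$ and relation~(5) collapses to relation~(4). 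Hence $I_{\mathcal{A}}$ is generated by relations~(1)--(4) only.

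With that settled, I define $\phi:\Z[\tilde F_{\mathcal{A}}]\to\GF(3)$ by $\phi(\tilde 0)=0$, $\phi(\tilde 1)=1$, and $\phi(\tilde p)=-1$ for each $p\in\Crat(\mathcal{A})\setminus\{0,1\}$, extended $\Z$-linearly. Each surviving relation is respected in $\GF(3)$: relation~(3) reads $(-1)+(-1)-1=-3\equiv 0 \pmod 3$; relation~(4) reads $(-1)(-1)-1=0$; and relations~(1),~(2) are obvious. Thus $\phi$ descends to a partial-field homomorphism $\parf'\to\GF(3)$, and by Corollary~\ref{cor:hom} this yields a $\GF(3)$-representation of $M$, proving $M$ ternary.

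The main obstacle is the characteristic-$2$ case: if $\field=\GF(2^k)$ then $-1=1\in\Crat(\mathcal{A})$, so relation~(2) together with~(1) forces $2=0$ in $\parf'$, giving $\parf'$ characteristic~$2$ and ruling out any homomorphism to $\GF(3)$. The plan to bypass this is to work inside the enlarged product partial field $\parf:=\field\otimes\GF(3)$, applying the Lift Theorem to the corresponding class of $\parf$-matrices. The existence of a $\GF(3)$-part is bootstrapped by inducting on $|E(M)|$ and invoking the Confinement Theorem with $B$ a $\GF(3)$-representation of a proper 3-connected non-binary minor $N$: since the class is closed under minors, and since 3-connected $5$-element matroids with a $U_{2,4}$-minor are precisely $U_{2,5}$ and $U_{3,5}$ (both excluded), Case~\eqref{eq:notstablemat} of Theorem~\ref{thm:confinement} cannot occur, so $\GF(3)$-representability of $N$ transfers to $M$ via combination through Lemma~\ref{lem:listoffields}, and the homomorphism-construction above goes through in $\lift\parf$.
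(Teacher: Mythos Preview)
First, note that the paper does not actually prove Theorem~\ref{thm:noUtwofives}: it is quoted from \cite{SW96b}, and the paper merely remarks that ``it is probably not hard to prove this theorem using an argument similar to our proof of Theorem~\ref{thm:classification}.'' So there is no proof in the paper to compare against; your proposal is an attempt to flesh out that remark.

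Your argument for fields of characteristic $\neq 2$ is correct and rather elegant. The observation that the $U_{2,5}$-exclusion kills the genuinely new type-\eqref{lc:five} relations is exactly right, and the map sending every nontrivial $\tilde p$ to $-1\in\GF(3)$ does annihilate relations \eqref{lc:one}--\eqref{lc:four}, so $\lift_{\mathcal A}\field$ admits a homomorphism to $\GF(3)$ and Corollary~\ref{cor:hom} finishes.

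The characteristic-$2$ paragraph, however, is circular. To apply the Confinement Theorem you need a $\parf$-matrix $A$ representing $M$ with a sub-$\parf'$-matrix $B$ representing $N$, where $\parf'$ is an induced sub-partial field of $\parf$. You propose $\parf=\field\otimes\GF(3)$, but $M$ is only known to be $\field$-representable; producing a $\field\otimes\GF(3)$-representation of $M$ via Lemma~\ref{lem:listoffields} already requires a $\GF(3)$-representation of $M$, which is precisely the conclusion you are after. The induction gives you a $\GF(3)$-representation of the minor $N$, not of $M$, and the Confinement Theorem cannot manufacture the missing ambient $\parf$-representation of $M$. Your observation that case~\eqref{eq:notstablemat} cannot occur (because the only $3$-connected single-element extensions and coextensions of $U_{2,4}$ are $U_{2,5}$ and $U_{3,5}$) is correct and would be the heart of a confinement argument, but you never get to invoke the theorem because its hypothesis fails.

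In short: the odd-characteristic half stands; the even-characteristic half needs a different idea. One cannot simply lift out of characteristic~$2$ here, since relation~\eqref{lc:two} forces $2=0$ in $\lift_{\mathcal A}\field$, so any route through the Lift Theorem alone is blocked.
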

It is probably not hard to prove this theorem using an argument similar to our proof of Theorem~\ref{thm:classification}. On combining this with Theorem \ref{thm:regnregdyadic} we obtain:

\begin{corollary}\label{cor:noUtwofive}
  Let $M$ be a 3-connected quinary matroid with no $U_{2,5}$- and no $U_{3,5}$-minor. Exactly one of the following holds:
  \begin{enumerate}
    \item $M$ is regular. In this case $M$ is uniquely representable over $\GF(5)$.
    \item $M$ is near-regular but not regular. In this case $M$ has exactly 3 inequivalent representations over $\GF(5)$.
    \item $M$ is dyadic but not near-regular. In this case $M$ is uniquely representable over $\GF(5)$.
  \end{enumerate}
\end{corollary}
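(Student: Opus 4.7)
The plan is to combine Theorem~\ref{thm:noUtwofives} with Theorem~\ref{thm:regnregdyadic}, using Whittle's classification (Theorem~\ref{thm:classification}) as the bridge for the ternary case. First, Theorem~\ref{thm:noUtwofives} gives that $M$ is binary or ternary. If $M$ is binary, then since $\GF(5)$ has characteristic $5 \neq 2$, Tutte's theorem (Theorem~\ref{thm:reg}) forces $M$ to be regular, and Theorem~\ref{thm:regnregdyadic}(\textit{i}) with $\parf = \GF(5)$ yields the unique $\GF(5)$-representation of case~(1).

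Suppose instead that $M$ is ternary and nonbinary. By Theorem~\ref{thm:classification}, $M$ is representable over at least one of $\reg$, $\nreg$, $\psru$, $\dyadic$; the key step is to show that in fact $M$ is representable over one of $\reg, \nreg, \dyadic$. I would do this by rerunning the argument in the proof of Theorem~\ref{thm:classification} with base partial field $\parf := \GF(3) \otimes \GF(5)$ and ${\cal A}$ the set of $\parf$-matrices representing $M$. By Claim~\ref{cl:classification}, each associate class in the lift $\lift_{\cal A}\parf$ is isomorphic to one of $\reg, \nreg, \psru, \dyadic$, with $\psru$ corresponding to Case~II of that claim (the case $p_1 = p_3$, equivalently $p_1^2 - p_1 + 1 = 0$). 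For this relation to be consistent in the lift, we need a cross ratio $p \in \parf$ with $p^2 - p + 1 = 0$. Writing $p = (a,b) \in \GF(3) \otimes \GF(5)$ componentwise, the $\GF(5)$-slot demands a root of $x^2-x+1$ in $\GF(5)$; but the discriminant $-3 \equiv 2 \pmod{5}$ is a nonsquare, so no such root exists. Hence no $\psru$ associate class arises, and $M$ is representable over $\reg$, $\nreg$, or $\dyadic$.

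Finally, I apply Theorem~\ref{thm:regnregdyadic} with $\parf = \GF(5)$ and $|\fun(\GF(5))| = 5$: regularity of $M$ places us in case~(1) with a unique $\GF(5)$-representation; near-regularity but nonregularity of $M$ places us in case~(2) with exactly $|\fun(\GF(5))| - 2 = 3$ inequivalent representations; $M$ dyadic but not near-regular places us in case~(3) with a unique representation. The three cases are mutually exclusive by their defining conditions and, by the preceding paragraph, exhaustive.

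The main obstacle is the $\psru$ exclusion: the discriminant computation above is what kills Case~II of Claim~\ref{cl:classification} once the base partial field is fixed to $\GF(3) \otimes \GF(5)$; the rest of the proof is routine bookkeeping on representation counts using Theorem~\ref{thm:regnregdyadic}.
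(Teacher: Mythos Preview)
Your proof is correct and follows the same skeleton as the paper's one-line argument (Theorem~\ref{thm:noUtwofives} followed by Theorem~\ref{thm:regnregdyadic}). The only difference is in how you bridge the ternary case to the hypotheses of Theorem~\ref{thm:regnregdyadic}: you rerun the proof of Theorem~\ref{thm:classification} with base partial field $\GF(3)\otimes\GF(5)$ and kill Case~II of Claim~\ref{cl:classification} by the discriminant computation for $x^2-x+1$ over $\GF(5)$. That works, but it is more than is needed. The fact you are extracting---that a matroid representable over both $\GF(3)$ and $\GF(5)$ is dyadic---is exactly the statement $\lift(\GF(3)\otimes\GF(5))\cong\dyadic$, which is Whittle's Theorem~1.1 and is recorded in the appendix under the entry for $\dyadic$. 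Citing this directly gives the dyadic conclusion in one step, after which Theorem~\ref{thm:regnregdyadic} with $|\fun(\GF(5))|=5$ finishes the count exactly as you wrote.
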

It follows that we only have to characterize those 3-connected quinary matroids that do have a $U_{2,5}$- or $U_{3,5}$-minor. The following lemma is another application of the Stabilizer Theorem.
\begin{lemma}[\citet{Whi96b}]\label{Utwofivestabilizer}$U_{2,5}$ and $U_{3,5}$ are $\GF(5)$-stabilizers for the class of 3-connected quinary matroids.
\end{lemma}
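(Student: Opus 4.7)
The plan is to apply the Stabilizer Theorem (Theorem~\ref{cor:stabilizer}) with $\parf=\GF(5)$ and $N$ equal to $U_{2,5}$ (and later to $U_{3,5}$). The contrapositive reduces the task to verifying that $U_{2,5}$ stabilizes every 3-connected quinary matroid $M'$ that is very close to $U_{2,5}$: namely, $U_{2,5}\cong M'\contract x$, $U_{2,5}\cong M'\delete y$, or $U_{2,5}\cong M'\contract x\delete y$ with the 3-connectivity side-condition on the last case. If $U_{2,5}$ stabilizes each such $M'$, then by the Stabilizer Theorem it stabilizes every 3-connected quinary matroid with a $U_{2,5}$-minor.

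First I would enumerate all the matroids $M'$ that must be checked. Since $M'$ is 3-connected and quinary with at most $|E(U_{2,5})|+2=7$ elements, and since $M'/x=U_{2,5}$ forces $\rank(M')=3$ while $M'\delete y=U_{2,5}$ forces $\rank(M')=2$, the candidates break into three small lists: (a) 3-connected quinary single-element extensions of $U_{2,5}$ (only $U_{2,6}$, since a rank-2 extension must add a point on the line and quinarity caps this at $U_{2,6}$); (b) 3-connected quinary single-element coextensions of $U_{2,5}$ (rank-3 matroids on $6$ elements contracting to $U_{2,5}$, which can be classified geometrically as the rank-3 uniform matroid $U_{3,6}$ together with a handful of non-uniform 6-element quinary matroids obtained by placing the new point off all the "short" flats); and (c) 3-connected 7-element quinary matroids having $U_{2,5}$ as a deletion-contraction minor with the required one-sided 3-connectivity, which by the standard Splitter-Theorem-style bookkeeping arise from the matroids in (a) and (b) by one further coextension or extension respectively.

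For each candidate $M'$ on the list I would directly verify the stabilizer property: fix a basis $B$ and a maximal spanning forest $T$ of $G(M',B)$ extending one for $G(U_{2,5},B\cap E(U_{2,5}))$, so that every $\GF(5)$-representation of $M'$ may be taken $T$-normalized. Then the $T$-normalized entries in the $U_{2,5}$-part are fixed by the chosen representation of $U_{2,5}$, and each remaining entry must satisfy the polynomial equations imposed by the remaining (non)bases of $M'$. Solving these equations entry-by-entry (they are linear in each unknown once the earlier unknowns have been pinned down) shows that the extension to a $\GF(5)$-representation of $M'$, if it exists, is unique up to scaling. This is the main obstacle: it is a routine but tedious finite computation, and one must check each candidate $M'$ individually. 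The computations are short because each $M'$ has at most two "new" entries beyond the $U_{2,5}$-submatrix, and $\GF(5)$ is small enough to verify uniqueness directly.

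Finally, the statement for $U_{3,5}$ follows from duality. Since $U_{3,5}=U_{2,5}^*$, since the class of 3-connected quinary matroids is closed under matroid duality, and since $N$ stabilizes $M$ over $\GF(5)$ if and only if $N^*$ stabilizes $M^*$ over $\GF(5)$ (the definition of stabilizer respects transposition of representation matrices, as noted in the paragraph after Definition~\ref{def:confinement} applied to its stabilizer analogue), the same case analysis applied to $U_{2,5}$ in the dual gives the result for $U_{3,5}$ at no extra cost.
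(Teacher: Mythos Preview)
Your proposal is correct and follows exactly the approach the paper indicates: the paper does not give a detailed proof of this lemma but simply cites \citet{Whi96b} and remarks that it is ``another application of the Stabilizer Theorem,'' which is precisely the reduction you carry out. Your duality argument for $U_{3,5}$ and the finite check of small extensions and coextensions are the standard way this verification is done.
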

Now we introduce a hierarchy of partial fields, the \emph{Hydra-$k$ partial fields}\footnote{The Hydra is a many-headed mythological monster that grows back two heads whenever you cut off one. The most famous is the Lernaean Hydra, which was killed by Herakles.} $\hydra_1, \hydra_2, \ldots, \hydra_6$, such that the following theorem holds:
\begin{theorem}
  Let $M$ be a 3-connected, quinary matroid that has a $U_{2,5}$- or $U_{3,5}$-minor, and let $k \in \{1, \ldots, 6\}$. The following are equivalent:
  \begin{enumerate}
    \item $M$ is representable over $\hydra_k$;
    \item $M$ has at least $k$ inequivalent representations over $\GF(5)$.
  \end{enumerate}
\end{theorem}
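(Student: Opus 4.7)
The plan is to construct each $\hydra_k$ as a lift of a carefully chosen class of matrices over $\GF(5)^{\otimes k}$, and then verify the equivalence using the Stabilizer Theorem (Lemma~\ref{Utwofivestabilizer}) and the Confinement Theorem. Fix $N \in \{U_{2,5}, U_{3,5}\}$. Since $N$ is a $\GF(5)$-stabilizer for the class of 3-connected quinary matroids, inequivalent $\GF(5)$-representations of such an $M$ are in bijection with inequivalent $\GF(5)$-representations of its $N$-minor. Because the fundamental elements of $\GF(5)\setminus\{0,1\}$ form a single six-element associate class (Lemma~\ref{lem:assocdesc}), $N$ itself has exactly six inequivalent $\GF(5)$-representations, so the number of inequivalent $\GF(5)$-representations of $M$ is between $1$ and $6$.

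I would define ${\cal A}_k$ to be the class of $\GF(5)^{\otimes k}$-matrices $A$ such that $M[I\: A]$ is a 3-connected quinary matroid with an $N$-minor and the $k$ coordinate projections $\pi_1(A),\ldots,\pi_k(A)$ are pairwise inequivalent $\GF(5)$-representations of $M[I\: A]$, and set
\begin{align}
  \hydra_k := \lift_{{\cal A}_k}\bigl(\GF(5)^{\otimes k}\bigr).
\end{align}
The relations of Definition~\ref{def:liftclass} become tractable here because every cross ratio of a matrix in ${\cal A}_k$ is a $k$-tuple drawn from the finite set $\assoc\{p\}$; the resulting case analysis is analogous in spirit to Claim~\ref{cl:classification}, only with more generators.

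For the direction (2)~$\Rightarrow$~(1), given $k$ inequivalent $\GF(5)$-representations $A_1,\ldots,A_k$ of $M$ on a common basis, Lemma~\ref{lem:listoffields} shows that $A := A_1\otimes\cdots\otimes A_k$ is a $\GF(5)^{\otimes k}$-matrix representing $M$, and by construction $A \in {\cal A}_k$. Theorem~\ref{thm:liftclass} then yields that $M$ is $\hydra_k$-representable. For (1)~$\Rightarrow$~(2), any $\hydra_k$-representation of $M$ evaluates, via the canonical partial-field homomorphism $\hydra_k \to \GF(5)^{\otimes k}$ sending each generator $\tilde p$ to $p$, to a $\GF(5)^{\otimes k}$-matrix $A$ representing $M$ (by Corollary~\ref{cor:hom}). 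Its $k$ coordinate projections give $\GF(5)$-representations of $M$, and the claim is that they are pairwise inequivalent. If not, after applying the Stabilizer Theorem two of them would agree on $N$ up to scaling, so after suitable rescaling $A[N]$ would lie in the ``diagonal'' sub-partial field $\parf' \subset \GF(5)^{\otimes k}$ identifying those two coordinates. Since $\parf'$ is induced, the Confinement Theorem (Theorem~\ref{thm:confinement}) applied with $\parf'$, the submatrix $A[N]$, and the full matrix $A$ would force $A$ to be a scaled $\parf'$-matrix --- contradicting the fact that the cross ratios appearing in $A$ lie in $\Crat({\cal A}_k)$ and in particular are off-diagonal in the relevant coordinates.

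The main obstacle is the finite but non-trivial check that $\hydra_k$ is precisely the right partial field: it must have exactly $k$ distinct homomorphisms to $\GF(5)$ (up to composition with coordinate permutations) and no spurious identifications among the generators coming from the Lift relations. Verifying this reduces to analyzing how the defining ideal $I_{{\cal A}_k}$ interacts with $k$-tuples in $\assoc\{p\}$, which is a case analysis made feasible by the smallness of the associate class and by Proposition~\ref{prop:hom}. A secondary subtlety is handling the absence of the obstructing 3-connected extensions required in the alternative of the Confinement Theorem; this should follow from the fact that $\GF(5)$ admits no 3-connected 1- or 2-element extensions of $N$ witnessing such an obstruction, in the same way this absence was exploited in Claim~\ref{cl:splittable}.
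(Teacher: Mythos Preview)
Your overall strategy---define $\hydra_k$ as the lift of ${\cal A}_k$ over $\GF(5)^{\otimes k}$ and use Theorem~\ref{thm:liftclass} plus Lemma~\ref{Utwofivestabilizer} for the direction (2)$\Rightarrow$(1)---matches the paper. The gap is in your argument for (1)$\Rightarrow$(2).

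You write that confining $A$ to the diagonal sub-partial field $\parf'$ ``contradicts the fact that the cross ratios appearing in $A$ lie in $\Crat({\cal A}_k)$ and in particular are off-diagonal in the relevant coordinates.'' But nothing you have established forces the cross ratios of $A=\psi(A_0)$ to be off-diagonal in coordinates $i,j$. Elements of $\Crat({\cal A}_k)$ can and do have two equal coordinates; the paper's lemma only excludes \emph{three} equal coordinates, and even that is used for a different purpose (to make the lift computation finite). Moreover, your invocation of the Confinement Theorem here is redundant: once $\pi_i(A)\sim\pi_j(A)$ on the $N$-minor, the Stabilizer Theorem already gives $\pi_i(A)\sim\pi_j(A)$ globally, hence $A$ is a scaled $\parf'$-matrix without any appeal to confinement. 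So the argument collapses to the bare question: can some $\hydra_k$-representation of $U_{2,5}$ map under $\psi$ to a $\GF(5)^{\otimes k}$-matrix with two equivalent projections? You give no reason why not, and this is precisely what the paper says ``we cannot rule out a priori''. The paper settles it by a direct finite enumeration of all normalized $\hydra_k$-representations of $U_{2,5}$, which is feasible because $\fun(\hydra_k)$ turns out to be finite for each $k$.

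Your ``secondary subtlety'' is also off: $\GF(5)$ certainly admits $3$-connected one-element coextensions of $U_{2,5}$ (for example $U_{3,6}$), so the analogy with Claim~\ref{cl:splittable}---where the point was that $\GF(3)$ has no $U_{2,5}$-minor---fails. In the paper, the Confinement Theorem enters not in the proof of (1)$\Leftrightarrow$(2) but in the \emph{construction} of $\hydra_k$: for $k\geq 3$ it is used to show that no cross ratio in $\Crat({\cal A}_k)$ has three equal coordinates, which prunes the generating set enough to make the Gr\"obner-basis computation of the lift ideal tractable.
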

First we sketch how to construct the Hydra-$k$ partial fields. For $k=1$ we obviously pick $\hydra_1 := \GF(5)$. For $k > 1$ we consider $\parf_k:=\bigotimes_{i=1}^k\GF(5)$. Let $\phi_i:\parf_k\rightarrow\GF(5)$ be the $i$th projection map, i.e. $\phi_i(x) = x_i$, and let ${\cal A}_k$ be the class of 3-connected $\parf_k$-matrices $A$ for which the $\phi_i(A)$, $i=1, \ldots, k$ are pairwise inequivalent. For $k \geq 3$ we need to invoke the Confinement Theorem; its use is summarized in the following lemma.

\begin{lemma}
  Let $k \geq 3$. Let $p \in \parf_k$, $p \not \in \{(0,\ldots,0),(1,\ldots,1)\}$ be such that three coordinates are equal. Then $p \not \in \Crat({\cal A}_k)$.
\end{lemma}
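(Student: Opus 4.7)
The plan is to derive a contradiction by combining the Confinement Theorem with the Stabilizer Theorem (Lemma~\ref{Utwofivestabilizer}). Suppose, for contradiction, that $A \in {\cal A}_k$ and $B := \left[\begin{smallmatrix} 1 & 1 \\ p & 1 \end{smallmatrix}\right] \minorof A$, where $p \in \parf_k$ has three equal coordinates $p_{i_1} = p_{i_2} = p_{i_3} = q$ and $p \notin \{(0,\ldots,0),(1,\ldots,1)\}$. My first step is to pin down $q$: since $B$ is a minor of a $\parf_k$-matrix, it is itself a $\parf_k$-matrix, so its subdeterminants $p$ and $1-p$ must each be $0$ or a unit in $\GF(5)^k$. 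Combined with the two exclusions on $p$, this forces every coordinate of $p$ to lie in $\GF(5)\setminus\{0,1\}=\{2,3,4\}$; in particular $q \in \{2,3,4\}$, so $B$ represents the $3$-connected matroid $U_{2,4}$.

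Next I would apply the Confinement Theorem to $A$ and $B$ with the sub-partial field $\parf' := \{x \in \parf_k : x_{i_1} = x_{i_2} = x_{i_3}\}$, which is induced in $\parf_k$ because it equals $\parf_k^*$ intersected with the subring of $\GF(5)^k$ whose three coordinates $i_1, i_2, i_3$ agree; note that $B$ is a $\parf'$-matrix since all its entries lie in $\parf'$. If Case~1 of the theorem held, $A$ would be a scaled $\parf'$-matrix, forcing $\phi_{i_1}(A), \phi_{i_2}(A), \phi_{i_3}(A)$ to differ only by row- and column-scalings, hence to be pairwise scaling-equivalent, contradicting the defining property of ${\cal A}_k$. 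So Case~2 holds, producing a $3$-connected minor $A'$ of $A$ that extends $B$ by at most one row and one column, is not a scaled $\parf'$-matrix, and, if both a row and a column are added, has at least one of its two single-element deletions $3$-connected. Since the only $3$-connected matroids on exactly five elements with a $U_{2,4}$-minor are $U_{2,5}$ and $U_{3,5}$, and the case $A' \cong B$ is excluded (as that would make $A'$ a scaled $\parf'$-matrix), I can extract a $3$-connected minor $A''$ of $A$ on five elements with $M[I\: A''] \in \{U_{2,5}, U_{3,5}\}$ and containing $B$ as a submatrix.

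Finally I would examine the three projections $\phi_{i_1}(A''), \phi_{i_2}(A''), \phi_{i_3}(A'')$. They are $\GF(5)$-representations of the same $5$-element uniform matroid, and each contains the same $U_{2,4}$-submatrix $\left[\begin{smallmatrix}1 & 1 \\ q & 1\end{smallmatrix}\right] = \phi_{i_j}(B)$. A fixed $U_{2,4}$-representation over $\GF(5)$ has exactly two inequivalent extensions to $U_{2,5}$ (and likewise to $U_{3,5}$): $PG(1,5)$ has six points, of which four are already consumed by the $U_{2,4}$-part, leaving two projective choices for the remaining element. By the pigeonhole principle two of the three $\phi_{i_j}(A'')$ are scaling-equivalent; since $M[I\: A]$ is a $3$-connected quinary matroid with $U_{2,5}$- or $U_{3,5}$-minor $M[I\: A'']$, Lemma~\ref{Utwofivestabilizer} then forces the corresponding two full projections $\phi_{i_j}(A)$ to be scaling-equivalent as well, contradicting $A \in {\cal A}_k$. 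The main technical obstacle is the extraction of the $5$-element minor $A''$ in Case~2 of the Confinement Theorem; this relies on the size bound $|U \cap X'|, |U \cap Y'| \leq 1$ together with the short enumeration of $3$-connected matroids on at most five elements that have a $U_{2,4}$-minor.
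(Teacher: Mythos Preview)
Your proof is correct and follows essentially the same approach as the paper's: both set up the induced sub-partial field $\parf'$ where three fixed coordinates agree, use a pigeonhole argument on the remaining two values in $\GF(5)\setminus\{0,1,p_1\}$ together with the $U_{2,5}/U_{3,5}$ stabilizer lemma to rule out $3$-connected one-element extensions and coextensions of $B$, and then invoke the Confinement Theorem to force $A$ to be a scaled $\parf'$-matrix. The only difference is organizational---the paper first eliminates the small extensions and then applies Confinement, whereas you apply Confinement first and handle Case~2 afterwards---but the content is the same.
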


\begin{proof}
  Suppose there is an $A \in {\cal A}_k$ such that $p \in \Crat(A)$. Without loss of generality we assume that the first three coordinates of $p$ are equal. Let $R'$ be the subring of $GF(5)^k$ in which the first three coordinates are equal, and define $\parf_k' := (R',\parf_k^* \cap R')$. Then $\parf_k'$ is clearly an induced sub-partial field. Suppose
  \begin{align}
    \begin{bmatrix}
      1 & 1 & 1\\
      1 & p & q
    \end{bmatrix} \minorof A
  \end{align}
  is a $\parf_k$-matrix with $q \not \in \{p,(0,\ldots,0),(1,\ldots,1)\}$. Note that $q_1,q_2,q_3 \not \in \{0,1,p_1\}$. Hence two of $q_1,q_2,q_3$ must be equal. By permuting we may assume $q_1=q_2$. But then Lemma \ref{Utwofivestabilizer} implies that $\phi_1(A) \sim \phi_2(A)$, contradicting the definition of ${\cal A}_k$.

  It follows that
  \begin{align}
  \begin{bmatrix}
    1 & 1\\
    p & 1
  \end{bmatrix}
  \end{align}
  has no 3-connected 1-element extensions or coextensions. But then Theorem \ref{thm:confinement} implies that this matrix confines $A$ to $\parf_k'$, again contradicting the definition of ${\cal A}_k$.
\end{proof}

Now we let $\hydra_k := \lift_{{\cal A}_k}\parf_k$, as in Definition~\ref{def:liftclass}. The descriptions of $\hydra_k$ that we will give below were obtained from $\lift_{{\cal A}_k}\parf_k$ by computing a Gr\"obner basis over the integers for the ideal, and choosing a suitable set of generators.

Let $M$ be a 3-connected matroid having a $U_{2,5}$- or $U_{3,5}$-minor, and at least $k$ inequivalent representations over $\GF(5)$. Then $M = M[I\: A]$ for some $\parf_k$-matrix $A \in {\cal A}_k$. By Lemma~\ref{Utwofivestabilizer} every representation of a $U_{2,5}$- or $U_{3,5}$-minor of $M$ is in ${\cal A}_k$, from which it follows that $M$ is representable over $\hydra_k$.

For the converse we cannot rule out a priori that there exists an $\hydra_k$-representation $A'$ of $U_{2,5}$ such that $\{\phi_i(\phi(A')) \mid i = 1, \ldots, k\}$ contains fewer than $k$ inequivalent representations over $\GF(5)$. To prove that this degeneracy does not occur, one may simply check each normalized $\hydra_k$-representation of $U_{2,5}$. This is feasible because it turns out that all of $\hydra_1, \ldots, \hydra_6$ have a finite number of fundamental elements.

Note that the computations described in the preceding paragraphs are quite elaborate. Rather than reproducing those in the paper we have made the computer code available for download in a technical report \cite{PZ10report}.

With this background we proceed with the description of the partial fields and their properties. First Hydra-2. This turns out to be the Gaussian partial field, introduced in \cite[{Section 4.2}]{PZ08lift}. There we proved the following results:
\begin{lemma}[{\cite[{Lemma 4.12}]{PZ08lift}}]\label{lem:hydratwo}Let $M$ be a 3-connected matroid.
\begin{enumerate}
  \item\label{eq:onehydratwo} If $M$ has at least $2$ inequivalent representations over $\GF(5)$, then $M$ is representable over $\hydra_2$.
  \item\label{eq:twohydratwo} If $M$ has a $U_{2,5}$- or $U_{3,5}$-minor and $M$ is representable over $\hydra_2$, then $M$ has at least $2$ inequivalent representations over $\GF(5)$.
\end{enumerate}
\end{lemma}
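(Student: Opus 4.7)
\textbf{Proof plan for Lemma~\ref{lem:hydratwo}.}

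For part \eqref{eq:onehydratwo}, the plan is a direct application of the tensor-product construction combined with the Lift Theorem. Suppose $A_1,A_2$ are two inequivalent $\GF(5)$-representations of $M$ on a common basis $B$, chosen to be $T$-normalized with respect to the same normalizing spanning forest $T$ of $G(M,B)$. By Lemma~\ref{lem:listoffields}, $A := A_1\otimes A_2$ is a $\parf_2$-matrix with $M[I\: A] = M$. Since $A_1\not\sim A_2$ while both are $T$-normalized, we have $A_1\neq A_2$, so $A \in {\cal A}_2$ (the two projections of $A$ are inequivalent, as required). By definition $\hydra_2 = \lift_{{\cal A}_2}\parf_2$, so Theorem~\ref{thm:liftclass} yields a $\hydra_2$-representation of $M$.

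For part \eqref{eq:twohydratwo} I would proceed as follows. Let $A$ be a $\hydra_2$-representation of $M$. Because $\hydra_2$ is constructed as a quotient of a polynomial ring over $\Z$ in symbols $\tilde F_{{\cal A}_2}$, the set ${\cal A}_2$ itself provides a partial-field homomorphism $\phi:\hydra_2\to\parf_2$ obtained by sending each symbol $\tilde p$ back to $p\in\parf_2$; applying $\phi$ and then composing with the two projections $\phi_1,\phi_2:\parf_2\to\GF(5)$ produces two $\GF(5)$-representations $A_1',A_2'$ of $M$. The task is to ensure that $A_1'\not\sim A_2'$. Here I use the hypothesis that $M$ has a $U_{2,5}$- or $U_{3,5}$-minor $N$: choose a basis $B$ of $M$ containing a basis of $N$, and a normalizing spanning forest $T$ for $\bip(A)$ extending one for the corresponding submatrix. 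By Lemma~\ref{Utwofivestabilizer}, $N$ is a $\GF(5)$-stabilizer, so if $\phi_1(\phi(A))[N]\not\sim\phi_2(\phi(A))[N]$ then automatically $A_1'\not\sim A_2'$. Thus it suffices to carry out a finite check: enumerate all $T$-normalized $\hydra_2$-representations of $U_{2,5}$ (and of $U_{3,5}$), and verify for each one that the two induced $\GF(5)$-representations are inequivalent.

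The heart of the argument, and the only step that is not purely formal, is this finite check on $U_{2,5}$ and $U_{3,5}$. A $T$-normalized $3\times U_{2,5}$ matrix over $\hydra_2$ is determined by a single cross ratio in $\fun(\hydra_2)\setminus\{0,1\}$, so one lists the fundamental elements of $\hydra_2 = \gauss = (\C,\langle i,1-i\rangle)$ — these form a small explicit set — and for each such value $p$ computes the pair $(\phi_1(\phi(p)),\phi_2(\phi(p)))\in\GF(5)\times\GF(5)$, confirming that the two coordinates always land in distinct associate classes in $\GF(5)$; the $U_{3,5}$ case is dual. The main obstacle, therefore, is not conceptual but computational: it is the verification that the lift construction really does capture inequivalence at the level of $U_{2,5}$-representations, i.e.\ that no fundamental element of $\hydra_2$ collapses under $\phi$ to a point with $\phi_1\circ\phi=\phi_2\circ\phi$ on its associate class. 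Once this is verified, parts \eqref{eq:onehydratwo} and \eqref{eq:twohydratwo} fit together cleanly via the Lift Theorem and the Stabilizer Lemma respectively.
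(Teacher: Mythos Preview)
Your approach is essentially the paper's own template: the paper does not prove this lemma (it cites \cite{PZ08lift}), but the proof sketch it gives for the analogous Lemma~\ref{lem:hydrathree} follows exactly your outline---tensor the inequivalent $\GF(5)$-representations, invoke Theorem~\ref{thm:liftclass} for one direction, and for the other direction push an $\hydra_k$-representation through a homomorphism $\hydra_k\to\bigotimes_i\GF(5)$ and use Lemma~\ref{Utwofivestabilizer} to reduce to a finite check on $U_{2,5}$.

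One factual slip worth fixing before you carry out the check: a normalized representation of $U_{2,5}$ has the form $\left[\begin{smallmatrix}1&1&1\\1&p&q\end{smallmatrix}\right]$ and is determined by \emph{two} fundamental elements $p,q$ (with $p,q,p-q$ all nonzero), not a single cross ratio; you may be thinking of $U_{2,4}$. So the enumeration is over pairs $(p,q)$ drawn from $\fun(\hydra_2)\setminus\{0,1\}$, and you must verify for each such pair that the two projected $\GF(5)$-matrices are inequivalent (not merely unequal). This is still finite since $\fun(\gauss)$ is finite, but it is a larger check than you indicated.
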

\begin{theorem}[{\cite[{Theorem 4.14}]{PZ08lift}}]\label{thm:gauss}
Let $M$ be a 3-connected matroid with a $U_{2,5}$- or $U_{3,5}$-minor. The following are equivalent:
\begin{enumerate}
  \item \label{eq:gaussfivefive}$M$ has 2 inequivalent representations over $\GF(5)$;
  \item \label{eq:gaussTU} $M$ is $\gauss$-representable;
  \item \label{eq:gaussevery}$M$ has two inequivalent representations over $\GF(5)$, is representable over $\GF(p^2)$ for all primes $p \geq 3$, and over $\GF(p)$ when $p \equiv 1 \mod 4$.
\end{enumerate}
\end{theorem}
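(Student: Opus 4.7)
The plan is to prove the cyclic chain $(\ref{eq:gaussfivefive}) \Rightarrow (\ref{eq:gaussTU}) \Rightarrow (\ref{eq:gaussevery}) \Rightarrow (\ref{eq:gaussfivefive})$. The implication $(\ref{eq:gaussevery}) \Rightarrow (\ref{eq:gaussfivefive})$ is free, since (\ref{eq:gaussevery}) itself asserts the existence of two inequivalent $\GF(5)$-representations. The implication $(\ref{eq:gaussfivefive}) \Rightarrow (\ref{eq:gaussTU})$ is also immediate: Lemma~\ref{lem:hydratwo}\eqref{eq:onehydratwo} says that any 3-connected matroid with at least two inequivalent $\GF(5)$-representations is $\hydra_2$-representable, and the remark preceding the theorem identifies $\hydra_2$ with $\gauss$.

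The substantive direction is $(\ref{eq:gaussTU}) \Rightarrow (\ref{eq:gaussevery})$. The first clause of (\ref{eq:gaussevery}), that $M$ has two inequivalent $\GF(5)$-representations, is exactly Lemma~\ref{lem:hydratwo}\eqref{eq:twohydratwo} (this is where the hypothesis of a $U_{2,5}$- or $U_{3,5}$-minor is used). For the remaining representability claims I would, for each target field $\field$, exhibit a partial-field homomorphism $\phi: \gauss \to \field$ and then invoke Corollary~\ref{cor:hom} to transport the $\gauss$-representation of $M$ to a $\field$-representation. Since $\gauss = (\C, \langle i, 1-i\rangle)$, it suffices to produce a ring homomorphism from the subring $\Z[i]\subseteq \C$ to $\field$ that carries $\langle i, 1-i\rangle$ into $\field^{*}$.

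For an odd prime $p$, reduction modulo $p$ gives a ring homomorphism $\Z[i] \to \GF(p)[x]/(x^2+1)$. When $p \equiv 3 \pmod 4$ the polynomial $x^2 + 1$ is irreducible over $\GF(p)$ and the target is $\GF(p^2)$; when $p \equiv 1 \pmod 4$ it factors as $(x-j)(x+j)$ and the quotient is $\GF(p) \times \GF(p)$, so composing with either projection yields a homomorphism $\Z[i] \to \GF(p) \subseteq \GF(p^2)$. In either case the image of $i$ is a square root of $-1$ (hence a unit), and the image of $1-i$ is a unit because $(1-i)(1+i) = 2 \not\equiv 0 \pmod p$. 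Thus the resulting $\phi$ restricts to a partial-field homomorphism $\gauss \to \GF(p^2)$ for every $p \geq 3$, and to $\GF(p)$ precisely when $p \equiv 1 \pmod 4$, as required.

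The main obstacle is really only the bookkeeping step of checking that the natural ring reduction restricts to a partial-field homomorphism, i.e.\ that the distinguished generators $i$ and $1-i$ of $\gauss^{*}$ remain units in the target. Once that verification is in place, the theorem follows by threading together Lemma~\ref{lem:hydratwo}, Corollary~\ref{cor:hom}, and the identification $\hydra_2 \cong \gauss$; the arithmetic reduces to the elementary facts that $-1$ is a square modulo $p$ exactly when $p \equiv 1 \pmod 4$ and that $2$ is invertible modulo any odd prime.
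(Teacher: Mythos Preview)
Your argument is correct and is precisely the intended one. Note that the present paper does not actually prove this theorem; it is quoted from \cite{PZ08lift}, and your reconstruction matches the structure visible from the surrounding citations (Lemma~\ref{lem:hydratwo} is \cite[Lemma~4.12]{PZ08lift}, and the appendix records the existence of the required homomorphisms $\gauss\to\GF(p^2)$ and $\gauss\to\GF(p)$ for $p\equiv 1\pmod 4$ as \cite[Theorem~4.13]{PZ08lift}). One small tightening: the ring underlying $\gauss$ is $\Z[i,\tfrac{1}{2}]$ rather than $\Z[i]$ (since $2=i(1-i)^2\in\langle i,1-i\rangle$), so your reduction map must also send $2$ to a unit; you effectively verify this when you observe $2\not\equiv 0\pmod p$, but it is worth stating explicitly that the map on $\Z[i]$ therefore extends to $\Z[i,\tfrac{1}{2}]$.
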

Next up is Hydra-3. We have
\begin{equation}
  \hydra_3 := (\Q(\alpha),\langle -1,\alpha, \alpha-1, \alpha^2-\alpha+1\rangle).
\end{equation}

\begin{lemma}\label{lem:H3fun}
\begin{align}\fun(\hydra_3) = \assoc\left\{1,\alpha,\alpha^2-\alpha+1,\frac{\alpha^2}{\alpha-1},\frac{-\alpha}{(\alpha-1)^2}\right\}.
\end{align}
\end{lemma}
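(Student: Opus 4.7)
The plan is to prove the two inclusions.

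For $\supseteq$, I would verify directly that each of $1, \alpha, \alpha^2-\alpha+1, \alpha^2/(\alpha-1), -\alpha/(\alpha-1)^2$ is fundamental, i.e.\ that $1-p$ lies in $\langle -1,\alpha,\alpha-1,\alpha^2-\alpha+1\rangle\cup\{0\}$. For example,
\[
1-(\alpha^2-\alpha+1)=-\alpha(\alpha-1),\qquad 1-\frac{\alpha^2}{\alpha-1}=-\frac{\alpha^2-\alpha+1}{\alpha-1},\qquad 1+\frac{\alpha}{(\alpha-1)^2}=\frac{\alpha^2-\alpha+1}{(\alpha-1)^2}.
\]
Since $\assoc\{p\}\subseteq\fun(\hydra_3)$ whenever $p\in\fun(\hydra_3)$, this yields the claimed containment.

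For the reverse inclusion I would use a Riemann--Hurwitz degree bound to reduce to a finite check. Every nonzero $p\in\hydra_3$ has the form $\pm\alpha^a(\alpha-1)^b(\alpha^2-\alpha+1)^c$. Viewing $p$ as a rational map $p\colon\mathbb{P}^1_{\overline{\Q}}\to\mathbb{P}^1_{\overline{\Q}}$ of degree $d$, its divisor is supported on the five-point set $S:=\{0,1,\omega,\overline{\omega},\infty\}$, where $\omega$ is a primitive sixth root of unity (hence a root of $\alpha^2-\alpha+1$). If $p$ is fundamental, then $1-p\in\hydra_3$ as well, so $p^{-1}(1)\subseteq S$; thus the three disjoint fibres $p^{-1}(0)$, $p^{-1}(1)$, $p^{-1}(\infty)$ have total cardinality at most $|S|=5$. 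Riemann--Hurwitz gives total ramification $2d-2$ for $p$, while the ramification above $\{0,1,\infty\}$ alone is
\[
\sum_{y\in\{0,1,\infty\}}\bigl(d-|p^{-1}(y)|\bigr) \;=\; 3d-\bigl(|p^{-1}(0)|+|p^{-1}(1)|+|p^{-1}(\infty)|\bigr) \;\geq\; 3d-5.
\]
Hence $3d-5\leq 2d-2$, i.e.\ $d\leq 3$.

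The remaining step is a finite enumeration of triples $(a,b,c)\in\Z^3$ and signs giving an element $p=\pm\alpha^a(\alpha-1)^b(\alpha^2-\alpha+1)^c$ of degree at most $3$, together with a test for each whether the numerator of $1-p$ (in lowest terms) factors as $\pm$ a monomial in $\alpha$, $\alpha-1$, $\alpha^2-\alpha+1$; this factorization condition amounts to evaluating the numerator at $\alpha\in\{0,1,\omega\}$. Grouping the surviving $p$ by associate class recovers exactly the five classes listed.

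The main obstacle is organizing this final case analysis efficiently; the bookkeeping is routine but tedious. The conceptual crux is the Riemann--Hurwitz bound, which converts what would otherwise be an infinite family of $S$-unit-style equations $p+q=1$ into a finite search.
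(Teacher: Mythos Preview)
Your argument is correct and takes a genuinely different route from the paper's.  The paper bounds the exponents in $p=(-1)^s\alpha^x(\alpha-1)^y(\alpha^2-\alpha+1)^z$ by pushing $p$ through three explicit homomorphisms $\hydra_3\rightarrow\hydra_2$ (determined by $\alpha\mapsto i$, $\alpha\mapsto 1-i$, $\alpha\mapsto(1-i)/2$) and using that fundamental elements of $\hydra_2$ have norm in $[1/2,2]$; this yields the box $|x|,|y|\leq 2$, $|z|\leq 3$ and then a finite check.  Your approach instead treats $p$ as a rational map on $\mathbb{P}^1$ and observes that the three fibres over $0,1,\infty$ are disjoint subsets of the five-point set $S=\{0,1,\omega,\bar\omega,\infty\}$, so Riemann--Hurwitz (equivalently the Mason--Stothers theorem) forces $\deg p\leq 3$.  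Your method is more self-contained---it needs nothing about $\hydra_2$---and yields a cleaner uniform bound; the paper's method, on the other hand, fits the surrounding machinery (homomorphisms between partial fields) and is what the authors reuse for the analogous $\hydra_4$ and $\hydra_5$ computations, where no single algebraic curve picture is as transparent.  Either way the final enumeration is routine.
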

\begin{proof}[Proof sketch]
  All fundamental elements are of the form $(-1)^s\alpha^x(\alpha-1)^y(\alpha^2-\alpha+1)^z$.
  The homomorphism $\phi:\hydra_3\rightarrow\hydra_2$ determined by $\phi(\alpha) = i$ yields $-2\leq y \leq 2$, since fundamental elements must map to fundamental elements and the norms of fundamental elements of $\hydra_2$ are between $1/2$ and $2$. Similarly, $\psi:\hydra_3\rightarrow\hydra_2$ determined by $\psi(\alpha) = 1-i$ yields $-2\leq x \leq 2$ and $\rho:\hydra_3\rightarrow\hydra_2$ determined by $\rho(\alpha) = \frac{1-i}{2}$ yields, together with the preceding bounds, $-3\leq z\leq 3$. This reduces the proof to a finite check. We refer to \cite{PZ10report} for the computations.
\end{proof}

\begin{lemma}\label{lem:hydrathree}Let $M$ be a 3-connected matroid.
\begin{enumerate}
  \item\label{eq:onehydrathree} If $M$ has at least $3$ inequivalent representations over $\GF(5)$, then $M$ is representable over $\hydra_3$.
  \item\label{eq:twohydrathree} If $M$ has a $U_{2,5}$- or $U_{3,5}$-minor and $M$ is representable over $\hydra_3$, then $M$ has at least $3$ inequivalent representations over $\GF(5)$.
\end{enumerate}
\end{lemma}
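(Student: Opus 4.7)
My plan follows the template already used for Hydra-2 in Lemma~\ref{lem:hydratwo}, and more generally the strategy laid out in the discussion preceding the statement. I treat the two implications separately.

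For part~\eqref{eq:onehydrathree}, suppose $A_1,A_2,A_3$ are three pairwise inequivalent $\GF(5)$-representations of $M$ on a common basis $B$. Using Lemma~\ref{lem:listoffields} iteratively, the matrix $A := A_1\otimes A_2\otimes A_3$ is a $\parf_3$-matrix with $M[I\: A] = M$, and $A\in{\cal A}_3$ by inequivalence of the $A_i$. Theorem~\ref{thm:liftclass} then shows that $M$ is representable over $\lift_{{\cal A}_3}\parf_3$. Thus it suffices to exhibit a (surjective) partial-field homomorphism $\lift_{{\cal A}_3}\parf_3 \to \hydra_3$, so that any $\lift_{{\cal A}_3}\parf_3$-representation of $M$ becomes a $\hydra_3$-representation via Corollary~\ref{cor:hom}. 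This is where I expect the main difficulty: one has to identify the quotient ring in Definition~\ref{def:liftclass} with $\Z[\alpha,(\alpha(\alpha-1)(\alpha^2-\alpha+1))^{-1}]$, sending a distinguished lifted cross ratio $\tilde p$ to $\alpha$. The Confinement-based lemma proved immediately before the statement (which eliminates all $p\in\parf_3\setminus\{\mathbf 0,\mathbf 1\}$ with three equal coordinates from $\Crat({\cal A}_3)$) drastically restricts the admissible generators $\tilde F_{{\cal A}_3}$. Combined with the multiplicative relations \eqref{lc:four} and the trihedral relations \eqref{lc:five}, a Gr\"obner-basis computation over $\Z$ (as referenced in~\cite{PZ10report}) collapses the presentation onto the displayed subgroup of $\Q(\alpha)^*$. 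I would carry out the computation exactly in the style of \cite[Lemma~4.12]{PZ08lift}.

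For part~\eqref{eq:twohydrathree}, let $B$ be an $\hydra_3$-representation of $M$. The construction of $\hydra_3$ as a lift of $\parf_3 = \GF(5)\otimes\GF(5)\otimes\GF(5)$ comes with a canonical homomorphism $\pi:\hydra_3\to\parf_3$, and composing with the three coordinate projections $\phi_i$ gives homomorphisms $\psi_i := \phi_i\circ\pi:\hydra_3\to\GF(5)$ for $i=1,2,3$. By Corollary~\ref{cor:hom}, each $\psi_i(B)$ is a $\GF(5)$-representation of $M$. The task is to show that $\psi_1(B),\psi_2(B),\psi_3(B)$ are pairwise inequivalent. Here I use the hypothesis on the minor: let $N$ be a $U_{2,5}$- or $U_{3,5}$-minor of $M$ and let $B'$ be the corresponding submatrix of $B$ (after pivots) representing $N$. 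By Lemma~\ref{Utwofivestabilizer} (the stabilizer property), $\psi_i(B)\sim\psi_j(B)$ would force $\psi_i(B')\sim\psi_j(B')$. So it is enough to check that, for every normalized $\hydra_3$-representation $B'$ of $U_{2,5}$ (resp.\ $U_{3,5}$), the three projections $\psi_1(B'),\psi_2(B'),\psi_3(B')$ are pairwise inequivalent.

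This last check is finite. By Corollary~\ref{cor:signaturefund} every cross ratio of a $\parf$-matrix is fundamental, so a normalized $B$-representation of $U_{2,5}$ over $\hydra_3$ has the shape
\begin{equation*}
\begin{bmatrix}1 & 1 & 1\\ 1 & p & q\end{bmatrix}
\end{equation*}
with $p,q,p/q,(1-p)/(1-q),1-p,1-q\in\fun(\hydra_3)$. Lemma~\ref{lem:H3fun} pins down $\fun(\hydra_3)$ as a finite set (five orbits under $\assoc$, hence at most $30$ elements), so only finitely many pairs $(p,q)$ need to be inspected; analogously for $U_{3,5}$ via the dual. For each such pair one computes the three images under $\psi_1,\psi_2,\psi_3$ and verifies that no two are related by a $\GF(5)^*$-scaling of rows and columns plus a permutation allowed by the symmetry of $U_{2,5}$. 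This enumeration is routine; as in part~\eqref{eq:onehydrathree} I would delegate the mechanical verification to the computer-algebra scripts of~\cite{PZ10report}, completing the proof.
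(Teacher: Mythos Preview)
Your proposal is correct and follows essentially the same approach as the paper: both parts reduce to a finite computation over the explicit fundamental elements of $\hydra_3$ together with the stabilizer property of $U_{2,5}$. The only difference is one of framing for part~\eqref{eq:onehydrathree}: the paper \emph{defines} $\hydra_3 := \lift_{{\cal A}_3}\parf_3$ and then verifies that the concrete description $(\Q(\alpha),\langle -1,\alpha,\alpha-1,\alpha^2-\alpha+1\rangle)$ is correct by checking that the map $\alpha\mapsto(2,3,4)$ induces a bijection $\fun(\hydra_3)\to\Crat({\cal A}_3)$, whereas you frame it as constructing a homomorphism from the lift onto the concretely presented $\hydra_3$; these are two sides of the same Gr\"obner-basis verification.
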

\begin{proof}[Proof sketch]
  Let $\psi:\hydra_3\rightarrow \bigotimes_{i=1}^3\GF(5)$ be determined by $\psi(\alpha) = (2,3,4)$. A finite check shows for all $\hydra_3$-matrices $A=\left[\begin{smallmatrix}1 & 1 & 1\\1 & p & q\end{smallmatrix}\right]$ representing $U_{2,5}$ that $|\{\phi_i(\psi(A))\mid i = 1, \ldots, 3\}| = 3$. For the computations we refer to \cite{PZ10report}. Together with Lemma~\ref{Utwofivestabilizer} this proves \eqref{eq:twohydrathree}$\Rightarrow$\eqref{eq:onehydrathree}.

  Let $\phi:\hydra_3\rightarrow\bigotimes_{i=1}^3 \GF(5)$ be determined by $\phi(\alpha) = (2,3,4)$. Then $\phi|_{\fun(\hydra_3)}:\fun(\hydra_3)\rightarrow\Crat({\cal A}_k)$ is a bijection and by Theorem~\ref{thm:liftclass} and Lemma~\ref{Utwofivestabilizer} it follows that all matroids in ${\cal A}_k$ are representable over $\hydra_3$. Again, the necessary computations can be found in \cite{PZ10report}. Together with Theorem~\ref{thm:regnregdyadic} this proves \eqref{eq:onehydrathree}$\Rightarrow$\eqref{eq:twohydrathree}.
\end{proof}

Next up is Hydra-4. From now on we omit the proof sketches since no new technicalities arise. All computations can be found in \cite{PZ10report}.
\begin{equation}
  \hydra_4 := (\Q(\alpha,\beta),\langle -1,\alpha,\beta,\alpha-1,\beta-1,\alpha\beta-1, \alpha+\beta-2\alpha\beta\rangle).
\end{equation}
There exists a homomorphism $\phi:\hydra_4\rightarrow\bigotimes_{k=1}^4 \GF(5)$ determined by $\phi(\alpha) = (2,3,3,4)$, $\phi(\beta) = (2,3,4,3)$.
\begin{lemma}\label{lem:H4fun}
\begin{align}
  \fun(\hydra_4) = \assoc\Big\{&1,\alpha, \beta, \alpha\beta,\tfrac{\alpha-1}{\alpha\beta-1},\tfrac{\beta-1}{\alpha\beta-1},-\tfrac{\alpha(\beta-1)}{\beta(\alpha-1)},
  \tfrac{(\alpha-1)(\beta-1)}{1-\alpha\beta},\notag\\
   &\tfrac{\alpha(\beta-1)^2}{\beta(\alpha\beta-1)},\tfrac{\beta(\alpha-1)^2}{\alpha(\alpha\beta-1)}\Big\}.
\end{align}
\end{lemma}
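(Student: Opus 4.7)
The plan is to mirror the argument used for Lemma~\ref{lem:H3fun}. By the definition of $\hydra_4$, every nonzero element has the form
\begin{align*}
  p = (-1)^s\, \alpha^a\, \beta^b\, (\alpha-1)^c\, (\beta-1)^d\, (\alpha\beta-1)^e\, (\alpha+\beta-2\alpha\beta)^f
\end{align*}
for some integers $s,a,b,c,d,e,f$, so we must determine for which such tuples $1-p \in \hydra_4^*\cup\{0\}$.

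First I would bound the integer exponents by exploiting partial-field homomorphisms from $\hydra_4$ into partial fields with a known, finite set of fundamental elements, in particular $\hydra_2$ (by \cite{PZ08lift}) and $\hydra_3$ (by Lemma~\ref{lem:H3fun}). Since a homomorphism must send $\fun(\hydra_4)$ into $\fun(\parf')$, evaluating $(\alpha,\beta)$ at a carefully chosen pair of elements in $\parf'$ bounds a linear combination of $(a,b,c,d,e,f)$ in terms of the known bounds on the valuations (or norms) of the fundamental elements of $\parf'$. Picking several independent evaluations -- for instance substitutions $(\alpha,\beta)\mapsto (i,1-i)$, $((1-i)/2, i)$, $(1-i, i)$, and their $\alpha\leftrightarrow\beta$ swaps in $\hydra_2$, together with analogous substitutions with values in $\hydra_3$ -- forces the exponent tuple into a finite box. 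Throughout one must check that $\alpha-1$, $\beta-1$, $\alpha\beta-1$, and $\alpha+\beta-2\alpha\beta$ all land in $(\parf')^*$ so that the homomorphism is actually defined on the generators.

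Once the box is explicit, the remaining step is a finite search: for each candidate tuple, test by computer algebra (as in \cite{PZ10report}) whether $1-p$ factors in $\Q(\alpha,\beta)$ as a product of the generators of $\hydra_4$ times $\pm 1$. The surviving elements organize themselves, by Lemma~\ref{lem:assocdesc}, into orbits of size at most six under the associate group, and the $\alpha\leftrightarrow\beta$ automorphism of $\hydra_4$ pairs up further orbits, cutting the verification down to a short list of representatives. Matching these representatives against the ten elements in the statement then concludes the proof. The chief obstacle is arranging enough independent homomorphisms so that their combined constraints pin all six exponents to sufficiently small ranges to render the brute-force search feasible; with only two generators $\alpha,\beta$ available to evaluate, a single homomorphism controls at most two linear combinations of $(a,b,c,d,e,f)$, so at least three well-chosen evaluation points into $\hydra_2$ (or a mix of $\hydra_2$ and $\hydra_3$) are required.
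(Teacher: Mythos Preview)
Your plan is essentially the paper's own approach: the authors explicitly say that for $\hydra_4$ ``no new technicalities arise'' beyond the $\hydra_3$ case and defer the actual bounding and enumeration to the companion technical report, so mirroring the $\hydra_3$ argument is exactly what is intended.

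One small caution: not every plausible-looking substitution into $\hydra_2$ extends to a homomorphism on all of $\hydra_4$, because the generator $\alpha+\beta-2\alpha\beta$ is the awkward one. For example your suggested $(\alpha,\beta)\mapsto(i,1-i)$ sends $\alpha+\beta-2\alpha\beta$ to $-1-2i$, which has norm $5$ and hence is not a unit in $\hydra_2$. You already flagged the need to check this, but be aware that it genuinely rules out several of the obvious choices, so finding enough independent evaluations to box in all six exponents takes a bit more care (and possibly a pass through $\hydra_3$, as you suggest).
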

\begin{lemma}\label{lem:hydrafour}Let $M$ be a 3-connected matroid.
\begin{enumerate}
  \item\label{eq:onehydrafour} If $M$ has at least $4$ inequivalent representations over $\GF(5)$, then $M$ is representable over $\hydra_4$.
  \item\label{eq:twohydrafour} If $M$ has a $U_{2,5}$- or $U_{3,5}$-minor and $M$ is representable over $\hydra_4$, then $M$ has at least $4$ inequivalent representations over $\GF(5)$.
\end{enumerate}
\end{lemma}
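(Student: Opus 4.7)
The strategy is to follow the template established in the proof of Lemma~\ref{lem:hydrathree}, with the homomorphism $\psi:\hydra_4\rightarrow\bigotimes_{i=1}^4\GF(5)$ defined by $\psi(\alpha)=(2,3,3,4)$ and $\psi(\beta)=(2,3,4,3)$ playing the central role in both directions.

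For the implication \eqref{eq:twohydrafour}$\Rightarrow$\eqref{eq:onehydrafour}, suppose $M$ has a $U_{2,5}$- or $U_{3,5}$-minor and a representation $A$ over $\hydra_4$. Then $\psi(A)$ is a $\parf_4$-matrix representing $M$, and $\phi_1(\psi(A)),\ldots,\phi_4(\psi(A))$ yield four $\GF(5)$-representations of $M$. To establish that these are pairwise inequivalent, I would invoke Lemma~\ref{Utwofivestabilizer}: since $U_{2,5}$ and $U_{3,5}$ are $\GF(5)$-stabilizers for the class of 3-connected quinary matroids, it suffices to check inequivalence on the chosen minor. This reduces to a finite check: by Lemma~\ref{lem:H4fun}, $\fun(\hydra_4)$ is a finite union of associate sets, hence there are only finitely many normalized $\hydra_4$-matrices of the form $\left[\begin{smallmatrix}1&1&1\\1&p&q\end{smallmatrix}\right]$ with $p,q\in\fun(\hydra_4)\setminus\{0,1\}$, $p\neq q$, that represent $U_{2,5}$. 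For each such matrix $A_0$ one verifies by direct calculation that $\phi_i(\psi(A_0))$ are pairwise non-scaling-equivalent for $i=1,\ldots,4$. The enumeration is delegated to~\cite{PZ10report}.

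For the implication \eqref{eq:onehydrafour}$\Rightarrow$\eqref{eq:twohydrafour}, suppose $M$ admits four pairwise inequivalent $\GF(5)$-representations $A_1,\ldots,A_4$. By Lemma~\ref{lem:listoffields}, the componentwise product $A:=A_1\otimes\cdots\otimes A_4$ is a $\parf_4$-matrix with $M[I\:A]=M$, and by definition of ${\cal A}_4$ we have $A\in{\cal A}_4$. Theorem~\ref{thm:liftclass} then provides a representation of $M$ over $\lift_{{\cal A}_4}\parf_4$. To transfer this to the concrete partial field $\hydra_4$ stated in the excerpt, I would check that $\psi|_{\fun(\hydra_4)}:\fun(\hydra_4)\rightarrow\Crat({\cal A}_4)$ is a bijection; this ensures that the relations listed in Definition~\ref{def:liftclass} defining $\lift_{{\cal A}_4}\parf_4$ match exactly the generators and relations of the announced $\hydra_4$, so that $\hydra_4\cong\lift_{{\cal A}_4}\parf_4$.

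The main obstacle is computational rather than conceptual: one must certify by direct calculation (in practice, a Gr\"obner basis computation over $\Z$) that the presentation $\hydra_4=(\Q(\alpha,\beta),\langle -1,\alpha,\beta,\alpha-1,\beta-1,\alpha\beta-1,\alpha+\beta-2\alpha\beta\rangle)$ really does coincide with the lift, and that the ten-class list of associates in Lemma~\ref{lem:H4fun} is exhaustive. Both tasks are finite but noticeably larger than their $\hydra_3$ analogues because $\hydra_4$ is generated by two algebraically independent parameters rather than one; this produces more potential cross ratios of $U_{2,5}$ and more candidate relations to rule out. Once these two finite verifications are in place (and they are performed in~\cite{PZ10report}), the structural argument is identical to the Hydra-3 case.
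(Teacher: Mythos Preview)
Your proposal is correct and follows exactly the approach the paper intends: the paper explicitly states that for $\hydra_4$ (and $\hydra_5$) ``we omit the proof sketches since no new technicalities arise,'' referring back to the $\hydra_3$ template and delegating all finite checks to \cite{PZ10report}. You have faithfully reproduced that template with the correct homomorphism $\psi(\alpha)=(2,3,3,4)$, $\psi(\beta)=(2,3,4,3)$, the stabilizer reduction via Lemma~\ref{Utwofivestabilizer}, and the Lift Theorem step, so there is nothing to add.
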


Next up is Hydra-5.
\begin{align}
  \hydra_5 := (\Q(\alpha,\beta,\gamma),\langle& -1,\alpha,\beta,\gamma,\alpha-1,\beta-1,\gamma-1,\alpha-\gamma,\notag\\
  & \gamma-\alpha\beta,(1-\gamma)-(1-\alpha)\beta\rangle).
\end{align}
There exists a homomorphism $\phi:\hydra_5\rightarrow\bigotimes_{k=1}^5 \GF(5)$ determined by $\phi(\alpha) = (2,3,4,2,3)$, $\phi(\beta) = (3,2,3,4,2)$, $\phi(\gamma) = (3,2,3,4,4)$.
\begin{lemma}\label{lem:H5fun}
\begin{align}
  \fun(\hydra_5) = \assoc\Big\{&1,\alpha,\beta,\gamma,\tfrac{\alpha\beta}{\gamma},\tfrac{\alpha}{\gamma},\tfrac{(1-\alpha)\gamma}{\gamma-\alpha},\tfrac{(\alpha-1)\beta}{\gamma-1},\tfrac{\alpha-1}{\gamma-1},\tfrac{\gamma-\alpha}{\gamma-\alpha\beta},\notag\\
  & \tfrac{(\beta-1)(\gamma-1)}{\beta(\gamma-\alpha)},\tfrac{\beta(\gamma-\alpha)}{\gamma-\alpha\beta},\tfrac{(\alpha-1)(\beta-1)}{\gamma-\alpha},\notag\\
  & \tfrac{\beta(\gamma-\alpha)}{(1-\gamma)(\gamma-\alpha\beta)},\tfrac{(1-\alpha)(\gamma-\alpha\beta)}{\gamma-\alpha},\tfrac{1-\beta}{\gamma-\alpha\beta}
  \Big\}.
\end{align}
\end{lemma}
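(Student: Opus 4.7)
The plan is to follow the same strategy as in the proof sketches of Lemmas~\ref{lem:H3fun} and \ref{lem:H4fun}, with the additional complication that there are now nine multiplicative generators instead of three or six. Every nonzero element of $\hydra_5$ can be written as
\begin{align}
x = (-1)^{s}\alpha^{a}\beta^{b}\gamma^{c}(\alpha-1)^{d}(\beta-1)^{e}(\gamma-1)^{f}(\alpha-\gamma)^{g}(\gamma-\alpha\beta)^{h}\bigl((1{-}\gamma)-(1{-}\alpha)\beta\bigr)^{i},
\end{align}
with $s\in\{0,1\}$ and integer exponents $a,\ldots,i$, and $x$ is fundamental precisely when $1-x$ admits a similar expression. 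The task is to show that, modulo associates, the only such $x$ are the sixteen listed in the statement, together with $0$.

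First I would bound the exponents $a,\ldots,i$. As in the proof of Lemma~\ref{lem:H3fun}, each homomorphism $\hydra_5\rightarrow\hydra_2$ must send $\fun(\hydra_5)$ into $\fun(\hydra_2)$, and the norms of elements of $\fun(\hydra_2)$ are known to lie in the finite set $\{1/2,1,2\}$. By choosing nine independent homomorphisms $\psi_j:\hydra_5\rightarrow\hydra_2$ (for example, the compositions of the projections $\phi_k$ from the homomorphism $\hydra_5\rightarrow\bigotimes_{k=1}^5\GF(5)$ of the preceding lemmas with suitable homomorphisms $\GF(5)\otimes\GF(5)\rightarrow\hydra_2$, together with maps specializing one of $\alpha,\beta,\gamma$ to roots of $x^2-x+1$ or to $1-i$), one obtains nine linear inequalities on $(a,b,c,d,e,f,g,h,i)$ that together force each exponent to lie in a bounded integer interval. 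This reduces the problem to a finite check.

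Second, I would verify that each of the sixteen generators in the statement is genuinely fundamental by writing $1-p$ explicitly as a product of generators of $\hydra_5^*$; these identities are short algebraic manipulations that may be compiled once in the technical report~\cite{PZ10report}. Combined with Lemma~\ref{lem:assocdesc}, each generator $p$ contributes a set $\assoc\{p\}$ of up to six fundamental elements, all of which automatically lie in $\fun(\hydra_5)$.

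Third, with the bounded box of exponents in hand, a machine search enumerates all candidates $x$, tests for each whether $1-x$ can again be factored into the generators, and verifies that every resulting fundamental element lies in $\assoc\{p\}$ for one of the sixteen listed $p$. The main obstacle is calibrating the valuations in the first step so that the resulting search space is small enough to be tractable while still provably covering every fundamental element: with nine generators the naive enumeration grows very quickly, and as in Lemma~\ref{lem:H3fun} the effective bounds only emerge after combining several homomorphisms. The detailed calculations and the Gr\"obner-basis verification that no further fundamental elements exist are relegated to~\cite{PZ10report}, in keeping with the convention announced just before the statement.
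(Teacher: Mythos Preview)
Your approach matches the paper's exactly: the paper omits the proof sketch for $\hydra_5$ entirely, stating that ``no new technicalities arise'' beyond the $\hydra_3$ case and deferring all computations to~\cite{PZ10report}, so the three-step plan (bound exponents via homomorphisms to $\hydra_2$, verify the listed elements are fundamental, finite search) is precisely what is intended.

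One correction, though: your suggested source of homomorphisms via ``compositions of the projections $\phi_k$ from the homomorphism $\hydra_5\rightarrow\bigotimes_{k=1}^5\GF(5)$ \ldots\ with suitable homomorphisms $\GF(5)\otimes\GF(5)\rightarrow\hydra_2$'' cannot work, since $\GF(5)$ has characteristic~$5$ while $\hydra_2$ sits inside $\Q(i)$ and has characteristic~$0$; there is no nontrivial partial-field homomorphism in that direction. The usable homomorphisms are all of your second kind, namely direct specializations $\hydra_5\rightarrow\hydra_2$ obtained by sending $\alpha,\beta,\gamma$ to carefully chosen elements of $\hydra_2$ so that each of the nine generators maps to a unit. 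Finding nine such specializations whose induced valuations are linearly independent on the exponent vector $(a,\ldots,i)$ is the actual work here, and it is this calibration that~\cite{PZ10report} carries out.
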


\begin{lemma}\label{lem:hydrafive}Let $M$ be a 3-connected matroid.
\begin{enumerate}
  \item\label{eq:onehydrafive} If $M$ has at least $5$ inequivalent representations over $\GF(5)$, then $M$ is representable over $\hydra_5$.
  \item\label{eq:twohydrafive} If $M$ has a $U_{2,5}$- or $U_{3,5}$-minor and $M$ is representable over $\hydra_5$, then $M$ has at least $5$ inequivalent representations over $\GF(5)$.
\end{enumerate}
\end{lemma}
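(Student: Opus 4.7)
The plan is to proceed exactly as in the proofs for $\hydra_3$ and $\hydra_4$ (Lemmas~\ref{lem:hydrathree} and \ref{lem:hydrafour}), exploiting the fact that $U_{2,5}$ and $U_{3,5}$ are $\GF(5)$-stabilizers for the class of 3-connected quinary matroids (Lemma~\ref{Utwofivestabilizer}), so inequivalence of representations of $M$ is detected already on a $U_{2,5}$- or $U_{3,5}$-minor.

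For the implication \eqref{eq:twohydrafive}$\Rightarrow$\eqref{eq:onehydrafive}, I would use the explicit homomorphism $\phi:\hydra_5\rightarrow\bigotimes_{k=1}^5\GF(5)$ given before the lemma, determined by $\phi(\alpha)=(2,3,4,2,3)$, $\phi(\beta)=(3,2,3,4,2)$, $\phi(\gamma)=(3,2,3,4,4)$. Given a 3-connected $M$ that is $\hydra_5$-representable with a $U_{2,5}$- or $U_{3,5}$-minor, pick an $\hydra_5$-representation $A$ of $M$ and restrict to any such minor. Using the description of $\fun(\hydra_5)$ in Lemma~\ref{lem:H5fun}, one enumerates (up to normalization) all $\hydra_5$-representations of $U_{2,5}$ (equivalently $U_{3,5}$) of the form $\left[\begin{smallmatrix}1 & 1 & 1\\1 & p & q\end{smallmatrix}\right]$ with $p,q,p/q\in\fun(\hydra_5)\setminus\{0,1\}$, and verifies in each case that the five projections $\phi_i\circ\phi$ produce five pairwise non-scaling-equivalent $\GF(5)$-matrices. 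Combining this with Lemma~\ref{Utwofivestabilizer} applied to the full matroid $M$ then yields at least five inequivalent $\GF(5)$-representations of $M$.

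For the converse \eqref{eq:onehydrafive}$\Rightarrow$\eqref{eq:twohydrafive}, I would invoke the Lift Theorem. If $M$ has at least five inequivalent $\GF(5)$-representations, then one can combine them into a single $\parf_5$-matrix $A\in{\cal A}_5$ with $M=M[I\: A]$ via Lemma~\ref{lem:listoffields}. Theorem~\ref{thm:liftclass} then shows $M$ is representable over $\lift_{{\cal A}_5}\parf_5$, and it remains to verify that the partial field $\hydra_5$ stated in the lemma is indeed (isomorphic to) $\lift_{{\cal A}_5}\parf_5$. This identification is obtained by computing a Gr\"obner basis over $\Z$ for the ideal $I_{{\cal A}_5}$ of Definition~\ref{def:liftclass} and rewriting the generators; by the preceding lemma restricting which cross ratios of ${\cal A}_5$ are admissible (three equal coordinates are forbidden) the ideal is small enough for this reduction to produce the presentation $\Q(\alpha,\beta,\gamma)$ with the listed multiplicative generators. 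The map $\phi$ above gives the required homomorphism back to $\bigotimes\GF(5)$, closing the loop.

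The principal obstacle is the same as in the $\hydra_3$ and $\hydra_4$ cases: the verifications are finite but not small. One must (a) show $\fun(\hydra_5)$ is the finite set displayed in Lemma~\ref{lem:H5fun}, which is done by bounding exponents in monomials $(-1)^s\alpha^{x}\beta^{y}\gamma^{z}(\alpha-1)^{a}\cdots$ via the norms of images under suitable homomorphisms to $\hydra_2$ (or to $\GF(5)$), reducing to a finite enumeration, and (b) run through all normalized $U_{2,5}$-representations over $\hydra_5$ to confirm the five projections are pairwise inequivalent. Rather than reproducing these computations in the paper, I would, following the convention already established for $\hydra_3$ and $\hydra_4$, defer the explicit calculations to the technical report~\cite{PZ10report} and only sketch the structure above.
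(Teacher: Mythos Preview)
Your proposal is correct and follows essentially the same approach as the paper, which explicitly states that ``no new technicalities arise'' beyond the $\hydra_3$ and $\hydra_4$ cases and defers all computations to \cite{PZ10report}. The only detail you pass over is that, for the direction establishing \eqref{eq:onehydrafive}, the paper also invokes Theorem~\ref{thm:regnregdyadic} to dispose of the case where $M$ has no $U_{2,5}$- or $U_{3,5}$-minor (such an $M$ would have at most three $\GF(5)$-representations, so this case is in fact vacuous for $k=5$).
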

Finally we consider $\hydra_6$. There exists a homomorphism $\phi:\hydra_5\rightarrow\bigotimes_{k=1}^6 \GF(5)$ determined by $\phi(\alpha) = (2,3,4,2,3,4)$, $\phi(\beta) = (3,2,3,4,2,4)$, $\phi(\gamma) = (3,2,3,4,4,2)$. It turns out that for every $\hydra_5$-representation $A'$ of $U_{2,5}$, $|\{\phi_i(\phi(A')) \mid i = 1, \ldots, 6\}| =6$. Therefore we define
\begin{align}
  \hydra_6 := \hydra_5
\end{align}
and immediately obtain the following strengthening of Lemma~\ref{lem:hydrafive}:
\begin{lemma}\label{lem:hydrasix}Let $M$ be a 3-connected matroid.
\begin{enumerate}
  \item\label{eq:onehydrasix} If $M$ has at least $5$ inequivalent representations over $\GF(5)$, then $M$ is representable over $\hydra_5$.
  \item\label{eq:twohydrasix} If $M$ has a $U_{2,5}$- or $U_{3,5}$-minor and $M$ is representable over $\hydra_5$, then $M$ has at least $6$ inequivalent representations over $\GF(5)$.
\end{enumerate}
\end{lemma}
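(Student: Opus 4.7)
The plan is to derive part~\eqref{eq:onehydrasix} directly from Lemma~\ref{lem:hydrafive}\eqref{eq:onehydrafive}, since $\hydra_6 = \hydra_5$ by definition. The real content of the lemma is part~\eqref{eq:twohydrasix}, which strengthens the lower bound from five to six inequivalent representations.

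For part~\eqref{eq:twohydrasix}, I would mimic the proof of Lemma~\ref{lem:hydrafive}\eqref{eq:twohydrafive}, but with the stronger target $\bigotimes_{k=1}^6 \GF(5)$ and the homomorphism $\phi$ fixed by $\phi(\alpha)=(2,3,4,2,3,4)$, $\phi(\beta)=(3,2,3,4,2,4)$, $\phi(\gamma)=(3,2,3,4,4,2)$. Given a 3-connected $\hydra_5$-representable matroid $M$ with a $U_{2,5}$- or $U_{3,5}$-minor, write $M = M[I\: A]$ for a $\hydra_5$-matrix $A$. Then the compositions $\phi_i\circ\phi$ yield six $\GF(5)$-representations $\phi_i(\phi(A))$ of $M$; the aim is to show these are pairwise inequivalent.

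The first step is to reduce to the case of a $U_{2,5}$-minor by matroid duality and transposition of representation matrices (Proposition~\ref{prop:pmatops}). Restricting $A$ to the rows and columns of such a minor produces a $\hydra_5$-representation $A'$ of $U_{2,5}$. The stated computational fact that $|\{\phi_i(\phi(A')) \mid i=1,\ldots,6\}| = 6$ for every normalized $\hydra_5$-representation $A'$ of $U_{2,5}$ then gives six pairwise inequivalent $\GF(5)$-representations of $U_{2,5}$. Finally, invoking Lemma~\ref{Utwofivestabilizer} to the effect that $U_{2,5}$ is a $\GF(5)$-stabilizer for the class of 3-connected quinary matroids lifts this pairwise inequivalence from the minor back to $M$ itself.

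The hard part will be justifying the stated computation: verifying that the explicit choice of $\phi(\alpha),\phi(\beta),\phi(\gamma)$ does in fact separate every normalized $\hydra_5$-representation of $U_{2,5}$ into six truly distinct $\GF(5)$-representations. Using Lemma~\ref{lem:H5fun}, this reduces to a finite check over the orbits of fundamental elements of $\hydra_5$, and it is exactly this check (elaborate but purely mechanical) that is delegated to the supplementary computer code in \cite{PZ10report}. Once that check is in hand, the remaining pieces of the argument are immediate applications of already-established results.
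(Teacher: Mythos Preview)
Your proposal is correct and follows essentially the same approach as the paper: part~\eqref{eq:onehydrasix} is Lemma~\ref{lem:hydrafive}\eqref{eq:onehydrafive} verbatim, and part~\eqref{eq:twohydrasix} is deduced from the stated computational fact that the homomorphism $\phi:\hydra_5\rightarrow\bigotimes_{k=1}^6\GF(5)$ separates every $\hydra_5$-representation of $U_{2,5}$ into six distinct $\GF(5)$-representations, combined with Lemma~\ref{Utwofivestabilizer}. The paper presents this as an immediate consequence of the discussion preceding the lemma rather than as a standalone proof, but your write-up spells out exactly the intended argument.
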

We now have all ingredients for the proof of Theorem~\ref{thm:quinary} from the introduction.
\begin{proof}[Proof of Theorem~\ref{thm:quinary}]
  Let $M$ be a 3-connected quinary matroid. By Corollary~\ref{cor:noUtwofive} all of \eqref{quin:two}--\eqref{quin:five} hold when $M$ does not have a $U_{2,5}$- or $U_{3,5}$-minor. Therefore we may assume that $M$ does have a $U_{2,5}$- or $U_{3,5}$-minor.

  Statement~\eqref{quin:two} is \cite[{Theorem 4.12}]{PZ08lift}. For statement~\eqref{quin:three}, let $\field$ be a field, and let $p \in \field$ be an element that is not a root of the polynomials $x, x-1, x^2-x+1$. If $|\field|\geq 5$ then such an element must certainly exist. In that case $\phi:\hydra_3\rightarrow \field$ determined by $\phi(\alpha) = p$ is a nontrivial homomorphism.

  Statement~\eqref{quin:five} follows from Lemma~\ref{lem:hydrasix}.
\end{proof}
One could suspect that Theorem~\ref{thm:quinary}\eqref{quin:five} is true by observing that there is a bijection between the representations of $U_{2,5}$ in ${\cal A}_5$ and those in ${\cal A}_6$. But there seems to be no obvious reason why this bijection should extend to all $A \in {\cal A}_5$.

As a final remark we note that the partial fields $\hydra_k$ possess large automorphism groups, since permutations of coordinates in $\bigotimes_{i=1}^k \GF(5)$ must correspond with automorphisms of $\hydra_k$. Our representations of $\hydra_k$ obscure this fact, but expose other information in return. In \cite{PZ10report} we verify that the automorphism groups are isomorphic to $S_k$, the symmetric group on $k$ symbols, for $k \in \{1,2,3,4,6\}$.

\section{A number of questions and conjectures}\label{sec:questions}
The following conjecture links fundamental elements and universal partial fields.
\begin{conjecture}If $\parf_N$ has finitely many fundamental elements, then all $\parf_N$-representations of $N$ are equivalent.
\end{conjecture}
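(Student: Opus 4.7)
Plan for the proof:

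By Corollary~\ref{cor:universalbijection}, $T$-normalized $\parf_N$-representations of $N$ are in bijection with partial-field homomorphisms $\phi:\parf_N\to\parf_N$; the universal representation $\hat A_{N,B,T}$ corresponds to the identity endomorphism, and two $\parf_N$-representations are equivalent precisely when the corresponding endomorphisms differ by post-composition with an automorphism of $\parf_N$. Consequently, the conjecture is equivalent to showing that every endomorphism of $\parf_N$ is an automorphism, under the hypothesis that $\fun(\parf_N)$ is finite.

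I would first exploit the generation structure of $\parf_N$. By Theorem~\ref{thm:crossratpf}, $\parf_N$ is generated as a partial field by $\crat(N)\cup\{-1\}$, and by Corollary~\ref{cor:signaturefund} we have $\crat(N)\subseteq\fun(\parf_N)$. Any endomorphism $\phi$ sends $\fun(\parf_N)$ into itself, since $\phi(1-p)=1-\phi(p)$, and $\phi$ is determined by its restriction to the generators in $\crat(N)$. Under the finiteness hypothesis, this restriction is a map between finite sets, so $\operatorname{End}(\parf_N)$ is itself finite; in particular, the iterates $\phi,\phi^2,\phi^3,\ldots$ are eventually periodic, and $\phi^{n+k}=\phi^n$ for some $n\geq 0$, $k\geq 1$.

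The main step, and the key obstacle, lies in promoting this eventual periodicity to outright bijectivity of $\phi$ itself. I would consider the descending chain of finite subsets $\phi^n(\crat(N))\subseteq\fun(\parf_N)$; it stabilizes at some set $C^{\infty}$ on which $\phi$ acts bijectively. Let $\parf^{\infty}$ denote the sub-partial field of $\parf_N$ generated by $C^{\infty}$. Because $\phi^n(\hat A_{N,B,T})$ is a $\parf^{\infty}$-representation of $N$, the universal property provides a homomorphism $\psi:\parf_N\to\parf^{\infty}$, and composing with the inclusion $\parf^{\infty}\hookrightarrow\parf_N$ gives an endomorphism of $\parf_N$ that, under Corollary~\ref{cor:universalbijection}, must agree with $\phi^n$. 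The target is to conclude $\parf^{\infty}=\parf_N$, after which $\phi$ permutes the finite generating set $\crat(N)$ and is therefore an automorphism.

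The genuinely difficult step is closing this loop: ruling out a persistent strict inclusion $\parf^{\infty}\subsetneq\parf_N$. The finiteness of $\fun(\parf_N)$ is essential here, for otherwise $\phi$ could contract $\parf_N$ along an infinite chain of proper sub-partial fields. I expect the argument to combine (a) the finiteness of $\fun(\parf_N)$, which forces any descending chain of generating subsets of fundamental elements to terminate, with (b) the universal property of $\parf_N$, which supplies a splitting of each stage of the chain, in order to show that stabilization of the generating sets implies stabilization of the generated partial fields. The hard part will be excluding pathological endomorphisms that act bijectively on $\fun(\parf_N)$ yet contract $\parf_N$ in some less visible way; this is where I expect the proof to require the most care, and possibly an additional input connecting cross ratios to the ring structure via Theorem~\ref{thm:crossratpf}.
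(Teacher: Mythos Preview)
The statement you are attempting to prove is presented in the paper as a \emph{conjecture}, not a theorem: it appears in Section~\ref{sec:questions} among the open problems, and the paper gives no proof. There is therefore nothing in the paper to compare your argument against.

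Your reformulation---that the conjecture amounts to showing every endomorphism of $\parf_N$ is an automorphism---is natural and agrees with the paper's own commentary: immediately after stating the conjecture, the authors observe that $\phi:\uniform_1^{(2)}\to\uniform_1^{(2)}$ given by $x\mapsto x^2$ is an endomorphism that is not an automorphism, and they note that $\uniform_1^{(2)}$ (which has infinitely many fundamental elements) is the universal partial field of $M[I\:A_3]$. This shows both that the finiteness hypothesis cannot be dropped and that your endomorphism framing is the right one.

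However, your outline is not a proof, and you say as much. The eventual-periodicity argument gives a sub-partial field $\parf^\infty$ on which $\phi$ acts bijectively, but you have no mechanism to force $\parf^\infty=\parf_N$. Finiteness of $\fun(\parf_N)$ constrains where generators can go, but it does not by itself prevent $\phi$ from collapsing $\crat(N)$ onto a proper subset that still happens to generate a partial field over which $N$ is representable; the universal property then only tells you that this smaller partial field receives a map from $\parf_N$, not that the inclusion back into $\parf_N$ is surjective. Closing this gap is precisely the content of the conjecture, and nothing in your sketch addresses it. If you can complete the argument, you will have resolved an open problem.
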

This conjecture cannot be strengthened by much. Consider the matroid $M[I\: A_3]$ from Table~\ref{tab:universal}, which is obtained from the Fano matroid by adding one element freely to a line. The homomorphism $\phi:\uniform_1^{(2)}\rightarrow\uniform_1^{(2)}$ determined by $x \mapsto x^2$ is not an automorphism. A related conjecture is the following:
\begin{conjecture}If $N$ is 3-connected then $N$ is a $\parf_N$-stabilizer for the class of $\parf_N$-representable matroids.
\end{conjecture}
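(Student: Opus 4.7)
The plan is to reduce the conjecture via the Stabilizer Theorem to a small number of base cases, and then dispatch those cases by translating into the language of universal partial fields and exploiting 3-connectivity of the matroids involved.

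First I would apply the Stabilizer Theorem (Theorem~\ref{cor:stabilizer}) with $\parf = \parf_N$. This reduces the conjecture to the following: for every 3-connected $\parf_N$-representable matroid $M'$ in which $N$ is isomorphic to $M' \contract x$, $M' \delete y$, or $M' \contract x \delete y$ (with at least one of $M' \contract x$, $M' \delete y$ being 3-connected in the last case), $N$ stabilizes $M'$ over $\parf_N$. Thus the entire problem is compressed into analyzing these 1- and 2-element extensions.

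Second, I would convert the base case into a question about universal partial fields. By Corollary~\ref{cor:universalbijection}, equivalence classes of $\parf_N$-representations of $M'$ correspond bijectively to partial-field homomorphisms $\phi: \parf_{M'} \to \parf_N$. The canonical homomorphism $\iota := \phi_{M', U, V}: \parf_N \to \parf_{M'}$ from Lemma~\ref{lem:universalminorhom} encodes the $N$-minor structure, and by Lemma~\ref{lem:universalhomsubmat} two $T$-normalized $\parf_N$-representations of $M'$ have the same restriction to $N$ precisely when the corresponding homomorphisms agree on $\iota(\parf_N) \subseteq \parf_{M'}$. Thus stabilization of $M'$ by $N$ over $\parf_N$ is equivalent to: any two homomorphisms $\phi_1, \phi_2: \parf_{M'} \to \parf_N$ with $\phi_1 \circ \iota = \phi_2 \circ \iota$ must be equal.

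Third, I would establish this uniqueness in the one-element case using the matroid structure. Say $N = M' \delete y$ with $M'$ 3-connected. The ring $\ring_{M'}$ is generated by $\crat(M')$; the cross ratios not already in $\iota(\ring_N)$ are those arising from $2\times 2$ submatrices of $A_{M', B, T}$ whose support involves the column $y$. Since $M'$ is 3-connected, $\bip(A_{M', B, T})$ is 2-connected (Lemma~\ref{lem:bipconn}\eqref{bip:threeconn}), and a spanning tree $T$ extending a spanning tree $T'$ of $\bip(A_{N,B,T'})$ uses exactly one edge incident to $y$; normalizing this edge to $1$ via Lemma~\ref{lem:bipproperties}\eqref{eq:bipscaling} fixes the scaling of column $y$. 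Each off-tree edge $x'y$ generates a fundamental cycle $C$ which is induced in $\bip(A_{M',B,T})$, and by Lemma~\ref{lem:signature}\eqref{sgn:det} together with Corollary~\ref{cor:signaturefund} the value of the corresponding entry is determined by $\sigma_A(C) = 1 - \det(A[V(C)])$, a cross ratio of $M'$ expressible as a polynomial in the cross ratios of $N$ via the tree structure. Hence $\phi_1$ and $\phi_2$ must agree on every generator of $\ring_{M'}$, giving $\phi_1 = \phi_2$. For the two-element case, chain the one-element result through a 3-connected intermediate $M'' \in \{M' \contract x, M' \delete y\}$: $N$ stabilizes $M''$, and $M''$ stabilizes $M'$, so $N$ stabilizes $M'$.

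The main obstacle will be making rigorous the claim in the third step that every new cross ratio involving $y$ is, via the signature of its fundamental cycle, a polynomial expression in the cross ratios of $N$ and thus preserved by any homomorphism agreeing with $\phi_1 \circ \iota = \phi_2 \circ \iota$. While this is morally the classical fact that elements of 3-connected matroids are fixed up to scaling, translating it cleanly into the partial-field framework requires careful combinatorial bookkeeping of how fundamental cycles of $\bip(A_{M',B,T})$ traverse $y$. A promising alternative strategy is to avoid the universal-partial-field detour entirely and apply the Confinement Theorem (Theorem~\ref{thm:confinement}) directly to $A := A_1 \otimes A_2 \in (\parf_N \otimes \parf_N)^{X\times Y}$ (Lemma~\ref{lem:listoffields}) with $\parf' := \{(p,p) \mid p \in \parf_N\}$ as the induced diagonal sub-partial field: if $A_1[\text{on }N] \sim A_2[\text{on }N]$ then $A[\text{on }N]$ is a scaled $\parf'$-matrix, and the path-shortening technique of Lemma~\ref{lem:pathshortening} combined with 3-connectivity should propagate the diagonal property from $N$ throughout $A$, exactly when no 3-connected 1- or 2-element $\parf_N$-representable extension of $N$ can obstruct it --- which is precisely the base case we need.
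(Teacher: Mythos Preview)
The statement you are attempting to prove is listed in the paper as an open \emph{conjecture}; the paper provides no proof and explicitly remarks that even the special case where $N$ is uniquely $\parf_N$-representable would already have important consequences. Your reduction via the Stabilizer Theorem to 3-connected one- and two-element extensions $M'$ of $N$ is correct, as is your translation of the base case into the question of whether two homomorphisms $\phi_1, \phi_2 : \parf_{M'} \to \parf_N$ that agree on $\iota(\parf_N)$ must coincide.

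The gap is in your third step. For an off-tree edge $x'y$ in a $T$-normalized representation, every tree edge on the fundamental cycle $C$ carries the entry $1$, so by Definition~\ref{def:sig} one has $\sigma_A(C) = \pm A_{x'y}^{\pm 1}$: the signature \emph{is} the unknown entry, not a quantity that determines it from previously known data. Your assertion that $\sigma_A(C)$ is ``expressible as a polynomial in the cross ratios of $N$'' is exactly the statement that $N$ settles $M'$ in the sense of Theorem~\ref{thm:settlement}, i.e.\ that $\iota : \ring_N \to \ring_{M'}$ is surjective, which is at least as strong as what you set out to prove; the argument is therefore circular. Nothing in Lemma~\ref{lem:signature} or Corollary~\ref{cor:signaturefund} places the new cross ratio inside $\iota(\ring_N)$; they merely place it inside $\crat(M') \subseteq \parf_{M'}$. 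Your alternative strategy---applying the Confinement Theorem to the diagonal $\parf' \subset \parf_N \otimes \parf_N$---is verbatim the derivation of the Stabilizer Theorem given in the paper (see the proof of Theorem~\ref{cor:stabilizer}), so it reproduces the same reduction to the same base cases without supplying any additional tool to resolve them. What is genuinely missing is a structural argument explaining why a 3-connected $\parf_N$-representable one-element extension of $N$ cannot introduce a cross ratio outside $\iota(\ring_N)$; absent that, the conjecture remains open.
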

Even if this is only true when $N$ is uniquely $\parf_N$-representable this conjecture would have important implications. For example a theorem by \citet{GGW06} would follow immediately and could, in fact, be strengthened.

Not all partial fields are universal. For instance, it is not hard to construct partial fields with homomorphisms to $\GF(3)$ different from the ones in Theorem~\ref{thm:classification}.
\begin{question}
  What distinguishes universal partial fields from partial fields in general?
\end{question}
We say that a partial field $\parf$ is \emph{level} if $\parf = \lift_{\cal A}\parf'$ for some partial field $\parf'$, where ${\cal A}$ is the class of $\parf'$-representable matroids.
\begin{question}\label{con:naturallevel}
  Under what conditions is $\parf_M$ level?
\end{question}
The converse of the latter question is also of interest.
\begin{question}\label{con:levelnatural}
  When is a level partial field also universal?
\end{question}
As shown in Table~\ref{tab:universal}, several known level partial fields are universal. The notable omissions in that table are the Hydra-$k$ partial fields for $k \geq 3$. We do not know if these are universal. The problem here is that many partial fields have exactly $k$ homomorphisms to $\GF(5)$, and all examples that we tried from Mayhew and Royle's catalog of small matroids~\cite{MR08} turned out to have slightly different universal partial fields.

A somewhat weaker statement is the following. Let ${\cal M}$ be a class of matroids. A partial field $\parf$ is \emph{${\cal M}$-universal} if, for every partial field $\parf'$ such that every matroid in ${\cal M}$ is $\parf'$-representable, there exists a homomorphism $\phi:\parf\rightarrow\parf'$.
\begin{conjecture}\label{con:leveluniversal}
  Let ${\cal M}$ be the set of all $\parf$-representable matroids, where $\parf$ is a level partial field. Then $\parf$ is ${\cal M}$-universal.
\end{conjecture}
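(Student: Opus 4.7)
The plan is to build the homomorphism $\phi\colon \parf \to \parf'$ in two stages: first lift the data to a common framework over $\parf_0\otimes\parf'$, where $\parf=\lift_{\cal A}\parf_0$, and then project back to $\parf'$.

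Since $\parf=\lift_{\cal A}\parf_0$ is level, Definition~\ref{def:liftclass} presents $\parf$ by generators $\tilde p$, one for each $p\in\Crat({\cal A})$, subject to the additive, multiplicative, and $3\times 2$-minor relations listed there. A partial-field homomorphism $\phi\colon\parf\to\parf'$ therefore corresponds to a function $\Phi\colon\Crat({\cal A})\to\parf'$ that respects each of those relations.

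For the bridge construction: for each $A\in{\cal A}$ the matroid $M[I\: A]$ lies in ${\cal M}$ and so, by hypothesis, admits a $\parf'$-representation $A^*$. By Lemma~\ref{lem:listoffields}, $A\otimes A^*$ is a $(\parf_0\otimes\parf')$-matrix representing $M[I\: A]$. Set ${\cal B}:=\{A\otimes A^*\mid A\in{\cal A}\}$ and $\parf_{\cal B}:=\lift_{\cal B}(\parf_0\otimes\parf')$. Composition of the evaluation homomorphism $\parf_{\cal B}\to\parf_0\otimes\parf'$ with the projection onto the second coordinate yields a partial-field homomorphism $\psi\colon\parf_{\cal B}\to\parf'$. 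It now suffices to construct $\rho\colon\parf\to\parf_{\cal B}$ and set $\phi:=\psi\circ\rho$.

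The natural recipe for $\rho$ is to send each generator $\tilde p\in\parf$ to $\tilde{(p,p^*)}\in\parf_{\cal B}$, where $(p,p^*)$ is a cross ratio of some $A\otimes A^*\in{\cal B}$ projecting to $p$. Every $p\in\Crat({\cal A})$ has such a lift by construction of ${\cal B}$. Relations (\ref{lc:one})--(\ref{lc:four}) of Definition~\ref{def:liftclass} are internal to a single $\assoc$-orbit, since $\assoc\{p\}\subseteq\Crat(A)$ whenever $p\in\Crat(A)$; picking one witnessing matrix $A_O\in{\cal A}$ per $\assoc$-orbit $O\subseteq\Crat({\cal A})$ and lifting all of $O$ through $A_O^*$ preserves these relations automatically.

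The main obstacle is relation (\ref{lc:five}) of Definition~\ref{def:liftclass}, which couples cross ratios from up to three different $\assoc$-orbits via a single $3\times 2$ minor of some $A\in{\cal A}$ and hence may be violated by independent per-orbit choices. To handle it I would proceed by transfinite induction on a well-ordering of $\Crat({\cal A})$, at each step extending $\Phi$ by a value compatible both with all previously-decided values and with some $\parf'$-representation of a matroid realizing the newly-active relations. The local extension claim---that for any finite bundle of such constraints there is a compatible $\parf'$-representation of an appropriate matroid, for instance the direct sum of the matrices supplying the constraints, which lies in ${\cal M}$---is the technical heart of the argument, and I expect it to be the hardest step. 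If full coherence is not available in general, the conjecture would naturally restrict to the case $|\Crat({\cal A})|<\infty$, which already covers the Hydra partial fields $\hydra_k$ of Section~\ref{sec:GFfive}.
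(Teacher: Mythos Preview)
This statement is a \emph{conjecture} in the paper, not a theorem; the paper offers no proof and explicitly lists it among open problems. So there is no ``paper's own proof'' to compare against.

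Your outline correctly locates the difficulty but does not resolve it. The step you flag as ``the technical heart of the argument'' is in fact the whole content of the conjecture, and your sketch does not close it. Concretely: the relations of type~\eqref{lc:five} in Definition~\ref{def:liftclass} are indexed not by elements of $\Crat({\cal A})$ but by $2\times 3$ minors occurring in \emph{arbitrary} matrices $A\in{\cal A}$. A single cross ratio $p$ may participate in many such relations, each witnessed by a different matrix, and the value $p^*$ you assign must be compatible with \emph{all} of them simultaneously. Your transfinite induction would need, at each stage, to choose $p^*$ consistent with every already-fixed triple $(p,q,r)$ and every future one; nothing in the hypothesis ``every $M\in{\cal M}$ is $\parf'$-representable'' supplies that. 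Representability of a direct sum only tells you that \emph{some} tuple of $\parf'$-representations exists for the summands, not that the representations can be chosen coherently across the relations that glue the lift together.

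The restriction to finite $\Crat({\cal A})$ does not help for the same reason: finiteness lets you package all cross ratios into one matroid, but the relations defining $\lift_{\cal A}\parf_0$ still range over all of ${\cal A}$, so a single $\parf'$-representation of your packaged matroid need not validate relations coming from other matrices. In short, you have reformulated the conjecture rather than proved it; the coherence obstruction you name is precisely why it remains open.
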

As mentioned before, the Settlement Theorem is reminiscent of the theory of free expansions from \citet{GOVW02}. We offer the following conjecture:
\begin{conjecture}
  Let $M$ be a representable matroid. Then $M\delete e$ settles $M$ if and only if $e$ is fixed in $M$.
\end{conjecture}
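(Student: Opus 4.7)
The strategy is to interpret both sides via the canonical ring homomorphism $\phi := \phi_{M,\emptyset,\{e\}}\colon \parf_{M\delete e}\to \parf_M$ of Definition~\ref{def:canonichom} and reduce the conjecture to an algebraic statement about $\phi$. Without loss of generality $e$ is not a coloop (the coloop case being immediate). I would pick a basis $B$ of $M$ with $e\notin B$ and a spanning tree $T$ of $\bip(M,B)$ extending a spanning tree $T'$ of $\bip(M\delete e,B)$. Lemma~\ref{lem:universalhomsubmat} then gives $\phi(A_{M\delete e,B,T'}) = A_{M,B,T}\delete e$. Combined with Corollary~\ref{cor:universalbijection}, this shows that for every partial field $\parf$ the restriction map $A\mapsto A\delete e$ on equivalence classes of $\parf$-representations corresponds under the bijection to the precomposition map $\psi\mapsto \psi\circ\phi$ from $\text{Hom}(\parf_M,\parf)$ to $\text{Hom}(\parf_{M\delete e},\parf)$.

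I would then invoke the standard representational characterization of fixedness for representable matroids: $e$ is fixed in $M$ if and only if, for every partial field $\parf$, any two $\parf$-representations of $M$ that restrict to equivalent representations of $M\delete e$ are themselves equivalent. This is because a non-scalar alternative column for $e$ in a $\parf$-representation of $M$ corresponds precisely to a clone of $e$ in a single-element extension of $M$. With this characterization in hand, the conjecture reduces to the algebraic claim: $\phi$ is surjective iff precomposition with $\phi$ is injective on $\text{Hom}(\,\cdot\,,\parf)$ for every partial field $\parf$.

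The forward direction of this algebraic claim is immediate: if $\phi$ is surjective, any homomorphism from $\parf_M$ is determined by its restriction to the image of $\phi$. For the converse, I plan to argue contrapositively. Suppose $\phi$ is not surjective and set $S := \phi(\ring_{M\delete e})\subsetneq \ring_M$. The two $S$-algebra inclusions $i_1,i_2\colon \ring_M\to \ring_M\otimes_S\ring_M$ sending $r$ to $r\otimes 1$ and $1\otimes r$ respectively agree on $S$ but are distinct, for otherwise the multiplication map $\ring_M\otimes_S\ring_M\to \ring_M$ would be injective and $S$ would equal $\ring_M$. Choosing a prime ideal $\mathfrak{p}$ for which $i_1(r)\not\equiv i_2(r)\pmod{\mathfrak{p}}$ for some $r$, and passing to the fraction field $\field$ of the quotient, yields two distinct partial field homomorphisms $\parf_M\to \field$ agreeing after precomposition with $\phi$. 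By Corollary~\ref{cor:universalbijection} these give two inequivalent $\field$-representations of $M$ restricting to the same representation of $M\delete e$, so $e$ is not fixed.

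The main obstacle is the ``$\Rightarrow$'' direction of the representational characterization of fixedness: combinatorial fixedness in $M$ should imply that no partial field admits a representation of $M$ with a non-scalar column alternative for $e$. While the column of $e$ satisfies linear equations over $\phi(\parf_{M\delete e})$ in the universal matrix $A_{M,B,T}$, and uniqueness up to scaling over $\parf_M$ follows from fixedness, transferring this uniqueness to an arbitrary target $\parf$ requires showing that uniqueness is preserved under every partial field homomorphism $\parf_M\to \parf$. If this step proves delicate, an alternative route I would attempt is to apply the Settlement Theorem (Theorem~\ref{thm:settlement}) directly: a 3-connected minor $M'$ of $M$ witnessing the failure of settlement gives, case by case, a single-element extension of $M\delete e$ distinct from $M$ in which a clone of $e$ can be exhibited, contradicting fixedness in $M$.
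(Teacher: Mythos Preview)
This statement is listed in the paper as a \emph{conjecture} in the section on open problems; the paper offers no proof, so there is nothing to compare your proposal against. Your proposal is an attempt at an open problem, and it contains a genuine gap in the tensor-product step. You argue that if $S := \phi(\ring_{M\delete e}) \subsetneq \ring_M$ then the two inclusions $i_1,i_2$ into $\ring_M \otimes_S \ring_M$ differ, since otherwise the multiplication map $\ring_M \otimes_S \ring_M \to \ring_M$ would be an isomorphism and hence $S = \ring_M$. But ``multiplication is an isomorphism'' is precisely the condition that $S \hookrightarrow \ring_M$ is an \emph{epimorphism} of commutative rings, and ring epimorphisms need not be surjective (the inclusion $\Z \hookrightarrow \Q$ is the standard counterexample). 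Nothing in your argument rules out $S \to \ring_M$ being a non-surjective epimorphism; if it is one, then precomposition with $\phi$ is injective on $\mathrm{Hom}(\,\cdot\,,\parf)$ for every $\parf$ even though $\phi$ is not surjective. That is exactly the ``fixed $\Rightarrow$ settles'' direction of the conjecture, and your reduction does not close it.

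A secondary point: you have the difficulty in your ``representational characterization of fixedness'' backwards. The implication ``$e$ fixed $\Rightarrow$ no non-scalar alternative column over any $\parf$'' is the easy one: two non-proportional columns $c,c'$ for $e$ over the same $A$ yield $M^+ := M[A\mid c\mid c']$, in which $e$ and the new element are independent clones, so $e$ is not fixed. The delicate direction is the converse, ``$e$ not fixed $\Rightarrow$ some $\parf$ admits an alternative column'': this would require the combinatorial clone extension $M^+$ to be representable, which is not automatic even when $M$ is. Your fallback suggestion of invoking the Settlement Theorem does not help either, since that theorem only localizes a failure of settlement to a small minor; it does not manufacture a clone of $e$ in $M$ itself.
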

Define the set
\begin{align}
  \chi_{\parf} := \{ \parf_M \mid M \textrm{ 3-connected, }\parf\textrm{-representable matroid}\}.
\end{align}
Whittle's classification, Theorem~\ref{thm:classification}, amounts to
\begin{align}
  \chi_{\GF(3)} = \{\uniform_0, \uniform_1, \dyadic, \psru, \GF(3)\}.
\end{align}
It is known that $\chi_{\GF(4)}$ is infinite, but it might be possible to determine $\chi_{\parf}$ for other partial fields. A first candidate might be $\GF(4)\otimes\GF(5)$, which is the class of \emph{golden ratio} matroids. Unfortunately our proof of Theorem~\ref{thm:classification} can not be adapted to this case, since we no longer have control over the set of fundamental elements. We outline a different approach. For all $\parf_M \in \chi_{\parf}$, there exists a ``totally free'' matroid $N\minorof M$ that settles $M$. Moreover, it is known that all totally free $\parf$-representable matroids can be found by an inductive search. Clearly $\ring_M \cong \ring_N/I_{N,M}$ for some ideal $I_{N,M}$. The main problem, now, consists of finding the possible ideals $I_{N,M}$.
\begin{conjecture}
  If $N = M\delete e$, $N, M$ are 3-connected, and $N$ settles $M$, then
  $I_{N,M}$ is an ideal generated by relations $p - q$, where $p,q \in \crat(N)$.
\end{conjecture}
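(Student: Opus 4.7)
The plan is to reduce the conjecture to a concrete statement about how the cross ratios of $M$ compare to those of $N$ once we realize both universal rings through compatible representation matrices. Let $B$ be a basis of $M$ with $e\notin B$, and choose a spanning tree $T$ of $\bip(\hat A_{M,B})$ that extends a spanning tree $T'$ of $\bip(\hat A_{N,B})$ by a single edge $x_0 e$. Then $\hat A_{M,B,T}$ is $\hat A_{N,B,T'}$ augmented by a column $c=(c_x)_{x\in B}$ with $c_{x_0}=1$, and by Lemma~\ref{lem:universalhomsubmat} the canonical homomorphism $\phi:=\phi_{M,\emptyset,\{e\}}$ identifies $\hat A_{N,B,T'}$ with $\hat A_{M,B,T}-e$. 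Surjectivity of $\phi$ (the settlement hypothesis) gives that each $c_x$ lies in $\phi(\ring_N)$, i.e.\ $c_x=\phi(r_x)$ for some polynomial $r_x$ in cross ratios of $N$.

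The first main step I would carry out is to upgrade this: show that each $c_x$ is in fact $\phi(q_x)$ for a \emph{single} $q_x\in\crat(N)$, not merely a polynomial combination. For a fixed $x\in B$ with $c_x\ne 0$, pick $y\in E(N)\setminus B$ with both $A_{xy}\ne 0$ and $A_{x_0 y}\ne 0$; the 3-connectivity of $M$ (combined with a path-shortening argument in the style of Lemma~\ref{lem:pathshortening}, using pivots to reroute through $\bip(\hat A_{M,B})$) guarantees that after a strongly equivalent choice of $T$ such a $y$ is always available. The $2\times 2$ submatrix $\hat A_{M,B,T}[\{x_0,x\},\{y,e\}]$ then has the form $\bigl[\begin{smallmatrix}1 & 1\\ p & c_x\end{smallmatrix}\bigr]$ after rescaling, exhibiting $c_x$ as a cross ratio of $\hat A_{M,B,T}$ and hence, through the analysis of Theorem~\ref{thm:crossratpf}, as the image under $\phi$ of a single cross ratio $q_x\in\crat(N)$.

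The second step is to conclude the conjecture from this structural statement. Any $F\in I_{N,M}$ encodes a polynomial dependency among the images $\phi(p)$ for $p\in\crat(N)$ that holds in $\ring_M$ but not in $\ring_N$. Every such new dependency must arise in $\bracketring_M$ from a 3-term Grassmann--Pl\"ucker relation involving a bracket with $e$, because Grassmann--Pl\"ucker relations among brackets avoiding $e$ already hold in $\bracketring_N$ and push forward to $\ring_N$. Using Step 1 to substitute $c_x=\phi(q_x)$ everywhere, each such $e$-involving Grassmann--Pl\"ucker relation collapses to an equation of the form $\phi(p)=\phi(q)$ for some $p,q\in\crat(N)$. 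Hence $I_{N,M}$ is generated by the differences $p-q$ with $\phi(p)=\phi(q)$, as required.

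The hard part will be Step 1: passing from the bare surjectivity $c_x=\phi(r_x)$ to $c_x=\phi(q_x)$ with $q_x$ a cross ratio. The settlement hypothesis alone does not obviously force this, and the candidate polynomial $r_x$ could a priori be irreducibly of higher degree. The 3-connectivity of $M$ together with $N=M\delete e$ being 3-connected is exactly what should rescue the argument, since it prevents 2-separations from appearing across the neighborhood of $e$ in $\bip(\hat A_{M,B})$; without this, the rerouting via pivots required to realize $c_x$ as a single cross ratio need not succeed. If this step were to fail in some edge case, one would naturally be led to weaken the conjecture by requiring additional connectivity relative to $e$, or by allowing the generators of $I_{N,M}$ to come from a slightly larger set than pairwise differences of cross ratios; either refinement still fits the spirit of the conjecture.
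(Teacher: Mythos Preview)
This statement is listed in the paper as an open conjecture, not a theorem; the authors give no proof and only remark that it has been checked for 3-connected single-element extensions of 6-element rank-3 matroids. So there is no paper proof to compare against, and the only question is whether your outline actually settles the problem.

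It does not. Step~1 contains a non sequitur. The $2\times 2$ submatrix you write down uses the column $e$, so the cross ratio it produces (which is $p/c_x$ or an associate, not $c_x$ itself) lies in $\crat(M)$, not in $\crat(N)$. Theorem~\ref{thm:crossratpf} only tells you that entries of a normalized $\parf$-matrix lie in $\parf[\crat(A)]$; it gives no mechanism for pulling a cross ratio of $M$ back to a single cross ratio of $N$ through $\phi$. Surjectivity of $\phi$ yields $c_x=\phi(r_x)$ with $r_x\in\ring_N$ a polynomial in $\crat(N)$, and the upgrade to $r_x\in\crat(N)$ is precisely what you need but do not prove. The 3-connectivity and path-shortening apparatus you invoke lets you relocate where in $\bip(\hat A_{M,B,T})$ the entry $c_x$ sits; it does not constrain its value as an element of $\ring_M$ relative to the subset $\phi(\crat(N))$. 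You flag this yourself at the end, but without it there is no argument.

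Step~2 has an independent gap even if Step~1 is granted. A 3-term Grassmann--Pl\"ucker relation, once divided through by a nonzero bracket product, becomes an identity of the shape $1-r-s=0$, i.e.\ $r=1-s$, not a bare equality of two prescribed elements of $\crat(N)$. To reach the conjectured form you would still have to show that both $r$ and $1-s$ land in $\phi(\crat(N))$ and that every $e$-involving relation reduces this cleanly; neither is argued. Your outline identifies a plausible line of attack and correctly isolates where the difficulty lives, but both key reductions are asserted rather than established, and the conjecture remains open on this argument.
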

The conjecture holds for all 3-connected 1-element extensions of a 6-element, rank-3 matroid. One example is $N = U_{3,6}$ and $M = \Phi_3^+$, the rank-3 free spike with tip.

\paragraph*{Acknowledgements}We thank Gordon Royle for his help in querying the catalog of small matroids (\citet{MR08}). One of these queries resulted in the matroid $M[I\: A_1]$ from Table~\ref{tab:universal}. We thank the two anonymous referees for thoroughly reading our manuscript and providing many helpful pointers for improvement.

\appendix

\section{A catalog of partial fields}\label{app:catalog}

\begin{figure}[p]
  \center
  \includegraphics{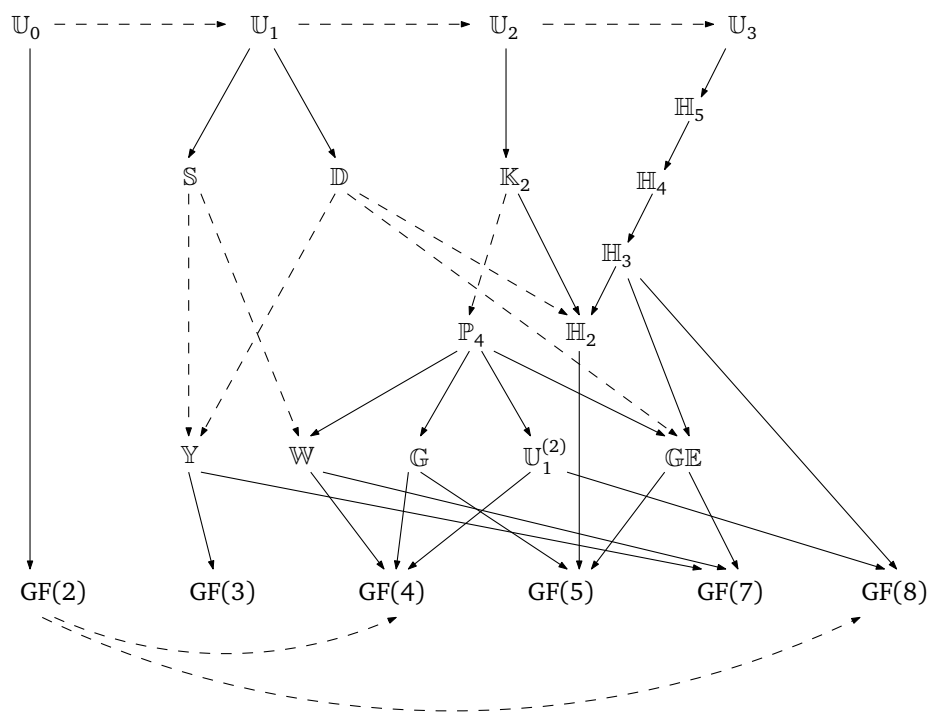}
  \caption{Some partial fields and their homomorphisms. A (dashed) arrow from $\parf'$ to $\parf$ indicates that there is an (injective) homomorphism $\parf'\rightarrow \parf$.}\label{fig:pfcat2}
\end{figure}

In this appendix we summarize all partial fields that have appeared in this paper and in \cite{PZ08lift}, and some of their basic properties. Like rings, partial fields form a category. The regular partial field, $\reg$, has a homomorphism to every other partial field. The references for the lift partial fields ($\lift\parf$) point to papers that first observed that the two partial fields carry the same set of matroids. For the actual computation of $\lift\parf$ we refer to \cite[Section 5]{PZ08lift}.

\begin{description}
  \item[The regular partial field,] $\reg$:
  \begin{itemize}
    \item $\reg = (\Z, \{-1,0,1\})$;
    \item $\fun(\reg) = \{0,1\}$;
    \item There is a homomorphism to every partial field $\parf$ \cite[Theorem 2.29]{PZ08lift};
    \item Isomorphic to $\lift(\GF(2)\times\GF(3))$ \cite{Tut65};
    \item There are finitely many excluded minors for $\reg$-representability \cite{Tut65}.
  \end{itemize}
  \item[The near-regular partial field,] $\nreg$:
  \begin{itemize}
    \item $\nreg = \left(\Z[\alpha, \tfrac{1}{1-\alpha}, \tfrac{1}{\alpha}], \langle -1, \alpha, 1-\alpha \rangle\right)$, where $\alpha$ is an indeterminate;
    \item $\fun(\nreg) = \assoc\{1,\alpha\} = \left\{0,1,\alpha,1-\alpha,\frac{1}{1-\alpha},\frac{\alpha}{\alpha-1},\frac{\alpha-1}{\alpha},\frac{1}{\alpha}\right\}$\\ \cite[Lemma 4.4]{PZ08lift};
    \item There is a homomorphism to every field with at least three elements \cite[Theorem 1.4]{Whi97};
    \item Isomorphic to $\lift(\GF(3)\times\GF(8))$ and $\lift(\GF(3)\times\GF(4)\times\GF(5))$ \cite[Theorem 1.4]{Whi97};
    \item There are finitely many excluded minors for $\nreg$-representability \cite{HMZ09}.
  \end{itemize}
  \item[The $k$-uniform partial field,] $\uniform_k$:
  \begin{itemize}
    \item $\uniform_k = (\Q(\alpha_1, \ldots, \alpha_k),\langle U_k\rangle)$, where
     \begin{align*}
       U_k := \{\, x - y \mid x,y \in \{0,1,\alpha_1,\ldots,\alpha_k\}, x \neq y\,\},
     \end{align*} and $\alpha_1,\ldots,\alpha_k$ are  indeterminates;
    \item Introduced by \citet{Sem97} as the \emph{$k$-regular} partial field;
    \item \citet{Sem97} proved that
    \begin{align*}
      \fun(\uniform_k) = & \left\{ \tfrac{a-b}{c-b} \,\Big|\, a,b,c \in \{0,1,\alpha_1,\ldots,\alpha_k\}, \textrm{ distinct}\right\} \cup\notag\\
      & \left\{ \tfrac{(a-b)(c-d)}{(c-b)(a-d)} \,\Big|\, a,b,c,d \in \{0,1,\alpha_1,\ldots,\alpha_k\}, \textrm{ distinct}\right\};
    \end{align*}
    \item There is a homomorphism to every field with at least $k+2$ elements \cite[Proposition 3.1]{Sem97};
    \item Finitely many excluded minors for $\uniform_k$-re\-pre\-sen\-ta\-bi\-li\-ty are $\uniform_{k'}$-re\-pre\-sen\-table for some $k' > k$ \cite{OSV00}.
  \end{itemize}
  \item[The sixth-roots-of-unity ($\sru$) partial field,] $\psru$:
  \begin{itemize}
    \item $\psru = (\Z[\zeta], \langle \zeta \rangle)$, where $\zeta$ is a root of $x^2-x+1=0$.
    \item $\fun(\psru) = \assoc\{1,\zeta\} = \{0,1,\zeta,1-\zeta\}$;
    \item There is a homomorphism to $\GF(3)$, to $\GF(p^2)$ for all primes $p$, and to $\GF(p)$ when $p \equiv 1 \mod 3$ \cite[Theorem 2.30]{PZ08lift};
    \item Isomorphic to $\lift(\GF(3)\times\GF(4))$ \cite[Theorem 1.2]{Whi97};
    \item There are finitely many excluded minors for $\psru$-representability \cite[Corollary 1.4]{GGK}.
  \end{itemize}
  \item[The dyadic partial field,] $\dyadic$:
  \begin{itemize}
    \item $\dyadic = \left(\Z[\tfrac{1}{2}], \langle -1,2\rangle\right)$;
    \item $\fun(\dyadic) = \assoc\{1,2\} = \{0,1,-1,2,1/2\}$ \cite[Lemma 4.2]{PZ08lift};
    \item There is a homomorphism to every field that does not have characteristic two \cite[Theorem 1.1]{Whi97};
    \item Isomorphic to $\lift(\GF(3)\times\GF(5))$ \cite[Theorem 1.1]{Whi97};
  \end{itemize}
  \item[The union of $\sru$ and dyadic,] $\splittable$:
  \begin{itemize}
    \item $\splittable = (\Z[\zeta,\tfrac{1}{2}], \langle -1, 2, \zeta\rangle)$, where $\zeta$ is a root of $x^2-x+1=0$;
    \item $\fun(\splittable) = \assoc\{1,2,\zeta\} = \{0,1,-1,2,1/2,\zeta,1-\zeta\}$ \cite[Lemma 4.6]{PZ08lift};
    \item There is a homomorphism to $\GF(3)$, to $\GF(p^2)$ for all odd primes $p$, and to $\GF(p)$ when $p \equiv 1 \mod 3$ \cite[Theorem 4.7]{PZ08lift};
    \item Isomorphic to $\lift(\GF(3)\times\GF(7))$ \cite[Theorem 1.3]{Whi97}.
  \end{itemize}
  \item[The $2$-cyclotomic partial field,] $\cyclo_2$:
  \begin{itemize}
    \item $\cyclo_2 = \left(\Q(\alpha), \langle -1, \alpha, \alpha-1,\alpha+1\rangle\right)$, where $\alpha$ is an indeterminate;
    \item $\fun(\cyclo_2) = \assoc\{1,\alpha,-\alpha,\alpha^2\}$ \cite[Lemma 4.16]{PZ08lift};
    \item There is a homomorphism to $\GF(q)$ for $q \geq 4$ \cite[Lemma 4.14]{PZ08lift};
    \item Isomorphic to $\lift(\GF(4)\times\gauss)$ \cite[Theorem 4.17]{PZ08lift};
  \end{itemize}
  \item[The $k$-cyclotomic partial field,] $\cyclo_k$:
  \begin{itemize}
      \item $\cyclo_k = \left(\Q(\alpha), \langle -1, \alpha, \alpha-1, \alpha^2-1, \ldots, \alpha^k-1\rangle\right)$, where $\alpha$ is an indeterminate;
      \item $\cyclo_k = (\Q(\alpha), \langle\{-1\}\cup \{\Phi_j(\alpha)\mid j = 0, \ldots, k\}\rangle)$, where $\Phi_0(\alpha) = \alpha$ and $\Phi_j$ is the $j$th \emph{cyclotomic polynomial} \cite[Lemma 4.15]{PZ08lift};
      \item There is a homomorphism to $\GF(q)$ for $q \geq k+2$ \cite[Lemma 4.14]{PZ08lift}.
  \end{itemize}
  \item[The ``Dowling lift'' of $\GF(4)$,] $\dowfour$:
  \begin{itemize}
    \item $\dowfour := \left(\Z[\zeta, \tfrac{1}{1+\zeta}], \langle -1, \zeta, 1+\zeta\rangle\right)$, where $\zeta$ is a root of $x^2-x+1=0$;
    \item $\fun(\dowfour) = \assoc\{1,\zeta, \zeta^2\} = \big\{0,1, \zeta, \bar\zeta, \zeta^2, \bar\zeta^{2}, \zeta+1, (\zeta+1)^{-1}, (\bar\zeta+1)^{-1}, \bar\zeta+1
                     \big\}$ \cite[Lemma 2.5.37]{vZ09};
    \item There is a homomorphism to every field with an element of multiplicative order $3$ \cite[Theorem 3.2.8]{vZ09}.
  \end{itemize}
  \item[The Gersonides partial field,] $\ger$:
  \begin{itemize}
    \item $\ger = \left( \Z[\tfrac{1}{2}, \tfrac{1}{3}], \langle -1,2,3\rangle\right)$;
    \item $\fun(\ger) = \assoc\{1,2,3,4,9\}$ \cite[Lemma 2.5.40]{vZ09};
    \item There is a homomorphism to every field that does not have characteristic two or three \cite[2.5.39]{vZ09}.
  \end{itemize}
  \item[The partial field ] $\parf_4$:
  \begin{itemize}
    \item $\parf_4 = (\Q(\alpha), \langle -1,\alpha,\alpha-1,\alpha+1,\alpha-2\rangle)$, where $\alpha$ is an indeterminate;
    \item $\fun(\parf_4) = \assoc\{1,\alpha,-\alpha,\alpha^2,\alpha-1,(\alpha-1)^2\}$ \cite[Lemma 2.5.43]{vZ09};
    \item There is a homomorphism to every field with at least four elements \cite[Lemma 2.5.43]{vZ09}.
  \end{itemize}
  \item[The Gaussian partial field,] $\gauss$:
  \begin{itemize}
    \item $\gauss = \left(\Z[i,\tfrac{1}{2}], \langle i, 1-i\rangle\right)$, where $i$ is a root of $x^2+1=0$;
    \item $\fun(\gauss) = \assoc\{1,2,i\} = \left\{0,1,-1,2,\tfrac{1}{2},i,i+1,\tfrac{i+1}{2},1-i,\tfrac{1-i}{2},-i\right\}$\\ \cite[Lemma 4.10]{PZ08lift};
    \item There is a homomorphism to $\GF(p^2)$ for all primes $p \geq 3$, and to $\GF(p)$ when $p \equiv 1 \mod 4$ \cite[Theorem 4.13]{PZ08lift};
    \item A matroid is $\gauss$-representable if and only if it is dyadic or has at least two inequivalent $\GF(5)$-representations \cite[Lemma 4.12]{PZ08lift};
  \end{itemize}
  \item[The Hydra-$3$ partial field,] $\hydra_3$:
  \begin{itemize}
    \item $\hydra_3 = (\Q(\alpha),\langle -1, \alpha, 1-\alpha, \alpha^2-\alpha+1\rangle)$, where $\alpha$ is an indeterminate;
    \item $\fun(\hydra_3) = \assoc\left\{1,\alpha,\alpha^2-\alpha+1,\tfrac{\alpha^2}{\alpha-1},\tfrac{-\alpha}{(\alpha-1)^2}\right\}$ (Lemma \ref{lem:H3fun});
    \item There is a homomorphism to every field with at least five elements (Theorem \ref{thm:quinary});
    \item A matroid is $\hydra_3$-representable if and only if it is regular or has at least three inequivalent $\GF(5)$-representations (Lemma \ref{lem:hydrathree}).
  \end{itemize}
  \item[The Hydra-$4$ partial field,] $\hydra_4$:
  \begin{itemize}
    \item $\hydra_4 = (\Q(\alpha,\beta),\langle -1,\alpha,\beta,\alpha-1,\beta-1,\alpha\beta-1, \alpha+\beta-2\alpha\beta\rangle)$, where $\alpha$, $\beta$ are indeterminates;
    \item $\fun(\hydra_4) = \assoc\Big\{1,\alpha, \beta, \alpha\beta,\tfrac{\alpha-1}{\alpha\beta-1},\tfrac{\beta-1}{\alpha\beta-1},-\tfrac{\alpha(\beta-1)}{\beta(\alpha-1)},
  \tfrac{(\alpha-1)(\beta-1)}{1-\alpha\beta},\\
   \tfrac{\alpha(\beta-1)^2}{\beta(\alpha\beta-1)}$, $\tfrac{\beta(\alpha-1)^2}{\alpha(\alpha\beta-1)}\Big\}$ (Lemma \ref{lem:H4fun});
    \item There is a homomorphism to every field with at least five elements;
    \item A matroid is $\hydra_4$-representable if and only if it is near-regular or has at least four inequivalent $\GF(5)$-representations (Lemma \ref{lem:hydrafour}).
  \end{itemize}
  \item[The Hydra-$5$ partial field,] $\hydra_5 = \hydra_6$:
  \begin{itemize}
    \item $\hydra_5 = (\Q(\alpha,\beta,\gamma),\langle -1,\alpha,\beta,\gamma,\alpha-1,\beta-1,\gamma-1,\alpha-\gamma,
     \gamma-\alpha\beta$,\\$(1-\gamma)-(1-\alpha)\beta\rangle)$, where $\alpha$, $\beta$, $\gamma$ are indeterminates;
    \item $\fun(\hydra_5) = \assoc\Big\{1,\alpha,\beta,\gamma,\tfrac{\alpha\beta}{\gamma},\tfrac{\alpha}{\gamma},\tfrac{(1-\alpha)\gamma}{\gamma-\alpha},\tfrac{(\alpha-1)\beta}{\gamma-1},\tfrac{\alpha-1}{\gamma-1},\tfrac{(\beta-1)(\gamma-1)}{\beta(\gamma-\alpha)},$\\
  $\tfrac{\gamma-\alpha}{\gamma-\alpha\beta},\tfrac{\beta(\gamma-\alpha)}{\gamma-\alpha\beta},\tfrac{(\alpha-1)(\beta-1)}{\gamma-\alpha},
  \tfrac{\beta(\gamma-\alpha)}{(1-\gamma)(\gamma-\alpha\beta)},\tfrac{(1-\alpha)(\gamma-\alpha\beta)}{\gamma-\alpha},\tfrac{1-\beta}{\gamma-\alpha\beta}
  \Big\}$ (Lemma \ref{lem:H5fun});
    \item There is a homomorphism to every field with at least five elements;
    \item A matroid is $\hydra_5$-representable if and only if it is near-regular or has at least \emph{six} inequivalent $\GF(5)$-representations (Lemma \ref{lem:hydrafive}).
  \end{itemize}
  \item[The near-regular partial field modulo two,] $\nreg^{(2)}$:
  \begin{itemize}
    \item $\nreg^{(2)} = (\GF(2)(\alpha), \langle\alpha, 1+\alpha\rangle)$, where $\alpha$ is an indeterminate;
    \item $\fun(\nreg^{(2)}) = \{0,1\}\cup \assoc \left\{\alpha^{2^k} \mid k \in \N\right\}$ \cite[Lemma 2.5.46]{vZ09};
    \item There is a homomorphism to $\GF(2^k)$ for all $k \geq 2$ \cite[Lemma 2.5.45]{vZ09}.
  \end{itemize}
  \item[The golden ratio partial field,] $\golrat$:
  \begin{itemize}
    \item $\golrat = (\Z[\tau], \langle -1, \tau \rangle)$, where $\tau$ is the positive root of $x^2-x-1=0$;
    \item $\fun(\golrat) = \assoc\{1,\tau\} = \{0,1,\tau, -\tau, 1/\tau, -1/\tau, \tau^2, 1/\tau^2\}$\\ \cite[Lemma 4.8]{PZ08lift}
    \item There is a homomorphism to $\GF(5)$, to $\GF(p^2)$ for all primes $p$, and to $\GF(p)$ when $p \equiv \pm 1 \mod 5$ \cite[Theorem 4.9]{PZ08lift};
    \item Isomorphic to $\lift(\GF(4)\times\GF(5))$ \cite[Theorem 4.9]{PZ08lift}.
  \end{itemize}
\end{description}

  \renewcommand{\Dutchvon}[2]{#1}
  \bibliography{matbib}
  \bibliographystyle{elsarticle-num-names}
\end{document}